\numberwithin{equation}{section} 
\renewcommand{\Re}{\operatorname{Re}}
\newtheorem{theorem}{Theorem}[section]
\newtheorem{lem}[theorem]{Lemma}
\newtheorem{cor}[theorem]{Corollary}
\newtheorem{prop}[theorem]{Proposition}
\newtheorem{definition}[theorem]{Definition}
\theoremstyle{remark}
\newtheorem{rem}[theorem]{Remark}
\newcommand{\R}{\mathbb{R}}
\newcommand{\N}{\mathbb{N}}
\newcommand{\Ltwoa}{L^2(\Omega)}
\newcommand{\Ltwoh}{L^2(\R^3_+)}
\newcommand{\Ltwohdom}{L^2(G)}
\newcommand{\Ltwohn}[1]{\|#1\|_{\Ltwoh}}
\newcommand{\Ltwoan}[1]{\|#1\|_{L^2_\gamma(\Omega)}}
\newcommand{\Ltwohnt}[1]{\Gnorm{0}{#1}}
\newcommand{\Ltwodomh}{L^2(G)}
\newcommand{\Ltwodomhn}[1]{\|#1\|_{\Ltwodomh}}
\newcommand{\Lpmtwohdom}{L^2(G_\pm)}
\newcommand{\Hh}[1]{H^{#1}(\R^3_+)}
\newcommand{\Ha}[1]{H^{#1}(\Omega)}
\newcommand{\Hagamma}[1]{H^{#1}_\gamma(\Omega)}
\newcommand{\Hhta}[1]{H_{\operatorname{ta}}^{#1}(\R^3_+)}
\newcommand{\Hata}[1]{H_{\operatorname{ta}}^{#1}(\Omega)}
\newcommand{\Hatan}[2]{\|#2\|_{H_{\operatorname{ta},\gamma}^{#1}(\Omega)}}
\newcommand{\Hhn}[2]{\|#2\|_{\Hh{#1}}}
\newcommand{\Hangamma}[2]{\|#2\|_{\Hagamma{#1}}}
\newcommand{\Hpmhdom}[1]{\mathcal{H}^{#1}(G)}
\newcommand{\Hpmadom}[1]{\mathcal{H}^{#1}(J \times G)}
\newcommand{\Hpmhndom}[2]{\|#2\|_{\Hpmhdom{#1}}}
\newcommand{\Hpmandom}[2]{\|#2\|_{\mathcal{H}^{#1}(J \times G)}}
\newcommand{\Hpmangammadom}[2]{\|#2\|_{\mathcal{H}^{#1}_\gamma(J \times G)}}
\newcommand{\G}[1]{G_{#1}(\Omega)}
\newcommand{\Gnorm}[2]{\|#2\|_{G_{#1, \gamma}(\Omega)}}
\newcommand{\Gpmdom}[1]{\mathcal{G}_{#1}(J \times G)}
\newcommand{\GpmdomP}[1]{\mathcal{G}_{#1}(J_\tau \times G)}
\newcommand{\Gpmdomvar}[1]{\tilde{\mathcal{G}}_{#1}(J \times G)}
\newcommand{\GpmdomvarP}[1]{\tilde{\mathcal{G}}_{#1}(J_\tau \times G)}
\newcommand{\Gpmdomnorm}[2]{\|#2\|_{\mathcal{G}_{#1, \gamma}(J \times G)}}
\newcommand{\Gpmdomnormwg}[2]{\|#2\|_{\mathcal{G}_{#1}(J \times G)}}
\newcommand{\GpmdomnormwgP}[2]{\|#2\|_{\mathcal{G}_{#1}(J_\tau \times G)}}
\newcommand{\F}[1]{F_{#1}(\Omega)}
\newcommand{\Fpdw}[2]{F_{#1,#2}(\Omega)}
\newcommand{\Fuwl}[2]{F_{#1}^{\operatorname{#2}}(\Omega)}
\newcommand{\Fupdwl}[3]{F_{#1,#3}^{\operatorname{#2}}(\Omega)}
\newcommand{\Fpdk}[3]{F_{#1,#3,#2}(\Omega)}
\newcommand{\Fuwlk}[3]{F_{#1,#3}^{\operatorname{#2}}(\Omega)}
\newcommand{\Fnorm}[2]{\|#2\|_{F_{#1}(\Omega)}}
\newcommand{\Fvarwkdom}[2]{F^0_{#1,#2}(\tilde{G})}
\newcommand{\Fcoeff}[2]{F_{#1, \operatorname{coeff}}^{\operatorname{#2}}(\R^3_+)}
\newcommand{\Fvarnorm}[2]{\|#2\|_{F^0_{#1}(\R^3_+)}}
\newcommand{\Fvarnormdom}[2]{\|#2\|_{F^0_{#1}(\tilde{G})}}
\newcommand{\Fpmuwl}[2]{\mathcal{F}_{#1}^{\operatorname{#2}}(\Omega)}
\newcommand{\Fpmupdwl}[3]{\mathcal{F}_{#1,#3}^{\operatorname{#2}}(\Omega)}
\newcommand{\Fpmdom}[1]{\mathcal{F}_{#1}(J \times G)}
\newcommand{\Fpmdompdw}[2]{\mathcal{F}_{#1,#2}(J \times G)}
\newcommand{\Fpmdomupdwl}[3]{\mathcal{F}_{#1,#3}^{\operatorname{#2}}(J \times G)}
\newcommand{\Fpmdomuwlk}[3]{\mathcal{F}_{#1,#3}^{\operatorname{#2}}(J \times G)}
\newcommand{\Fpmdomupdwlk}[4]{\mathcal{F}_{#1,#4,#3}^{\operatorname{#2}}(J \times G)}
\newcommand{\Fpmvardom}[1]{\mathcal{F}^0_{#1}(G)}
\newcommand{\Fpmvarwkdom}[2]{\mathcal{F}^0_{#1,#2}(G)}
\newcommand{\Fpmcoeff}[2]{\mathcal{F}_{#1, \operatorname{coeff}}^{\operatorname{#2}}(\R^3_+)}
\newcommand{\Fpmvarnormdom}[2]{\|#2\|_{\mathcal{F}^0_{#1}(G)}}
\newcommand{\Fpmnormdom}[2]{\|#2\|_{\mathcal{F}_{#1}(J \times G)}}
\newcommand{\E}[1]{E_{#1}(J \times \partial \R^3_+)}
\newcommand{\Edom}[1]{E_{#1}(J \times \Sigma)}
\newcommand{\Enorm}[2]{\|#2\|_{E_{#1, \gamma}(J \times \partial \R^3_+)}}
\newcommand{\Enormwgdom}[2]{\|#2\|_{E_{#1}(J \times \Sigma)}}
\newcommand{\Enormdom}[2]{\|#2\|_{E_{#1, \gamma}(J \times \Sigma)}}
\newcommand{\Epmdom}[1]{E_{#1}(J \times \Sigma)}
\newcommand{\Epmnormdom}[2]{\|#2\|_{E_{#1, \gamma}(J \times \Sigma)}}
\newcommand{\ml}[3]{\mathcal{ML}^{#1}(#2,#3)}
\newcommand{\mlwl}[4]{\mathcal{ML}^{#1,\operatorname{#4}}(#2,#3)}
\newcommand{\mlpd}[3]{\mathcal{ML}_{\operatorname{pd}}^{#1}(#2,#3)}
\newcommand{\mlpm}[3]{\mathcal{ML}^{#1}(#2,#3)}
\newcommand{\mlwlpm}[4]{\mathcal{ML}^{#1,\operatorname{#4}}(#2,#3)}
\newcommand{\mlpdpm}[3]{\mathcal{ML}_{\operatorname{pd}}^{#1}(#2,#3)}
\newcommand{\mlpdwlpm}[4]{\mathcal{ML}_{\operatorname{pd}}^{#1, \operatorname{#4}}(#2,#3)}
\newcommand{\curl}{\operatorname{curl}}
\newcommand{\Div}{\operatorname{Div}}
\renewcommand{\div}{\operatorname{div}}
\newcommand{\bs}{\boldsymbol}
\newcommand{\Tr}{\operatorname{Tr}}
\newcommand{\tr}{\operatorname{tr}}
\newcommand{\clJ}{\overline{J}}
\newcommand{\dist}{\operatorname{dist}}
\newcommand{\supp}{\operatorname{supp}}
\newcommand{\image}{\operatorname{im}}
\renewcommand{\epsilon}{\varepsilon}
\renewcommand{\phi}{\varphi}
\title[Quasilinear Maxwell interface problems]{Local wellposedness of quasilinear Maxwell equations with conservative interface conditions}
\author{Roland Schnaubelt and Martin Spitz}
\email{schnaubelt@kit.edu, martin.spitz@kit.edu}
\address{Department of Mathematics, Karlsruhe Institute of Technology, 76128 Karlsruhe, Germany.}
\subjclass[2010]{35L50, 35L60, 35Q61}
\keywords{Nonlinear Maxwell system, perfectly conducting boundary/interface conditions,
local wellposedness, blow-up criterion, continuous dependence, piecewise regular}
\thanks{We gratefully acknowledge financial support by the Deutsche Forschungsgemeinschaft (DFG)
through CRC 1173.}
\begin{document}
\begin{abstract}
We establish a comprehensive local wellposedness theory for the quasilinear Maxwell system with interfaces
in the space of piecewise $H^m$-functions for $m \geq 3$. The system is equipped with
instantaneous and piecewise regular material laws and perfectly conducting interfaces and boundaries. 
We also provide a blow-up criterion in the  Lipschitz norm and prove the continuous dependence on the data.
 The proof relies on precise a priori estimates and the regularity theory for the corresponding linear 
 problem also shown here.
\end{abstract}
 \maketitle
 
 %%%%%%%%%%%%%%%%%%%%%%%%%%%%%%%%%%%%%%%%%%%%%%%%%%%%%%%%%%%%%%%%%%%%%%%%%%%%%%%%%%%%%%%%%%%%
 %%%%%%%%%%%%%%%%%%%%%%%%%%%%%%%%%%%%%%%%%%%%%%%%%%%%%%%%%%%%%%%%%%%%%%%%%%%%%%%%%%%%%%%%%%%%
 %%%%%%%%%%%%%%%%%%%%%%%%%%%%%%%%%%%%%%%%%%%%%%%%%%%%%%%%%%%%%%%%%%%%%%%%%%%%%%%%%%%%%%%%%%%%
 \section{Introduction}
 \label{SectionIntroduction}
 The Maxwell equations are the basis of electro-magnetic theory and thus one of the fundamental partial 
 differential equations in physics. In the case of  instantaneous nonlinear material laws, 
 they form a symmetric quasilinear hyperbolic system under natural assumptions. 
 For such systems on  $\R^d$, in \cite{KatoQuasilinearHyperbolicSystems}
 Kato has established a satisfactory local wellposedness theory in $H^s(\R^d)$ for $s > 1+\frac{d}{2}$.
  However, on a domain $G \neq \R^3$, the Maxwell system with the boundary conditions of a perfect conductor 
 has a characteristic boundary and 
  does not belong to the classes of hyperbolic systems  for which one knows a wellposedness theory in $H^3$.
 The available results need much more regularity and exhibit a loss of derivatives in normal direction (encoded in 
 weighted function spaces),  see~\cite{Gues} or \cite{Secchi}. In the recent papers \cite{SpitzQuasilinearMaxwell}
 and \cite{SpitzMaxwellLinear} by one of the authors, a comprehensive local wellposedness theory in $H^m$ for $m\ge 3$ 
 has been established for the boundary conditions of a perfect conductor. The main effort in these works is
devoted to prove full regularity in normal direction at the boundary, heavily using the structure of the Maxwell system.

 However,  deriving boundary conditions for the Maxwell systems on a domain $G\subseteq \R^3$, 
 one starts from the interface conditions \eqref{EquationJumpConditionsOnInterface} at $\partial G$ and \emph{assumes}
 that one knows the trace of the fields outside $G$, see~Section~I.4.2.2 of \cite{DautrayLionsI} or Section~7.12 in \cite{FabrizioMorro}.
 Moreover, in applications one often deals with composite materials in which the constitutive relations
 are only piecewise regular in $x\in G$. Here one has to treat the jumps in the material as interfaces.
 It is thus necessary to investigate interface problems in electro-magnetism, and not only (pure) boundary value problems. 
 
 In this work, we treat a (possibly unbounded) domain $G\subseteq \R^3$ being the disjoint union of two subdomains $G_+$ and $G_{-}$ 
 and the interface $\Sigma=\partial G_-$, where  $\Sigma$ and $\partial G$ are smooth and have positive distance. 
 Our results immediately extend to domains consisting  of finitely many such components. 
 We establish a  comprehensive local wellposedness theory in $H^m$ with $m\ge 3$ for the  Maxwell system on $G$, given as 
\begin{align}
\label{EquationMaxwellSystem}
\begin{aligned}
 \partial_t \boldsymbol{D}_\pm &= \curl \boldsymbol{H}_\pm -  \boldsymbol{J}_\pm, \qquad  &&\text{for } x \in G_\pm, \quad  &&t \in J, \\
 \partial_t \boldsymbol{B}_\pm &= -\curl \boldsymbol{E}_\pm,  	 &&\text{for } x \in G_\pm, &&t \in J,  \\
 \div \boldsymbol{D}_\pm &= \rho_\pm,  \quad \div \boldsymbol{B}_\pm = 0, &&\text{for } x \in G_\pm, &&t \in J,  \\
 \boldsymbol{E}_+ \times \nu &= 0, \quad  \boldsymbol{B}_+ \cdot \nu = 0, &&\text{for } x \in \partial G, &&t \in J,  \\
 \boldsymbol{E}_{\pm}(t_0) &= \boldsymbol{E}_{0,\pm}, \quad \boldsymbol{H}_{\pm}(t_0) = \boldsymbol{H}_{0,\pm}, &&\text{for } x \in G_\pm,
\end{aligned}
 \end{align}
for an initial time $t_0 \in \R$, $J=(t_0,T)$, and the unit outward normal vector $\nu$ of $G_+$.
Here $\boldsymbol{E}_\pm(t,x),\boldsymbol{D}_\pm(t,x) \in \R^3$ are the electric and
$\boldsymbol{H}_{\pm}(t,x),\boldsymbol{B}_{\pm}(t,x) \in \R^3$ the magnetic fields on $G_\pm$. It is known that the 
divergence equations and the magnetic boundary condition $\boldsymbol{B}_+\cdot\nu = 0$ in \eqref{EquationMaxwellSystem}  remain valid if they are satisfied 
by the initial fields. Here, the charge densities $\rho_{\pm}(t,x)$ are given by the initial charge and the current densities
$\boldsymbol{J}_{\pm}(t,x) \in \R^3$  via
\begin{equation*}
 \rho_{\pm}(t) = \rho_{\pm}(t_0) - \int_{t_0}^t \div \boldsymbol{J}_{\pm}(s) ds
\end{equation*}
for all $t \geq t_0$ on $G_\pm$. (See Section~I.4.2.2 in \cite{DautrayLionsI}.)
In~\eqref{EquationMaxwellSystem} we have imposed the boundary conditions of a 
perfect conductor on $\partial G$. On $\Sigma$ the Maxwell equations imply the interface conditions
\begin{align}
\label{EquationJumpConditionsOnInterface}
 [\boldsymbol{D} \cdot \nu] &= -\rho_{\Sigma}, \quad \ [\boldsymbol{B} \cdot \nu] = 0, \quad \
 [\boldsymbol{E} \times \nu] = 0, \quad \ [\boldsymbol{H} \times \nu] = \boldsymbol{J}_{\Sigma}
 \end{align}
 for  $x \in \Sigma$ and $t \in (t_0,T),$  see Section~I.4.2.4 of \cite{DautrayLionsI}, where   
 $[\boldsymbol{D} \cdot \nu] = (\boldsymbol{D}_+ - \boldsymbol{D}_{-}) \cdot \nu$ etc. 
 In \eqref{EquationJumpConditionsOnInterface}  the charge density $\rho_\Sigma$ on the interface is determined by
 \begin{equation*}
  \rho_\Sigma(t) =  \rho_\Sigma(t_0) - \int_{t_0}^t (\div_{\Sigma} \boldsymbol{J}_{\Sigma}(s) - [\boldsymbol{J} \cdot \nu](s)) ds, \qquad t\in J,
 \end{equation*}
 and the equations for $\boldsymbol{D}$ and $ \boldsymbol{B} $  are  true if they are valid at $t=t_0$,
 see Lemma~\ref{LemmaConservationOfInterfaceConditions}.

  The system~\eqref{EquationMaxwellSystem} has to be complemented by constitutive relations 
between the electric and magnetic fields, where we 
choose $\boldsymbol{E}_{\pm}$ and $\boldsymbol{H}_{\pm}$ as state variables. There are various classes of such material laws.
In the so-called retarded ones the fields $\boldsymbol{D}_{\pm}$ and $\boldsymbol{B}_{\pm}$ depend also on 
the past of $\boldsymbol{E}_{\pm}$ and $\boldsymbol{H}_{\pm}$, 
see~\cite{BabinFigotin}, \cite{FabrizioMorro}, \cite{MN}, or~\cite{RoachStratisYannacopoulos}. 
In dynamical material laws the material response is modelled by additional evolution equations, 
see \cite{AmmariHamdache}, \cite{DumasSueur}, \cite{Jochmann}, \cite{JolyMetivierRauch}, or \cite{MN}.
We concentrate on instantaneous material laws, see~\cite{BuschEtAl} or~\cite{FabrizioMorro}, where the fields 
$\boldsymbol{D}_{\pm}$ and $\boldsymbol{B}_{\pm}$ are given by
\[\boldsymbol{D}_{\pm}(t,x) = \theta_{1,\pm}(x,\boldsymbol{E}_{\pm}(t,x), \boldsymbol{H}_{\pm}(t,x)), \qquad\!
\boldsymbol{B}_{\pm}(t,x) = \theta_{2,\pm}(x,\boldsymbol{E}_{\pm}(t,x), \boldsymbol{H}_{\pm}(t,x))\] 
for regular functions $\theta_\pm=(\theta_{1,\pm}, \theta_{2,\pm}) \colon G_{\pm} \times \R^6 \rightarrow \R^6$.
The most prominent example is the so called Kerr nonlinearity 
$\boldsymbol{D}_{\pm} =  \boldsymbol{E}_\pm + \vartheta_\pm |\boldsymbol{E}_{\pm}|^2 \boldsymbol{E}_{\pm}$ and $\boldsymbol{B}_{\pm} = \boldsymbol{H}_{\pm}$
with $\vartheta_\pm \colon G_\pm \rightarrow \R$. We further assume that the current density decomposes as 
\begin{equation}
\label{EquationOhmsLaw}
 \boldsymbol{J}_{\pm} = \boldsymbol{J}_{0,\pm} + \tilde{\sigma}_{\pm}(\boldsymbol{E}_{\pm}, \boldsymbol{H}_{\pm}) \boldsymbol{E}_{\pm},
\end{equation}
where $\boldsymbol{J}_{\pm,0}$ is a given external current density and $\tilde{\sigma}_{\pm}$ denotes the conductivity on $G_\pm$. 
If we insert these material laws into~\eqref{EquationMaxwellSystem} 
and formally differentiate, we derive
\begin{align*}
	(\partial_t \boldsymbol{D}_{\pm}, \partial_t \boldsymbol{B}_{\pm}) = \partial_{(\boldsymbol{E}_{\pm}, \boldsymbol{H}_{\pm})} \theta_{\pm}(x,\boldsymbol{E}_{\pm}, \boldsymbol{H}_{\pm}) \partial_t (\boldsymbol{E}_{\pm}, \boldsymbol{H}_{\pm}) 
	\! = \! (\curl \boldsymbol{H}_{\pm} \! - \! \boldsymbol{J}_{\pm}, - \! \curl \boldsymbol{E}_{\pm})
\end{align*}
from~\eqref{EquationMaxwellSystem}. Our main structural  assumption is that 
$\partial_{(\boldsymbol{E}_{\pm},\boldsymbol{H}_{\pm})} \theta_\pm$
is symmetric and positive definite, which is true for the Kerr law for small $\boldsymbol{E}_{\pm}$ (and globally  if $\vartheta_\pm \ge0$).
 Such assumptions are quite standard already for linear Maxwell equations.
 
 The resulting equations form a symmetric quasilinear hyperbolic system of  first order.
In order to transform~\eqref{EquationMaxwellSystem}  into a standard form, we introduce the matrices
\begin{align}
 J_1 &= \begin{pmatrix}
	  0 &0 &0 \\
	  0 &0 &-1 \\
	  0 &1 &0
       \end{pmatrix},
        \quad
 J_2 =  \begin{pmatrix}
         0 &0 &1 \\
         0 &0 &0 \\
         -1 &0 &0
        \end{pmatrix},
        \quad
 J_3 = \begin{pmatrix}
        0 &-1 &0 \\
        1 &0 &0 \\
        0 &0 &0
       \end{pmatrix}, \notag \\
\label{EquationDefinitionOfAj}
 A_j^{\operatorname{co}} &= \begin{pmatrix}
        0 & -J_j \\
        J_j &0 
       \end{pmatrix}, \qquad j\in\{1,2,3\}.
\end{align}
Note that $J_1 \partial_1 + J_2 \partial_2 +J_3 \partial_3 = \curl$. Writing $\chi_{\pm}=\partial_{(\boldsymbol{E}_{\pm}, \boldsymbol{H}_{\pm})} \theta_{\pm}$, 
$f_{\pm} = (- \boldsymbol{J}_{\pm,0},0)$, $\sigma_\pm = (\begin{smallmatrix} \tilde{\sigma}_{\pm} &0 \\ 0 &0 \end{smallmatrix}$), 
and  using $u_{\pm} = (\boldsymbol{E}_{\pm}, \boldsymbol{H}_{\pm})$ as a new variable, we  obtain the system
\begin{align}
\label{EquationMaxwellAsFirstOrderSystemEvolutionPart}
	\chi_\pm(u_\pm) \partial_t u_\pm + \sum_{j = 1}^3 A_j^{\operatorname{co}} \partial_j u_\pm +\sigma_\pm(u_\pm) u_\pm 
	   = f_\pm, \quad &&(t,x) \in J \times G_\pm.
\end{align}
To recast the electric boundary and interface conditions in \eqref{EquationMaxwellSystem} and \eqref{EquationJumpConditionsOnInterface}, we set
\begin{align}
\label{EquationDefinitionB}
	B_{\nu} = \begin{bmatrix}
      0 &\nu_3 &-\nu_2  \\
      -\nu_3 &0 &\nu_1  \\
      \nu_2 &-\nu_1 &0  
     \end{bmatrix}, \
     B_{\partial G} = \begin{bmatrix}
                       B_\nu &0
                      \end{bmatrix}, \
     B_{\Sigma} \!=\! \begin{bmatrix}
                   B_\nu &0 &-B_\nu &0 \\
                   0 &B_\nu &0 &-B_\nu
                  \end{bmatrix}
\end{align}
on $\partial G$ respectively $\Sigma$, and  put $g = (0, \boldsymbol{J}_{\Sigma})^T$. System~\eqref{EquationMaxwellSystem} 
is then equivalent to the symmetric quasilinear hyperbolic initial boundary value problem
 \begin{align}
 \label{EquationNonlinearIBVP} 
  \left\{\begin{aligned}
   \chi_\pm(u_\pm)\partial_t u_\pm + \sum\nolimits_{j=1}^3 A_j^{\operatorname{co}} \partial_j u_\pm + \tilde\sigma_\pm(u_\pm) u_\pm 
          &= f_\pm, \quad &&x \in G_\pm, \ &&t \in J; \\
   B_{\partial G} u_+ &= 0, &&x \in \partial G, &&t \in J; \\
   B_{\Sigma} (u_+, u_{-}) &= g, &&x \in \Sigma, &&t \in J; \\
   u(t_0) &= u_0, &&x \in  G.
 \end{aligned}\right.
\end{align}
On $\partial G$ we could also allow for inhomogeneous boundary values, see~\cite{SpitzQuasilinearMaxwell}.
As noted above, the magnetic boundary and interface conditions and the divergence relations in 
\eqref{EquationMaxwellSystem} and \eqref{EquationJumpConditionsOnInterface} are true if we impose corresponding
 conditions on $u_0$. (See Lemma~7.25 in \cite{SpitzDissertation} and Lemma~\ref{LemmaConservationOfInterfaceConditions}.)
We  look for solutions $u$ of \eqref{EquationNonlinearIBVP} in the spaces
\begin{align}
	\label{EquationDefinitionGm}
	\Gpmdom{m} &= \bigcap\nolimits_{j = 0}^m C^j(\clJ, \Hpmhdom{m-j}),\\
	%\label{EquationDefinitionOfScriptHm}
  \Hpmhdom{k} &= \{v \in \Ltwohdom \colon v_+ \in H^k(G_+), v_{-} \in H^k(G_{-})\},\notag
\end{align}
 cf.~\cite{BenzoniGavage,RauchMassey}, where $k,m\in \N_0$ and $v_\pm$
are the restrictions of $v$ to $G_\pm$. We assume that the coefficients and data  are appropriately smooth and 
 compatible  (in the sense of~\eqref{EquationNonlinearCompatibilityConditions}). Our main 
 Theorem~\ref{TheoremLocalWellposednessNonlinear} then shows that
\begin{enumerate}
	\item the system~\eqref{EquationNonlinearIBVP} has a unique maximal solution $u\in\Gpmdom{m}$  with $m \geq 3$,
	\item finite existence time can be characterized by blow-up in the Lipschitz-norm, 
	\item the solution depends continuously on the data.
\end{enumerate}

 These results are based on the detailed regularity theory in Theorem~\ref{TheoremExistenceAndUniquenessOnDomain} 
 for the corresponding nonautonomous linear system
 \begin{equation}
	\label{EquationIBVPIntroduction} 
	 \left\{\begin{aligned}
   A_{0,\pm} \partial_t u_\pm + \sum\nolimits_{j=1}^3 A_j^{\operatorname{co}} \partial_j u_\pm + D_\pm u_\pm &= f_\pm, \quad &&x \in G_\pm, \quad &&t \in J; \\
   B_{\partial G} u_+ &= 0, &&x \in \partial G, &&t \in J; \\
   B_{\Sigma} (u_+,u_{-}) &= g, &&x \in \Sigma, &&t \in J; \\
   u(t_0) &= u_0, &&x \in  G.
 \end{aligned}\right.
\end{equation}
 We follow the same strategy as for the 
pure initial boundary value problem in~\cite{SpitzQuasilinearMaxwell} and \cite{SpitzMaxwellLinear}. We freeze a map $\hat{u}$ 
in the nonlinearities of~\eqref{EquationNonlinearIBVP}. The resulting linear problem \eqref{EquationIBVPIntroduction} 
can be solved in $\Gpmdom{0}$ for Lipschitz coefficients using  \cite{Eller}. In a lengthy procedure one can first
show a priori estimates for solutions in $\Gpmdom{m}$ and then prove that the $\mathcal{G}_0$--solution  actually
belongs to $\Gpmdom{m}$, provided that data and coefficients are regular enough and compatible.
Here one has to inductively intertwine different results for the tangential, time, and normal directions. The
normal part is the most difficult one due to the characteristic interface and boundary (i.e.,  $A_1^{\operatorname{co}}\nu_1
+ A_2^{\operatorname{co}}\nu_2 + A_3^{\operatorname{co}}\nu_3$ is singular). Our treatment 
of the normal regularity heavily relies on the structure of the Maxwell system, see
Proposition~\ref{PropositionCentralEstimateInNormalDirection} and Lemma~\ref{LemmaRegularityInNormalDirection}.

For these arguments one has to localize the system. In this procedure one at first loses many of the zeros in the 
coefficient matrices of \eqref{EquationNonlinearIBVP}, which also become non-constant. 
However, using an additional  transformation described in \eqref{def:Gr}, \eqref{eq:A3-trafo} and \eqref{eq:B-trafo}, we obtain
localized systems with an unchanged space-independent matrix $A_3^{\operatorname{co}}$ and space-independent 
boundary matrices $B_{\Sigma}$ and $B_{\partial G}$. This fact allows us to partly separate the treatment of the normal 
directions from the others. This achievement is crucial for our analysis.

The nonlinear problem is then solved by a contraction argument in Theorem~\ref{TheoremLocalExistenceNonlinear}, 
which is basically standard though one has to be very
careful setting up the constants.  Here one uses the precise form of the a priori estimate in Theorem~\ref{TheoremExistenceAndUniquenessOnDomain}.
In the derivation of the blow-up criterion and the continuous dependence of the data, one has to use the localized problems
and the structure of the system once more.

Fortunately, the methods developed in \cite{SpitzQuasilinearMaxwell} and \cite{SpitzMaxwellLinear} for the pure boundary value problem
work quite well in the present situation. Many arguments can be adapted with straightforward changes. These are omitted 
below. However, at several points the structure of the problem changes significantly because of the interface condition.
In the first step one has to apply the basic linear $L^2$ results of \cite{Eller} to the localized interface problem on $\R^3$.
To this aim, one rewrites the Maxwell system as a $12\times 12$ initial boundary value system on the positive half-space by reflecting 
the coefficients from the negative one. In this procedure extra signs arise due to the reflection and spoil the structure of the pure 
Maxwell system appearing in~\cite{SpitzMaxwellLinear}, see e.g.\ \eqref{def:cAj} and \eqref{EquationDefinitionOfTildeMu}. 
However, the core parts of the proof concerning normal regularity
heavily depend on cancellation properties of  the arising (linear) Maxwell system. Similarly the structure  of the new $12 \times 12$ Maxwell system
is crucial in order to obtain constant coefficients $A_3^{\operatorname{co}}$ and  $B_{\Sigma}$ in the localization procedure.
 These and several other arguments are closely tied to the structure of the interface problem.
They are thus  worked out in detail, though they lead to lengthy and intricate calculations.

In the next section we introduce our basic notation and some auxiliary results. The localization  procedure is discussed
in Section~\ref{SectionLocalization}.  The core  a priori estimates and regularity results for the linear problem are shown 
in Sections~\ref{SectionAPrioriEstimates} and~\ref{SectionRegularity}, respectively. The basic fixed point argument is included in 
Section~\ref{SectionLocalExistence}, and the main local wellposedness theorem in Section~\ref{SectionMain}.

\section{Function spaces and linear compatibility conditions}
\label{SectionNotation}
\textbf{Standing notation:}
Let $m\in \N_0$ and  set $\tilde{m}=\max\{m,3\}$. We work with domains $G$, $G_+$, and  $G_-$  in $\R^3$ 
such that $G$ is the disjoint union of $G_+$, $G_-$, and $\Sigma:= \partial G_-$. Moreover it is assumed that
 $\Sigma$ and $\partial G $ have a positive distance and are \emph{tame uniform} $C^{\tilde{m} + 2}$--boundaries,
 see Definitions~2.24 and 5.4 of \cite{SpitzDissertation}. This means that they are uniform $C^{\tilde{m} + 2}$-boundaries (see e.g.~\cite{Adams})
 and that there exist a smooth partition  of unity 
 $(\theta_i)_{i \in \N_0}$ of $G_{-}$ respectively $G$ subordinate to the locally 
 finite covering $(U_i)_{i \in \N_0}$ (where $U_0 = G_{-}$ respectively $U_0 = G$), as well as test functions $\sigma_i$ with $\sigma_i=1$ on  $\supp \theta_i$
 and  $\omega_i$ with $\omega_i=1$ on  $\varphi_i(\supp \sigma_i)$, which are all uniformly bounded in $C^{\tilde{m} + 2}$.
 Of course, compact boundaries of class $C^{\tilde{m} + 2}$ or  halfspaces satisfy these assumptions. 

Our solutions take values in domains  $\mathcal{U}_+$ and $\mathcal{U}_{-}$ in $\R^6$.
We further write $\mathcal{L}(\mathcal{A}_{0}, \ldots, \mathcal{A}_{3}, \mathcal{D})$ or
$\mathcal{L}(\mathcal{A}_{j},  \mathcal{D})$ for 
 the differential operator $\sum_{j = 0}^3 \mathcal{A}_{j} \partial_j + \mathcal{D}$
 with the coefficients $\mathcal{A}_{j}$ and  $\mathcal{D}$, where $\partial_0=\partial_t$.
 By $J$ we mean an open time interval and we set $\Omega = J \times \R^3_+$. The image of 
 a function $v$ is designated by $\image v$. For a function $w$ in $\Hpmhdom{1}$, we denote by $\partial_j w$ the $L^2(G)$-function 
 whose restriction to $G_\pm$ coincides with $\partial_j w_\pm$.  In the localization procedure we employ the matrices
 \begin{equation}
  \label{EquationDefinitionOfLargeAj}
  \mathcal{A}_j^{\mathrm{co}} = \begin{pmatrix}
                                 A_j^{\mathrm{co}} &0 \\
                                 0 &A_j^{\mathrm{co}}
                                \end{pmatrix}
                                \quad\text{for \  } j \in \{1,2,3\}\qquad \text{and}\qquad
   \tilde{\mathcal{A}}_3^{\mathrm{co}} = \begin{pmatrix}
                                          A_3^{\mathrm{co}} &0 \\
                                          0 &-A_3^{\mathrm{co}}
                                         \end{pmatrix}.
 \end{equation}

 To introduce the necessary trace operators, take coefficients $A_j \in \mathcal{W}^{1,\infty}(J \times G)$, i.e., 
 the restrictions $A_{j,\pm}$ belong to $W^{1,\infty}(J \times G_\pm)$. Let $v_+$ be an element of  $L^2(J \times G_+)$ 
 such that $\sum_{j = 0}^3 A_{j,+} \partial_j v_+$ is contained in $L^2(J \times G_+)$.
Then the product  $A_+(\nu) v_+ = (\sum_{j=0}^3 A_{j,+} \nu_j) v_+$ has a trace on $J \times \partial G_+$  
belonging to $H^{-1/2}(J \times \partial G_+)$, cf.\ \cite{SpitzDissertation, SpitzMaxwellLinear}, for instance. 
 Here $\nu$  denotes the unit outer normal of $J \times G_+$. We may  restrict this trace to $J \times \Sigma$ and 
 to $J \times \partial  G$, respectively. Moreover, the corresponding trace operators  $\Tr_{J \times \Sigma,+}$ and 
 $\Tr_{J \times \partial G}$ are given by the standard ones $\tr_{\Sigma,+}$ and   $\tr_{\partial G,+}$, respectively,
 if $v_+$ takes values in in $H^1(G_+)$. Here we can replace the subscript $+$ by $-$. We further set 
 \begin{equation*}
 \Tr_{J \times \Sigma,\pm}(A(\nu) u) = (\Tr_{J \times \Sigma,+} (A_+(\nu) u_+), \Tr_{J \times \Sigma,-} (A_{-}(\nu) u_{-}))
 \end{equation*}
 if $u \in L^2(J \times G)$ satisfies $\sum_{j = 0}^3 A_{j,\pm} \partial_j u_\pm \in L^2(J \times G_\pm)$, 
 respectively
 \begin{equation*} 
  \tr_{\Sigma,\pm} u = (\tr_{\Sigma,+} u_+, \tr_{\Sigma,-} u_{-})
 \end{equation*}
 if $u \in \mathcal{H}^1(G)$. We define the trace
 $\Tr_{J \times \Sigma,+}(M A(\nu) u)$ by $M \Tr_{J \times \Sigma,+}(A(\nu) u)$
 for matrix-functions $M \in \mathcal{W}^{1,\infty}(J \times G)$, and correspondingly for 
 the other trace operators. Finally, $\tr_{\Sigma}$ is the usual trace at $\Sigma$ 
for functions in $H^1(G)$ or $C(G)$.
 On $\R^3_+=\{x\in \R^3: x_3>0\}$ we  use the trace operator $\Tr_{J \times \partial \R^3_+}$ as introduced 
 in~\cite{SpitzMaxwellLinear}.
 
 We will employ the same function spaces as in~\cite{SpitzMaxwellLinear}, but we have to add variants allowing 
 discontinuities across the interface.  For reasons of clarity, we introduce all the spaces here.  Take a subdomain 
 $\tilde{G}$ of $\R^3$. We have already encountered the spaces $\Gpmdom{m}$ and $\Hpmhdom{m}$ in \eqref{EquationDefinitionGm}. 
 Their  norms are given by
 \begin{align*}
  \Gpmdomnormwg{m}{v} &=  \max_{j\in\{0,\dots,m\}}  \|\partial_t^j v\|_{L^\infty(J,\Hpmhdom{m-j})},\\
  \Hpmhndom{m}{v}^2 &= \|v_+\|_{H^m(G_+)}^2 + \|v_{-}\|_{H^m(G_{-})}^2.
 \end{align*}
 We also need the simpler version 
 \begin{equation*}
  G_m(J \times \tilde{G}) = \bigcap\nolimits_{j=0}^m C^j(J, H^{m-j}(\tilde{G})).
 \end{equation*}
 Set $e_{-\gamma}(t)=e^{-\gamma t}$ for  $\gamma \ge0$ and $t\in\R$. 
 We use the time-weighted norms
 \begin{equation*}
  \|v\|_{G_{m,\gamma}(J \times \tilde{G})} = \max_{j\in \{0,\dots,m\}} \|e_{-\gamma} \partial_t^j v\|_{L^\infty(J,H^{m-j}(\tilde{G}))}
 \end{equation*}
 for all $\gamma \geq 0$. If $\gamma = 0$, we also write $\|\cdot\|_{G_{m}(J \times \tilde{G})}$ 
 instead of $\|\cdot\|_{G_{m,0}(J \times \tilde{G})}$. Other function spaces on  $J \times \tilde{G}$  or $J\times G$
 are treated analogously.  We further set
 \begin{equation*}
  \tilde{G}_m(J \times \tilde{G}) = \{v \!\in\! L^\infty(J, L^2(\tilde{G})) \colon \partial^\alpha v\! \in\! L^\infty(J, L^2(\tilde{G})) 
  \text{ for all } \alpha \!\in\! \N_0^4 \text{ with } |\alpha| \leq m\},
 \end{equation*}
and define $\tilde{\mathcal{G}}_m(J \times \tilde{G})$ in a similar way. These spaces are endowed with the same norms as 
 $G_m(J \times \tilde{G})$ respectively $\Gpmdom{m}$.
 
 The coefficients of the linear problem will be contained in
 \begin{align*}
  F_{m,k}(J \times \tilde{G}) &= \{A \in W^{1,\infty}(J \times \tilde{G})^{k \times k} \colon  
    \partial^\alpha A \in L^\infty(J, L^2(\tilde{G})) \text{ for all } \alpha \in \N_0^4 \\
    &\hspace{12em} \text{with } 1 \leq |\alpha| \leq m\}, \\
  \|A\|_{F_m(J \times \tilde{G})} &= \max\{\|A\|_{W^{1,\infty}(J \times \tilde{G})}, \max_{1 \leq |\alpha| \leq m}\|\partial^\alpha A\|_{L^\infty(J, L^2(\tilde{G}))}\}; \\  
  \Fpmdompdw{m}{k} &= \{A \in \mathcal{W}^{1,\infty}(J \times G) \colon A_+ \in F_{m,k}(J \times G_+), A_{-} \in F_{m,k}(J \times G_{-})\}, \\
  \|A\|_{\mathcal{F}_{m}(J \times G)} &= \max\{\|A_+\|_{F_m(J \times G_+)}, \|A_{-}\|_{F_m(J \times G_{-})}\}.
 \end{align*}
 The regularity of time-evaluations is measured in the spaces
 \begin{align*}
  \Fvarwkdom{m}{k} &= \{A \in L^\infty(\tilde{G})^{k \times k} \colon \partial^\alpha A \in L^2(\tilde{G})^{k \times k} \text{ for all } \alpha \in \N_0^3  \text{ with }  1\leq |\alpha| \leq m\}, \\
  \Fvarnormdom{m}{A} &= \max\{\|A\|_{L^\infty(\tilde{G})}, \max_{1 \leq |\alpha| \leq m}\|\partial^\alpha A\|_{L^2(\tilde{G})}\}; \\
  \Fpmvarwkdom{m}{k} &= \{A \in L^\infty(G)^{k \times k} \colon A_+ \in F_{m,k}^0(G_+), A_{-} \in F_{m,k}^0(G_{-})\}, \\
  \Fpmvarnormdom{m}{A} &= \max\{\|A_+\|_{F_m^0(G_+)}, \|A_{-}\|_{F_m^0(G_{-})}\}.
 \end{align*}
 The subscript $\eta$ always designates the  subspace of  matrix-valued maps $A$  with 
$A^T = A \geq \eta>0$. By $\Fpmdomupdwl{m}{cp}{k}$  we mean those $A\in \Fpmdompdw{m}{k}$ which 
are constant outside of a compact subset of $\overline{J \times G}$, and by $\Fpmdomupdwl{m}{cv}{k}$
those  which have a limit as $|(t,x)| \rightarrow \infty$. The variants for $F$ instead of $\mathcal{F}$ are defined
analogously. We will only use the parameters $k\in\{ 1, 6, 12\}$. As it will be clear from the 
context which parameter we consider, we usually drop it from our notation. 

After the localization procedure below, 
the coefficients in front of the spatial derivatives  belong to the space
\begin{align} \label{EquationDefinitionFmcoeff}
 \Fcoeff{m}{cp} = \{&\mathcal{A} \in \Fuwlk{m}{cp}{12} \colon \exists\, \mu_1, \mu_2, \mu_3 \in \Fuwlk{m}{cp}{1} 
 \text{ independent of time, } \nonumber\\
 &\text{ such that } \mathcal{A} = \sum\nolimits_{j = 1}^3 \mathcal{A}_j^{\operatorname{co}} \mu_j\}.
\end{align}
Finally, we introduce the space for the data on the interface, namely
\begin{equation*}
 \Edom{m} = \bigcap\nolimits_{j = 0}^m H^j(J, H^{m+\frac{1}{2}-j}(\Sigma)).
\end{equation*}

We next state several bilinear estimates, which will be ubiquitous in the following.
One proves this result by applying Lemma~2.1 from~\cite{SpitzMaxwellLinear} on $G_{-}$ and on $G_+$.
 \begin{lem}
 \label{LemmaRegularityForA0}
  Take  $m_1, m_2 \in \N$ with $m_1 \geq m_2$ and $m_1 \geq 2$ and a parameter $\gamma \geq 0$.
 \begin{enumerate}
  \item \label{ItemProductInG0} Let $k \in \{0,\ldots,m_1\}$, $f \in  \Gpmdomvar{m_1 - k}$, and
      $g \in \Gpmdomvar{k}$. Then 
      \begin{align*}
    f  g \in \Gpmdomvar{0} \quad  \text{ and }\quad \Gpmdomnorm{0}{ f g} \leq C \Gpmdomnormwg{m_1 - k}{f} \Gpmdomnorm{k}{g}.
      \end{align*}

  \item \label{ItemProductHkappa}  Let $f \in \Gpmdomvar{m_1}$ and 
      $g \in \Gpmdomvar{m_2}$. Then $f g \in \Gpmdomvar{m_2}$ and 
      \begin{align*}
       \Gpmdomnorm{m_2}{f g} \leq C \min\{&\Gpmdomnormwg{m_1}{f} \Gpmdomnorm{m_2}{g}, \\
       &\Gpmdomnorm{m_1}{f} \Gpmdomnormwg{m_2}{g}\}.
      \end{align*}
      The result remains true if we replace $\Gpmdomvar{m_1}$ by $\Fpmdom{m_1}$ and if 
      we replace both $\Gpmdomvar{m_1}$ and $\Gpmdomvar{m_2}$ 
      by $\Fpmdom{m_1}$ and $\Fpmdom{m_2}$.
    
  \item 
  Let $k \in \{0,\ldots,m_1\}$, $f \in  \Hpmhdom{m_1 -k}$,
      and $g \in \Hpmhdom{k}$. Then $f  g \in \Ltwodomh$ and
      \begin{align*}
       \Ltwodomhn{ f g} \leq C \Hpmhndom{m_1 - k}{f} \Hpmhndom{k}{g}.
      \end{align*}
      
  \item 
  Let $f \in \Hpmhdom{m_1}$ and $g \in \Hpmhdom{m_2}$. 
	 Then $f g \in \Hpmhdom{m_2}$ and
	 \begin{align*}
	  \Hpmhndom{m_2}{f g} \leq C \Hpmhndom{m_1}{f}\Hpmhndom{m_2}{g}.
	 \end{align*}
	 The result is also valid with $\Hpmhdom{m_1}$ replaced by $\Fpmvardom{m_1}$.
 \end{enumerate}
 In assertions~\ref{ItemProductInG0} and~\ref{ItemProductHkappa} one  can also remove the tildes.
\end{lem}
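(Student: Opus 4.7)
The plan is to reduce every assertion to the corresponding bilinear estimate for products on the smooth subdomains $G_+$ and $G_-$ separately, which is precisely Lemma~2.1 of \cite{SpitzMaxwellLinear}. The key observation is that each of the spaces $\Gpmdomvar{k}$, $\Hpmhdom{k}$, $\Fpmdom{k}$, $\Fpmvardom{k}$ is defined so that its norm is either the maximum of the $\pm$--norms or controlled by their sum, and the pointwise product respects restriction, i.e., $(fg)_\pm = f_\pm g_\pm$. Thus all hypotheses on $f$ and $g$ transfer directly to their restrictions on $G_+$ and $G_-$.

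For assertion~(1) I would restrict to $G_\pm$, apply the cited lemma to obtain
\begin{equation*}
  \|f_\pm g_\pm\|_{G_{0,\gamma}(J \times G_\pm)} \le C\, \|f_\pm\|_{G_{m_1-k}(J \times G_\pm)}\, \|g_\pm\|_{G_{k,\gamma}(J \times G_\pm)},
\end{equation*}
and then take the maximum over the sign on both sides, which produces the claimed estimate with the $\mathcal{G}$--norms on $J \times G$. Assertion~(2) is handled in the same way; the only bookkeeping point is that the minimum in the statement of the lemma must be established sign by sign, using the elementary inequality $\max_\pm \min\{A_\pm, B_\pm\} \le \min\{\max_\pm A_\pm, \max_\pm B_\pm\}$. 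The $\mathcal{F}$--variants follow verbatim, since Lemma~2.1 of \cite{SpitzMaxwellLinear} is stated with $F_{m,k}$ in the appropriate slot and $\Fpmnormdom{m}{\cdot}$ is again the maximum over the signs.

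Assertions~(3) and (4) are the stationary analogues. The $\Ltwodomh$--norm decomposes cleanly:
\begin{equation*}
  \Ltwodomhn{fg}^2 = \|f_+ g_+\|_{L^2(G_+)}^2 + \|f_- g_-\|_{L^2(G_-)}^2,
\end{equation*}
and each summand is estimated by the corresponding Sobolev multiplication inequality on $G_\pm$ from \cite{SpitzMaxwellLinear}. The same decomposition works for (4), where the $\Hpmhndom{m_2}{\cdot}$--norm is a sum of $H^{m_2}(G_\pm)$--norms; the $\Fpmvardom{m_1}$--variant is identical since $\Fpmvarnormdom{m_1}{\cdot} = \max\{\|\cdot_+\|_{F_{m_1}^0(G_+)}, \|\cdot_-\|_{F_{m_1}^0(G_-)}\}$. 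The closing remark about removing tildes is automatic: functions in $\Gpmdom{k}$ or $\Gdom{k}$ inherit the bounds from the larger spaces $\tilde{\mathcal{G}}_k$ or $\tilde{G}_k$ with the identical norm.

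No genuinely new analytic difficulty appears; the whole proof is a routine localization to the two components of $G$ together with an application of the cited result. The only mild obstacle is the bookkeeping in~(2), where one must choose the same factorization (either $\|f\| \|g\|_{m_2}$ or $\|f\|_{m_1} \|g\|$) on both $G_+$ and $G_-$ before maximizing over the sign, so that the two sign contributions can be combined into the $\mathcal{F}$-- or $\mathcal{G}$--norm on the entire interface domain.
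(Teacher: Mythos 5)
Your proposal is correct and matches the paper's own argument, which the authors summarize in a single sentence preceding the lemma: apply Lemma~2.1 of \cite{SpitzMaxwellLinear} on $G_+$ and $G_-$ separately and reassemble via the max/sum structure of the $\mathcal{G}$-, $\mathcal{H}$-, and $\mathcal{F}$-norms, using that $(fg)_\pm = f_\pm g_\pm$. Your extra remark about the minimum in~(2) is sound, though the minimax inequality you invoke already handles the case where the optimal factorization differs on $G_+$ and $G_-$, so the "choose the same factorization on both components" caveat at the end is not actually needed.
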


In Section~\ref{SectionRegularity} we develop a regularization procedure which needs the next
 approximation result for the coefficients, taken from Lemma~2.2 of \cite{SpitzMaxwellLinear}.
 (There it  is stated for $k\in\{1,6\}$, but the proof works componentwise and thus for all $k\in\N$,
 cf.~\cite[Lemma~2.21]{SpitzDissertation}.)
\begin{lem}
 \label{LemmaApproximationOfCoefficients}
 Let $m \in \N$. Choose $A \in \F{m}$.
 Then there exists a family $\{A_\epsilon\}_{\epsilon > 0}$ 
 in $C^\infty(\overline{\Omega})$ satisfying
 \begin{enumerate}
  \item 
	$\partial^\alpha A_\epsilon \in \F{m}$ for all $\alpha \in \N_0^4$ and $\epsilon > 0$,
  \item 
	$\|A_\epsilon\|_{W^{1,\infty}(\Omega)} \leq C \|A\|_{W^{1,\infty}(\Omega)}$ and 
	$\|\partial^\alpha A_\epsilon\|_{L^\infty(J, \Ltwoh)} \leq C \Fnorm{m}{A}$ 
	for all multiindices $1 \leq |\alpha| \leq m$ and $\epsilon > 0$,
  \item 
	$A_\epsilon \rightarrow A$ in $L^\infty(\Omega)$ as $\epsilon \rightarrow 0$,
  \item 
	$A_\epsilon(0) \rightarrow A(0)$ in $L^\infty(\R^3_+)$, and $\partial^\alpha  A$ and $\partial^\alpha A_\epsilon$
	have a 	representative in the space $C(\overline{J},\Ltwoh)$ with
	$\partial^\alpha  A_\epsilon(0) \rightarrow \partial^\alpha  A(0)$ in $\Ltwoh$ as 
	$\epsilon \rightarrow 0$ for all $\alpha \in \N_0^4$ with $0 < |\alpha| \leq m-1$.
 \end{enumerate}
 If $A$ is independent of time, the same is true for $A_\epsilon$ for all $\epsilon > 0$. If $A$ 
 additionally belongs to $\Fuwl{m}{cp}$, $\Fuwl{m}{cv}$,  $\Fpdw{m}{\eta}$ for a number
 $\eta > 0$, or the 
 intersection of two of these spaces, then the same is true for $A_\epsilon$ for all $\epsilon > 0$.
\end{lem}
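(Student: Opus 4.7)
The plan is a standard extension-and-mollification argument, carried out componentwise for matrix entries. First I extend $A$ from $\Omega = J \times \R^3_+$ to a function $\tilde A$ on $\R \times \R^3$, using a (higher-order) reflection across $\{x_3=0\}$ and an analogous reflection or Seeley-type extension across the endpoints of $J$. Such an extension preserves the $W^{1,\infty}$-norm up to a universal constant and keeps $\partial^\alpha \tilde A$ in $L^\infty(\R, L^2(\R^3))$ for all multi-indices with $1 \leq |\alpha| \leq m$. Then I set $A_\epsilon := \rho_\epsilon * \tilde A$, where $\rho_\epsilon$ is a standard nonnegative mollifier on $\R^4$, and restrict back to $\overline{\Omega}$; by construction $A_\epsilon \in C^\infty(\overline{\Omega})$.

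For item~(1), convolution of an $L^\infty(J,L^2)$ function with a compactly supported smooth kernel again lies in $L^\infty(J,L^2)$; higher-order derivatives of $A_\epsilon$ fall on $\rho_\epsilon$ and incur only an $\epsilon$-dependent factor, which is acceptable because the claim is that $\partial^\alpha A_\epsilon \in \F{m}$ at each fixed $\epsilon>0$. Item~(2) follows from $\partial^\alpha(\rho_\epsilon * \tilde A) = \rho_\epsilon * \partial^\alpha \tilde A$ for $|\alpha| \leq m$ combined with Young's inequality and $\|\rho_\epsilon\|_{L^1}=1$, plus the standard mollifier estimate for the $W^{1,\infty}$ part. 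Item~(3) follows from the uniform continuity of $\tilde A \in W^{1,\infty}(\R^4)$ and approximation-of-the-identity theory.

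Item~(4) is the delicate step. For $0 < |\alpha| \leq m-1$ one has $\partial_t \partial^\alpha A \in L^\infty(J, L^2(\R^3_+))$, hence $\partial^\alpha A \in W^{1,\infty}(J, L^2(\R^3_+)) \hookrightarrow C(\clJ, L^2(\R^3_+))$, which gives the continuous representative; the same reasoning applies to $A_\epsilon$. Since the $\R^4$-mollifier acts jointly in $(t,x)$, evaluating at $t=0$ yields $\partial^\alpha A_\epsilon(0) \to \partial^\alpha A(0)$ in $L^2(\R^3_+)$ by a standard approximation-of-the-identity argument at the time slice $t=0$, which is valid because the extended $\partial^\alpha \tilde A$ is continuous into $L^2$ across $t=0$. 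The uniform convergence $A_\epsilon(0) \to A(0)$ in $L^\infty(\R^3_+)$ is analogous to item~(3) applied to that same slice.

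The main obstacle is preserving the structural subclasses simultaneously with all four regularity requirements. For $A \in \Fuwl{m}{cp}$ I would write $A = c + (A-c)$, where $c$ is the constant value outside a compact set, extend and mollify only $A-c$, so that $A_\epsilon = c + \rho_\epsilon * \widetilde{(A-c)}$ remains constant outside a slightly enlarged compact set. The case $A \in \Fuwl{m}{cv}$ is handled analogously using the limit at infinity. Positive definiteness in $\Fpdw{m}{\eta}$ is preserved since $\xi^T A_\epsilon(t,x)\xi = \int \rho_\epsilon(s,y)\, \xi^T \tilde A(t-s, x-y) \xi\, ds\,dy \geq \eta |\xi|^2$, and symmetry is preserved componentwise. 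If $A$ is time-independent, one performs the same construction only in the spatial variable, yielding a time-independent $A_\epsilon$. These bookkeeping points, together with the careful extension across $\partial\Omega$, constitute the main technical content.
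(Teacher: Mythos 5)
The main gap is in the preservation of the class $\Fpdw{m}{\eta}$. You claim $A_\epsilon = \rho_\epsilon * \tilde A \geq \eta$ because $\tilde A \geq \eta$, but the Hestenes/Seeley-type extension $\tilde A$ across $\{x_3 = 0\}$ (and across the two endpoints of $J$) does \emph{not} satisfy $\tilde A \geq \eta$. For $m \geq 2$ the matching conditions $\sum_j c_j(-\lambda_j)^l = 1$, $l = 0, \dots, m$, force some reflection coefficients $c_j$ to be negative, so $\tilde A(t,x_1,x_2,-x_3) = \sum_j c_j A(t,x_1,x_2,\lambda_j x_3)$ is a signed, non-convex combination of positive definite matrices and need not be positive definite at all. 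Retreating to zeroth-order (even) reflection, which is $\eta$-preserving, fails as soon as $m \geq 2$: then $\partial_{x_3}\tilde A$ is odd and generically jumps across $\{x_3=0\}$, so $\partial_{x_3}^2\tilde A$ contains a surface-delta term that is not in $L^\infty(J,\Ltwoh)$, spoiling item~(2). The same dichotomy occurs at the time endpoints for $\partial_t^2\tilde A$. So as written your positive-definiteness argument breaks for every $m \geq 2$.

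The remedy used in this setting is to avoid extension in $x_3$ altogether and work with the one-sided translation $T_\delta$ (the very device the paper employs in the proof of Lemma~\ref{LemmaRegularityInNormalDirection}): shift $A$ inward by $\delta > 0$ in $x_3$, mollify with a kernel supported in a ball of radius $< \delta$, and then take $\delta$ to $0$ together with the mollification parameter. Both operations commute with $\partial^\alpha$, preserve $A\geq\eta$ pointwise, and give the $L^\infty$-convergence in item~(3) from Lipschitz continuity; an analogous one-sided device or a small rescaling of the time variable handles $t \in \{0,T\}$. Alternatively, you could keep the higher-order reflection and subsequently rescale $A_\epsilon$ by a factor $1 + c(\epsilon)$ with $c(\epsilon)\downarrow 0$ to restore the bound $\geq\eta$, checking that items~(1)--(4) are unaffected in the limit -- but this extra step is not in your argument, and the stated reason ``$\tilde A \geq \eta$'' is simply false. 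The remaining parts (items~(1)--(4) themselves, time independence, $\Fuwl{m}{cp}$, $\Fuwl{m}{cv}$, and symmetry) are correct as you describe.
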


In order to discuss the compatibility conditions both for the linear Maxwell system~\eqref{EquationIBVPIntroduction}
and its localized variants, we look at \eqref{EquationIBVPIntroduction} with 
variable, time-indepen\-dent  coefficients ${A}_1, {A}_2, {A}_3 \in \Fpmdom{m}$  for a moment. 
We further  fix coefficients ${A}_0 \in \Fpmdom{m,\eta}$ and ${D} \in \Fpmdom{m}$,  as well as data $f \in \Hpmadom{m}$, 
$g \in \Edom{m}$, and $u_0 \in \Hpmhdom{m}$. Given a solution $u$ in  $\Gpmdom{m}$ of  \eqref{EquationIBVPIntroduction}, 
we can differentiate the differential equation in~\eqref{EquationIBVPIntroduction}
up to $(m-1)$-times in time by means of Lemma~\ref{LemmaRegularityForA0}, obtaining the identity
\begin{equation}
\label{EquationTimeDerivativesOfSolution}
 \partial_t^p u(t) = S_{G,m,p}(t,{A}_0, {A}_1,{A}_2,{A}_3,{D},f,u(t)),
\end{equation}
for all $t \in \clJ$ and $p \in \{0, \ldots, m-1\}$. Here we inductively define the maps $S_{G,m,p} =  S_{G,m,p}(t_0,A_j, D, f,u_0 )
=S_{G,m,p}(t_0,{A}_0, {A}_1,{A}_2,{A}_3,{D},f,u_0)$ by
\begin{align}
\label{EquationDefinitionSmp}
 &S_{G,m,0,\pm} = u_{0,\pm}, \nonumber \\
 &S_{G,m,p,\pm}= {A}_{0,\pm}(t_0)^{-1} \Big(\partial_t^{p-1} f_{\pm}(t_0) 
  - \sum_{j = 1}^3 {A}_{j,\pm} \partial_j S_{G,m, p-1,\pm}   \\
  &\hspace{1em} - \sum_{l=1}^{p-1} \binom{p-1}{l} \partial_t^l {A}_{0,\pm}(t_0) S_{G,m, p-l,\pm} - \sum_{l=0}^{p-1} \binom{p-1}{l} \partial_t^l {D}_{\pm}(t_0) S_{G,m,p-1-l,\pm}\Big), \nonumber 
\end{align}
for $1 \leq p \leq m$. On the other hand, we can differentiate the boundary condition 
in~\eqref{EquationIBVPIntroduction} up to $(m-1)$-times in time and insert $t$. It follows the equation
\begin{equation}
\label{EquationDifferentiatedBoundaryCondition}
 B_\Sigma \tr_{\Sigma,\pm}(\partial_t^p u(t)) = \partial_t^p g(t)
\end{equation}
on $\Sigma$ for all $0 \leq p \leq m-1$ and $t \in \clJ$. We proceed on $\partial G$ in the same way.  For $t=t_0$ 
equations~\eqref{EquationTimeDerivativesOfSolution}  and~\eqref{EquationDifferentiatedBoundaryCondition} 
yield the compatibility  conditions of order $m$ 
 \begin{align}
 \label{EquationCompatibilityConditionPrecised}
 B_\Sigma \tr_{\Sigma,\pm} S_{G,m,p}(t_0, {A}_0, \ldots, {A}_3, {D}, f, u_0) &= \partial_t^p g(t_0) 
     \qquad \text{on } \Sigma \text{ \ for } 0 \leq p \leq m-1, \nonumber \\
 B_{\partial G} \tr_{\partial G} S_{G,m,p}(t_0, {A}_0, \ldots, {A}_3, {D}, f, u_0)&= 0 \qquad \text{on } \partial G \text{ \ for } 0 \leq p \leq m-1
\end{align}
for the coefficients and data. These conditions are thus necessary for the existence of a 
solution in $\Gpmdom{m}$. In Section~\ref{SectionRegularity} their sufficiency will be shown.
We will also need them to treat the half-space problem arising from the localization procedure, where $G=\R_+^3$, $k=12$, and $A_j$, $D$, and $B_\Sigma$ are replaced by $\mathcal{A}_j$, $\mathcal{D}$, and $B$.
We often suppress $G$ in the notation.

As the maps $S_{G,m,p}$ appear frequently,  the following estimates are indispensable.
They follow from  Lemma~2.3 of \cite{SpitzMaxwellLinear} applied on $G_+$ and on $G_-$.
\begin{lem}
 \label{LemmaEstimatesForHigherOrderInitialValues}
 Let $\eta > 0$, $m \in \N$, and $\tilde{m} = \max\{m,3\}$.  Pick $r_0 > 0$. Choose
 ${A}_0 \!\in\!  \mathcal{F}_{\tilde{m},\eta}(J\!\times\! G)$, time-independent ${A}_1,\! {A}_2,\! {A}_3\!\in\! \mathcal{F}_{\tilde{m}}(J\!\times\! G)$, 
 and ${D} \!\in\! \mathcal{F}_{\tilde{m}}(J\!\times\! G)$ with
 \begin{align*}
  &\Fpmvarnormdom{\tilde{m}-1}{{A}_i(t_0)} \leq r_0, \quad \Fpmvarnormdom{\tilde{m}-1}{{D}(t_0)} \leq r_0, \\
  &\max_{1 \leq j \leq m-1}\Hpmhndom{\tilde{m}-1-j}{\partial_t^j {A}_0(t_0)} \leq r_0, \quad
  \max_{1 \leq j \leq m-1}{\Hpmhndom{\tilde{m}-1-j}{\partial_t^j {D}(t_0)}} \leq r_0
 \end{align*}
 for all $i \in \{0, \ldots, 3\}$.  Take $f \in \Hpmadom{m}$ and $u_0 \in \Hpmhdom{m}$. Let $0 \leq p \leq m$.
 Then the function $S_{G,m,p}(t_0,A_0,\ldots,A_3, D, f,u_0)$ is contained in $\Hpmhdom{m-p}$.
 Moreover, there exist constants  $C_{m,p} = C_{m,p}(\eta,r_0) > 0$ such that
 \begin{align*}
  \Hpmhndom{m-p}{S_{G,m,p}} \leq C_{m,p} \Big(\sum_{j = 0}^{p-1} \Hpmhndom{m-1-j}{\partial_t^j f(t_0)} + \Hpmhndom{m}{u_0} \Big).
 \end{align*}
\end{lem}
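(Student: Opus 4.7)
The plan is to reduce the assertion to the single-subdomain version of this estimate, namely Lemma~2.3 of \cite{SpitzMaxwellLinear}, by exploiting the fact that the recursion \eqref{EquationDefinitionSmp} decouples completely across $G_+$ and $G_-$. Inspecting \eqref{EquationDefinitionSmp}, the quantity $S_{G,m,p,\pm}$ depends only on the restrictions ${A}_{0,\pm},\ldots,{A}_{3,\pm},{D}_\pm,f_\pm$ and $u_{0,\pm}$ to $G_\pm$, so it coincides with the analogous map for the pure initial boundary value problem on $G_\pm$.

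On each subdomain the hypotheses of \cite[Lemma~2.3]{SpitzMaxwellLinear} are satisfied: the pointwise bounds ${A}_{0,\pm}\geq \eta$, the estimates $\|{A}_{i,\pm}(t_0)\|_{F^0_{\tilde m -1}(G_\pm)}\leq r_0$ and $\|{D}_\pm(t_0)\|_{F^0_{\tilde m -1}(G_\pm)}\leq r_0$, as well as the bounds on $\partial_t^j{A}_{0,\pm}(t_0)$ and $\partial_t^j{D}_\pm(t_0)$ in $H^{\tilde m -1-j}(G_\pm)$, all follow from the assumed $\mathcal{F}$- and $\mathcal{H}$-norm hypotheses by the very definition of the $\mathcal{F}$- and $\mathcal{H}$-norms. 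Applying \cite[Lemma~2.3]{SpitzMaxwellLinear} on each of $G_+$ and $G_-$ then yields
\begin{equation*}
 \|S_{G,m,p,\pm}\|_{H^{m-p}(G_\pm)} \leq C_{m,p}\Big(\sum_{j=0}^{p-1} \|\partial_t^j f_\pm(t_0)\|_{H^{m-1-j}(G_\pm)} + \|u_{0,\pm}\|_{H^m(G_\pm)}\Big)
\end{equation*}
with a constant depending only on $\eta$ and $r_0$. Squaring, summing over $\pm$, and invoking the definition of the $\mathcal{H}^k(G)$-norm immediately gives the asserted estimate.

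The only point that requires some care is to confirm that \cite[Lemma~2.3]{SpitzMaxwellLinear} applies on the general subdomain $G_\pm$ rather than on the half-space. Its proof proceeds by induction on $p$: one rewrites \eqref{EquationDefinitionSmp} and estimates each product $\partial_t^l A_0(t_0)\,S_{G,m,p-l}$, $\partial_t^l D(t_0)\,S_{G,m,p-1-l}$, and $A_j\,\partial_j S_{G,m,p-1}$ using the bilinear Sobolev estimates, together with the fact that $A_0(t_0)^{-1}$ inherits $F^0_{\tilde m -1}$-regularity from $A_0(t_0)$ via the lower bound $A_0\geq \eta$. Since the required product estimates on $G_\pm$ are precisely those recorded in parts (3) and (4) of Lemma~\ref{LemmaRegularityForA0}, the proof transfers essentially verbatim. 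The tame uniform $C^{\tilde m +2}$-assumption on $\Sigma$ and $\partial G$ is exactly what makes these Sobolev product estimates, and hence the induction, go through on $G_\pm$; this is the sole step where the geometric hypotheses enter, and it will be the main thing to verify in the written proof.
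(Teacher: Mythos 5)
Your proposal is correct and matches the paper's own (one-line) argument: the paper states that the estimate "follow[s] from Lemma~2.3 of \cite{SpitzMaxwellLinear} applied on $G_+$ and on $G_-$," which is precisely the decoupling-plus-subdomain-application you carry out. Your extra remark on transferring the half-space lemma to the general subdomains $G_\pm$ via the uniform $C^{\tilde m+2}$-boundary hypothesis is a sensible check that the paper leaves implicit.
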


 %%%%%%%%%%%%%%%%%%%%%%%%%%%%%%%%%%%%%%%%%%%%%%%%%%%%%%%%%%%%%%%%%%%%%%%%%%%%%%%%%%%%%%%%%%%%
 %%%%%%%%%%%%%%%%%%%%%%%%%%%%%%%%%%%%%%%%%%%%%%%%%%%%%%%%%%%%%%%%%%%%%%%%%%%%%%%%%%%%%%%%%%%%
 %%%%%%%%%%%%%%%%%%%%%%%%%%%%%%%%%%%%%%%%%%%%%%%%%%%%%%%%%%%%%%%%%%%%%%%%%%%%%%%%%%%%%%%%%%%%
 \section{Localization}
 \label{SectionLocalization}

 We first discuss the localization procedure. In fact, in the logical order of our reasoning 
 this section should be placed after the linear part as  in~\cite{SpitzDissertation}, but we decided to start 
 with it as it determines the linear problems we have to study. 
 The next theorem thus assumes that we can solve the arising linear problems on the half space, 
 which will be shown in Sections~\ref{SectionAPrioriEstimates} and~\ref{SectionRegularity}.
 \begin{theorem}
   \label{TheoremExistenceAndUniquenessOnDomain}
  Let $\eta > 0$, $m \in \N_0$, and $\tilde{m} = \max\{m,3\}$. Fix $r \geq  r_0 >0$. Take a domain $G$ 
  as described at the beginning of Section~\ref{SectionNotation}.
  Choose $t_0 \in \R$, $T' > 0$, $T \in (0, T')$, and set 
  $J = (t_0, t_0 + T)$. Take coefficients $A_0 \in \Fpmdomupdwlk{\tilde{m}}{cv}{\eta}{6}$ and  
  $D \in \Fpmdomuwlk{\tilde{m}}{cv}{6}$ satisfying
  \begin{align*}
    &\Fpmnormdom{\tilde{m}}{A_0} \leq r, \quad \Fpmnormdom{\tilde{m}}{D} \leq r, \\
    &\max \{\Fpmvarnormdom{\tilde{m}-1}{A_0(t_0)},\max_{1 \leq j \leq \tilde{m}-1} \Hpmhndom{\tilde{m}-j-1}{\partial_t^j A_0(t_0)}\} \leq r_0, \\
    &\max \{\Fpmvarnormdom{\tilde{m}-1}{D(t_0)},\max_{1 \leq j \leq \tilde{m}-1} \Hpmhndom{\tilde{m}-j-1}{\partial_t^j D(t_0)}\} \leq r_0.
  \end{align*}
  Choose data $f \in \Hpmadom{m}$, $g \in \Epmdom{m}$, and $u_0 \in \Hpmhdom{m}$ such that the tuples 
  $(t_0, A_0, A_1^{\operatorname{co}}, A_2^{\operatorname{co}}, A_3^{\operatorname{co}}, D, B_{\Gamma}, f, g,u_0)$
  fulfills the compatibility conditions~\eqref{EquationCompatibilityConditionPrecised} of 
  order $m$ on $\Gamma=\Sigma$ and  on $\Gamma=\partial G$. 
   
  Then the linear initial boundary value problem~\eqref{EquationIBVPIntroduction} has a unique solution $u$ in 
  $\Gpmdom{m}$. Moreover,  there is a number $\gamma_m = \gamma_m(\eta, r, T') \geq 1$ such that
 \begin{align}
 \label{EquationAPrioriEstimatesOnADomain}
  &\Gpmdomnorm{m}{u}^2  \leq (C_{m,0} + T C_m) e^{m C_1 T} \Big(  \sum_{j = 0}^{m-1} \Hpmhndom{m-1-j}{\partial_t^j f(t_0)}^2 + \Epmnormdom{m}{g}^2  \nonumber\\
      &\hspace{16em} + \Hpmhndom{m}{u_0}^2 \Big) + \frac{C_m}{\gamma}  \Hpmangammadom{m}{f}^2    
 \end{align}
 for all $\gamma \geq \gamma_m$, where $C_i = C_i(\eta,  r,T') \geq 1$ and $C_{i,0} = C_{i,0}(\eta,r_0) \geq 1$ 
 for $i \in \{1,m\}$.
 \end{theorem}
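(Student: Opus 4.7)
The plan is to reduce the theorem to the linear half-space theory established in Sections~\ref{SectionAPrioriEstimates} and~\ref{SectionRegularity}, via the localization procedure announced in the introduction and formalized in Section~\ref{SectionLocalization}. I would fix a tame uniform partition of unity $(\theta_i)$ associated with a locally finite cover of $G$ of three types: interior patches compactly contained in $G_+$ or $G_-$, boundary patches flattening $\partial G$ to $\partial\R^3_+$, and interface patches flattening $\Sigma$ to $\partial\R^3_+$ and straddling both $G_+$ and $G_-$. Because $\Sigma$ and $\partial G$ have positive distance by assumption, each chart is of exactly one type.

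Interior patches give Cauchy problems on $\R^3$ of standard symmetric hyperbolic type. A boundary patch produces a localized $6\times 6$ Maxwell-type problem on $\R^3_+$ with the perfect conductor operator $B_{\partial G}$. An interface patch requires the doubling construction: I would reflect the $-$-side coefficients $A_{0,-}$, $D_{-}$ across $\Sigma$, combine the two sides into a single unknown $\mathcal{U}=(u_+,R u_-)$ on $\R^3_+$, where $R$ reverses the flattened normal coordinate, and obtain a $12\times 12$ symmetric hyperbolic system whose principal part involves the matrices $\mathcal{A}_j^{\operatorname{co}}$ and $\tilde{\mathcal{A}}_3^{\operatorname{co}}$ from~\eqref{EquationDefinitionOfLargeAj} and whose boundary operator is the constant matrix $B_\Sigma$. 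In both boundary-type cases, the extra change of unknowns alluded to in the introduction (transformations~\eqref{def:Gr}, \eqref{eq:A3-trafo}, \eqref{eq:B-trafo}) keeps the normal-direction principal coefficient and the boundary operator space-independent, so that the coefficient class $\Fcoeff{m}{cp}$ from~\eqref{EquationDefinitionFmcoeff} and the matching half-space theorem apply directly.

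For existence, I would first verify that the compatibility conditions~\eqref{EquationCompatibilityConditionPrecised} on $(\Sigma,\partial G)$ descend to the half-space compatibility conditions for each localized tuple. This follows inductively in $p$ from the recursion~\eqref{EquationDefinitionSmp} defining $S_{G,m,p}$: multiplication by $\theta_i$ and composition with $\varphi_i^{-1}$ commute with the recursion up to remainder terms carrying derivatives of $\theta_i$ or $\varphi_i$, and those remainders either vanish on the boundary or can be absorbed using the lower-order compatibility already established. Applying the half-space theorem to each patch yields unique localized solutions $v_i$, which I would pull back and paste by $u=\sum_i \theta_i v_i$; the result solves~\eqref{EquationIBVPIntroduction} once the diffeomorphism and cut-off commutators are tracked carefully. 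For global uniqueness I would apply Eller's $\Gpmdom{0}$-result from \cite{Eller} to the difference of two $\Gpmdom{m}$-solutions with zero data, which is admissible since $A_0, D\in\mathcal{W}^{1,\infty}(J\times G)$.

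The a priori estimate~\eqref{EquationAPrioriEstimatesOnADomain} is obtained by summing the localized estimates from Sections~\ref{SectionAPrioriEstimates} and~\ref{SectionRegularity}, using Lemma~\ref{LemmaRegularityForA0} to control the commutators produced by cut-offs and diffeomorphisms and Lemma~\ref{LemmaEstimatesForHigherOrderInitialValues} to bound each $S_{G,m,p}(t_0)$ in terms of the data norms. The main obstacle is the careful bookkeeping needed to recover the precise split $C_{m,0}+TC_m$ in~\eqref{EquationAPrioriEstimatesOnADomain}: the leading constant $C_{m,0}$ must depend only on $\eta$ and $r_0$ (i.e.\ on coefficient norms at $t_0$) and not on $r$ or $T'$, which forces both the half-space estimate and the compatibility descent to respect exactly this split. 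The tame uniformity of the cover and partition is crucial so that summing over the locally finite cover yields bounds depending only on the geometry and not on the number of patches.
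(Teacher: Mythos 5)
Your overall plan -- localize via a tame uniform cover of $G$ with interior, $\partial G$, and interface patches, flatten each interface chart, double the unknown by reflecting the $-$--side across the flattened interface to produce a $12\times12$ system on $\R^3_+$, and apply the extra transformation \eqref{def:Gr}--\eqref{eq:B-trafo} to make $\mathcal{A}_3$ and $B$ constant -- is exactly what the paper does in Step~I of its proof, and your Step for transferring the compatibility conditions mirrors Steps~II--III. Your remarks on the $C_{m,0}$-vs-$C_m$ split and the role of tame uniformity are also on point.

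There is, however, a genuine gap in your existence argument. You propose to solve the localized problems independently to obtain $v_i$ and then paste $u=\sum_i\theta_i v_i$, ``tracking cut-off commutators carefully.'' This cannot work as stated, because the localized inhomogeneity is not determined by the data alone: in \eqref{EquationDefinitionOfLocalizedData} it has the form $f^i(f,v)=\mathcal{R}_i\Phi_i(\theta_i f)+\mathcal{R}_i\Phi_i\bigl(\sum_j A_j^{\operatorname{co}}\partial_j\theta_i\,v\bigr)$, so it involves the (unknown) global solution $v$ through the commutator term. The localized problems are therefore coupled and cannot be solved one patch at a time. Moreover, even once candidate $\mathcal{U}^i$ are in hand, a naive sum over patches produces the defect terms $\sum_i\sum_j A_j^{\operatorname{co}}\partial_j\sigma_i\,\Phi_i^{-1}\mathcal{R}_i^{-1}\mathcal{U}^i$ when you apply the operator $\mathcal{L}$, and these do not cancel. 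The paper resolves both issues in Steps~V--VI via a two-level fixed-point argument: it first constructs, for each frozen $v\in\mathcal{G}_{m,\operatorname{iv}}$, a modified inhomogeneity $f^*(v)$ solving \eqref{EquationDefiningEquationForfstar} (a contraction in $\Hpmangammadom{m}{\cdot}$ for $\gamma$ large), precisely so that the pasted candidate $\mathcal{S}(v)=\sum_i\sigma_i\Phi_i^{-1}\mathcal{R}_i^{-1}\mathcal{U}^i(f^*(v),v)$ satisfies $\mathcal{L}\mathcal{S}(v)=f$ exactly; it then shows $\mathcal{S}$ is a strict contraction so that its fixed point solves \eqref{EquationIBVPIntroduction}. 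The space $\mathcal{G}_{m,\operatorname{iv}}$, which encodes the required initial conditions $\partial_t^p v(0)=S_{m,p}$, is also essential to keep the compatibility conditions intact across the iteration. Without this machinery the existence part of your proposal does not close; the a priori estimate (your Step~IV analogue) only gives uniqueness and bounds for solutions that are already known to exist.

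A minor secondary point: you suggest deducing uniqueness from Eller's $L^2$-result applied directly on $G$, but Lemma~\ref{LemmaEllerResult} is stated (and used in the paper) on $\R^3_+$ after localization; the paper instead reads off uniqueness directly from the a priori estimate \eqref{EquationAPrioriEstimatesOnADomain}. Either route is fine in principle, but you would need to localize first if you want to invoke Eller's theorem.
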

 
 \begin{proof}
Set $\N_{-1}=\{-1,0\} \cup \N$. Fix a covering $(U_i)_{i \in \N_{-1}}$ of $\overline{G}$, 
a sequence of sets $(V_i)_{i \in \N_{-1}}$, and sequences of functions 
$(\phi_i)_{i \in \N_{-1}}$, $(\theta_i)_{i \in \N_{-1}}$, $(\sigma_i)_{i \in \N_{-1}}$, and $(\omega_i)_{i \in \N_{-1}}$ 
as in Definition~5.4 in~\cite{SpitzDissertation} for the tame uniform $C^{\tilde{m} + 2}$-boundary  $\Sigma$ of $G_{-}$ (complemented by a domain $U_{-1}$ covering 
$\overline{G} \setminus \overline{G_{-}}$ and corresponding functions). We further take $\phi_i = \operatorname{id}$ for $i \in \{-1,0\}$.
Here, $\phi_i:U_i\to V_i$ is a chart,  $(U_i)_{i \in \N}$ is a cover of $\Sigma$ with positive distance to $\partial G$, the set $U_0$ covers 
$G_{-} \setminus \bigcup_{i = 1}^\infty U_i$, while  $\overline{G_+} \setminus \bigcup_{i = 1}^\infty U_{i}$ is contained in $U_{-1}$.
In particular, $(\theta_i)_{i \in \N_{-1}}$ is a smooth partition of unity on $G$. We recall that 
the maps $\omega_i$ equal $1$ on the sets $K_i = \phi_i(\supp \sigma_i)$ and that
$\sigma_i=1$  on $\supp \theta_i$ for all $i \in \N_{-1}$. Moreover,
$\phi_i(U_i \cap G_+) = \{y \in V_i \colon y_3 > 0\}$ and $\phi_i(U_i \cap G_{-}) = \{y \in V_i \colon y_3 < 0\}$ for $i \in \N$.
We use the same symbol for a function and its zero extensions.

\smallskip

 I) In the first step we determine the coefficients of the localized problem on $\R_+^3$.
 To this aim, we write $\psi_i=\phi_i^{-1}  \colon V_i \rightarrow U_i$,
 and  define the composition operators
 \begin{align*} 
  \Phi_i &\colon L^2(U_i) \rightarrow L^2(V_i), \quad v \mapsto v \circ \psi_i; \qquad
  \Phi_i^{-1} \colon L^2(V_i) \rightarrow L^2(U_i), \quad v \mapsto v \circ \phi_i;
 \end{align*}
 for all $i \in \N_{-1}$. Observe that $\phi_i$, and thus $\Phi_i$, are  the identity for $i \in \{-1,0\}$.
The operators $\Phi_i$ and $\Phi_i^{-1}$ act  componentwise on vector-valued functions.
 With a slight abuse of notation we also denote the composition with $\psi_i$ on $L^2(J \times V_i)$ 
 and $H^{-1}(J \times V_i)$ by $\Phi_i$, and analogously for $\Phi_i^{-1}$.

 For $v \in L^2(J \times V_i)$ we  introduce the differential operator 
 \begin{align}
  \label{EquationDerivationTransportedDifferentialOperator}
  \mathfrak{A}^i_\pm v_\pm &:= \Phi_i \Big( A_{0,\pm} \partial_t + \sum_{j=1}^3 A_j^{\operatorname{co}} \partial_j + D_\pm \Big) \Phi_i^{-1} v_\pm \nonumber \\
  &= \Phi_i A_{0,\pm} \, \partial_t v_\pm +  \sum_{l = 1}^3 \Big(\sum_{j = 1}^3 A_j^{\operatorname{co}} \Phi_i \partial_j \phi_{i,l}  \Big) \partial_l v_\pm + \Phi_i D_\pm\, v_\pm, 
 \end{align}
 where $\phi_{i,l}$ is the $l$-th component of $\phi_i$ for all $i \in \N$. Throughout, 
for a function $v$ defined on $V_i$ respectively $\R^3$ we write $v_\pm$ for the restrictions to $V_i\cap \R_\pm^3$
respectively to $\R^3_\pm$, where $\R^3_{-}=\{x\in \R^3: x_3<0\}$. We define
 \begin{align}\label{def:Aj-tilde}
  \tilde{A}_0^i = \Phi_i A_0 , \qquad
  \tilde{A}_l^i = \Phi_i \Big(\sum\nolimits_{j = 1}^3 A_j^{\operatorname{co}} \partial_j \phi_{i,l}\Big), \qquad
  \tilde{D}^i =   \Phi_i D 
 \end{align}
 on $V_i$ for all $i \in \N$ and $l \in \{1,2,3\}$, as well as $\tilde{A}_0^0 = \Phi_0 A_0 = A_0$ and 
 $\tilde{D}^0 = \Phi_0 D = D$ on $U_0$, and $\tilde{A}_0^{-1} = \Phi_{-1} A_0 = A_0$ and $\tilde{D}^{-1} 
 = \Phi_{-1} D = D$ on $U_{-1}$. 
 (This notation is only used if confusion with a matrix inverse is not possible.)
 
   Lemma~5.1 in~\cite{SpitzDissertation} yields numbers $z(i) \in \{1,2,3\}$ and $\tau\in (0,1)$ with
 \begin{equation}
 \label{EquationDefinitionofzi}
    |\partial_{z(i)} \phi_{i,3}| \geq \tau \qquad \text{on \ } U_i
 \end{equation}
 for all $i \in \N$. We pick a point $y_i \in V_i$ for each $i \in \N$ and set
 \begin{align}
  A_0^i &= \omega_i  \tilde{A}_0^i + (1 - \omega_i) \eta \qquad \text{for \ } i \in \N_{-1}, \notag  \\
  A_j^i &= \omega_i  \tilde{A}_j^i + (1 - \omega_i) \frac{\partial_{z(i)} \phi_{i,3}}{|\partial_{z(i)} \phi_{i,3}|}(\psi_i(y_i)) 
   A_{z(i)}^{\operatorname{co}} \qquad\text{for \ } i \in \N, \ \ j \in \{1,2,3\}, \label{def:Aj-ext} \\
  D^i &= \omega_i \tilde{D}^i  \qquad \text{for \ }i \in \N_{-1}. \notag 
 \end{align}
 These coefficients will only be multiplied with  functions supported in the set where $\omega_i=1$, but we need the above extensions
in our reasoning.
 The differential operator $\mathfrak{A}^i$ can thus be  extended to a differential operator  on $\R^3$ by setting
 \begin{equation*}
  \mathfrak{A}^i_\pm v_\pm = A_{0,\pm}^i \partial_t v_\pm + \sum\nolimits_{j = 1}^3 A_{j,\pm}^i \partial_j v_\pm + D_\pm^i  v_\pm
 \end{equation*}
 for all $v \in L^2(J \times \R^3)$ and $i \in \N$. To rewrite the interface problem on $\R^3$
 as an boundary value problem on $\R^3_+$, we set
 \begin{align*}
  \breve{A}_{j,-}^{i}(\cdot, x_3) = A_{j,-}^{i}(\cdot, -x_3), \quad \breve{A}_{3,-}^{i}(\cdot, x_3) = - A_{3,-}^{i}(\cdot,-x_3), 
  \;\;\; \breve{D}_{-}^{i}(\cdot,x_3) = D_{-}^{i}(\cdot, -x_3)
 \end{align*}
 for $j \in \{0,1,2\}$, and introduce the $(12 \times 12)$-matrices
 \begin{align}\label{def:cAj}
  {\mathcal{A}}_j^i = \begin{pmatrix}
                   A_{j,+}^{i} &0 \\
                   0 &\breve{A}_{j,-}^{i}
                  \end{pmatrix}
                  \qquad\text{and}\qquad
  {\mathcal{D}}^i = \begin{pmatrix}
                 D_+^{i} &0 \\
                 0   &\breve{D}_{-}^{i}
                \end{pmatrix}
 \end{align}
 for all $j \in \{0, \ldots, 3\}$ on $J \times \R^3_+$. Here the part of the equation on $\R^3_{-}$ is reflected to $\R^3_+$ 
 and written in the new 6 lines.  The minus in front of $A_{3,-}^{i}$ is needed to compensate the inner derivative when applying $\partial_3$.

 We turn our attention to the interface condition. By Remark~5.2 in~\cite{SpitzDissertation}, the vector field 
 $\nabla \phi_{i,3}$ is normal to $\Sigma$, and hence there is a number $\kappa_i(x) \in \R$ with
 \begin{equation*}
  \nabla \phi_{i,3}(x) = \kappa_i(x) \nu(x)
 \end{equation*}
 for all $x \in \Sigma \cap U_i$ and $i \in \N$. In particular, $\kappa_i = \nabla \phi_{i,3} \cdot \nu$ belongs to
 $C^{m+1}(\Sigma \cap U_i, \R)$ for all $i \in \N$. Moreover, we can extend the product $\kappa_i \nu$ 
 smoothly from $U_i \cap \Sigma$ to $U_i$ by $\nabla \phi_{i,3}$. Let $i \in \N$. We now introduce the interface matrices
 \begin{equation}\label{def:B-hat-i}
  \hat{B}^i = \omega_i \Phi_i (\kappa_i B_\Sigma)  + (1 - \omega_i) \frac{\partial_{z(i)} \phi_{i,3}}{|\partial_{z(i)} \phi_{i,3}|}(\psi_i(y_i)) 
  B_{z(i)}^{\operatorname{co}}, \qquad  B_j^{\operatorname{co}} := B_{\Sigma}(e_j),
 \end{equation}
 on $\R^3$ for $j \in \{1,2,3\}$, where $e_j$ denotes the $j$th unit vector in $\R^3$ and $B_\Sigma(e_j)$ 
 is given by the second line in \eqref{EquationDefinitionB} with $\nu = e_j$.
 Define the function $b_{z(i)}\colon \R^3 \rightarrow \R$ by
 \begin{equation*}
  b_{z(i)} = \omega_i \Phi_i \partial_{z(i)} \phi_{i,3} + (1- \omega_i) \frac{\partial_{z(i)} \phi_{i,3}}{|\partial_{z(i)} \phi_{i,3}|}(\psi_i(y_i)).
 \end{equation*}
  Since 
 $\partial_{z(i)} \phi_{i,3}$ does not change signs on $U_i$, estimate~\eqref{EquationDefinitionofzi} implies the 
 lower bound
 \begin{align*}
  |b_{z(i)}|   &= \omega_i |\Phi_i \partial_{z(i)} \phi_{i,3}| + (1 - \omega_i) \geq \tau \omega_i + 1 - \omega_i 
  = 1 - (1 - \tau) \omega_i \geq \tau
 \end{align*}
 on $\R^3$ as $\tau \in (0,1)$. Consequently, the functions $b_{z(i)}$ and $b_{z(i)}^{-1}$ belong 
 to $C^{m+1}(\R^3)$ and their restrictions  to $\partial \R^3_+$ are elements of $C^{m+1}(\partial \R^3_+)$.

We next want to transform the coefficients  $\mathcal{A}_{3}^i$ and $\hat{B}^i$ to constant coefficients 
similar to those in the original Maxwell system \eqref{EquationIBVPIntroduction} on $G$.
Here we only consider the case $z(i) = 3$ with  $b_3 \geq \tau$ on $\R^3$. The other ones are treated analogously,
 cf.\ Section~5 of \cite{SpitzDissertation}. To rewrite $\mathcal{A}_{3}^i$, we use the matrices
 \begin{align*}
  \hat{A}_3^i = \begin{pmatrix}
                 0 &b_3^i &- \omega_i \Phi_i \partial_2 \phi_{i,3} \\
                 - b_3^i &0 &\omega_i \Phi_i \partial_1 \phi_{i,3} \\
                 \omega_i \Phi_i \partial_2 \phi_{i,3} & -\omega_i \Phi_i \partial_1 \phi_{i,3} &0
                \end{pmatrix}
 \end{align*}
 on $\R^3$. Let $Q$ be the reflection operator defined by $Q v(\cdot, x_3) = v(\cdot, -x_3)$ for any 
 $v \in L^2_{\operatorname{loc}}(J \times \R^3)$. The coefficient ${\mathcal{A}}_3^i$ can now be  written as
 \begin{align*}
  {\mathcal{A}}_3^i = \begin{pmatrix}
                             A_{3,+}^{i} &0 \\
                             0 &-Q A_{3,-}^{i} 
                            \end{pmatrix}
                          = \begin{pmatrix}
                             0 &\hat{A}_3^i &0 &0 \\
                             -\hat{A}_3^i &0 &0 &0 \\
                             0 &0 &0 &- Q \hat{A}_3^i \\
                             0 &0 &Q \hat{A}_3^i &0
                            \end{pmatrix}.
 \end{align*}
 Our main tool are the matrix-valued functions
 \begin{align}\label{def:Gr}
  \hat{G}_r^i = b_3^{i,-1/2} \begin{pmatrix}
			      1 &0 &\omega_i \Phi_i \partial_1 \phi_{i,3} \\
			      0 &1 &\omega_i \Phi_i \partial_2 \phi_{i,3} \\
			      0 &0 &b_3^i
                           \end{pmatrix}, \qquad 
   \mathcal{G}_r^i    = \begin{pmatrix}
		      \hat{G}_r^i &0 &0 &0 \\
		      0 &\hat{G}_r^i &0 &0 \\
		      0 &0 &Q \hat{G}_r^i &0 \\
		      0 &0 &0 &Q \hat{G}_r^i  
		      \end{pmatrix}
 \end{align}
on $\R^3$. Equation \eqref{EquationDefinitionOfLargeAj} then yields
the first desired transformation
 \begin{align}\label{eq:A3-trafo}
  (\mathcal{G}_r^i)^T {\mathcal{A}}_3^i \mathcal{G}_r^i = \begin{pmatrix}
                                                                 A_3^{\operatorname{co}} &0 \\
                                                                 0 &-A_3^{\operatorname{co}}
                                                                \end{pmatrix}
                                                                = \tilde{\mathcal{A}}_3^{\operatorname{co}}.
 \end{align}
 For the boundary condition,  we note that
 \begin{align*}
  \hat{B}^i = \begin{pmatrix}
               \hat{B}_{3,\operatorname{bl}}^i &0 &-\hat{B}_{3,\operatorname{bl}}^i &0 \\
               0 &\hat{B}_{3,\operatorname{bl}}^i &0 &-\hat{B}_{3,\operatorname{bl}}^i
              \end{pmatrix}
              \qquad \text{with} \quad \hat{B}_{3,\operatorname{bl}}^i := \hat{A}_3^i.
 \end{align*}
 Setting $\hat{R}^i_3= (\hat{G}_r^i)^T$, we calculate
 \begin{equation*}
  \hat{R}_3^i \hat{B}_{3,\operatorname{bl}}^i = b_3^{i,1/2}\begin{pmatrix}
                     0 &1 &-\omega_i \Phi_i (\partial_2 \phi_{i,3} ) b_3^{i,-1}  \\
                     -1 & 0 & \omega_i \Phi_i (\partial_1 \phi_{i,3}) b_3^{i,-1}  \\
                     0 &0 &0 
                    \end{pmatrix}
                    =: \tilde{B}^i_{\operatorname{bl},3}
 \end{equation*}
 on $\partial \R^3_+$. Consequently,
 \begin{align}\label{eq:RB-hat}
   R_3^i \hat{B}^i := \begin{pmatrix}
			\hat{R}_3^i &0 \\
			0 &\hat{R}_3^i
		      \end{pmatrix}
		      \cdot \hat{B}^i
  = \begin{pmatrix}
     \tilde{B}_{\operatorname{bl},3}^i &0 &-\tilde{B}_{\operatorname{bl},3}^i &0 \\
     0 &\tilde{B}_{\operatorname{bl},3}^i &0 &-\tilde{B}_{\operatorname{bl},3}^i
    \end{pmatrix}.
 \end{align}
 Delete in $\tilde{B}^i_{\operatorname{bl},3}$ the line of zeros and call the resulting matrix
  $B^i_{\operatorname{bl},3}$. We then introduce the boundary matrices
 \begin{align}  
	B_3^i = \begin{pmatrix} 
           B_{\operatorname{bl},3}^i &0 &-B_{\operatorname{bl},3}^i &0 \\
           0 &B_{\operatorname{bl},3}^i &0 &-B_{\operatorname{bl},3}^i
          \end{pmatrix}.\label{eq:Bi}
 \end{align}
 We next infer that
 \begin{align*}
  B_{\operatorname{bl},3}^i \hat{G}_r^i &= b_3^{i,1/2} \begin{pmatrix}
                     0 &1 &-\omega_i \Phi_i (\partial_2 \phi_{i,3} )b_3^{i,-1}  \\
                     -1 & 0 & \omega_i \Phi_i (\partial_1 \phi_{i,3}) b_3^{i,-1}
                    \end{pmatrix}
                    b_3^{i,-1/2}
                    \begin{pmatrix}
			      1 &0 &\omega_i \Phi_i \partial_1 \phi_{i,3} \\
			      0 &1 &\omega_i \Phi_i \partial_2 \phi_{i,3} \\
			      0 &0 &b_3^i
                           \end{pmatrix} \\
                         &= 
                         \begin{pmatrix}
                          0 &1 &0 \\
                          -1 &0 &0
                         \end{pmatrix}
                         =: B_{\operatorname{bl}}.
 \end{align*}
 On the boundary $\partial \R^3_+$ we thus obtain the second crucial identity 
 \begin{align}\label{eq:B-trafo}
  B_3^i \cdot \mathcal{G}_r^i = \begin{pmatrix}
                                 B_{\operatorname{bl}} &0 &-B_{\operatorname{bl}} &0 \\
                                 0 &B_{\operatorname{bl}} &0 &-B_{\operatorname{bl}}
                                \end{pmatrix}
  =: \mathcal{B}^{\operatorname{co}}.
 \end{align}
Finally, we define the matrices
 \begin{align*} 
  C_{\operatorname{bl}} &= \begin{pmatrix}
         1  &0  &0  \\
         0  &1  &0  
        \end{pmatrix}, \qquad
  \mathcal{C}^{\operatorname{co}} = \begin{pmatrix}
            0 &-C_{\operatorname{bl}} &0 &-C_{\operatorname{bl}} \\
            C_{\operatorname{bl}} &0 &C_{\operatorname{bl}} &0 
           \end{pmatrix}=:  \mathcal{M}^{\operatorname{co}}.
 \end{align*}
 Using \eqref{EquationDefinitionOfAj}, we then compute
 \begin{align*}
  C_{\operatorname{bl}}^T \cdot B_{\operatorname{bl}} & =\begin{pmatrix}
                                                          0 &1 &0 \\
                                                          -1 &0 &0 \\
                                                          0 &0 &0
                                                         \end{pmatrix} = -J_3, \qquad
    B_{\operatorname{bl}}^T C_{\operatorname{bl}}  = (-J_3)^T=J_3,  \\
(\mathcal{C}^{\operatorname{co}})^T \mathcal{B}^{\operatorname{co}} &= \begin{pmatrix}
                                                   0 &C_{\operatorname{bl}}^T \\
                                                   -C_{\operatorname{bl}}^T &0 \\
                                                   0 &C_{\operatorname{bl}}^T \\
                                                   -C_{\operatorname{bl}}^T &0
                                                  \end{pmatrix}
  \cdot \begin{pmatrix}
         B_{\operatorname{bl}} &0 &-B_{\operatorname{bl}} &0 \\
         0 &B_{\operatorname{bl}} &0 &-B_{\operatorname{bl}}
        \end{pmatrix} \\
  &= \begin{pmatrix}
     0 &C_{\operatorname{bl}}^T B_{\operatorname{bl}} &0 &-C_{\operatorname{bl}}^T B_{\operatorname{bl}} \\
     -C_{\operatorname{bl}}^T B_{\operatorname{bl}} &0 &C_{\operatorname{bl}}^T B_{\operatorname{bl}} &0 \\
     0 &C_{\operatorname{bl}}^T B_{\operatorname{bl}} &0 &-C_{\operatorname{bl}}^T B_{\operatorname{bl}} \\
     -C_{\operatorname{bl}}^T B_{\operatorname{bl}} &0 &C_{\operatorname{bl}}^T B_{\operatorname{bl}} &0
    \end{pmatrix}, \\
  (\mathcal{B}^{\operatorname{co}})^T \mathcal{C}^{\operatorname{co}} 
   &= \begin{pmatrix}
     0 &-B_{\operatorname{bl}}^T C_{\operatorname{bl}} &0 &-B_{\operatorname{bl}}^T C_{\operatorname{bl}} \\
     B_{\operatorname{bl}}^T C_{\operatorname{bl}} &0 &B_{\operatorname{bl}}^T C_{\operatorname{bl}} &0 \\
     0 &B_{\operatorname{bl}}^T C_{\operatorname{bl}} &0 &B_{\operatorname{bl}}^T C_{\operatorname{bl}} \\
     -B_{\operatorname{bl}}^T C_{\operatorname{bl}} &0 &-B_{\operatorname{bl}}^T C_{\operatorname{bl}} &0
    \end{pmatrix}.
    \end{align*}
We can now check certain algebraic conditions needed to apply \cite{Eller}, namely
    \begin{align}
     \label{EquationDecompositionOfA3co}
        \Re\big((\mathcal{C}^{\operatorname{co}})^T \mathcal{B}^{\operatorname{co}}\big) 
        &= \frac{1}{2} \Big((\mathcal{C}^{\operatorname{co}})^T \mathcal{B}^{\operatorname{co}} 
          + (\mathcal{B}^{\operatorname{co}})^T \mathcal{C}^{\operatorname{co}} \Big) 
    = \begin{pmatrix}
     0 &-J_3 &0 &0 \\
     J_3 &0 &0 &0 \\
     0 &0 &0 &J_3 \\
     0 &0 &-J_3 &0
    \end{pmatrix} \nonumber\\
    &= \begin{pmatrix}
        A_3^{\operatorname{co}} &0 \\
        0 &-A_3^{\operatorname{co}}
       \end{pmatrix} = \tilde{\mathcal{A}}_3^{\operatorname{co}},\\
  \mathcal{M}^{\operatorname{co}} \tilde{\mathcal{A}}_3^{\operatorname{co}} &= \mathcal{B}^{\operatorname{co}}.\notag
 \end{align}

To simplify the notation, we write $B^i$ and $R^i$ instead of $B^i_{z(i)}$ and $R^i_{z(i)}$ in the following.
Observe that the restrictions of $B^i$ and $R^i$ to $\R^3_+$ belong %to $C^{\tilde{m}+1}(\R^3)$ 
 to $C^{\tilde{m}+1}(\overline{\R^3_+})$.  The rank of $\mathcal{B}^{\operatorname{co}}$
 and $\mathcal{C}^{\operatorname{co}}$ is  $4$ and $R^i(x)$ is invertible for all $x \in \overline{\R^3_+}$. 
 The inverse of  $R^i$ is as regular as $R^i$ itself. Moreover, the transformed coefficients satisfy
 \begin{align}\label{def:cAj-tilde}
  \tilde{\mathcal{A}}_0^i &:= (\mathcal{G}_r^i)^T \begin{pmatrix}
                                                 A_{0,+}^{i} &0 \\
                                                 0 &Q A_{0,-}^{i}
                                                \end{pmatrix}
			      \mathcal{G}_r^i \ \in \ \Fpmupdwl{\tilde{m}}{cp}{\eta},\notag \\
  \tilde{\mathcal{A}}_j^i &:= (\mathcal{G}_r^i)^T {\mathcal{A}}_j^i
			      \mathcal{G}_r^i \ \in \ \Fpmcoeff{\tilde{m}}{cp} \qquad  \text{for \ } j \in \{1,2\},\\
  \tilde{\mathcal{D}}^i &:= (\mathcal{G}_r^i)^T {\mathcal{D}}^i \mathcal{G}_r^i - \sum\nolimits_{j = 1}^3 (\mathcal{G}_r^i)^T 
    {\mathcal{A}}_j^i \mathcal{G}_r^i \partial_j( \mathcal{G}_r^i)^{-1} \mathcal{G}_r^i \ \in \ \Fpmuwl{\tilde{m}}{cp},\notag
 \end{align}
 where we reduced the size of $\eta$ independently of $i$ if necessary.
 
 We next fix a constant $M_1$ as in Lemma~5.1 of~\cite{SpitzDissertation} and constants $M_2$, $M_3$, 
 and $M_4$ as in Definition~5.4 in~\cite{SpitzDissertation}
 for the tame uniform $C^{\tilde{m}+2}$-boundary $\Sigma$ of $G_{-}$. We put $M = \max_{i = 1,\ldots, 4} M_i$
 The construction of our extended coefficients then shows 
 \begin{align}\label{est:cAj} 
  &\Fnorm{m}{{\mathcal{A}}_0^i} \leq C(M_1, M_4) \Fpmnormdom{m}{{A}_0}\le R, \nonumber\\
  &\max \{\Fvarnorm{\tilde{m}-1}{{\mathcal{A}}_0^i(0)},\max_{1 \leq j \leq \tilde{m}-1} \Hhn{\tilde{m}-j-1}{\partial_t^j {\mathcal{A}}_0^i(0)}\} \nonumber\\
  &\quad \leq C(M_1, M_4) \max \{\Fpmvarnormdom{\tilde{m}-1}{{A}_0(0)},\max_{1 \leq j \leq \tilde{m}-1} \Hpmhndom{\tilde{m}-j-1}{\partial_t^j {A}_0(0)}\}\le R_0, \nonumber\\
  &\Fnorm{\tilde{m}}{{\mathcal{A}}_j^i} \leq C(M_1, M_4)\le R, \\
  &\Fnorm{\tilde{m}}{{\mathcal{D}}^i} \leq C(M_1, M_4) \Fpmnormdom{m}{{D}}\le R, \nonumber\\
  &\max \{\Fvarnorm{\tilde{m}-1}{{\mathcal{D}}^i(0)},\max_{1 \leq j \leq \tilde{m}-1} \Hhn{\tilde{m}-j-1}{\partial_t^j {\mathcal{D}}^i(0)}\}  \nonumber\\
  &\quad \leq C(M_1, M_4) \max \{\Fpmvarnormdom{\tilde{m}-1}{{D}(0)},\max_{1 \leq j \leq \tilde{m}-1} \Hpmhndom{\tilde{m}-j-1}{\partial_t^j {D}(0)}\}\le R, \notag
   \end{align}
 for all $i \in \N$ and $j \in \{1,2,3\}$, and for constants $R = R(M, r)$ and $R_0 = R_0(M, r_0)$.

 \smallskip

II) After introducing some notation, we relate the compatibility conditions of the localized problem to the given ones. 
Using the reflection operator $Q$ from step~I), we define the maps
\begin{align*}
 &\mathcal{R}_{6} \colon L_{\mathrm{loc}}^2(\R^3, \R^6) \rightarrow L_{\mathrm{loc}}^2(\R^3_+, \R^{12}), \ \  v \mapsto (v_+, Q v_{-}), \\
 &\mathcal{R}_{6 \times 6} \colon L_{\mathrm{loc}}^2(\R^3, \R^{6 \times 6}) \rightarrow L_{\mathrm{loc}}^2(\R^3_+, \R^{12 \times 12}), \ \ 
   A \mapsto \begin{pmatrix}
     A_+ &0 \\
   0 &Q A_{-}
    \end{pmatrix}, \\
 &\hat{\mathcal{R}}_{6 \times 6}\colon L_{\mathrm{loc}}^2(\R^3, \R^{6 \times 6}) \rightarrow L_{\mathrm{loc}}^2(\R^3_+, \R^{12 \times 12}), \ \ 
   A \mapsto \begin{pmatrix}
                          A_+ &0 \\
                         0 &-Q A_{-}
            \end{pmatrix}.
\end{align*}
As it will be clear from the context which operator we consider, we drop the index, and we put
 $\mathcal{R}_i = \operatorname{id}$ for $i \in \{-1,0\}$ and $\mathcal{R}_i = \mathcal{R}$ for $i \in \N$.

 In step~IV) we  determine  the initial (boundary) value problem  solved by the functions
 $\mathcal{R}_i \Phi_i (\theta_i u)$ on $J \times G$, $J \times \R^3$, respectively $J \times \R^3_+$.
For given functions $v \in \Gpmdom{m}$ and  $ h\in \Hpmadom{m}$, then the transformed data
\begin{align}
\label{EquationDefinitionOfLocalizedData}
 f^i(h,v) &= \mathcal{R}_i \Phi_i (\theta_i h) + \mathcal{R}_i \Phi_i \Big(\sum_{j=1}^3 {A}_j^{\operatorname{co}} \partial_j \theta_i v \Big)\ \in \ \Ha{m}, \nonumber\\
 g^i&= \big((\tr_{\partial \R^3_+} R^i) \tilde{\Phi}_i (\tr_{\Sigma} (\theta_i) \kappa_i g)\big)_{\alpha(i)}\ \in \ \E{m}, 
 \nonumber\\
 u_0^i &= \mathcal{R}_i \Phi_i (\theta_i u_0)\ \in \  \Hh{m},
\end{align}
arise for  $i \in \N_{-1}$ respectively $i \in \N$. Here $\alpha(i)$ denotes the $4$-tuple obtained by removing $z(i)$ and
$z(i) + 3$ from $(1,\ldots,6)$ and $\tilde{\Phi}_i$ the composition operator with the restriction of $\psi_i$ to 
$U_i \cap \Sigma$.

Let $v \in \Gpmdom{m}$ be a map with $\partial_t^p v(0) = S_{G,m,p}(0, {A}_0, {A}_1^{\operatorname{co}}, {A}_2^{\operatorname{co}}, 
{A}_3^{\operatorname{co}}, {D}, f, u_0 )$ for all $p \in \{0, \ldots, m-1\}$, 
with the operators $S_{G,m,p}$ from~\eqref{EquationDefinitionSmp}.  We abbreviate
\begin{align}
\label{EquationDefinitionSmpi}
 S_{m,p}^i &= S_{\R^3_+,m,p}(0,{\mathcal{A}}_0^i,{\mathcal{A}}_1^i, {\mathcal{A}}_2^i, {\mathcal{A}}_3^i, {\mathcal{D}}^i, f^i(f,v), u_0^i), \\
 S_{m,p} &= S_{G,m,p}(0, {A}_0, {A}_1^{\operatorname{co}}, {A}_2^{\operatorname{co}}, {A}_3^{\operatorname{co}}, {D}, f, u_0 ) \nonumber
\end{align}
for all $p \in \{0, \ldots, m\}$ and $i \in \N$. The maps $S_{m,p}^i$ and $S_{m,p}$ are well-defined 
due to the regularity of the coefficients and the data. Fix an index $i \in \N$. We claim that 
\begin{equation}
 \label{EquationClaimTransferOfCompatibilityConditions}
 S_{m,p}^i = \mathcal{R} \Phi_i (\theta_i S_{m,p}) \qquad \text{for all \ }p \in \{0, \ldots, m\}.
\end{equation}

To show this assertion, we first note that 
\begin{equation*}
 S_{m,0}^i = u_0^i = \mathcal{R} \Phi_i (\theta_i u_0) = \mathcal{R} \Phi_i (\theta_i S_{m,0}).
\end{equation*}
Next, let the claim~\eqref{EquationClaimTransferOfCompatibilityConditions} be true  for all $l \in \{0, \ldots, p-1\}$ 
and some $p \in \{1, \ldots, m\}$. The definition of the operators $S_{\R^3_+,m,p}$ then yields
\begin{align}
\label{EquationRecursionForSmpi}
 S_{m,p}^i = {\mathcal{A}}_0^i(0)^{-1} \Big[&\partial_t^{p-1} f^i(f,v)(0) - \sum_{j = 1}^3 {\mathcal{A}}_j^i \partial_j S_{m,p-1}^i 
      - \sum_{l=1}^{p-1} \binom{p-1}{l} \partial_t^l {\mathcal{A}}_0^i(0) S_{m,p-l}^{i} \nonumber\\
      &- \sum_{l=0}^{p-1} \binom{p-1}{l} \partial_t^l {\mathcal{D}}^i(0) S_{m, p-1-l}^i \Big].
\end{align}
The induction hypothesis implies that 
\[\supp S_{m,p-l}^i = \supp \Phi_i (\theta_i S_{m,p}) \subseteq \supp \Phi_i \theta_i \subseteq K_i\] 
for all $l \in \{1, \ldots, p\}$. Together with \eqref{def:Aj-ext} and \eqref{def:cAj}, we thus obtain
\begin{align*}
 {\mathcal{A}}_j^i \partial_j S_{m, p-1}^i = {\mathcal{R}} ({A}_j^i) \partial_j S_{m, p-1}^i
 = {\mathcal{R}} (\tilde{{A}}_j^i) \partial_j \mathcal{R} \Phi_i(\theta_i S_{m,p-1}) 
 = \mathcal{R}(\tilde{{A}}_j^i \partial_j \Phi_i(\theta_i S_{m,p-1}) ) 
\end{align*}
for  $j \in \{1,2\}$, as $\omega_i = 1$ on $K_i$. Similarly it follows
\begin{align*}
 {\mathcal{A}}_3^i \partial_3 S_{m, p-1}^i = \hat{\mathcal{R}} ({{A}}_3^i) \partial_3 \mathcal{R} \Phi_i(\theta_i S_{m,p-1}) 
 = \mathcal{R}(\tilde{{A}}_3^i \partial_3 \Phi_i(\theta_i S_{m,p-1})).
\end{align*}
Using also \eqref{def:Aj-tilde}, we next compute
\begin{align*}
 \partial_j (\Phi_i (\theta_i S_{m,p-1})) &= (\nabla (\theta_i S_{m, p-1})) \circ \psi_i \, \partial_j \psi_i 
 = \sum_{l = 1}^3 \Phi_i (\partial_l (\theta_i S_{m,p-1}))\,\partial_j \psi_{i,l},\\
 \mathcal{R}(\tilde{{A}}_j^i \partial_j \Phi_i(\theta_i S_{m,p-1})) &= \mathcal{R} \Big(\sum_{k = 1}^3 {A}_k^{\operatorname{co}} \Phi_i \partial_k \phi_{i,j} \sum_{l=1}^3 \Phi_i \partial_l (\theta_i S_{m, p-1}) \partial_j \psi_{i,l} \Big) \\
 &= \mathcal{R}\Big(\sum_{k,l = 1}^3 {A}_k^{\operatorname{co}} \Phi_i \partial_l (\theta_i S_{m,p-1}) \Phi_i \partial_k \phi_{i,j}\, \partial_j \psi_{i,l} \Big)
\end{align*}
for all $j \in \{1, 2, 3\}$. Applying $\Phi_i$ to the identity
\begin{align*}
 \delta_{lk} & = (\nabla \operatorname{id}_{U_i})_{lk} = (\nabla (\psi_i \circ \phi_i))_{lk} 
 = \sum_{j = 1}^3 \Phi_i^{-1} \partial_j \psi_{i,l} \, \partial_k \phi_{i,j}
\end{align*}
on $U_i$ for all $k,l \in \{1,2,3\}$, we conclude
\begin{align*} 
 &\sum_{j = 1}^3 {\mathcal{A}}_j^i \partial_j S_{m,p-1}^i = \mathcal{R} \Big(\sum_{j,k,l = 1}^3 {A}_k^{\operatorname{co}} \Phi_i \partial_l (\theta_i S_{m,p-1}) \Phi_i \partial_k \phi_{i,j}\, \partial_j \psi_{i,l} \Big) \nonumber\\
 &= \mathcal{R} \Big(\sum_{k,l = 1}^3 {A}_k^{\operatorname{co}} \Phi_i \partial_l (\theta_i S_{m,p-1}) \delta_{lk} \Big)
 = \mathcal{R} \Big( \sum_{k = 1}^3 {A}_k^{\operatorname{co}} \Phi_i \partial_k (\theta_i S_{m,p-1})\Big).
\end{align*}
Note that the support of every term in the brackets on the right hand side of~\eqref{EquationRecursionForSmpi}  is contained in $K_i$ 
and $\omega_i = 1$ on $K_i$. Proceeding as above, the induction hypothesis then yields that $S_{m,p}^i$ is equal to 
\begin{align*}
 & \mathcal{R} \Phi_i {A}_0^i(0)^{-1} \Big[\mathcal{R}\Phi_i (\theta_i \partial_t^{p-1} f(0)) 
  + \mathcal{R} \Phi_i \Big[\sum_{j = 1}^3 \! {A}_j^{\operatorname{co}} \partial_j \theta_i \partial_t^{p-1} v(0) 
    - \sum_{j = 1}^3 \! {A}_j^{\operatorname{co}} \partial_j (\theta_i S_{m,p-1}) \Big] \\
    &\quad \ \, - \sum_{l=1}^{p-1} \! c_{p,l} \mathcal{R} \Phi_i (\partial_t^l  {A}_0^i(0)) \mathcal{R} \Phi_i (\theta_i S_{m,p-l})
      - \sum_{l=0}^{p-1} \! c_{p,l} \mathcal{R} \Phi_i (\partial_t^l {D}^i(0)) \mathcal{R} \Phi_i (\theta_i S_{m, p-1-l}) \Big ] \\
      &\;\; = \mathcal{R} \Phi_i \Big[\theta_i {A}_0(0)^{-1}  \Big(\partial_t^{p-1} f(0) - \sum_{j = 1}^3 {A}_j^{\operatorname{co}} \partial_j S_{m,p-1} 
      - \sum_{l=1}^{p-1}  c_{p,l} \partial_t^l {A}_0(0) S_{m,p-l} \\
      &\hspace{8em} -  \sum_{l=0}^{p-1} c_{p,l} \partial_t^l {D}(0)  S_{m, p-1-l}\Big) \Big], \\
      &\;\; = \mathcal{R} \Phi_i (\theta_i S_{m,p}),
\end{align*}
where $c_{p,l}= \binom{p-1}{l}$ and we also employed that $\partial_t^{p-1}v(0) = S_{m,p-1}$. 
So \eqref{EquationClaimTransferOfCompatibilityConditions} is true.

\smallskip

III) In this step we show that the tuple $(0, {\mathcal{A}}_0^i, \ldots, {\mathcal{A}}_3^i, {\mathcal{D}}^i, B^i, f^i(f,v), g^i, u_0^i)$ 
fulfills the linear compatibility conditions~\eqref{EquationCompatibilityConditionPrecised} on $G=\R^3_+$ of order $m$, where $v$ 
is any function in $\Gpmdom{m}$ with  $\partial_t^p v(0) = S_{m,p}$ for all $p \in \{0, \ldots, m-1\}$. 

To that purpose, we exploit 
our assumption~\eqref{EquationCompatibilityConditionPrecised}, i.e., $ B_{\Sigma} \tr_{\Sigma,\pm} S_{m,p}  = \partial_t^p g(0)$
for all $p \in \{0, \ldots, m-1\}$. Fix  a number $p \in \{0, \ldots, m-1\}$. The trace operator commutes with multiplication by
test functions and the composition with diffeomorphisms, so that \eqref{EquationCompatibilityConditionPrecised} and \eqref{def:B-hat-i}
imply the identities
\begin{align*}
 &\partial_t^p (\tilde{\Phi}_i (\tr_{\Sigma}(\theta_i) \kappa_i g))(0) =\tilde{\Phi}_i (\tr_{\Sigma}(\theta_i) \kappa_i \partial_t^p g(0)) 
 =\tilde{\Phi}_i(\kappa_i B_\Sigma \tr_{\Sigma}(\theta_i) \tr_{\Sigma,\pm} S_{m,p}) \nonumber \\
 &= \tr_{\partial \R^3_+} \hat{B}^i \tilde{\Phi}_i  \tr_{\Sigma,\pm} (\theta_i S_{m,p}) 
 = \tr_{\partial \R^3_+} \hat{B}^i \tr_{\partial \R^3_+,\pm} (\Phi_i (\theta_i S_{m,p})) \\
 &= \tr_{\partial \R^3_+} \hat{B}^i \tr_{\partial \R^3_+} (\mathcal{R}\Phi_i (\theta_i S_{m,p}))
 =  \tr_{\partial \R^3_+} (\hat{B}^i S_{m,p}^i).
\end{align*}
Multiplying this equation with the trace of $R^i$, we arrive at
\begin{equation}
\label{EquationOneStepBeforeRemovingziComponent}
 \tr_{\partial \R^3_+} (R^i)  \tr_{\partial \R^3_+} (\hat{B}^i S_{m,p}^i) = \partial_t^p (\tr_{\partial \R^3_+} (R^i) \tilde{\Phi}_i (\tr_{\Sigma}(\theta_i) \kappa_i g))(0).
\end{equation}
The $z(i)$-th and the $(z(i)+3)$-th components on the left-hand side are zero by \eqref{eq:RB-hat}, so that the same is true for the right-hand side. 
In view of formulas \eqref{eq:RB-hat}, \eqref{eq:Bi} and \eqref{EquationDefinitionOfLocalizedData}, 
equation~\eqref{EquationOneStepBeforeRemovingziComponent} thus yields the desired compatibility conditions 
\begin{equation*}
 \tr_{\partial \R^3_+} (B^i S_{m,p}^i) = \partial_t^p (\tr_{\partial \R^3_+} (R^i) \tilde{\Phi}_i (\tr_{\Sigma}(\theta_i) \kappa_i g))_{\alpha(i)}(0) = \partial_t^p g^i(0).
\end{equation*}

IV) Let $u$ be a solution in $\Gpmdom{m}$ of~\eqref{EquationIBVPIntroduction} with data $f$,  $g$, and  $u_0$.
In this step we derive a priori estimates for $u$ by applying  a priori estimates on $G_+$ from~\cite{SpitzMaxwellLinear}, 
on $\R^3$ from~\cite{SpitzDissertation}, respectively on $\R^3_+$ from 
Theorem~\ref{TheoremAPrioriEstimatesAndRegularityOfSolutionForGeneralCoefficients} below to $\theta_{-1}u$, $\theta_0 u$, 
respectively $\Phi_i(\theta_i u)$ for $i \in \N$. To that purpose, we first note that the properties of the functions 
$\phi_i$, $\psi_i$, and $\theta_i$ imply the equivalences
\begin{align}\label{EquationEstimateForgiInTermsOfg}
 u \in \Gpmdom{m} &\Longleftrightarrow \theta_{-1} u \in G_m(J \times G), \theta_0 u \in G_m(J \times \R^3) \notag\\
 &\hspace{2em} \text{ and \  } \mathcal{R} \Phi_i (\theta_i u) \in G_m(J \times \R^3_+) \text{ \ for all } i \in \N, \notag\\
 f \in \Hpmadom{m} &\Longleftrightarrow \theta_{-1} u \in H^m(J \times G), \theta_0 f \in H^m(J \times \R^3) \\
 &\hspace{2em} \text{ and \ } \mathcal{R} \Phi_i (\theta_i u) \in H^m(J \times \R^3_+) \text{ \ for all } i \in \N, \notag\\
 g \in \Edom{m} &\Longleftrightarrow g^i \in E_m(J \times \partial \R^3_+) \text{ \ for all } i \in \N,\notag
\end{align}
with corresponding bounds.

Fix an index $i \in \N$. Since  $\supp \Phi_i (\theta_i u) \subseteq \supp \Phi_i \theta_i \subseteq K_i$, 
the definition of the extended coefficients in \eqref{def:cAj} as well as
formulas~\eqref{EquationDerivationTransportedDifferentialOperator} and \eqref{EquationDefinitionOfLocalizedData} yield
\begin{align*} %\label{EquationDiffEqForPhiithetaiu}
 {\mathcal{A}}_0^i \partial_t(\mathcal{R} &\Phi_i(\theta_i u)) + \sum_{j = 1}^3 {\mathcal{A}}_j^i \partial_j (\mathcal{R} \Phi_i(\theta_i u)) + {\mathcal{D}}^i \mathcal{R} \Phi_i(\theta_i u) \\
 &= \mathcal{R} \Phi_i \Big({A}_{0,\pm} \partial_t (\theta_i u_\pm) + \sum_{j = 1}^3 {A}_j^{\operatorname{co}} \partial_j (\theta_i u_\pm) + {D}_\pm (\theta_i u_\pm) \Big) \nonumber \\
 &= \mathcal{R} \Phi_i (\theta_i f) + \mathcal{R} \Phi_i \Big(\sum_{j=1}^3 {A}_j^{\operatorname{co}} \partial_j \theta_i u \Big)= f^i(f,u) \nonumber
\end{align*}
on $J \times \R^3_+$. Since $\Tr_{J \times \Sigma} (B_\Sigma (u_+,u_{-})) = g$ on $J \times \Sigma$,
a similar computation  as in step~III) shows that
\begin{align*}
 \Tr_{J \times \partial \R^3_+}[\hat{B}^i \mathcal{R} \Phi_i(\theta_i u)]
   &= \Tr_{J \times \partial \R^3_+}[ \Phi_i (\theta_i \kappa_i B_\Sigma (u_+,\!u_{-}))] 
 = \tilde{\Phi}_i \! \Tr_{J \times \Sigma}  [\theta_i \kappa_i B_\Sigma (u_+,\!u_{-})] \\
 &= \tilde{\Phi}_i (\tr_{\Sigma} (\theta_i) \kappa_i \, \Tr_{J \times \Sigma} [B_\Sigma (u_+,u_{-})]) 
 = \tilde{\Phi}_i (\tr_{\Sigma} (\theta_i) \kappa_i g).
\end{align*}
Multiplying  this equation with the trace of $R^i$ and removing the $z(i)$-th and $z(i)+3$-th component of the
result, we obtain
\begin{align*}
 \Tr_{J \times \partial \R^3_+}(B^i \mathcal{R} \Phi_i(\theta_i u)) 
 &= \Tr_{J \times \partial \R^3_+}(R^i \hat{B}^i \mathcal{R} \Phi_i(\theta_i u))_{\alpha(i)} \\
& = (\tr_{\partial \R^3_+} (R^i) \tilde{\Phi}_i (\tr_{\Sigma} (\theta_i) \kappa_i g))_{\alpha(i)} = g^i,
\end{align*}
cf.\ \eqref{eq:RB-hat}, \eqref{eq:Bi} and \eqref{EquationDefinitionOfLocalizedData}. 
We conclude that the function $\mathcal{R} \Phi_i (\theta_i u)$ is a $G_m(J \times \R^3_+)$-solution of the initial 
boundary value problem
\begin{align}
  \label{EquationIBVPForPhiiThetaiu}
   {\mathcal{A}}_0^i \partial_t v + \sum\nolimits_{j=1}^3 {\mathcal{A}}_j^i \partial_j v + {\mathcal{D}}^i v  &= f^i(f,u), \quad &&x \in \R^3_+, \quad &t \in J; \notag\\
   B^i v &= g^i, \quad &&x \in \partial \R^3_+, &t \in J; \\
   v(0) &= u_0^i, \quad &&x \in \R^3_+.\notag
\end{align}
In the following we abbreviate $U_i \cap G$ by $G_i$ for all $i \in \N_{-1}$. 
The spaces $\mathcal{H}^m(G_i)$, $\mathcal{H}^m(J \times G_i)$ and $\mathcal{G}_m(J \times G_i)$ 
are defined as their analogues on $G$.

To apply Theorem~\ref{TheoremAPrioriEstimatesAndRegularityOfSolutionForGeneralCoefficients}, 
we have to work with a constant 
boundary matrix $\mathcal{A}_3$ and a constant matrix $B$. As shown in step~I), 
this is achieved via the multiplication with the matrices $\mathcal{G}_r^i$. We therefore recall, 
respectively define, the maps 
\begin{align}\label{def:tilde-voll}
 &\tilde{\mathcal{A}}_j^i = (\mathcal{G}_r^i)^T {\mathcal{A}}_j^i \mathcal{G}_r^i, \ \
 \tilde{\mathcal{B}}^i = B^i \mathcal{G}_r^i = \mathcal{B}^{\operatorname{co}}, \ \
 \tilde{\mathcal{D}}^i = (\mathcal{G}_r^i)^T {\mathcal{D}}^i \mathcal{G}_r^i \!
- \!\sum_{j = 1}^3 (\mathcal{G}_r^i)^T\!{\mathcal{A}}_j^i \mathcal{G}_r^i \partial_j (\mathcal{G}_r^i)^{-1} \mathcal{G}_r^i, \notag\\
 &\tilde{f}^i = (\mathcal{G}_r^i)^T f^i, \quad \tilde{g}^i = g^i, \quad \tilde{u}_0^i = (\mathcal{G}_r^i)^{-1} u_0^i
\end{align}
for all $j \in \{0, \ldots, 3\}$. Recall that $\tilde{\mathcal{A}}_3^i = \tilde{\mathcal{A}}_3^{\operatorname{co}}$ by \eqref{eq:A3-trafo}.
We claim that a function $u^i$  belongs to $\G{m}$ and solves~\eqref{EquationIBVPForPhiiThetaiu} if and only if the function 
$\tilde{u}^i = \mathcal{G}_r^{i,-1} u^i$ belongs to $\G{m}$ and solves the initial boundary value problem
\begin{align}
  \label{EquationIBVPForGriinverseui}
   \tilde{\mathcal{L}}v:=\tilde{\mathcal{A}}_0^i \partial_t v + \sum\nolimits_{j=1}^3 \tilde{\mathcal{A}}_j^i \partial_j v + \tilde{\mathcal{D}}^i v  
     &= \tilde{f}^i, \quad &&x \in \R^3_+, \quad &t \in J; \notag \\
   \mathcal{B}^{\operatorname{co}} v &= \tilde{g}^i, \quad &&x \in \partial \R^3_+, &t \in J; \\
   v(0) &= \tilde{u}_0^i, \quad &&x \in \R^3_+.\notag
\end{align}
To see this claim, we assume that $u^i$ is a solution of~\eqref{EquationIBVPForPhiiThetaiu}. We then compute
\begin{align*}
\tilde{\mathcal{L}}\tilde{u}^i &= (\mathcal{G}_r^i)^T \Big[{\mathcal{A}}_0^i \partial_t u^i 
   + \sum_{j = 1}^3 {\mathcal{A}}_j^i \mathcal{G}_r^i \partial_j ((\mathcal{G}_r^i)^{-1} u^i) 
   + {\mathcal{D}}^i u^i - \sum_{j = 1}^3 {\mathcal{A}}_j^i \mathcal{G}_r^i \partial_j (\mathcal{G}_r^i)^{-1} u^i \Big] \\
  &= (\mathcal{G}_r^i)^T \Big[ {\mathcal{A}}_0^i \partial_t u^i + \sum_{j = 1}^3 {\mathcal{A}}_j^i  \partial_j  u^i
   + {\mathcal{D}}^i u^i  \Big] = (\mathcal{G}_r^i)^T f^i = \tilde{f}^i,\\
 \mathcal{B}^{\operatorname{co}} \tilde{u}^i &= B^i u^i = g^i = \tilde{g}^i, \\
 \tilde{u}^i(0)& = (\mathcal{G}_r^i)^{-1} u^i(0) = (\mathcal{G}_r^i)^{-1}  u_0^i = \tilde{u}_0^i.
\end{align*}
Analogously, one shows the other direction. We further note that the tuple
$(0, {\mathcal{A}}_j^i,{\mathcal{D}}^i, B^i, f^i, g^i, u_0^i)$ 
fulfills the compatibility conditions of order $m$ on $\partial \R^3_+$ if and only if the tuple
$(0, \tilde{\mathcal{A}}_j^i, \tilde{\mathcal{D}}^i, \mathcal{B}^{\operatorname{co}}, \tilde{f}^i, \tilde{g}^i, \tilde{u}_0^i)$ 
fulfills the compatibility conditions of order $m$ on $\partial \R^3_+$. To that purpose it is enough to show  that 
\begin{equation}
\label{EquationLinearCompatibilityConditionsTransformWithGr}
 \tilde{S}^i_{m,p} = (\mathcal{G}_r^i)^{-1} S^i_{m,p},
\end{equation}
for all $0 \leq p \leq m$, where we use \eqref{def:tilde-voll} and  set, respectively recall, 
\begin{align*}
 \tilde{S}^i_{m,p} &= S_{\R^3_+,m,p}(0, \tilde{\mathcal{A}}_j^i, \tilde{\mathcal{D}}^i, \tilde{f}^i, \tilde{u}_0^i), \qquad
 S_{m,p}^i =  S_{\R^3_+, m,p}(0, {\mathcal{A}}_j^i, {\mathcal{D}}^i, f^i, u_0^i).
\end{align*}
For $p = 0$ we have $ \tilde{S}^i_{m,0} = \tilde{u}_0^i = (\mathcal{G}_r^i)^{-1} u_0^i = (\mathcal{G}_r^i)^{-1} S^i_{m,0}.$
Next, let~\eqref{EquationLinearCompatibilityConditionsTransformWithGr} be true for all 
$0 \leq l \leq p-1$. Inserting  \eqref{def:tilde-voll}, we compute
\begin{align*}
 \tilde{S}_{m,p}^i &= \tilde{\mathcal{A}}_0^{i,-1} \Big(\partial_t^{p-1} \tilde{f}^i(0) 
      - \sum_{j = 1}^3 \tilde{\mathcal{A}}_j^i \partial_j \tilde{S}_{m,p-1}^i 
      - \sum_{l = 1}^{p-1} \binom{p-1}{l} \partial_t^l \tilde{\mathcal{A}}_0^i(0) \tilde{S}_{m,p-l}^i \\
      &\hspace{4em} - \sum_{l=0}^{p-1} \binom{p-1}{l} \partial_t^l \tilde{\mathcal{D}}^i(0) \tilde{S}_{m,p-1-l}^i\Big) \\
      &= \mathcal{G}_r^{i,-1} {\mathcal{A}}_0^{i,-1} \mathcal{G}_r^{i,-T} \Big(\mathcal{G}_r^{i,T} \partial_t^p f^i(0) 
	  - \sum_{j = 1}^3 \mathcal{G}_r^{i,T} {\mathcal{A}}^i_j \mathcal{G}_r^i \partial_j (\mathcal{G}_r^{i,-1} S_{m,p-1}^i) \\
	  &\quad - \sum_{l = 1}^{p-1} \binom{p-1}{l} \partial_t^l (\mathcal{G}_r^{i,T} {\mathcal{A}}_0^i \mathcal{G}_r^i)(0) \mathcal{G}_r^{i,-1} S_{m,p-l}^i \\
	&\quad - \sum_{l=0}^{p-1} \binom{p-1}{l} \partial_t^l \Big(\mathcal{G}_r^{i,T} {\mathcal{D}}^i \mathcal{G}_r^i 
	  - \sum_{j = 1}^3 \mathcal{G}_r^{i,T} {\mathcal{A}}_j^i \mathcal{G}_r^i \partial_j \mathcal{G}_r^{i,-1} \mathcal{G}_r^i\Big)(0) \mathcal{G}_r^{i,-1} S_{m,p-1-l}^i\Big) \\
      &= \mathcal{G}_r^{i,-1}{\mathcal{A}}_0^{i,-1} \Big(\partial_t^{p-1} f^i(0) 
      - \sum_{j = 1}^3 {\mathcal{A}}^i_j \partial_j  S_{m,p-1}^i - \sum_{l = 1}^{p-1} \binom{p-1}{l} \partial_t^l {\mathcal{A}}_0^i(0) S_{m,p-l}^i \\
      &\quad - \sum_{l=0}^{p-1} \binom{p-1}{l} \partial_t^l {\mathcal{D}}^i(0) S_{m,p-1-l}^i\Big) \\
&= (\mathcal{G}_r^i)^{-1} S_{m,p}^i,
\end{align*}
omitting some parentheses. The claim \eqref{EquationLinearCompatibilityConditionsTransformWithGr} is thus valid
for all $0 \leq p \leq m$.

Consequently, we can apply 
Theorem~\ref{TheoremAPrioriEstimatesAndRegularityOfSolutionForGeneralCoefficients} to this transformed problem 
and then obtain a solution of the same regularity of the original problem via the inverse transform. 
Also the a priori estimates carry over to the original problem with an additional constant $C(M_1)$. 
In order to simplify the notation, we suppress this transform in the following but assume 
that the matrices ${\mathcal{A}}_3^{i}$ and $B^i$ are constant.
Theorem~\ref{TheoremAPrioriEstimatesAndRegularityOfSolutionForGeneralCoefficients} 
in combination with~\eqref{EquationDefinitionOfLocalizedData} and \eqref{EquationEstimateForgiInTermsOfg} then yield
\begin{align}
	\label{EquationAPrioriEstimatesAppliedToPhiiThetaiu}
	&\Gnorm{m}{\mathcal{R} \Phi_i(\theta_i u)}^2  \nonumber\\
	&\leq (C_{\ref{TheoremAPrioriEstimatesAndRegularityOfSolutionForGeneralCoefficients}, m, 0} + T C_{\ref{TheoremAPrioriEstimatesAndRegularityOfSolutionForGeneralCoefficients},m}) e^{m C_{\ref{TheoremAPrioriEstimatesAndRegularityOfSolutionForGeneralCoefficients}, 1} T} 
	\Big(\sum_{j = 0}^{m-1} \Hhn{m-1-j}{\partial_t^j f^i(f,u)(0)}^2 \notag \\ 
	&\hspace{2em}  + \Enorm{m}{g^i}^2 
	+ \Hhn{m}{u_0^i}^2\Big)  + C_{\ref{TheoremAPrioriEstimatesAndRegularityOfSolutionForGeneralCoefficients},m} e^{m C_{\ref{TheoremAPrioriEstimatesAndRegularityOfSolutionForGeneralCoefficients}, 1} T} \frac{1}{\gamma} \Hangamma{m}{f^i(f,u)}^2 	\nonumber \\
	&\leq C(M_1)(C_{\ref{TheoremAPrioriEstimatesAndRegularityOfSolutionForGeneralCoefficients}, m, 0} + T C_{\ref{TheoremAPrioriEstimatesAndRegularityOfSolutionForGeneralCoefficients},m}) e^{m C_{\ref{TheoremAPrioriEstimatesAndRegularityOfSolutionForGeneralCoefficients}, 1} T} \Big[\sum_{j = 0}^{m-1} \|\theta_i \partial_t^j f(0)\|_{\mathcal{H}^{m-1-j}(G_i)}^2 \nonumber \\
	&\quad + \sum_{j = 0}^{m-1} \sum_{k=1}^3 \|\partial_k \theta_i S_{m,j}\|_{\mathcal{H}^{m-1-j}(G_i)}^2 +  \Enormdom{m}{\tr_{\Sigma}(\theta_i) \, g}^2 +  \|\theta_i u_0\|_{\mathcal{H}^{m}(G_i)}^2 \Big] \nonumber \\
	&\quad + C(M_1) \frac{ C_{\ref{TheoremAPrioriEstimatesAndRegularityOfSolutionForGeneralCoefficients},m}}{\gamma} e^{m C_{\ref{TheoremAPrioriEstimatesAndRegularityOfSolutionForGeneralCoefficients}, 1} T} \Big(\|\theta_i f\|_{\mathcal{H}^m_\gamma(J \times G_i)}^2 + \sum_{k = 1}^3 \|\partial_k \theta_i u\|_{\mathcal{H}^m_\gamma(J \times G_i)}^2  \Big)
\end{align}
for all $\gamma \geq \gamma_{\ref{TheoremAPrioriEstimatesAndRegularityOfSolutionForGeneralCoefficients},m}$. 
Here we exploited that $\partial_t^j u(0) = S_{m,j}$ 
for all $j \in \{0, \ldots, m-1\}$, and
$C_{\ref{TheoremAPrioriEstimatesAndRegularityOfSolutionForGeneralCoefficients},m} = C_{\ref{TheoremAPrioriEstimatesAndRegularityOfSolutionForGeneralCoefficients},m}(\eta,R,T')$, 
$C_{\ref{TheoremAPrioriEstimatesAndRegularityOfSolutionForGeneralCoefficients},m,0} = C_{\ref{TheoremAPrioriEstimatesAndRegularityOfSolutionForGeneralCoefficients},m,0}(\eta,R_0)$, and 
$\gamma_{\ref{TheoremAPrioriEstimatesAndRegularityOfSolutionForGeneralCoefficients},m} = \gamma_{\ref{TheoremAPrioriEstimatesAndRegularityOfSolutionForGeneralCoefficients},m}(\eta,R,T')$ 
are constants from Theorem~\ref{TheoremAPrioriEstimatesAndRegularityOfSolutionForGeneralCoefficients}.
The estimates for $i\in\{-1,0\}$ follow in the same way from Theorem~1.1 in~\cite{SpitzMaxwellLinear} and Theorem~5.3 in~\cite{SpitzDissertation}
with corresponding constants $\tilde{C}_{m, 0}$ and $\tilde{C}_{m}$.

By Definition~2.24 of \cite{SpitzDissertation} at most $N$ of the sets $U_i$ intersect at a given point, and 
we use the constants $M_1$ and $M_2$ introduced there and Definition~5.4 of \cite{SpitzDissertation}.
The monotone convergence theorem thus implies  that
\begin{align}	%\label{EquationRecomposeInitialValue}
	&\sum_{i = -1}^\infty \|\theta_i u_0\|_{\mathcal{H}^{m}(G_i)}^2 = \sum_{i=-1}^\infty \Big[\int_{G_+} \sum_{|\alpha| \leq m} |\partial^\alpha (\theta_i u_{0,+})|^2 dx +\! \int_{G_{-}} \sum_{|\alpha| \leq m} |\partial^\alpha (\theta_i u_{0,-})|^2 dx \Big] \nonumber\\
	&\leq C(m, M_2) \sum_{|\alpha| \leq m}  \Big[ \int_{G_+} \sum_{i=-1}^\infty \chi_{U_i}|\partial^\alpha u_{0,+}|^2 dx 
	    + \int_{G_{-}} \sum_{i=-1}^\infty \chi_{U_i}|\partial^\alpha u_{0,-}|^2 dx \Big] \nonumber\\
	&\leq C(m, M_2, N) \Hpmhndom{m}{u_0}^2.\label{EquationRecomposeOtherData}
\end{align}
Analogously, we treat the other terms on the right-hand side of \eqref{EquationAPrioriEstimatesAppliedToPhiiThetaiu}.
We set $C_m' = \max\{\tilde{C}_{m}, C_{\ref{TheoremAPrioriEstimatesAndRegularityOfSolutionForGeneralCoefficients},m}\}$
and $C_{m,0}' = \max\{\tilde{C}_{m,0}, C_{\ref{TheoremAPrioriEstimatesAndRegularityOfSolutionForGeneralCoefficients},m,0}\}$. 
Equation~\eqref{EquationAPrioriEstimatesAppliedToPhiiThetaiu}  then yields the inequality
\begin{align*}
 \Gpmdomnorm{m}{u}^2 
 &\leq C(N) \sum_{i = -1}^\infty \| \theta_i u \|_{\mathcal{G}_{m,\gamma}(J \times G_i)}^2  \leq C(N, M_1) \sum_{i=-1}^\infty \Gnorm{m}{\mathcal{R}_i \Phi_i (\theta_i u)}^2 \nonumber \\
 &\leq C(m, N, M_1, M_2,\tau) (C_{m, 0}' + T C_m') e^{m C_{1}' T} \Big(\sum_{j = 0}^{m-1} \|\partial_t^j f(0)\|_{\mathcal{H}^{m-1-j}(G)}^2 \nonumber \\
	&\hspace{2em} +  \sum_{j = 0}^{m-1}\|S_{m,j}\|_{\mathcal{H}^{m-1-j}(G)}^2 + \Enormdom{m}{ g}^2 +  \|u_0\|_{\mathcal{H}^{m}(G)}^2 \Big) \\
	&\quad + C(m, N,  M_1,M_2) \frac{ C_{m}'}{\gamma} e^{m C_{1}' T} \Big( \|f\|_{\mathcal{H}^m_\gamma(J \times  G)}^2 +  \| u\|_{\mathcal{H}^m_\gamma(J \times G))}^2 \Big)\nonumber 
\end{align*}
for all $\gamma \geq \max\{\tilde{\gamma}_{m}, \gamma_{\ref{TheoremAPrioriEstimatesAndRegularityOfSolutionForGeneralCoefficients},m}\}$.
Choosing $\gamma_m = \gamma_m (\eta,\tau,N, M_1, M_2, r, T')$ large enough and using Lemma~\ref{LemmaEstimatesForHigherOrderInitialValues} 
we thus arrive at 
\begin{align*}
 \Gpmdomnorm{m}{u}^2 
 &\leq (C_{m,0} + T C_m) e^{m C_1 T} \Big(\sum_{j = 0}^{m-1} \Hpmhndom{m-1-j}{\partial_t^j f(0)}^2 + \Enormdom{m}{g}^2 \nonumber \\
 &\hspace{12em} + \Hpmhndom{m}{u_0}^2 \Big) + C_m e^{m C_1 T} \frac{1}{\gamma} \Hpmangammadom{m}{f}^2 
\end{align*}
for all $\gamma \geq \gamma_m$. Employing that $R = R(M,r)$ and $R_0 = R_0(M,r_0)$, we also 
deduce that the constants $C_{m,0}$ and $C_m$ are of the claimed form (where we drop the dependence 
on $M$ as $G$ is fixed). We have thus 
shown the a priori estimates~\eqref{EquationAPrioriEstimatesOnADomain}, which imply uniqueness of the 
$\Gpmdom{m}$-solution of~\eqref{EquationIBVPIntroduction}.

\smallskip

V) To solve \eqref{EquationIBVPIntroduction}, we introduce the spaces
\begin{align*}
 \mathcal{G}_{m, \operatorname{iv}}(J \times G) &= \{v \in \Gpmdom{m} \colon \partial_t^j v(0) = S_{m,j}, \, j \in \{0, \ldots, m-1\}\}, \\
 \mathcal{H}^m_{\operatorname{iv}, f}(J \times G) &= \{\tilde{f} \in \Hpmadom{m} \colon \partial_t^j \tilde{f}(0) = \partial_t^j f(0), \, j \in \{0, \ldots, m-1\}\}.
\end{align*}
We point out that $\mathcal{G}_{m, \operatorname{iv}}(J \times G)$ is nonempty by Lemma~2.34 from~\cite{SpitzDissertation} and 
$\mathcal{H}^m_{\operatorname{iv}, f}(J \times G)$ is nonempty as $f \in \mathcal{H}^m_{\operatorname{iv}, f}(J \times G)$. Because 
the time derivatives up to order $m-1$ in $0$ of functions from $\mathcal{H}^m_{\operatorname{iv}, f}(J \times G)$ respectively 
$\mathcal{G}_{m, \operatorname{iv}}(J \times G)$ coincide, we obtain
\begin{align}
\label{EquationReplacingfbytildefIfInitialValuesfit}
 S_{\R^3_+,m,p}(0, {\mathcal{A}}_j^i, {\mathcal{D}}^i, f^i(\tilde{f}, \tilde{v}), u_0^i) 
 &= S_{\R^3_+,m,p}(0, {\mathcal{A}}_j^i, {\mathcal{D}}^i, f^i(f, v), u_0^i) 
 = S_{m,p}^i
\end{align}
for all $\tilde{f} \in \mathcal{H}^m_{\operatorname{iv}, f}(J \times G)$, $v, \tilde{v} \in \mathcal{G}_{m, \operatorname{iv}}(J \times G)$, 
$p \in \{0, \ldots, m\}$, and $i \in \N$, cf.~\eqref{EquationDefinitionSmpi}. The analogous equations for $i\in\{-1,0\}$ are 
also true. Step~III) thus implies that the tuple $(0, {\mathcal{A}}_j^i, {\mathcal{D}}^i , B^i , f^i(\tilde{f}, v), g^i, u_0^i)$ 
fulfills the compatibility conditions of order $m$ for all $\tilde{f} \in \mathcal{H}^m_{\operatorname{iv}, f}(J \times G)$, $v \in \mathcal{G}_{m, \operatorname{iv}}(J \times G)$, 
and $i \in \N$. 
As explained in step~IV), we can now apply Theorem~\ref{TheoremAPrioriEstimatesAndRegularityOfSolutionForGeneralCoefficients} which shows that the problem
\begin{align}
  \label{EquationIBVPForExistence}
  {\mathcal{A}}_0^i \partial_t w + \sum\nolimits_{j=1}^3 {\mathcal{A}}_j^i \partial_j w +{\mathcal{D}}^i w  &= f^i(\tilde{f},v), \quad &&x \in \R^3_+, \quad &t \in J; \notag\\
   {B}^i w &= g^i, \quad &&x \in \partial \R^3_+, &t \in J; \\
   w(0) &= u_0^i, \quad &&x \in \R^3_+;\notag 
\end{align}
has a unique solution $\mathcal{U}^i(\tilde{f}, v)$ in $\G{m}^{12}$ for all $\tilde{f} \in \mathcal{H}^m_{\operatorname{iv}, f}(J \times G)$, 
$v \in \mathcal{G}_{m, \operatorname{iv}}(J \times G)$, and $i \in \N$. Moreover, Theorem~5.3 from~\cite{SpitzDissertation}
gives a function $\mathcal{U}^0(\tilde{f}, v)$ in $G_m(J \times \R^3)^6$ solving the initial value problem
\begin{align}
  \label{EquationIVPForExistence}
   A_0^0 \partial_t w + \sum\nolimits_{j=1}^3 A_j^{\operatorname{co}} \partial_j w + D^0 w  &= f^0(\tilde{f},v), \quad &&x \in \R^3, \quad &t \in J; \\
   w(0) &= u_0^0, \quad &&x \in \R^3;\notag
\end{align}
for all such $\tilde{f}$ and $v$. Finally, Theorem~1.1 and Remark~1.2 in~\cite{SpitzMaxwellLinear} yield a solution 
$\mathcal{U}^{-1}(\tilde{f},v)$ in $G_m(J \times G)^6$ of the initial boundary value problem
\begin{align}
  \label{EquationIBVPPartialGForExistence}
  {{A}}_0^{-1} \partial_t w + \sum\nolimits_{j=1}^3 {{A}}_j^{\operatorname{co}} \partial_j w +{{D}}^{-1} w  &= f^{-1}(\tilde{f},v), \quad &&x \in G, \quad &t \in J; \notag\\
   {B}_{\partial G} w &= 0, \quad &&x \in \partial G, &t \in J; \\
   w(0) &= u^{-1}_0, \quad &&x \in G;\notag
\end{align}
for all such $\tilde{f}$ and $v$.
We claim that there is a map $f^* = f^*(v)$ in $\mathcal{H}^m_{\operatorname{iv}, f}(J \times G)$ with
\begin{equation}
 \label{EquationDefiningEquationForfstar}
 f^* +  \sum_{i = -1}^\infty \sum_{j = 1}^3  {A}_j^{\operatorname{co}} \partial_j \sigma_i \Phi_i^{-1} \mathcal{R}_i^{-1} \mathcal{U}^i(f^*, v) = f
\end{equation}
for all $v \in \mathcal{G}_{m, \operatorname{iv}}(J \times G)$.
To prove this claim, we define the operator
\begin{align*}
 \Psi_v \colon \mathcal{H}^m_{\operatorname{iv}, f}(J \times G) \rightarrow \mathcal{H}^m_{\operatorname{iv}, f}(J \times G), \quad \ \
	    \tilde{f} \longmapsto f  - \sum_{i = -1}^\infty \sum_{j = 1}^3  {A}_j^{\operatorname{co}} \partial_j \sigma_i \Phi_i^{-1} \mathcal{R}_i^{-1} \mathcal{U}^i(\tilde{f}, v)
\end{align*}
for each $v \in \mathcal{G}_{m, \operatorname{iv}}(J \times G)$. We fix such a function $v$. The operator $\Psi_v$ indeed takes values 
in $\Hpmadom{m}$ since $\Phi_i^{-1} \mathcal{R}^{-1}$ maps the $\Ha{m}$-function 
$\mathcal{U}^i(\tilde{f},v)$ into $\mathcal{H}^m(J \times U_{i})$ for $i \in \N$, $\partial_j \sigma_i$ has compact support in $U_i$, 
and the covering $(U_i)_{i \in \N}$ is locally finite. We further compute
\begin{align*}
 \partial_t^p \Psi_v(\tilde{f})(0) &= \partial_t^p f(0) -  \sum_{i = -1}^\infty \sum_{j = 1}^3 {A}_j^{\operatorname{co}} \partial_j \sigma_i \Phi_i^{-1} \mathcal{R}_i^{-1} \partial_t^p \mathcal{U}^i(\tilde{f},v)(0) \\
    &= \partial_t^p f(0)  - \sum_{i = -1}^\infty \sum_{j = 1}^3 \mathcal{A}_j^{\operatorname{co}} \partial_j \sigma_i \Phi_i^{-1} \mathcal{R}_i^{-1} \mathcal{R}_i \Phi_i (\theta_i S_{m,p}) \\
    &= \partial_t^p f(0) - \sum_{i = -1}^\infty \sum_{j = 1}^3 \mathcal{A}_j^{\operatorname{co}} \partial_j \sigma_i \theta_i S_{m,p} = \partial_t^p f(0)
\end{align*}
for all $p \in \{0, \ldots, m-1\}$ and $\tilde{f} \in \mathcal{H}^m_{\operatorname{iv}, f}(J \times G)$, where we 
used~\eqref{EquationTimeDerivativesOfSolution}, \eqref{EquationReplacingfbytildefIfInitialValuesfit}, 
\eqref{EquationClaimTransferOfCompatibilityConditions}, and that $\sigma_i$ equals $1$ on the support of $\theta_i$ for all $i \in \N_{-1}$. 
Therefore $\Psi_v$ indeed maps $\mathcal{H}^m_{\operatorname{iv}, f}(J \times G)$ into itself.

We observe that the difference $\mathcal{U}^i(f_1,v) - \mathcal{U}^i(f_2,v)$ solves a problem with zero initial and boundary data.
Moreover, formula \eqref{EquationDefinitionOfLocalizedData} and the initial conditions in the spaces
$\mathcal{H}^m_{\operatorname{iv}, f}(J \times G)$ and $\mathcal{G}_{m, \operatorname{iv}}(J \times G)$ imply that
the time derivatives of the inhomogeneities $f^i(f_k,v)$ coincide at $t=0$. (Such facts are also used below without further notice.)
Theorems~1.1 in~\cite{SpitzMaxwellLinear}, 5.3~in~\cite{SpitzDissertation}, and~\ref{TheoremAPrioriEstimatesAndRegularityOfSolutionForGeneralCoefficients} then imply
\begin{align}
\label{EquationContractionPsi}
 &\Hpmangammadom{m}{\Psi_v(f_1) - \Psi_v(f_2)}^2\\ 
 %\leq C(N,M_3) \sum_{i = -1}^\infty \|\Phi_i^{-1} \mathcal{R}_i^{-1} \mathcal{U}^i(f_1, v) - \Phi_i^{-1} \mathcal{R}_i^{-1} \mathcal{U}^i(f_2, v) \|_{\mathcal{H}^m_\gamma(J \times G_i)}^2 
 &\leq C(N, M_1,M_3) \Big( \| \mathcal{U}^{-1}(f_1,v) - \mathcal{U}^{-1}(f_2,v) \|_{H^m_\gamma(J \times G)}^2 \nonumber\\
 &\quad + \| \mathcal{U}^0(f_1,v) - \mathcal{U}^0(f_2,v) \|_{H^m_\gamma(J \times \R^3)}^2+  \sum_{i = 1}^\infty \Hangamma{m}{\mathcal{U}^i(f_1,v) - \mathcal{U}^i(f_2,v)}^2 \Big) \nonumber\\
 &\leq  \frac{C}{\gamma} \sum_{i=-1}^\infty \|\theta_i (f_1-f_2)\|_{\mathcal{H}_\gamma^{m}(J \times G_i)}^2  \le \frac{C}{\gamma}  \Hpmangammadom{m}{f_1 - f_2}^2\notag
\end{align}
for all $\gamma \geq \max\{\gamma_{1.1,m}, \gamma_{5.3,m}, \gamma_{\ref{TheoremAPrioriEstimatesAndRegularityOfSolutionForGeneralCoefficients},m}\}$, 
proceeding as in~\eqref{EquationRecomposeOtherData} in the last step and putting $C=C(m,\eta, \tau,  N, M, r, T')$.
We set
\begin{equation*}
 \gamma^* = \max\{\gamma_{1.1,m},\gamma_{5.3,m}, \gamma_{\ref{TheoremAPrioriEstimatesAndRegularityOfSolutionForGeneralCoefficients},m}, 4 C_{\ref{EquationContractionPsi}}\}, 
\end{equation*}
where $C_{\ref{EquationContractionPsi}}$ denotes the constant on the right-hand side of~\eqref{EquationContractionPsi}. This estimate 
then leads to the bound
\begin{equation}
 \label{EquationEstimateContractionPsi}
 \Hpmangammadom{m}{\Psi_v(f_1) - \Psi_v(f_2)} \leq \frac{1}{2} \Hpmangammadom{m}{f_1 - f_2}
\end{equation}
for all $\gamma \geq \gamma^*$.
We conclude that $\Psi_v$ is a strict contraction on $\mathcal{H}^m_{\operatorname{iv}, f}(J \times G)$, and 
there thus exists a unique function $f^* = f^*(v)$ in $\mathcal{H}^m_{\operatorname{iv}, f}(J \times G)$
satisfying equation~\eqref{EquationDefiningEquationForfstar}.

We next define the operator
\begin{align*}
 \mathcal{S} \colon \mathcal{G}_{m, \operatorname{iv}}(J \times G) \rightarrow \mathcal{G}_{m, \operatorname{iv}}(J \times G), \quad
	v \longmapsto \sum_{i = -1}^\infty \sigma_i \Phi_i^{-1} \mathcal{R}_{i}^{-1}\mathcal{U}^i(f^*(v), v).
\end{align*}
Let $v \in \mathcal{G}_{m, \operatorname{iv}}(J \times G)$.
We first check that $\mathcal{S}(v)$ indeed belongs to $\mathcal{G}_{m, \operatorname{iv}}(J \times G)$. Since $\mathcal{U}^i(f^*(v),v)$ is an element of $\G{m}$,
the function $\Phi_i^{-1} \mathcal{R}^{-1} \mathcal{U}^i(f^*(v),v)$ belongs to 
$\mathcal{G}_m(J \times G_i)$ for $i \in \N$. Moreover, $\mathcal{U}^{-1}(f^*(v),v)$ is contained $G_m(J \times G)$ and 
$\mathcal{U}^0(f^*(v),v)$ in $G_m(J \times \R^3)$.
Exploiting that $\sigma_i$ has compact support in $U_i$, the a priori estimates for $\mathcal{U}^i$, and~\eqref{EquationRecomposeOtherData}, 
we infer that $\mathcal{S}(v)$ belongs to $\Gpmdom{m}$. 
As $f^*(v) \in \mathcal{H}^m_{\operatorname{iv}, f}(J \times G)$, we now combine 
formula~\eqref{EquationReplacingfbytildefIfInitialValuesfit} with~\eqref{EquationClaimTransferOfCompatibilityConditions} 
as well as $\sigma_i = 1$ on $\supp \theta_i$ for all $i \in \N_{-1}$, and compute
\begin{align*}
 &\partial_t^p \mathcal{S}(v)(0) = \sum_{i = -1}^\infty \sigma_i \Phi_i^{-1} \mathcal{R}_i^{-1} \partial_t^p \mathcal{U}^i(f^*(v), v)(0) \\
    &= \sigma_{-1} \theta_{-1} S_{m,p} + \sigma_0 \theta_0 S_{m,p} + \sum_{i = 1}^\infty \sigma_i \Phi_i^{-1} \mathcal{R}^{-1} \mathcal{R} \Phi_i (\theta_i S_{m,p}) 
    = \sum_{i = -1}^\infty \theta_i S_{m,p} = S_{m,p}
\end{align*}
for all $p \in \{0, \ldots, m\}$ and $v \in \mathcal{G}_{m, \operatorname{iv}}(J \times G)$. Hence,
$\mathcal{S}$ maps into $\mathcal{G}_{m, \operatorname{iv}}(J \times G)$.

To show that $\mathcal{S}$ is a strict contraction, we take $v_1, v_2 \in \mathcal{G}_{m, \operatorname{iv}}(J \times G)$. 
 Estimate~\eqref{EquationEstimateContractionPsi} further yields 
\begin{align}
 \label{EquationEstimatingDifferenceOffstars}
 &\Hpmangammadom{m}{f^*(v_1) - f^*(v_2)} = \Hpmangammadom{m}{\Psi_{v_1}(f^*(v_1)) - \Psi_{v_2}(f^*(v_2))} \nonumber\\
    &\leq \Hpmangammadom{m}{\Psi_{v_1}(f^*(v_1)) - \Psi_{v_1}(f^*(v_2))} + \Hpmangammadom{m}{\Psi_{v_1}(f^*(v_2)) - \Psi_{v_2}(f^*(v_2))}  \nonumber \\
    &\leq \frac{1}{2} \Hpmangammadom{m}{f^*(v_1) - f^*(v_2)} + \Hpmangammadom{m}{\Psi_{v_1}(f^*(v_2)) - \Psi_{v_2}(f^*(v_2))}
\end{align}
for all $\gamma \geq \gamma^*$. The definition of the operator $\Psi_{v}$, Theorems~1.1 in~\cite{SpitzMaxwellLinear}, 5.3~in~\cite{SpitzDissertation}, 
and~\ref{TheoremAPrioriEstimatesAndRegularityOfSolutionForGeneralCoefficients}, 
formula~\eqref{EquationDefinitionOfLocalizedData} and a variant of~\eqref{EquationRecomposeOtherData}  imply 
\begin{align}
 \label{EquationEstimatingSecondDifferenceOfPsi}
 &\Hpmangammadom{m}{\Psi_{v_1}(f^*(v_2)) - \Psi_{v_2}(f^*(v_2))}^2 \nonumber\\
 &\leq C(N,M_3) \sum_{i = -1}^\infty \|\Phi_i^{-1} \mathcal{R}_i^{-1} \mathcal{U}^i(f^*(v_2), v_1) - \Phi_i^{-1} \mathcal{R}_i^{-1} \mathcal{U}^i(f^*(v_2),v_2)\|_{\mathcal{H}^m_\gamma(J \times G_i)}^2 \nonumber \\
 &\leq C(m,\eta, \tau, N,M, r, T') \frac{1}{\gamma} \sum_{i = -1}^\infty \Big\|\sum_{j=1}^3 {A}_j^{\operatorname{co}} \partial_j \theta_i (v_1 - v_2) \Big\|_{\mathcal{H}^m_\gamma(J \times G)}^2 \nonumber\\
 &\leq C(m,\eta, \tau, N,M, r, T') \frac{1}{\gamma} \Hpmangammadom{m}{v_1 - v_2}^2
\end{align}
for all $\gamma \geq \gamma^*$. We set $\gamma^{**} = \max\{\gamma^*, 16 C_{\ref{EquationEstimatingSecondDifferenceOfPsi}}\}$ 
and insert~\eqref{EquationEstimatingSecondDifferenceOfPsi} into~\eqref{EquationEstimatingDifferenceOffstars}, 
where $C_{\ref{EquationEstimatingSecondDifferenceOfPsi}}$ denotes the constant on the right-hand 
side of~\eqref{EquationEstimatingSecondDifferenceOfPsi}. We then arrive at
\begin{equation*}
 \Hpmangammadom{m}{f^*(v_1) - f^*(v_2)} \leq \frac{1}{2} \Hpmangammadom{m}{v_1 - v_2}
 \qquad \text{for all \ } \gamma \geq \gamma^{**}.
\end{equation*}

After these preparations, we can now estimate the difference of $\mathcal{S}(v_1)$ and $\mathcal{S}(v_2)$. 
Applying the a priori estimates from Theorem~1.1 in~\cite{SpitzMaxwellLinear}, Theorem~5.3 in~\cite{SpitzDissertation}, 
respectively Theorem~\ref{TheoremAPrioriEstimatesAndRegularityOfSolutionForGeneralCoefficients} 
once more and recalling that $v_1$ and $v_2 $ belong to $\mathcal{G}_{m, \operatorname{iv}}(J \times G)$, we infer as above
\begin{align}  \label{EquationEstimateDifferenceS}
 &\Gpmdomnorm{m}{\mathcal{S}(v_1) - \mathcal{S}(v_2)}^2 \nonumber\\
 &\le C(N,M_1,M_3)\sum_{i = -1}^\infty \Gpmdomnorm{m}{\Phi_i^{-1} \mathcal{R}_i^{-1}\big(\mathcal{U}^i(f^*(v_1),v_1) - \mathcal{U}^i(f^*(v_2), v_2)\big)}^2 \nonumber\\
 &\leq C(m, \eta, \tau,  N, M_, r, T') \frac{1}{\gamma} \Big(\Hpmangammadom{m}{f^*(v_1) - f^*(v_2)}^2 + \Hpmangammadom{m}{v_1 - v_2}^2 \Big)\nonumber \\
 &\leq C(m, \eta, \tau,  N, M, r, T') \frac{1}{\gamma} \cdot \frac{5}{4} \Gpmdomnorm{m}{v_1 - v_2}^2
\end{align}
for all $\gamma \geq \gamma^{**}$. 
We finally set $\gamma_{\mathcal{S}} = \max\{\gamma^{**}, 5 C_{\ref{EquationEstimateDifferenceS}}\}$, 
for  the constant $C_{\ref{EquationEstimateDifferenceS}}$ on the right-hand side of~\eqref{EquationEstimateDifferenceS}. It
follows
\begin{equation*}
 \Gpmdomnorm{m}{\mathcal{S}(v_1) - \mathcal{S}(v_2)} \leq \frac{1}{2} \Gpmdomnorm{m}{v_1 - v_2}
\end{equation*}
for all $\gamma \geq \gamma_{\mathcal{S}}$. There thus exists a unique fixed point $u\in\mathcal{G}_{m, \operatorname{iv}}(J \times G)$ 
of $\mathcal{S}$.

\smallskip

VI) We claim that the fixed point $u$ of $\mathcal{S}$ is a solution of~\eqref{EquationIBVPIntroduction}. To verify this 
assertion, we first compute for $u_\pm =\mathcal{S}(u)_\pm$ 
\begin{align*}
 &\mathcal{L}_\pm u_\pm 
 := {A}_{0,\pm} \partial_t u_\pm + \sum_{j = 1}^3 {A}_{j}^{\operatorname{co}} \partial_j u_\pm + {D}_\pm u_\pm \\
  & \ \ = \sum_{i = -1}^\infty \sigma_{i,\pm} \Big({A}_{0,\pm} \partial_t (\Phi_i^{-1} \mathcal{R}_i^{-1}\mathcal{U}^i(f^*(u),u))_\pm + \sum_{j = 1}^3 {A}_{j}^{\operatorname{co}} \partial_j (\Phi_i^{-1} \mathcal{R}_i^{-1} \mathcal{U}^i(f^*(u),u))_{\pm} \\
&\qquad  + {D}_{\pm} (\Phi_i^{-1} \mathcal{R}_i^{-1} \mathcal{U}^i(f^*(u),u))_\pm \Big) + \sum_{i = -1}^\infty \sum_{j = 1}^3 {A}_j^{\operatorname{co}} \partial_j \sigma_{i,\pm} (\Phi_i^{-1} \mathcal{R}_i^{-1} \mathcal{U}^i(f^*(u),u))_\pm
\end{align*}
on $J \times G_\pm$. Recalling \eqref{def:Aj-tilde}, \eqref{def:Aj-ext}, \eqref{def:cAj}, 
and that $\omega_i = 1$ on $\phi_i(\supp \sigma_i)$, on $G_+ \cap \supp \sigma_i$ we have
\begin{align*}
 &\sum_{j = 1}^3 {A}_j^{\operatorname{co}} \partial_j (\Phi_i^{-1} \mathcal{R}^{-1} v)
 = \sum_{j = 1}^3 {A}_j^{\operatorname{co}} \partial_j v_{(1, \ldots, 6)}(\phi_i(x))\\
 %= \sum_{j = 1}^3 {A}_j^{\operatorname{co}}\nabla v_{(1, \ldots, 6)}(\phi_i(x)) \partial_j \phi_i(x) \\
 &= \sum_{j,l = 1}^3 {A}_j^{\operatorname{co}} \partial_l v_{(1, \ldots, 6)}(\phi_i(x)) \partial_j \phi_{i,l}(x)  
 = \sum_{l = 1}^3 \Phi_i^{-1} ({A}_l^i)(x) \partial_l v_{(1, \ldots, 6)}(\phi_i(x)) \\
 &= \Phi_i^{-1} \mathcal{R}^{-1} \Big(\sum_{l = 1}^3 {\mathcal{A}}_l^i \partial_l v \Big),
\end{align*}
whereas on $G_{-} \cap \supp \sigma_i$, we deduce
\begin{align*}
	&\sum_{j = 1}^3 {A}_j^{\operatorname{co}} \partial_j (\Phi_i^{-1} \mathcal{R}^{-1} v)
 = \sum_{j = 1}^3 {A}_j^{\operatorname{co}} \partial_j  v_{(7, \ldots, 12)}(\phi_{i,1}(x), \phi_{i,2}(x), - \phi_{i,3}(x)) \\
 &= \sum_{j = 1}^3 {A}_j^{\operatorname{co}} \nabla  v_{(7, \ldots, 12)}(\phi_{i,1}(x), \phi_{i,2}(x), - \phi_{i,3}(x)) \cdot (\partial_j \phi_{i,1}(x), \partial_j \phi_{i,2}(x), - \partial_j \phi_{i,3}(x)) \\
 &= \sum_{j,l = 1}^3 {A}_j^{\operatorname{co}}  \partial_l v_{(7, \ldots, 12)}(\phi_{i,1}(x), \phi_{i,2}(x), - \phi_{i,3}(x)) \partial_j \phi_{i,l}(x) (-1)^{\delta_{3l}} \\
 &=  \sum_{l = 1}^3 \Phi_i^{-1} ({A}_{l,-}^i (-1)^{\delta_{3l}} Q \partial_l v_{(7, \ldots, 12)}) =  \sum_{l = 1}^3 \Phi_i^{-1} Q(\breve{A}_{l,-}^i \partial_l v_{(7, \ldots, 12)}) \\
& = \sum_{l = 1}^3 \Phi_i^{-1} Q (\mathcal{A}_l^i \partial_l v)_{(7, \ldots, 12)} = \Phi_i^{-1} \mathcal{R}^{-1} \Big(\sum_{l = 1}^3 {\mathcal{A}}_l^i \partial_l v \Big)
\end{align*}
for all $v \in L^2(V_i \cap \R^3_+)^{12}$. Since also ${A}_{0,\pm} = (\Phi_i^{-1} \mathcal{R}^{-1} {\mathcal{A}}_0^i)_\pm$ 
and ${D}^i_\pm = (\Phi_i^{-1} \mathcal{R}^{-1} {\mathcal{D}}^i)_{\pm}$ (where we put $\mathcal{A}_0^i = A_0$ and $\mathcal{D}^i = D$ 
for $i \in \{-1,0\}$)
on $\supp \sigma_i$ for all $i \in \N_{-1}$, the definition of the maps $\mathcal{U}^i(f^*(u),u)$ and~\eqref{EquationDefinitionOfLocalizedData}
imply the equality
\begin{align*}
 \mathcal{L}_\pm u_\pm &= \sum_{i = -1}^\infty \sigma_{i,\pm} \Big(\Phi_i^{-1} \mathcal{R}_i^{-1} \Big({\mathcal{A}}_0^i \partial_t \mathcal{U}^i(f^*(u),u)
 + \sum_{j = 1}^3 {\mathcal{A}}_j^i \partial_j \mathcal{U}^i(f^*(u),u) \\
  &\quad  + {\mathcal{D}}^i \mathcal{U}^i(f^*(u),u) \Big)\Big)_\pm 
    + \sum_{i = -1}^\infty \sum_{j = 1}^3 {A}_j^{\operatorname{co}} \partial_j \sigma_{i,\pm} (\Phi_i^{-1} \mathcal{R}_i^{-1} \mathcal{U}^i(f^*(u),u))_{\pm} \\
     &= \!\sum_{i = -1}^\infty\! \Big[\sigma_{i,\pm} (\Phi_i^{-1} \mathcal{R}_i^{-1} f^i(f^*(u),u))_\pm 
    +  \!\sum_{j = 1}^3\! {A}_j^{\operatorname{co}} \partial_j \sigma_{i,\pm} (\Phi_i^{-1} \mathcal{R}^{-1} \mathcal{U}^i(f^*(u),u))_{\pm} \Big] \\
     &= \sum_{i = -1}^\infty\! \Big[\sigma_{i,\pm} \theta_{i,\pm} f^*(u)_\pm + \sum_{j = 1}^3 {A}_j^{\operatorname{co}}
     \Big[ \sigma_{i,\pm}   \partial_j \theta_{i,\pm} u_\pm +\partial_j \sigma_{i,\pm} (\Phi_i^{-1} \mathcal{R}_i^{-1} w^i)_\pm\Big]\Big].
\end{align*}
where $w^i:=\mathcal{U}^i(f^*(u),u))$.
Employing that $\sigma_i=1$ on the support of $\theta_i$, that $(\theta_i)_{i \in \N_{-1}}$ is a partition of unity, and the defining property 
of $f^*(u)$, i.e.~\eqref{EquationDefiningEquationForfstar},
we deduce
\begin{align*}
 \mathcal{L}_\pm u_\pm &=\! \sum_{i = -1}^\infty\! \!\Big[\theta_{i,\pm} f^*(u)_\pm \!+ \!\sum_{j = 1}^3\! {A}_j^{\operatorname{co}} \partial_j \theta_{i,\pm} u_\pm \!+\! \sum_{j = 1}^3 \!{A}_j^{\operatorname{co}} \partial_j \sigma_{i,\pm} (\Phi_i^{-1} \mathcal{R}_i^{-1} \mathcal{U}^i(f^*(u),u))_\pm \Big]\\
     &= f^*(u)_\pm + \sum_{i = -1}^\infty \sum_{j = 1}^3 {A}_j^{\operatorname{co}} \partial_j \sigma_{i,\pm} (\Phi_i^{-1} \mathcal{R}_i^{-1} \mathcal{U}^i(f^*(u),u))_\pm = f_\pm.
\end{align*}
Since the covering $(U_i)_{i \in \N_{-1}}$ is locally finite, we can compute
\begin{align*}
 \Tr_{J \times \Sigma,\pm} (B_\Sigma u) &= \Tr_{J \times \Sigma} (B_\Sigma \cdot (\mathcal{S}(u)_+, \mathcal{S}(u)_{-})) = \Tr_{J \times \Sigma} \Big[B_\Sigma \!\sum_{i = 1}^\infty \!\sigma_i \Phi_i^{-1} \mathcal{U}^i(f^*(u),u) \Big] \\
 &= \sum_{i = 1}^\infty \tr_{\Sigma} \sigma_i \Tr_{J \times \Sigma} (B_\Sigma \Phi_i^{-1}  \mathcal{U}^i(f^*(u),u)) \\
 &= \sum_{i = 1}^\infty \tr_{\Sigma} (\sigma_i) \kappa_i^{-1} \Tr_{J \times \Sigma} \Big(\Phi_i^{-1}\big(\omega_i \Phi_i(\kappa_i B_\Sigma) \,  \mathcal{U}^i(f^*(u),u) \big) \Big), 
\end{align*}
using $\Phi_i^{-1} \omega_i = 1$ on $\supp \sigma_i$. The identity $\hat{B}^i = \omega_i \Phi_i(\kappa_i B)$ on $\supp \sigma_i$
then yields
\begin{align*}
 \Tr_{J \times \Sigma} (B_\Sigma u) &= \sum_{i = 1}^\infty \tr_{\Sigma} (\sigma_i) \kappa_i^{-1} \Tr_{J \times \Sigma} \Big(\Phi_i^{-1}\big(\hat{B}^i \,  \mathcal{U}^i(f^*(u),u) \big) \Big) \\
 &= \sum_{i = 1}^\infty \tr_{\Sigma} (\sigma_i) \kappa_i^{-1} \tilde{\Phi}_i^{-1} \Tr_{J \times \partial \R^3_+} ((R^i)^{-1} R^i \hat{B}^i \, \mathcal{U}^i(f^*(u),u) ). 
\end{align*}
Because $\mathcal{U}^i(f^*(u),u)$ solves the initial boundary value problem~\eqref{EquationIBVPForExistence} with 
the boundary value $g^i$ defined in~\eqref{EquationDefinitionOfLocalizedData} for every $i \in \N$, we arrive at
\begin{align*}
 \Tr_{J \times \Sigma} (B u) &= \sum_{i = 1}^\infty \tr_{\Sigma} (\sigma_i) \kappa_i^{-1} \tilde{\Phi}_i^{-1} \Tr_{J \times \partial \R^3_+} \Big((R^i)^{-1} R^i \hat{B}^i \, \mathcal{U}^i(f^*(u),u) \Big) \\
  &= \sum_{i = 1}^\infty \tr_{\Sigma} (\sigma_i) \kappa_i^{-1} \tilde{\Phi}_i^{-1}\Big( \tr_{\partial \R^3_+} ((R^i)^{-1}) \, g^i_{z(i) \rightarrow 0}\Big) \\
  &= \sum_{i = 1}^\infty \tr_{\Sigma} (\sigma_i) \kappa_i^{-1} \tilde{\Phi}_i^{-1}\Big( \tr_{\partial \R^3_+} ((R^i)^{-1}) \, \tr_{\partial \R^3_+} (R^i) \tilde{\Phi}_i(\tr_{\Sigma}(\theta_i) \kappa_i g) \Big) \\
  &= \sum_{i = 1}^\infty \tr_{\Sigma} (\sigma_i \theta_i) g = \sum_{i = 1}^\infty \tr_{\Sigma} ( \theta_i) g = g,
\end{align*}
where $g^i_{z(i) \rightarrow 0}$ denotes the vector we get by adding a zero in the $z(i)$-th and $z(i)+3$-th component of $g^i$. 
Moreover, we get
\begin{align*}
	\Tr_{J \times \partial G}(B_{\partial G} u) = \Tr_{J \times \partial G}(B_{\partial G} \mathcal{S}(u))
	= \Tr_{J \times \partial G}(B_{\partial G} \mathcal{U}^{-1}(f^*(u),u)) = 0
\end{align*}
as $\mathcal{U}^{-1}(f^*(u),u)$ solves the problem~\eqref{EquationIBVPPartialGForExistence}. Similarly it follows
\begin{align*}
 u(0) &= \mathcal{S}(u)(0) = \sum_{i = -1}^\infty \sigma_i \Phi_i^{-1} \mathcal{R}_i^{-1} \mathcal{U}^i(f^*(u),u)(0) 
      = \sum_{i = -1}^\infty \sigma_i \Phi_i^{-1} \mathcal{R}_i^{-1} u_0^i  \\
      &= \sum_{i = -1}^\infty \sigma_i \Phi_i^{-1} \mathcal{R}_i^{-1} \mathcal{R}_i \Phi_i(\theta_i u_0) = \sum_{i = -1}^\infty \sigma_i \theta_i u_0  = \sum_{i = -1}^\infty  \theta_i u_0 = u_0. 
\end{align*}
We conclude that $u$ is a solution of~\eqref{EquationIBVPIntroduction} in $\Gpmdom{m}$.
\end{proof}

%%%%%%%%%%%%%%%%%%%%%%%%%%%%%%%%%%%%%%%%%%%%%%%%%%%%%%%%%%%%%%%%%%%%%%%%%%%%%%%%%%%%%%%%%%%%%
%%%%%%%%%%%%%%%%%%%%%%%%%%%%%%%%%%%%%%%%%%%%%%%%%%%%%%%%%%%%%%%%%%%%%%%%%%%%%%%%%%%%%%%%%%%%%
%%%%%%%%%%%%%%%%%%%%%%%%%%%%%%%%%%%%%%%%%%%%%%%%%%%%%%%%%%%%%%%%%%%%%%%%%%%%%%%%%%%%%%%%%%%%%

\section{A priori estimates for the linear problem}
\label{SectionAPrioriEstimates}
In the previous section we have reduced~\eqref{EquationIBVPIntroduction} to the system
\begin{equation}
  \label{IBVP}
\left\{\begin{aligned}
   \mathcal{A}_0 \partial_t u + \sum_{j=1}^3 \mathcal{A}_j \partial_j u + \mathcal{D} u  &= f, \quad &&x \in \R^3_+, \quad &t \in J; \\
   B u &= g, \quad &&x \in \partial \R^3_+, &t \in J; \\
   u(0) &= u_0, \quad &&x \in \R^3_+;
\end{aligned}\right.
\end{equation}
on $\R^3_+$ with $\mathcal{A}_3 = \tilde{\mathcal{A}}_3^{\operatorname{co}}$, $B = \mathcal{B}^{\operatorname{co}}$, 
and $\mathcal{A}_1, \mathcal{A}_2 \in \Fcoeff{m}{cp}$, cf.\  \eqref{EquationDefinitionOfLargeAj}
and \eqref{eq:B-trafo}. Here we fix  $T' > 0$ and assume that $J = (0,T)$ for a time $T \in (0,T')$.

In this section we derive a priori estimates for $\G{m}$-solutions of~\eqref{IBVP}.
A (weak) \emph{solution} of~\eqref{IBVP} is a function $u \in C(\clJ, \Ltwoh)$ 
with $\mathcal{L}(\mathcal{A}_0, \ldots, \mathcal{A}_3, \mathcal{D}) u = f$ in the weak sense, $\Tr_{J \times \partial \R^3_+}(B u) = g$ on 
$J \times \partial \R^3_+$, and $u(0) = u_0$.

We first state the basic wellposedness result on $L^2$-level which directly follows from Proposition~5.1
in~\cite{Eller} because of the formulas \eqref{EquationDecompositionOfA3co}. The precise form of the constants 
is a consequence of  the proof in~\cite{Eller}.
\begin{lem}
 \label{LemmaEllerResult}
 Let $\eta > 0$ and $r \geq r_0 > 0$. Take $\mathcal{A}_0 \in \Fupdwl{0}{\operatorname{cp}}{\eta}$, 
 $\mathcal{A}_1, \mathcal{A}_2 \in \Fpmcoeff{0}{\operatorname{cp}}$ with $\|\mathcal{A}_i\|_{W^{1,\infty}(\Omega)} \leq r$
 and $\|\mathcal{A}_i(0)\|_{L^\infty(\R^3_+)} \leq r_0$ for all $i \in \{0, 1, 2\}$, and 
 $\mathcal{A}_3 = \tilde{\mathcal{A}}_3^{\operatorname{co}}$. Let $\mathcal{D} \in L^\infty(\Omega)$ 
 with  $\|\mathcal{D}\|_{L^\infty(\Omega)} \leq r$ and $B = \mathcal{B}^{\operatorname{co}}$.
 Choose data $f \in \Ltwoa$, $g \in L^2(J, H^{1/2}(\partial \R^3_+))$, and $u_0 \in \Ltwoh$. Then~\eqref{IBVP} has a unique 
 solution $u$ in $C(\clJ, \Ltwoh)$, 
 and there exists a number $\gamma_0 = \gamma_0(\eta,r) \geq 1$ such that we obtain 
 \begin{align}
 \label{EquationEllerEstimateInL2}
 \sup_{t \in J}&\Ltwohn{e^{-\gamma t}u(t)}^2 + \gamma \Ltwoan{u}^2 \nonumber\\
 &\leq C_{0,0}\Ltwohn{u_0}^2 + C_{0,0} \|g\|_{L^2_\gamma(J, H^{1/2}(\partial \R^3_+))}^2 +  \frac{C_0}{\gamma} \Ltwoan{f}^2
\end{align}
for all $\gamma \geq \gamma_0$, where $C_0 = C_0(\eta, r)$ 
and $C_{0,0} = C_{0,0}(\eta, r_0)$.
\end{lem}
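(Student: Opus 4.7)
The plan is to invoke Proposition~5.1 of \cite{Eller} for symmetric hyperbolic systems with maximally non-negative boundary conditions. First I would verify the structural hypotheses: $\mathcal{A}_0$ is symmetric and coercive with constant $\eta$; the tangential coefficients $\mathcal{A}_1, \mathcal{A}_2$ are Lipschitz and inherit symmetry from the Maxwell blocks via the structure encoded in $\Fpmcoeff{0}{\operatorname{cp}}$ (they are linear combinations of $\mathcal{A}_j^{\operatorname{co}}$ with scalar factors); and the normal coefficient $\mathcal{A}_3 = \tilde{\mathcal{A}}_3^{\operatorname{co}}$ is a constant symmetric matrix. These Friedrichs-symmetry properties take care of the bulk part, and the only nontrivial input needed is the algebraic compatibility between $B = \mathcal{B}^{\operatorname{co}}$ and $\mathcal{A}_3$.

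This algebraic input is precisely what is collected in \eqref{EquationDecompositionOfA3co}. The identity $\Re((\mathcal{C}^{\operatorname{co}})^T \mathcal{B}^{\operatorname{co}}) = \tilde{\mathcal{A}}_3^{\operatorname{co}}$ exhibits $\mathcal{A}_3$ as the symmetric part of a product of the primal and a dual boundary matrix, while $\mathcal{M}^{\operatorname{co}} \tilde{\mathcal{A}}_3^{\operatorname{co}} = \mathcal{B}^{\operatorname{co}}$ says that the range of $\mathcal{B}^{\operatorname{co}}$ is generated by $\mathcal{A}_3$ in the correct way; together with the rank count of $\mathcal{B}^{\operatorname{co}}$ and $\mathcal{C}^{\operatorname{co}}$ (both equal to $4$), this is exactly Friedrichs' criterion that $\ker B$ is maximally non-negative for $\mathcal{A}_3$ on $\partial \R^3_+$. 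Applying Eller's proposition then yields a unique solution $u \in C(\clJ, \Ltwoh)$.

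For the quantitative estimate \eqref{EquationEllerEstimateInL2}, the approach is the standard energy argument: pair the equation with $e^{-2\gamma t} u$, integrate over $(0,t) \times \R^3_+$, and use symmetry of the $\mathcal{A}_j$ together with the boundary non-negativity. The lower-order contributions from $\mathcal{D}$ and from $\partial_t \mathcal{A}_0, \partial_j \mathcal{A}_j$ produce a term bounded by $C(r) \Ltwoan{u}^2$, which is absorbed by the $\gamma \Ltwoan{u}^2$ from the time weight once $\gamma \geq \gamma_0(\eta, r)$ is taken large enough; the inhomogeneous boundary term is handled via the $H^{1/2}$-duality pairing with the normal trace. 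The subtle part, and the only real bookkeeping obstacle, is isolating the split of constants claimed in the lemma: $C_{0,0}$ in front of the initial-data and boundary terms depends only on $\eta$ and $r_0$, since it arises from the coercivity of $\mathcal{A}_0(0)$ and pointwise bounds on the coefficients at $t=0$, whereas $C_0$ multiplying the forcing term involves the full Lipschitz norm $r$ because it originates from the Gronwall step propagating the estimate from $t=0$ to arbitrary $t$. This split can be read off directly from the proof of Proposition~5.1 in \cite{Eller} by separating the $t=0$ evaluations from the time-integrated commutator terms.
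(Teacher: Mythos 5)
Your proposal matches the paper's proof in substance: the paper simply states that the result follows directly from Proposition~5.1 of \cite{Eller}, with the algebraic identities in \eqref{EquationDecompositionOfA3co} supplying the required compatibility between $B = \mathcal{B}^{\operatorname{co}}$ and $\mathcal{A}_3 = \tilde{\mathcal{A}}_3^{\operatorname{co}}$, and that the form of the constants is read off from Eller's proof. Your additional gloss on the Friedrichs-type non-negativity criterion and on the split between $C_{0,0}(\eta,r_0)$ (initial-time, coercivity) and $C_0(\eta,r)$ (Lipschitz/Gronwall) fills in detail the paper leaves implicit, but it is the same route, not a different one.
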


The a priori estimates for the $\alpha$th tangential and time derivatives of a regular solution of \eqref{IBVP} now follow in a standard way: 
These derivatives  satisfy \eqref{IBVP} with new data $f_\alpha$, $g_\alpha$ and $u_{0,\alpha}$, where $f_\alpha$ also
contains commutator terms involving $ \mathcal{A}_0$,  $\mathcal{A}_1$,  $\mathcal{A}_2$, and $\mathcal{D}$. On the resulting problem 
one can apply the $L^2$-estimate \eqref{EquationEllerEstimateInL2}. The differentiated system has the same structure as 
the corresponding problem (3.4) in \cite{SpitzMaxwellLinear}, and hence the proof of the next result is analogous to that given there.
 It is thus omitted. We use the space $\Hata{m}$ of
 those maps $v \in \Ltwoa$ with $\partial^\alpha v \in \Ltwoa$ for all 
$\alpha \in \N_0^4$ with $|\alpha| \leq m$ and $\alpha_3 = 0$. It is equipped with its natural norm.
\begin{lem}
\label{LemmaCentralEstimateInTangentialDirections}
Let $\eta > 0$, $r \geq r_0 > 0$, $m \in \N$, and $\tilde{m}=\max\{m,3\}$.
Take $\mathcal{A}_0 \in \Fupdwl{\tilde{m}}{\operatorname{cp}}{\eta}$, $\mathcal{A}_1, \mathcal{A}_2 \in \Fcoeff{\tilde{m}}{\operatorname{cp}}$, 
$\mathcal{A}_3 = \tilde{\mathcal{A}}_3^{\operatorname{co}}$, $\mathcal{D} \in \Fuwl{\tilde{m}}{\operatorname{cp}}$, and 
$B = \mathcal{B}^{\operatorname{co}}$ with
\begin{align*}
	&\Fnorm{\tilde{m}}{\mathcal{A}_i} \leq r, \quad \Fnorm{\tilde{m}}{\mathcal{D}} \leq r, \\ 
	&\max\{\Fvarnorm{\tilde{m}-1}{\mathcal{A}_i(0)}, \max_{1 \leq j \leq m-1} \Hhn{\tilde{m}-1-j}{\partial_t^j \mathcal{A}_0(0)}\} \leq r_0, \\
	&\max\{\Fvarnorm{\tilde{m}-1}{\mathcal{D}(0)}, \max_{1 \leq j \leq m-1} \Hhn{\tilde{m}-1-j}{\partial_t^j \mathcal{D}(0)}\} \leq r_0,
\end{align*}
for all $i \in \{0, 1, 2\}$.
Choose data $f \in \Hata{m}$, $g \in \E{m}$, and $u_0 \in \Hh{m}$. 
Assume that the solution $u$ of~\eqref{IBVP} belongs to $\G{m}$. 
Then there exists a parameter $\gamma_m = \gamma_m(\eta, r) \geq 1$ such that $u$ satisfies
\begin{align*} 
  \sum_{\substack{|\alpha| \leq m \\ \alpha_3 = 0}}\! \! \Gnorm{0}{\partial^\alpha u}^2 \! + \!\gamma \Hatan{m}{u}^2  
  &\leq  C_{m,0} \Big[\sum_{j = 0}^{m-1}\!\! \Hhn{m-1-j}{\partial_t^j f(0)}^2 \!+ \!\Enorm{m}{g}^2\\
   &\ \ + \Hhn{m}{u_0}^2 \Big]  \! +   \frac{C_m}{\gamma} \Big[ \Hatan{m}{f}^2 \!+\! \Gnorm{m}{u}^2 \Big], 
\end{align*}
for all $\gamma \geq \gamma_0$, where $C_m = C_m(\eta, r, T')$,  
and $C_{m,0} = C_{m,0}(\eta, r_0)$.
\end{lem}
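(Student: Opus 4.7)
The plan is to apply the $L^2$-estimate of Lemma~\ref{LemmaEllerResult} to the initial boundary value problem satisfied by $\partial^\alpha u$ for each multi-index $\alpha\in\N_0^4$ with $|\alpha|\le m$ and $\alpha_3=0$, and to sum the resulting inequalities. The crucial structural fact is that $\mathcal{A}_3=\tilde{\mathcal{A}}_3^{\operatorname{co}}$ and $B=\mathcal{B}^{\operatorname{co}}$ are constant matrices, so differentiating by $\partial^\alpha$ does not hit them, commutators only arise with $\mathcal{A}_0$, $\mathcal{A}_1$, $\mathcal{A}_2$ and $\mathcal{D}$, and the differentiated system again fits the hypotheses of Lemma~\ref{LemmaEllerResult}.

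Fix such an $\alpha$. Applying $\partial^\alpha$ to the PDE in~\eqref{IBVP} yields
\begin{equation*}
\mathcal{A}_0\partial_t(\partial^\alpha u) + \sum_{j=1}^3 \mathcal{A}_j\partial_j(\partial^\alpha u) + \mathcal{D}(\partial^\alpha u) = f_\alpha,
\end{equation*}
with
\begin{equation*}
f_\alpha := \partial^\alpha f - [\partial^\alpha,\mathcal{A}_0]\partial_t u - \sum_{j=1}^2 [\partial^\alpha,\mathcal{A}_j]\partial_j u - [\partial^\alpha,\mathcal{D}]u,
\end{equation*}
together with the boundary condition $B\,\partial^\alpha u = \partial^\alpha g$ on $J\times\partial\R^3_+$ (since $B$ is constant and $\alpha_3=0$) and initial value $\partial^\alpha u(0) = \partial_1^{\alpha_1}\partial_2^{\alpha_2} S_{m,\alpha_0}(0,\mathcal{A}_0,\ldots,\mathcal{A}_3,\mathcal{D},f,u_0)$ from~\eqref{EquationTimeDerivativesOfSolution}. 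Lemma~\ref{LemmaEstimatesForHigherOrderInitialValues} then produces a bound
\begin{equation*}
\Hhn{0}{\partial^\alpha u(0)}^2 \le C_{m,0}(\eta,r_0)\Big(\sum_{j=0}^{m-1}\Hhn{m-1-j}{\partial_t^j f(0)}^2 + \Hhn{m}{u_0}^2\Big).
\end{equation*}

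Applying Lemma~\ref{LemmaEllerResult} to the problem for $\partial^\alpha u$ and estimating the commutators in $f_\alpha$ through the product inequalities of Lemma~\ref{LemmaRegularityForA0}, one obtains for $\gamma\ge\gamma_0(\eta,r)$ an inequality of the form
\begin{align*}
\Gnorm{0}{\partial^\alpha u}^2 + \gamma\Ltwoan{\partial^\alpha u}^2 \le{}& C_{m,0}\Big(\sum_{j=0}^{m-1}\Hhn{m-1-j}{\partial_t^j f(0)}^2 + \Hhn{m}{u_0}^2\Big) \\
&{}+ C_{0,0}\Enorm{m}{g}^2 + \frac{C_m}{\gamma}\bigl(\Hatan{m}{f}^2 + \Gnorm{m}{u}^2\bigr),
\end{align*}
where the key commutator bound $\Ltwoan{[\partial^\alpha,\mathcal{A}_j]\partial_j u}^2 \le C(\eta,r,T')\,\Gnorm{m}{u}^2$ (and analogously for $\mathcal{A}_0$ and $\mathcal{D}$) follows from Lemma~\ref{LemmaRegularityForA0} by distributing derivatives. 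Summing over all $\alpha$ with $|\alpha|\le m$ and $\alpha_3=0$ and observing that $\Hatan{m}{u}^2$ equals the sum of $\Ltwoan{\partial^\alpha u}^2$ over these indices yields the asserted estimate.

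The main obstacle is the bookkeeping of commutators so that the prefactor of the $\Gnorm{m}{u}^2$ term appears with the $1/\gamma$ factor and depends on $r$ (as allowed), whereas the $\Hhn{m}{u_0}$-type terms are controlled by constants depending only on $r_0$. This separation is inherited from Lemma~\ref{LemmaEstimatesForHigherOrderInitialValues}, and the overall scheme is structurally identical to the proof of Lemma~3.1 in~\cite{SpitzMaxwellLinear}: the $12\times 12$ block structure of the coefficients plays no role here, because purely tangential and time derivatives never interact with the boundary matrix $B$ or with $\mathcal{A}_3$.
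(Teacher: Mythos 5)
Your proposal follows the same strategy the paper uses: differentiate the system by a tangential/time multi-index $\alpha$ (which leaves $\mathcal{A}_3=\tilde{\mathcal{A}}_3^{\operatorname{co}}$ and $B=\mathcal{B}^{\operatorname{co}}$ untouched because they are constant and $\alpha_3=0$), apply the $L^2$-estimate from Lemma~\ref{LemmaEllerResult} to the resulting problem for $\partial^\alpha u$, control the commutator contributions via Lemma~\ref{LemmaRegularityForA0}, bound the differentiated initial data by Lemma~\ref{LemmaEstimatesForHigherOrderInitialValues}, and sum over admissible $\alpha$. The paper itself omits the proof with precisely the remark that the differentiated system has the same structure as (3.4) in \cite{SpitzMaxwellLinear} and that the argument there carries over; your outline is that argument, including the key observation that the separation of the $r_0$-constants from the $r$-constants is inherited from the initial-value estimates.
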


%%%%%%%%%%%%%%%%%%%%%%%%%%%%%%%%%%%%%%%%%%%%%%%%%%%%%%%%%%%%%%%%%%%%%%%%%%%%%%%%%%%%%%%%%%%%
%%%%%%%%%%%%%%%%%%%%%%%%%%%%%%%%%%%%%%%%%%%%%%%%%%%%%%%%%%%%%%%%%%%%%%%%%%%%%%%%%%%%%%%%%%%%

The full $H^m$-norm of solutions $u$ to~\eqref{IBVP} cannot be controlled in this way since normal derivatives destroy
the boundary condition. From  the system~\eqref{IBVP} itself one can read off regularity of normal derivatives 
of the tangential components of $u$ because of the structure of the boundary matrix $\mathcal{A}_3 = \tilde{\mathcal{A}}_3^{\operatorname{co}}$.
The remaining four components will be recovered by means of cancellation properties of the Maxwell equations 
which imply that the `generalized divergence' $\Div(\mathcal{A}_1, \mathcal{A}_2, \mathcal{A}_3)$ of the Maxwell operator 
only contains first order derivatives. 

To define this concept, take $\mathcal{A}_1, \mathcal{A}_2 \in \Fcoeff{0}{cp}$
and $\mathcal{A}_3 = \tilde{\mathcal{A}}_3^{\operatorname{co}}$.
In particular,  there are functions $\mu_{lj} \in \Fuwlk{0}{cp}{1}$ such that
\begin{equation}
\label{EquationFixingmuljForCoefficients}
 \mathcal{A}_j = \sum_{l = 1}^3 \mathcal{A}_l^{\operatorname{co}} \mu_{lj} \quad \text{for \ } j \in \{1, 2\}
 \qquad \text{and}\qquad \mu_{13} = \mu_{23} = 0, \quad  \mu_{33} = 1,
\end{equation}
see \eqref{EquationDefinitionFmcoeff} and \eqref{EquationDefinitionOfLargeAj}. We now set 
\begin{equation}
\label{EquationDefinitionOfTildeMu}
 \mu=(\mu_{lj})_{l,j=1}^3, \qquad
\hat{\mu} = \begin{pmatrix}
               \mu_{11} &\mu_{12} &\mu_{13} \\
               \mu_{21} &\mu_{22} &\mu_{23} \\
               \mu_{31} &\mu_{32} &-\mu_{33} \\
              \end{pmatrix},
\qquad    \tilde{\mu} = \begin{pmatrix}
		  \mu &0 &0 &0\\
		  0   &\mu &0 &0 \\
		  0  &0 &\hat{\mu} &0 \\
		  0   &0 &0  &\hat{\mu}
        \end{pmatrix}, 
 \end{equation}
 and for $h \in \Ltwoh^{12}$ we define
 \begin{equation}
 \label{EquationDefinitionOfGeneralizedDiv}
  \Div(\mathcal{A}_1, \mathcal{A}_2, \mathcal{A}_3) h = \sum_{k = 1}^3 \Big( (\tilde{\mu}^T \nabla h)_{kk}, (\tilde{\mu}^T \nabla h)_{(k+3) k}, 
   (\tilde{\mu}^T \nabla h)_{(k+6)k},  (\tilde{\mu}^T \nabla h)_{(k+9)k} \Big).
 \end{equation}
 In view of the iteration and regularization process below, in the next proposition we treat solutions and data which are 
 a bit less regular than needed in this section and we consider the initial value problem
\begin{equation}
  \label{IVP}
 \left\{\begin{aligned}
   \mathcal{L}(\mathcal{A}_0,\ldots, \mathcal{A}_3, \mathcal{D}) u &= f, \qquad &&x \in \R^3_+, \qquad &&t \in J; \\
   u(0) &= u_0,  &&x \in \R^3_+.
 \end{aligned} \right.
\end{equation}
A \emph{solution} of~\eqref{IVP} is a function $u \in C(\clJ, \Ltwoh)$ with $u(0) = u_0$ in $\Ltwoh$ and $\mathcal{L} u = f$ in 
$H^{-1}(\Omega)$. The following result is the core step in our regularity theory.
\begin{prop}
\label{PropositionCentralEstimateInNormalDirection}
Let $T' > 0$, $\eta > 0$, $\gamma \geq 1$, and $r \geq r_0 >0$. 
Take coeffcients  $\mathcal{A}_0 \in \Fupdwl{0}{\operatorname{cp}}{\eta}$, $\mathcal{A}_1, \mathcal{A}_2 \in \Fcoeff{0}{\operatorname{cp}}$, 
$\mathcal{A}_3 = \tilde{\mathcal{A}}_3^{\operatorname{co}}$,
and $\mathcal{D} \in \Fuwl{0}{\operatorname{cp}}$ with 
\begin{align*}
	&\|\mathcal{A}_i\|_{W^{1,\infty}(\Omega)} \leq r, \quad \|\mathcal{D}\|_{W^{1,\infty}(\Omega)} \leq r, \\
	&\|\mathcal{A}_i(0)\|_{L^\infty(\R^3_+)} \leq r_0,  \quad \|\mathcal{D}(0)\|_{L^\infty(\R^3_+)} \leq r_0
\end{align*}
for all $i \in \{0, 1, 2\}$. 
Choose data $f \in \G{0}$ with $\Div(\mathcal{A}_1, \mathcal{A}_2, \mathcal{A}_3) f \in \Ltwoa$ and $u_0 \in \Hh{1}$.
Let $u$ solve~\eqref{IVP} and assume that $u$ is an element of $C^1(\clJ, L^2(\R^3_+)) \cap C(\clJ, \Hhta{1}) \cap L^\infty(J,\Hh{1})$.
Then $u$ belongs to $\G{1}$ and there are constants $C_{1,0} = C_{1,0}(\eta,r_0) \geq 1$ and $C_1 = C_1(\eta, r, T') \geq 1$ 
such that it satisfies
\begin{align}
 \label{EquationFirstOrderFinal}
    \Gnorm{0}{\nabla u}^2 &\leq e^{C_1 T}  \Big((C_{1,0} + T C_1)\Big(\sum_{j = 0}^2 \Ltwohnt{\partial_j u}^2 
      +   \Ltwohnt{f}^2 + \Hhn{1}{u_0}^2 \Big)\nonumber \\
    & \hspace{4em}  + \frac{C_1}{\gamma} \Ltwoan{\Div(\mathcal{A}_1, \mathcal{A}_2, \mathcal{A}_3) f}^2  \Big).
\end{align}
If $f$ is even contained in $\Ha{1}$, we obtain
\begin{align}
 \label{EquationFirstOrderFinalVariant}
    \Gnorm{0}{\nabla u}^2 &\leq e^{C_1 T}  \Big(\!(C_{1,0} + T C_1)\Big(\sum_{j = 0}^2 \Ltwohnt{\partial_j u}^2  
      +   \Ltwohn{f(0)}^2 + \Hhn{1}{u_0}^2\Big)\nonumber \\
    & \hspace{4em}  + \frac{C_1}{\gamma} \Hangamma{1}{f}^2   \Big).
\end{align}
Finally, if $f$ merely belongs to $\Ltwoa$ with $\Div(\mathcal{A}_1, \mathcal{A}_2, \mathcal{A}_3) f \in \Ltwoa$, we still have
\begin{align}
 \label{EquationFirstOrderL2}
 \Ltwoan{\nabla u}^2 &\leq e^{C_1 T} \Big((C_{1,0} + T C_1)  \Big(  \sum_{j = 0}^2 \Ltwoan{\partial_j u}^2  + \Ltwoan{f}^2 + \Hhn{1}{u_0}^2\Big) \nonumber\\
 &\hspace{4em} + \frac{C_1}{\gamma}  \Ltwoan{\Div(\mathcal{A}_1, \mathcal{A}_2, \mathcal{A}_3) f}^2 \Big).
\end{align}
\end{prop}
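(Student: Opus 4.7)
The strategy is to split $\nabla u$ into tangential/time derivatives (which are controlled by the hypothesis on $u$ and by Lemma~\ref{LemmaCentralEstimateInTangentialDirections}--style reasoning) and the single normal derivative $\partial_3 u$, then bound $\partial_3 u$ via the structure of the Maxwell system. Because $\mathcal{A}_3 = \tilde{\mathcal{A}}_3^{\operatorname{co}}$ has rank $8$ (its kernel has dimension $4$, corresponding to the "normal components" $u_3, u_6, u_9, u_{12}$), the equation $\mathcal{A}_0 \partial_t u + \sum_{j=1}^3 \mathcal{A}_j \partial_j u + \mathcal{D}u = f$ can be solved algebraically for the normal derivatives of the eight \emph{tangential} components in terms of $\partial_t u$, $\partial_1 u$, $\partial_2 u$, $u$, and $f$. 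This takes care of two thirds of $\partial_3 u$ in $L^2$.

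The remaining four components of $\partial_3 u$ (the normal ones) cannot be read off from the equation and are the main obstacle. The plan is to recover them via the divergence identity: apply $\operatorname{Div}(\mathcal{A}_1,\mathcal{A}_2,\mathcal{A}_3)$ from \eqref{EquationDefinitionOfGeneralizedDiv} to the PDE. The key algebraic cancellation built into \eqref{EquationDefinitionOfTildeMu} is that, coefficient-wise, $\operatorname{Div}(\mathcal{A}_1,\mathcal{A}_2,\mathcal{A}_3)(\mathcal{A}_j \partial_j u)$ involves no second-order derivatives of $u$: the Maxwell structure $\sum_l \mathcal{A}_l^{\operatorname{co}} \mu_{lj}$ combined with the $J_k$-skew-symmetry of the $\mathcal{A}_l^{\operatorname{co}}$ produces exactly the cross-derivative cancellations of $\operatorname{div}\circ\operatorname{curl} = 0$. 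Hence $\operatorname{Div}(\mathcal{A}_1,\mathcal{A}_2,\mathcal{A}_3)(\mathcal{A}_0\partial_t u + \mathcal{D} u) + (\text{first order}) = \operatorname{Div}(\mathcal{A}_1,\mathcal{A}_2,\mathcal{A}_3)f$, where the first-order terms contain $\partial_j u$ and coefficient derivatives. Crucially, when one expands $\operatorname{Div}(\mathcal{A}_1,\mathcal{A}_2,\mathcal{A}_3)(\mathcal{A}_0\partial_t u)$, the resulting terms involve $\partial_3 \partial_t u_{\text{normal}}$ multiplied by nontrivial coefficients (coming from the $\mu$-structure and the positive definiteness of $\mathcal{A}_0$), which, after an integration by parts in $t$ or a careful rearrangement, becomes a quantity controlled by $\partial_t u$ and data.

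I would then run a weighted $L^2_\gamma$ energy estimate on the resulting first-order equation for the normal divergences, absorbing the second-order tangential/time contributions into $\sum_{j=0}^2 \|\partial_j u\|_{G_{0,\gamma}}^2$ (a term already allowed on the right-hand side) and the coefficient commutators by $\frac{1}{\gamma}\|\nabla u\|^2$ for $\gamma$ large. A Gronwall argument in $t$, using that $u \in C^1(\clJ,L^2) \cap C(\clJ, \Hhta{1})$ gives the pointwise-in-$t$ meaning of the normal trace and divergence, yields \eqref{EquationFirstOrderFinal} after combining with the algebraic bound on the tangential components of $\partial_3 u$. The constants $C_{1,0}(\eta,r_0)$ and $C_1(\eta,r,T')$ appear naturally from Lemma~\ref{LemmaRegularityForA0} (bilinear estimates applied to $\mathcal{A}_i(0)$ versus full-time norms) and the uniform lower bound $\mathcal{A}_0 \geq \eta$.

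For the three variants: estimate \eqref{EquationFirstOrderFinalVariant} is obtained by replacing the $\Ltwoan{\Div(\cdots)f}$ term with $\Hangamma{1}{f}$, since if $f \in \Ha{1}$ then $\Div(\mathcal{A}_1,\mathcal{A}_2,\mathcal{A}_3)f$ is bounded by $\Fnorm{0}{\mathcal{A}_j}\Han{1}{f}$ (plus a coefficient derivative piece), and the pointwise evaluation $f(0)$ controls the time-zero trace used in the energy inequality. Estimate \eqref{EquationFirstOrderL2} is the $L^2_\gamma$-version of the same argument: instead of taking a supremum in $t$ we integrate the energy identity in time, which is legitimate since $u \in L^\infty(J,\Hh{1})$ and $\operatorname{Div}(\mathcal{A}_1,\mathcal{A}_2,\mathcal{A}_3)f \in \Ltwoa$ give meaning to every term. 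The hardest single step will be verifying the Maxwell-type cancellation for the interface-type block matrices $\tilde{\mathcal{A}}_3^{\operatorname{co}}$ and $\hat{\mu}$ (note the sign flip in the $\mu_{33}$ entry): one must check explicitly, using \eqref{EquationDefinitionOfAj}, that the second-order contributions to $\operatorname{Div}(\mathcal{A}_1,\mathcal{A}_2,\mathcal{A}_3)(\sum_j \mathcal{A}_j \partial_j u)$ cancel across the $(1,\ldots,6)$ and $(7,\ldots,12)$ blocks despite the minus sign introduced in \eqref{def:cAj} by the reflection $Q$. This is precisely the point where the algebraic design of $\tilde{\mu}$ in \eqref{EquationDefinitionOfTildeMu} pays off.
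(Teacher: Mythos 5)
Your proposal is correct and follows essentially the same route as the paper: read off the eight tangential components of $\partial_3 u$ from the PDE via $\ker\tilde{\mathcal{A}}_3^{\operatorname{co}}=\operatorname{span}\{e_3,e_6,e_9,e_{12}\}$, recover the remaining four by the generalized divergence (where the Levi--Civita cancellation survives the reflection sign flips built into $\hat{\mu}$ and $\tilde{\mathcal{A}}_3^{\operatorname{co}}$), and close with Gronwall. The one point worth noting is that the paper makes your ``careful rearrangement'' concrete by integrating $\partial_t\big(\tilde{\mu}^T\mathcal{A}_0\nabla u\big)$ (traced over the normal index set) in time and assembling the resulting four scalar relations with the twelve PDE rows into a $16\times 12$ full-rank system $\breve{\mu}\,\partial_3 u = F$, so the pointwise-in-$t$ control of $\partial_3 u$ comes from explicit Gaussian elimination using $\mathcal{A}_0\ge\eta$ rather than from an energy or integration-by-parts argument.
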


\begin{proof}
We have  to show that $\partial_3 u\in C(\clJ, \Ltwoh)$ and that inequalities~\eqref{EquationFirstOrderFinal} to~\eqref{EquationFirstOrderL2} are
true. We employ the matrix $\tilde{\mu}$ from~\eqref{EquationDefinitionOfTildeMu}.
Recall that the coefficients  $\mathcal{A}_l$ are given by \eqref{EquationFixingmuljForCoefficients} 
and $\mathcal{A}_3 = \tilde{\mathcal{A}}_3^{\operatorname{co}}$,
$\mathcal{A}_l^{\operatorname{co}}$  and $\tilde{\mathcal{A}}_3^{\operatorname{co}}$ by  \eqref{EquationDefinitionOfLargeAj}, 
as well as $A_l^{\operatorname{co}}$ and $J_l$ by \eqref{EquationDefinitionOfAj}, for $l\in\{1,2,3\}$
Morever, $J_{l;mn} = - \epsilon_{lmn}$ for all $l, m, n \in \{1,2,3\}$ and the Levi-Civita symbol, i.e.,
\begin{equation*}
	\epsilon_{ijk} =  \left\{\begin{aligned}
 &1  \quad &&\text{if } (i,j,k) \in \{(1,2,3), (2,3,1), (3,1,2)\},\\
 -&1   &&\text{if } (i,j,k) \in \{(3,2,1), (2,1,3), (1,3,2)\},\\
  &0 &&\text{else}.
  \end{aligned}\right.,
\end{equation*}
Since the coefficients are Lipschitz, we can differentiate
\begin{align}
\label{EquationDifferentiatingMTransposedAzeroGradu}
  \partial_t (\tilde{\mu}^T \mathcal{A}_0  \nabla u) &=  \tilde{\mu}^T \partial_t \mathcal{A}_0 \nabla u + \tilde{\mu}^T \mathcal{A}_0 \partial_t \nabla u \notag\\
  &=  \tilde{\mu}^T \partial_t \mathcal{A}_0 \nabla u + \tilde{\mu}^T \mathcal{A}_0 \nabla \Big(\mathcal{A}_0^{-1} \Big(f - \sum_{j = 1}^3 \mathcal{A}_j \partial_j u - \mathcal{D} u \Big)\Big) \nonumber\\
  &=  \tilde{\mu}^T \partial_t \mathcal{A}_0 \nabla u  + \tilde{\mu}^T \mathcal{A}_0 \nabla \mathcal{A}_0^{-1}\Big(f - \sum_{j = 1}^3 \mathcal{A}_j \partial_j u - \mathcal{D} u \Big)  \nonumber \\
  &\quad + \tilde{\mu}^T \nabla f - \tilde{\mu}^T \sum_{j = 1}^2 \nabla \mathcal{A}_j \partial_j u - \tilde{\mu}^T \nabla \mathcal{D} u 
  - \tilde{\mu}^T \mathcal{D} \nabla u - \tilde{\mu}^T \sum_{j = 1}^3 \mathcal{A}_j \nabla \partial_j u\nonumber \\
  &=: \Lambda- \tilde{\mu}^T \sum_{j = 1}^3 \mathcal{A}_j \nabla \partial_j u %\label{EquationDefinitionOfLambda}
\end{align}
in $L^\infty(J, \Hh{-1})$. Here we use~\eqref{IVP} and write
$ ((\nabla \mathcal{A}_0^{-1}) h)_{jk} := \sum_{l=1}^{12} \partial_k \mathcal{A}_{0;jl}^{-1} h_l $
etc. Note that $\Lambda$ only contains first order spatial derivatives of $u$. 
We next compute
\begin{align}
\label{EquationTraceInvolvingLeviCivita}
 \sum_{k = 1}^3 &\Big(\tilde{\mu}^T \sum_{j = 1}^3 \mathcal{A}_j \nabla \partial_j u \Big)_{kk} 
  = \sum_{j,k = 1}^3 \sum_{l,p = 1}^{12} \tilde{\mu}^T_{kl} \mathcal{A}_{j;lp} \partial_k \partial_j u_p 
  = \sum_{j,k,l = 1}^3 \sum_{p = 1}^{12} \mu_{lk} \mathcal{A}_{j;lp} \partial_k \partial_j u_p \nonumber\\
  &= \sum_{j,k,l,n,p = 1}^3  \mu_{lk} A^{\operatorname{co}}_{n;l(p+3)} \mu_{nj} \partial_k \partial_j u_{p+3}
   = \sum_{j,k,l,n,p = 1}^3  \epsilon_{nlp} \mu_{lk} \mu_{nj} \partial_k \partial_j u_{p+3}\\
 \label{EquationTraceInterchangingIndices}
 %\sum_{k = 1}^3 \Big(\tilde{\mu}^T \sum_{j = 1}^3 \mathcal{A}_j \nabla \partial_j u \Big)_{kk} 
 &= \sum_{j,k,l,n,p = 1}^3  \epsilon_{lnp} \mu_{nj}  \mu_{lk} \partial_j \partial_k u_{p+3}
 = - \sum_{j,k,l,n,p = 1}^3  \epsilon_{nlp} \mu_{lk} \mu_{nj} \partial_k \partial_j u_{p+3}\,,
\end{align}
exchanging the indices $l$ and $n$ as well as $k$ and $j$ in the penultimate step.
Equations~\eqref{EquationTraceInvolvingLeviCivita} and~\eqref{EquationTraceInterchangingIndices}  yield
\begin{equation}
 \label{EquationTraceSmaller3EqualsZero}
 \sum_{k = 1}^3 \Big(\tilde{\mu}^T \sum_{j = 1}^3 \mathcal{A}_j \nabla \partial_j u \Big)_{kk} = 0.
\end{equation}
Analogously, it follows
\begin{align}
 \label{EquationTraceLarger3EqualsZero}
 &\sum_{k = 1}^3 \Big(\tilde{\mu}^T \sum_{j = 1}^3 \mathcal{A}_j \nabla \partial_j u \Big)_{(k+3)k} = 0.
\end{align}
In the other components we take care of the extra signs in \eqref{EquationDefinitionOfTildeMu} and \eqref{EquationDefinitionOfLargeAj}, calculating
\begin{align}
\label{EquationTraceInvolvingLeviCivita2}
 \sum_{k = 1}^3 \Big(\tilde{\mu}^T &\sum_{j = 1}^3 \mathcal{A}_j \nabla \partial_j u \Big)_{(k+6)k} 
  = \sum_{j,k = 1}^3 \sum_{l,p = 1}^{12} \tilde{\mu}^T_{(k+6)l} \mathcal{A}_{j;lp} \partial_k \partial_j u_p \nonumber \\
  &= \sum_{j,k,l = 1}^3 \sum_{p = 1}^{12} \hat{\mu}_{lk} \mathcal{A}_{j;(l+6)p} \partial_k \partial_j u_p 
   = \sum_{j,k,l,p = 1}^3  \hat{\mu}_{lk} \mathcal{A}_{j;(l+6)(p+9)} \partial_k \partial_j u_{p+9} \nonumber\\
  &= \sum_{j,k,l,n,p = 1}^3  \mu_{lk} (-1)^{\delta_{3l} \delta_{3k}} A^{\operatorname{co}}_{n;l(p+3)} \mu_{nj}(-1)^{\delta_{3j} \delta_{3n}} \partial_k \partial_j u_{p+9} \nonumber\\
   &= \sum_{j,k,l,n,p = 1}^3  \epsilon_{nlp} (-1)^{\delta_{3l} \delta_{3k}} (-1)^{\delta_{3n} \delta_{3j} } \mu_{lk} \mu_{nj} \partial_k \partial_j u_{p+9}\\
 &= \sum_{j,k,l,n,p = 1}^3  \epsilon_{lnp} (-1)^{\delta_{3n} \delta_{3j}} (-1)^{\delta_{3l} \delta_{3k}} \mu_{nj}  \mu_{lk} \partial_j \partial_k u_{p+9} \notag \\
& = - \sum_{j,k,l,n,p = 1}^3  \epsilon_{nlp} (-1)^{\delta_{3l} \delta_{3k}} (-1)^{\delta_{3n} \delta_{3j} } \mu_{lk} \mu_{nj} \partial_k \partial_j u_{p+9}.
 \label{EquationTraceInterchangingIndices2}
\end{align}
Comparing the expressions~\eqref{EquationTraceInvolvingLeviCivita2} and~\eqref{EquationTraceInterchangingIndices2}, we infer
\begin{equation}
 \label{EquationTraceSmaller9EqualsZero}
 \sum_{k = 1}^3 \Big(\tilde{\mu}^T \sum_{j = 1}^3 \mathcal{A}_j \nabla \partial_j u \Big)_{(k+6)k} = 0.
\end{equation}
Proceeding similarly, we derive
\begin{equation}
 \label{EquationTraceLarger9EqualsZero}
 \sum_{k = 1}^3 \Big(\tilde{\mu}^T \sum_{j = 1}^3 \mathcal{A}_j \nabla \partial_j u \Big)_{(k+9)k} = 0.
\end{equation}
Integrating in time, the formulas~\eqref{EquationDifferentiatingMTransposedAzeroGradu}, \eqref{EquationTraceSmaller3EqualsZero} 
\eqref{EquationTraceLarger3EqualsZero}, \eqref{EquationTraceSmaller9EqualsZero} and  \eqref{EquationTraceLarger9EqualsZero}
imply the identities
\begin{equation*}
 \sum_{k = 1}^3 (\tilde{\mu}^T \mathcal{A}_0  \nabla u)_{(k+i)k}(t) = \sum_{k = 1}^3 (\tilde{\mu}^T \mathcal{A}_0  \nabla u)_{(k+i)k}(0) +  \sum_{k = 1}^3 \int_0^t\Lambda_{(k+i)k}(s) ds
\end{equation*}
in $\Hh{-1}$ for all $t \in \clJ$ and $i \in \{0,3,6,9\}$. The function $\Lambda$ is  integrable with values in $\Ltwoh$ so that the equality holds in $\Ltwoh$ 
for all $t \in \clJ$. 
Let $t \in \clJ$.
 We denote the $k$-th row respectively the $k$-th column of a matrix 
$N$ by $N_{k \cdot}$ respectively $N_{\cdot k}$, and we set 
\begin{align*}
 &F_{13+l}(t) = \!\sum_{k = 1}^3 (\tilde{\mu}^T \mathcal{A}_0  \nabla u)_{(k+3l)k}(0) +  \sum_{k = 1}^3 \int_0^t\!\!\Lambda_{(k+3l)k}(s) ds 
  - \!\sum_{k = 1}^2 (\tilde{\mu}^T \mathcal{A}_0)_{(k+3l) \cdot} \partial_k u(t),\notag\\
 & (F_1, \ldots, F_{12})^T = f - \sum_{j = 0}^2 \mathcal{A}_j \partial_j u - \mathcal{D} u
\end{align*}
for $l\in\{0,1,2,3\}$.  The map $F = (F_1, \ldots, F_{16})^T$ belongs to $C(\clJ, \Ltwoh)$ and 
\begin{equation}
\label{EquationFullRankEquationforNormalDerivativeOfu}
 \breve{\mu} \partial_3 u = F, \qquad \text{setting \ }\breve{\mu} = \begin{pmatrix}
            \mathcal{A}_3 \\
            (\tilde{\mu}^T \mathcal{A}_0)_{3 \cdot} \\
            (\tilde{\mu}^T \mathcal{A}_0)_{6 \cdot} \\
            (\tilde{\mu}^T \mathcal{A}_0)_{9 \cdot} \\
            (\tilde{\mu}^T \mathcal{A}_0)_{12 \cdot}
           \end{pmatrix} \in \F{0}^{16 \times 12}.
\end{equation}
Let $\zeta = \tilde{\mu}^T \mathcal{A}_0$ and the matrix $G_1$ be equal to  
\begin{align*}
 \begin{pmatrix}
        1 &0 &0 &0 &0 &0 &0 &0 &0 &0 &0 &0 &0 &0 &0 &0 \\
        0 &-1 &0 &0 &0 &0 &0 &0 &0 &0 &0 &0 &0 &0 &0 &0\\
        0 &0 &1 &0 &0 &0 &0 &0 &0 &0 &0 &0 &0 &0 &0 &0\\
        0 &0 &0 &-1 &0 &0 &0 &0 &0 &0 &0 &0 &0 &0 &0 &0\\
        0 &0 &0 &0 &1 &0 &0 &0 &0 &0 &0 &0 &0 &0 &0 &0\\
        0 &0 &0 &0 &0 &1 &0 &0 &0 &0 &0 &0 &0 &0 &0 &0\\
        0 &0 &0 &0 &0 &0 &-1 &0 &0 &0 &0 &0 &0 &0 &0 &0\\
        0 &0 &0 &0 &0 &0 &0 &1 &0 &0 &0 &0 &0 &0 &0 &0\\
        0 &0 &0 &0 &0 &0 &0 &0 &1 &0 &0 &0 &0 &0 &0 &0\\
        0 &0 &0 &0 &0 &0 &0 &0 &0 &1 &0 &0 &0 &0 &0 &0\\
        0 &0 &0 &0 &0 &0 &0 &0 &0 &0 &-1 &0 &0 &0 &0 &0\\
        0 &0 &0 &0 &0 &0 &0 &0 &0 &0 &0 &1 &0 &0 &0 &0\\
        -\zeta_{3,5} & \zeta_{3,4} &0 &\zeta_{3,2} &-\zeta_{3,1} &0 &\zeta_{3,11} &-\zeta_{3,10} &0 &-\zeta_{3,8} &\zeta_{3,7} &0 &1 &0 &0 &0 \\
        -\zeta_{6,5} & \zeta_{6,4} &0 &\zeta_{6,2} &-\zeta_{6,1} &0 &\zeta_{6,11} &-\zeta_{6,10} &0 &-\zeta_{6,8} &\zeta_{6,7} &0 &0 &1 &0 &0 \\
        \zeta_{9,5} & -\zeta_{9,4} &0 &-\zeta_{9,2} &\zeta_{9,1} &0 &-\zeta_{9,11} &\zeta_{9,10} &0 &\zeta_{9,8} &-\zeta_{9,7} &0 &0 &0 &-1 &0 \\
        \zeta_{12,5} & -\zeta_{12,4} &0 &-\zeta_{12,2} &\zeta_{12,1} &0 &-\zeta_{12,11} &\zeta_{12,10} &0 &\zeta_{12,8} &-\zeta_{12,7} &0 &0 &0 &0 &-1 \\
       \end{pmatrix}. \nonumber
\end{align*}
We derive the crucial identity 
\begin{align*}
 G_1 \breve{\mu}& = \begin{pmatrix}
                    0 &0 &0 &0 &1 &0 &0 &0 &0 &0 &0 &0 \\
                    0 &0 &0 &1 &0 &0 &0 &0 &0 &0 &0 &0\\
                    0 &0 &0 &0 &0 &0 &0 &0 &0 &0 &0 &0\\
                    0 &1 &0 &0 &0 &0 &0 &0 &0 &0 &0 &0\\
                    1 &0 &0 &0 &0 &0 &0 &0 &0 &0 &0 &0\\
                    0 &0 &0 &0 &0 &0 &0 &0 &0 &0 &0 &0\\
                    0 &0 &0 &0 &0 &0 &0 &0 &0 &0 &1 &0 \\
                    0 &0 &0 &0 &0 &0 &0 &0 &0 &1 &0 &0\\
                    0 &0 &0 &0 &0 &0 &0 &0 &0 &0 &0 &0\\
                    0 &0 &0 &0 &0 &0 &0 &1 &0 &0 &0 &0\\
                    0 &0 &0 &0 &0 &0 &1 &0 &0 &0 &0 &0\\
                    0 &0 &0 &0 &0 &0 &0 &0 &0 &0 &0 &0\\
                    0 &0 &\alpha_{3,3} &0 &0 &\alpha_{3,6} &0 &0 &\alpha_{3,9} &0 &0 &\alpha_{3,12} \\
                    0 &0 &\alpha_{6,3} &0 &0 &\alpha_{6,6} &0 &0 &\alpha_{6,9} &0 &0 &\alpha_{6,12} \\
                    0 &0 &\alpha_{9,3} &0 &0 &\alpha_{9,6} &0 &0 &\alpha_{9,9} &0 &0 &\alpha_{9,12} \\
                    0 &0 &\alpha_{12,3} &0 &0 &\alpha_{12,6} &0 &0 &\alpha_{12,9} &0 &0 &\alpha_{12,12} \\
                   \end{pmatrix},\\
           \alpha_{kn} &:= \zeta_{kn} =   \sum_{l = 1}^{12} \tilde{\mu}^T_{kl} \mathcal{A}_{0;ln}  = \mathcal{A}_{0;kn} \qquad \text{for \ } k \in \{3,6\},\\
           \alpha_{kn} &:= -\zeta_{kn} = -\sum_{l = 1}^{12} \tilde{\mu}^T_{kl} \mathcal{A}_{0;ln} = \mathcal{A}_{0;kn}\qquad \text{for \ } k \in \{9,12\},
\end{align*}
where $n \in \{3,6,9,12\}$. Here we use $\tilde{\mu}_{lk} = 1$ for $l = k$ and 
$\tilde{\mu}_{lk} = 0$ for $l \neq k$, if $k \in \{3,6\}$, as well as 
$\tilde{\mu}_{lk} = -1$ for $l = k$  and $\tilde{\mu}_{lk} = 0$ for $l \neq k$, if $k \in \{9,12\}$. Since
\begin{equation*}
 \begin{pmatrix}
        \alpha_{3,3} &\alpha_{3,6} &\alpha_{3,9} &\alpha_{3,12} \\
        \alpha_{6,3} &\alpha_{6,6} &\alpha_{6,9} &\alpha_{6,12} \\
        \alpha_{9,3} &\alpha_{9,6} &\alpha_{9,9} &\alpha_{9,12} \\
        \alpha_{12,3} &\alpha_{12,6} &\alpha_{12,9} &\alpha_{12,12}
       \end{pmatrix}
       = 
       \begin{pmatrix}
        \mathcal{A}_{0;3,3} &\mathcal{A}_{0;3,6} &\mathcal{A}_{0;3,9} &\mathcal{A}_{0;3,12} \\
        \mathcal{A}_{0;6,3} &\mathcal{A}_{0;6,6} &\mathcal{A}_{0;6,9} &\mathcal{A}_{0;6,12} \\
        \mathcal{A}_{0;9,3} &\mathcal{A}_{0;9,6} &\mathcal{A}_{0;9,9} &\mathcal{A}_{0;9,12} \\
        \mathcal{A}_{0;12,3} &\mathcal{A}_{0;12,6} &\mathcal{A}_{0;12,9} &\mathcal{A}_{0;12,12}
       \end{pmatrix}
       \geq \eta,
\end{equation*} 
this matrix has an inverse $\beta$ bounded by  $C(\eta)$.
Setting $ G_2 = \big(\begin{smallmatrix}
        I_{12 \times 12} &0 \\
        0 &\beta
       \end{smallmatrix}\big),$
we compute
\begin{equation}
\label{EquationMassMatrixAfterGaussAlgorithm}
  G_2 G_1 \breve{\mu} = \begin{pmatrix}
			      0 &0 &0 &0 &1 &0 &0 &0 &0 &0 &0 &0 \\
                    0 &0 &0 &1 &0 &0 &0 &0 &0 &0 &0 &0\\
                    0 &0 &0 &0 &0 &0 &0 &0 &0 &0 &0 &0\\
                    0 &1 &0 &0 &0 &0 &0 &0 &0 &0 &0 &0\\
                    1 &0 &0 &0 &0 &0 &0 &0 &0 &0 &0 &0\\
                    0 &0 &0 &0 &0 &0 &0 &0 &0 &0 &0 &0\\
                    0 &0 &0 &0 &0 &0 &0 &0 &0 &0 &1 &0 \\
                    0 &0 &0 &0 &0 &0 &0 &0 &0 &1 &0 &0\\
                    0 &0 &0 &0 &0 &0 &0 &0 &0 &0 &0 &0\\
                    0 &0 &0 &0 &0 &0 &0 &1 &0 &0 &0 &0\\
                    0 &0 &0 &0 &0 &0 &1 &0 &0 &0 &0 &0\\
                    0 &0 &0 &0 &0 &0 &0 &0 &0 &0 &0 &0\\
                    0 &0 &1 &0 &0 &0&0 &0 &0 &0 &0 &0 \\
                    0 &0 &0 &0 &0 &1 &0 &0 &0 &0 &0 &0 \\
                    0 &0 &0 &0 &0 &0 &0 &0 &1 &0 &0 &0 \\
                    0 &0 &0 &0 &0 &0 &0 &0 &0 &0 &0 &1 
                           \end{pmatrix} =: \tilde{M}.
\end{equation}
Equations~\eqref{EquationFullRankEquationforNormalDerivativeOfu} and~\eqref{EquationMassMatrixAfterGaussAlgorithm} yield
\begin{equation}
\label{EquationFullRankMatrixMtildeInFrontOfPartial3u}
 \tilde{M} \partial_3 u = G_2 G_1 F.
\end{equation}
The formulas in \eqref{EquationFixingmuljForCoefficients} imply the inequality
\begin{equation*}
 \|G_2 G_1 \|_{L^\infty(\Omega)} \leq C(\eta) (1 + c_0)^2 \quad\text{with}\quad
  c_0 := \max\{\max_{j = 0, \ldots, 3} \|\mathcal{A}_j \|_{L^\infty(\Omega)}, \|\mathcal{D}\|_{L^\infty(\Omega)}\}.
\end{equation*}
Since the matrix $\tilde{M}$ has rank $12$, equation \eqref{EquationFullRankMatrixMtildeInFrontOfPartial3u} shows that
$\partial_3 u$ is contained in $C(\overline{J}, \Ltwoh)$ and bounded by
\begin{equation} \label{EquationFirstEstimateForNormalDerivativeOfu}
 \Ltwohn{\partial_3 u(t)} \leq C(\eta)(1 + c_0)^2 \Ltwohn{F(t)}.
\end{equation}
This estimate is analogous to (3.29) in the proof of  Proposition~3.3 in \cite{SpitzMaxwellLinear},
where a comparable function $F$ was involved. The remaining arguments are the same as in \cite{SpitzMaxwellLinear}
and therefore omitted. They mainly consist of straightforward estimates and an application of Gronwall's 
inequality.
\end{proof}

%%%%%%%%%%%%%%%%%%%%%%%%%%%%%%%%%%%%%%%%%%%%%%%%%%%%%%%%%%%%%%%%%%%%%%%%%%%%%%%%%%%%%%%%%%%%%
%%%%%%%%%%%%%%%%%%%%%%%%%%%%%%%%%%%%%%%%%%%%%%%%%%%%%%%%%%%%%%%%%%%%%%%%%%%%%%%%%%%%%%%%%%%%%
We can now combine Lemma~\ref{LemmaEllerResult}, Lemma~\ref{LemmaCentralEstimateInTangentialDirections} and 
Proposition~\ref{PropositionCentralEstimateInNormalDirection} in an  iteration argument to establish
the desired a priori estimates of arbitrary order. This is done as in the proof of Theorem~4.4 in \cite{SpitzMaxwellLinear},
also using  the auxiliary   results from Section~\ref{SectionNotation}. Here the different structure in \eqref{IBVP} arising
from the interface condition does not play a role. So we do not give the proof.

\begin{theorem}
 \label{TheoremAPrioriEstimates}
 Let $T' > 0$, $\eta > 0$, $r \geq  r_0 >0$,  $m \in \N$, and $\tilde{m} = \max\{m,3\}$. Pick $T \in (0, T']$ and set $J = (0,T)$.
 Take coefficients $\mathcal{A}_0 \in \Fupdwl{\tilde{m}}{\operatorname{cp}}{\eta}$, $\mathcal{A}_1, \mathcal{A}_2 \in \Fcoeff{\tilde{m}}{\operatorname{cp}}$, 
 $\mathcal{A}_3 = \tilde{\mathcal{A}}_3^{\operatorname{co}}$,  
 $\mathcal{D} \in \Fuwl{\tilde{m}}{\operatorname{cp}}$, and $B = \mathcal{B}^{\operatorname{co}}$ satisfying
 \begin{align*}
  &\Fnorm{\tilde{m}}{\mathcal{A}_i} \leq r, \quad \Fnorm{\tilde{m}}{\mathcal{D}} \leq r, \\
  &\max \{\Fvarnorm{\tilde{m}-1}{\mathcal{A}_i(0)},\max_{1 \leq j \leq \tilde{m}-1} \Hhn{\tilde{m}-j-1}{\partial_t^j \mathcal{A}_0(0)}\} \leq r_0, \\
  &\max \{\Fvarnorm{\tilde{m}-1}{\mathcal{D}(0)},\max_{1 \leq j \leq \tilde{m}-1} \Hhn{\tilde{m}-j-1}{\partial_t^j \mathcal{D}(0)}\} \leq r_0
 \end{align*}
 for all $i \in \{0, 1, 2\}$.
 Choose data $f \in \Ha{m}$, $g \in \E{m}$, and $u_0 \in \Hh{m}$. Assume that the solution $u$ 
 of~\eqref{IBVP}  belongs to $\G{m}$. Then there is a number
 $\gamma_m = \gamma_m(\eta, r, T') \geq 1$ such that $u$ satisfies
 \begin{align}
  &\Gnorm{m}{u}^2  \leq (C_{m,0} + T C_m) e^{m C_1 T} \Big(  \sum_{j = 0}^{m-1} \Hhn{m-1-j}{\partial_t^j f(0)}^2 + \Enorm{m}{g}^2  \nonumber\\
      &\hspace{18em} + \Hhn{m}{u_0}^2 \Big) + \frac{C_m}{\gamma}  \Hangamma{m}{f}^2    \nonumber
 \end{align}
 for all $\gamma \geq \gamma_m$, where $C_m = C_m(\eta, r,T') \geq 1$, $C_{m,0} = C_{m,0}(\eta,r_0) \geq 1$, and 
 $C_1 = C_1(\eta,r,T')$ is a constant independent of $m$.
\end{theorem}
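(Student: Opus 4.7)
My plan is to proceed by induction on $m$, following closely the strategy in Theorem~4.4 of \cite{SpitzMaxwellLinear}. The three building blocks are already assembled: Lemma~\ref{LemmaEllerResult} gives the basic $L^2$ bound on $u$ itself (and, after differentiation, on each $\partial^\alpha u$ with $\alpha_3=0$), Lemma~\ref{LemmaCentralEstimateInTangentialDirections} packages the tangential/time estimates at level $m$, and Proposition~\ref{PropositionCentralEstimateInNormalDirection} supplies the only mechanism to control normal derivatives. The inductive step combines these to convert an estimate on $\Gnorm{m-1}{u}$ plus tangential data at level $m$ into an estimate on $\Gnorm{m}{u}$; the auxiliary Lemma~\ref{LemmaRegularityForA0} on products and Lemma~\ref{LemmaEstimatesForHigherOrderInitialValues} for the $S_{m,p}$ terms are the workhorses for estimating all the lower-order and commutator contributions.

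The base $m=1$ is already the most instructive case. First, apply Lemma~\ref{LemmaCentralEstimateInTangentialDirections} to obtain control of $\partial_t u$ and of $\partial_1 u,\partial_2 u$ in $\G{0}$ plus the weighted norm of the tangential derivatives. Second, observe that the hypotheses of Proposition~\ref{PropositionCentralEstimateInNormalDirection} are met (with $f\in \Ha{1}$, so we use the variant \eqref{EquationFirstOrderFinalVariant}) and that the right-hand side of \eqref{EquationFirstOrderFinalVariant} features precisely $\sum_{j=0}^2 \Ltwohnt{\partial_j u}^2$, which we have just bounded. Adding the two inequalities recovers the full $\Gnorm{1}{u}^2$; choosing $\gamma\ge \gamma_1$ sufficiently large absorbs the factor $\frac{C_1}{\gamma}\Gnorm{1}{u}^2$ coming from Lemma~\ref{LemmaCentralEstimateInTangentialDirections} into the left-hand side.

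For the inductive step from $m-1$ to $m$, I would again first apply Lemma~\ref{LemmaCentralEstimateInTangentialDirections} at level $m$. To recover the missing normal derivatives at order $m$, I differentiate \eqref{IBVP} by $\partial^\beta$ with $|\beta|\le m-1$ and $\beta_3=0$, producing a new system of the form \eqref{IVP} for $v=\partial^\beta u$ with forcing $f_\beta = \partial^\beta f + [\text{commutators}]$ and initial datum $\partial^\beta u(0)$, which by \eqref{EquationTimeDerivativesOfSolution} equals $\partial_x^{\beta_{x}} S_{m,\beta_0}$. The structural cancellation in Proposition~\ref{PropositionCentralEstimateInNormalDirection} then yields $\Gnorm{0}{\nabla v}^2$ in terms of $\sum_{j=0}^2 \Ltwohnt{\partial_j v}^2$, the $L^2$-norm of $\Div(\mathcal{A}_1,\mathcal{A}_2,\mathcal{A}_3)f_\beta$, and $\Hhn{1}{v(0)}$. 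The tangential derivatives of $v$ are already controlled by the preceding lemma, the initial datum by Lemma~\ref{LemmaEstimatesForHigherOrderInitialValues}, and the commutator terms by the product estimates of Lemma~\ref{LemmaRegularityForA0}, picking up at worst a factor $\Gnorm{m-1}{u}$ which is controlled by induction. Iterating over all $\beta$ with $|\beta|\le m-1$, $\beta_3=0$, reconstructs every $m$th-order derivative.

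The principal technical obstacle will be to check that the new generalized-divergence term $\Div(\mathcal{A}_1,\mathcal{A}_2,\mathcal{A}_3)f_\beta$ is genuinely controlled by $\Hangamma{m}{f}$ plus lower-order norms of $u$: the definition \eqref{EquationDefinitionOfGeneralizedDiv} involves one extra spatial derivative, so the commutators must be organized so that the second-order terms in $u$ either cancel via the same Levi-Civita identity used in \eqref{EquationTraceSmaller3EqualsZero}--\eqref{EquationTraceLarger9EqualsZero}, or are absorbed into the forcing at the correct level. This is exactly where the proof in \cite{SpitzMaxwellLinear} is delicate, but the Maxwell structure of $\mathcal{A}_3=\tilde{\mathcal{A}}_3^{\operatorname{co}}$ and the coefficient form \eqref{EquationFixingmuljForCoefficients} carry over verbatim to the present $12\times 12$ setting. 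After all constants are collected one finally chooses $\gamma_m$ large enough so that the factor $C_m/\gamma$ multiplying $\Gnorm{m}{u}^2$ and its intermediate cousins is absorbed into the left-hand side, yielding the stated estimate with the claimed dependence of $C_m$, $C_{m,0}$, and $C_1$.
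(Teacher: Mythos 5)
Your overall strategy is the right one and matches the paper's stated approach (which simply refers to the proof of Theorem~4.4 in \cite{SpitzMaxwellLinear}): combine Lemma~\ref{LemmaCentralEstimateInTangentialDirections} for the time/tangential derivatives with Proposition~\ref{PropositionCentralEstimateInNormalDirection} for the normal direction, and iterate. Your base case $m=1$ is correct as written, and your identification of the commutator terms, the role of Lemmas~\ref{LemmaRegularityForA0} and~\ref{LemmaEstimatesForHigherOrderInitialValues}, and the use of the variant~\eqref{EquationFirstOrderFinalVariant} when $f\in\Ha{m}$, are all on target.

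There is, however, a genuine gap in the inductive step. You restrict the differentiated system to multi-indices $\beta$ with $|\beta|\le m-1$ \emph{and} $\beta_3=0$, and then assert that iterating over these $\beta$ "reconstructs every $m$th-order derivative." It does not. Applying Proposition~\ref{PropositionCentralEstimateInNormalDirection} to $v=\partial^\beta u$ gains exactly one normal derivative, so with $\beta_3=0$ you only ever reach $\partial^\alpha u$ with $\alpha_3\le 1$. Derivatives of order $m$ with $\alpha_3\ge 2$ are never produced. The gap is vacuous for $m=1$ but appears immediately at $m=2$ (you cannot obtain $\partial_3^2 u$ this way).

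The correct organization is a nested iteration over the normal order $k=\alpha_3$. Having already controlled all $\partial^\alpha u$ with $|\alpha|\le m$ and $\alpha_3\le k-1$ (the base $k=0$ is Lemma~\ref{LemmaCentralEstimateInTangentialDirections}), take $\beta$ with $|\beta|=m-1$ and $\beta_3=k-1$ and apply Proposition~\ref{PropositionCentralEstimateInNormalDirection} to $v=\partial^\beta u$. The right-hand side of~\eqref{EquationFirstOrderFinalVariant} involves $\Ltwohnt{\partial_j v}$ for $j\in\{0,1,2\}$, which are derivatives of order $m$ with $\alpha_3=k-1$ and are therefore controlled by the previous step; the output controls $\partial_3 v$, i.e.\ $\alpha_3=k$. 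Iterating up to $k=m$ closes the argument, at which point the $\gamma$-absorption goes through as you describe. This nested structure is exactly the one encoded by Lemma~\ref{LemmaRegularityInNormalDirection} in the regularity section, and your proof must adopt it.
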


%%%%%%%%%%%%%%%%%%%%%%%%%%%%%%%%%%%%%%%%%%%%%%%%%%%%%%%%%%%%%%%%%%%%%%%%%%%%%%%%%%%%%%%%%%%%%
%%%%%%%%%%%%%%%%%%%%%%%%%%%%%%%%%%%%%%%%%%%%%%%%%%%%%%%%%%%%%%%%%%%%%%%%%%%%%%%%%%%%%%%%%%%%%
%%%%%%%%%%%%%%%%%%%%%%%%%%%%%%%%%%%%%%%%%%%%%%%%%%%%%%%%%%%%%%%%%%%%%%%%%%%%%%%%%%%%%%%%%%%%%

\section{Regularity of solutions to the linear problem}
\label{SectionRegularity}

In this section we prove that the $G_0(\Omega)$-solution $u$ of~\eqref{IBVP} actually belongs to $\G{m}$ if the data and the 
coefficients are accordingly smooth and compatible. To this aim, different regularizing techniques 
in normal, tangential, and time direction are used. We first show that regularity in time and 
in tangential directions implies regularity in normal direction. This is the crucial step in the regularization
argument, and it heavily relies on the structure of the Maxwell system. As in Proposition~\ref{PropositionCentralEstimateInNormalDirection},
we only look at the linear initial value problem~\eqref{IVP}.

\begin{lem}
 \label{LemmaRegularityInNormalDirection}
Let $\eta > 0$, $m \in \N$, and $\tilde{m} =  \max\{m,3\}$. Take coefficients $\mathcal{A}_0 \in \Fupdwl{\tilde{m}}{cp}{\eta}$, 
$\mathcal{A}_1, \mathcal{A}_2 \in \Fcoeff{\tilde{m}}{\operatorname{cp}}$, $\mathcal{A}_3 = \tilde{\mathcal{A}}_3^{\operatorname{co}}$,
and $\mathcal{D} \in \Fuwl{\tilde{m}}{cp}$. Choose data $f \in \Ha{m}$
and $u_0 \in \Hh{m}$.  Let $u$ be a solution of~\eqref{IVP} for these coefficients and data. 
Assume that $u$ belongs to $\bigcap_{j = 1}^m C^j(\clJ, \Hh{m-j})$.

Take $k \in \{1, \ldots, m\}$ and a multi-index $\alpha \in \N_0^4$ with $|\alpha| = m$, $\alpha_0 = 0$,
and $\alpha_3 = k$. 
Suppose that $\partial^\beta u$ is contained in $\G{0}$ for all $\beta \in \N_0^4$ with $|\beta| = m$ and 
$\beta_3 \leq k-1$.
Then $\partial^\alpha u$ is an element of $\G{0}$.
\end{lem}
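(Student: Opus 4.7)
The plan is to apply Proposition~\ref{PropositionCentralEstimateInNormalDirection} to the function $v := \partial^{\alpha - \varepsilon} u$, where $\varepsilon = (0,0,0,1) \in \N_0^4$ is the unit multi-index in the $x_3$-direction. Since $|\alpha - \varepsilon| = m-1$, $(\alpha - \varepsilon)_3 = k-1$, and $\partial_3 v = \partial^\alpha u$, the conclusion $\nabla v \in \G{0}$ delivered by the proposition implies the claim. Differentiating~\eqref{IVP} by $\partial^{\alpha - \varepsilon}$ shows that $v$ solves an initial value problem with the same coefficients $\mathcal{A}_0,\ldots,\mathcal{A}_3,\mathcal{D}$, initial datum $v(0) = \partial^{\alpha - \varepsilon} u(0) \in \Hh{1}$, and right-hand side
\begin{equation*}
\tilde f := \partial^{\alpha - \varepsilon} f - [\partial^{\alpha - \varepsilon}, \mathcal{L}] u .
\end{equation*}

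First I would check that $v$ has the required regularity $C^1(\clJ, \Ltwoh) \cap C(\clJ, \Hhta{1}) \cap L^\infty(J, \Hh{1})$. The $C^1$-regularity in time and the $L^\infty(J, \Hh{1})$-bound follow immediately from the hypothesis $u \in \bigcap_{j=1}^m C^j(\clJ, \Hh{m-j})$. The tangential continuity $\partial_t v, \partial_1 v, \partial_2 v \in C(\clJ, \Ltwoh)$ is equivalent to the statement $\partial^\beta u \in \G{0}$ for the three choices $\beta \in \{\alpha - \varepsilon + (1,0,0,0),\, \alpha - \varepsilon + (0,1,0,0),\, \alpha - \varepsilon + (0,0,1,0)\}$, each having total order $m$ and normal order $k-1$, and therefore covered by the standing assumption on $\partial^\beta u$. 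Moreover, $\tilde f \in \G{0}$ holds by $f \in \Ha{m}$ together with the bilinear estimates of Lemma~\ref{LemmaRegularityForA0}, since every derivative of $u$ appearing in the commutator $[\partial^{\alpha - \varepsilon}, \mathcal{L}] u$ has either order at most $m-1$ (and so lies in $\G{0}$ by the regularity of $u$) or exactly $m$ with normal order $\leq k-1$ (and so lies in $\G{0}$ by hypothesis).

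The main technical difficulty, which I expect to constitute the bulk of the proof, is the divergence condition $\Div(\mathcal{A}_1,\mathcal{A}_2,\mathcal{A}_3) \tilde f \in \Ltwoa$ required by Proposition~\ref{PropositionCentralEstimateInNormalDirection}. Expanding this object through~\eqref{EquationDefinitionOfGeneralizedDiv} takes one additional spatial derivative of $\tilde f$, which in the commutator term $[\partial^{\alpha - \varepsilon}, \mathcal{L}] u$ naively manufactures factors $\partial^\alpha u$ of normal order $k$ that are not a priori controlled in $L^2$. The remedy is exactly the Maxwell-type cancellations already exploited in the proof of Proposition~\ref{PropositionCentralEstimateInNormalDirection}: the would-be top-order contributions collect, via the structural matrix $\tilde\mu$ from~\eqref{EquationDefinitionOfTildeMu}, into sums of the form treated in~\eqref{EquationTraceSmaller3EqualsZero}, \eqref{EquationTraceLarger3EqualsZero}, \eqref{EquationTraceSmaller9EqualsZero} and~\eqref{EquationTraceLarger9EqualsZero}, and therefore vanish identically as a manifestation of $\Div \circ \curl = 0$. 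What remains are products involving coefficient derivatives of order $\leq \tilde m$ and derivatives of $u$ of total order $\leq m$ with normal order $\leq k-1$; both factors are controlled in $L^2$ by the standing hypotheses and Lemma~\ref{LemmaRegularityForA0}, so that $\Div(\mathcal{A}_1,\mathcal{A}_2,\mathcal{A}_3)\tilde f$ indeed belongs to $\Ltwoa$.

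Assembling these ingredients, Proposition~\ref{PropositionCentralEstimateInNormalDirection} applies to $v$, producing $\nabla v \in \G{0}$ and in particular $\partial^\alpha u = \partial_3 v \in \G{0}$, which finishes the proof.
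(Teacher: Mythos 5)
Your plan fails at the very first verification step. You claim that the regularity hypothesis $v = \partial^{\alpha-\varepsilon} u \in L^\infty(J,\Hh{1})$ required by Proposition~\ref{PropositionCentralEstimateInNormalDirection} ``follows immediately from $u \in \bigcap_{j=1}^m C^j(\clJ,\Hh{m-j})$.'' It does not. That intersection gives you at most $u \in C^1(\clJ,\Hh{m-1})$, i.e.\ $m-1$ spatial derivatives of $u$ in $L^2$, whereas $v \in L^\infty(J,\Hh{1})$ with $|\alpha - \varepsilon| = m-1$ would require a full $m$ spatial derivatives. Concretely, $\partial_3 v = \partial^\alpha u$ has normal order $k$, which is covered neither by the standing hypothesis on $\partial^\beta u$ (that only covers $\beta_3 \le k-1$) nor by the time-regularity assumption. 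Worse, $\partial^\alpha u \in L^\infty(J,\Ltwoh)$ is essentially what the lemma is asking you to establish; assuming it is circular. Since Proposition~\ref{PropositionCentralEstimateInNormalDirection} genuinely uses the $L^\infty(J,\Hh{1})$ assumption (the whole chain \eqref{EquationDifferentiatingMTransposedAzeroGradu}--\eqref{EquationFullRankMatrixMtildeInFrontOfPartial3u} manipulates $\nabla u$ as a bona fide $L^2$ object), you cannot apply it directly to $\partial^{\alpha-\varepsilon} u$.

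This is exactly why the paper introduces the mollification-translation operator $M_\epsilon T_\delta$: the regularized function $M_\epsilon T_\delta\,\partial^{\alpha-\varepsilon} u$ lies in $C^1(\clJ,\Hh{2}) \hookrightarrow \G{1}$, so the hypotheses of the proposition are satisfied for it; the translation $T_\delta$ pushes the half-space boundary away so that the spatial mollifier is well-defined; and the content of the argument is then to show that the resulting a priori bound \eqref{EquationFirstOrderFinal} is \emph{uniform} in $\epsilon$ and $\delta$ (requiring the commutator estimates for $[\,\cdot\,,M_\epsilon]$ and the cancellations you correctly identify on the level of $\Div_\delta$), after which one passes to the limit. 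Your observation about $\Div\circ\curl$-type cancellations being what controls the divergence of the forcing is indeed the right structural insight and matches the computations \eqref{EquationTraceSmaller3EqualsZero}--\eqref{EquationTraceLarger9EqualsZero} in the paper, but it addresses a secondary hypothesis; without the regularization and limit passage, the main hypothesis is unverifiable and the argument does not close.
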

\begin{proof} 
 I) We begin with several preparations.  Let $M_\epsilon$, $\epsilon>0$, be a standard mollifier on $\R^3$ 
 with kernel $\rho\ge0$. Let $\delta>0$. We introduce the translation operator
 \begin{align}
 \label{EquationDefinitionTranslation}
  T_\delta v(x) = v(x_1,x_2, x_3 + \delta) \quad
  \text{for } \ v \in L^1_{\operatorname{loc}}(\R^3_+) \text{ and a.e. } x \in \R^2 \times (-\delta, \infty).
 \end{align}
 Notice that $T_\delta$ maps $W^{l,p}(\R^3_+)$ continuously into 
 $W^{l,p}(\R^2 \times (-\delta, \infty))$ and that $\partial^{\tilde{\alpha}} T_\delta v = T_\delta \partial^{\tilde{\alpha}} v$
 for all $v \in W^{l,p}(\R^3_+)$, $\tilde{\alpha} \in \N_0^4$ with $|\tilde{\alpha}| \leq l$, $l \in \N_0$, and $1 \leq p \leq \infty$.
 If $v \in L^1_{\operatorname{loc}}(\R^3)$, we further define $T_{\delta} v$ by formula~\eqref{EquationDefinitionTranslation} 
 for all $\delta \in \R$.
 
 Functions which are only defined on a subset of $\R^3$ will be identified with their zero-extensions. 
  Moreover, restrictions of a map $v$ to a subset are also denoted by~$v$.
 We extend the translations $T_\delta$ to continuous operators on $H^{-1}(\R^3_+)$ by 
 setting
 \begin{align*}
    &\langle T_\delta v, \psi \rangle_{H^{-1}(\R^3_+) \times H^1_0(\R^3_+)} = \langle v, T_{-\delta} \psi \rangle_{H^{-1}(\R^3_+) \times H^1_0(\R^3_+)} 
 \end{align*}
 for all  $\psi \in H^1_0(\R^3_+)$ and $\delta > 0$. It is then straighforward to check that
  $\partial_j T_\delta v  = T_\delta \partial_j v$ for all $v \in \Ltwoh$ and $\delta > 0$.
 
 We want to apply $M_\epsilon$ to functions in 
 $L^1_{\operatorname{loc}}(\R^3_+)$ without obtaining singularities at the boundary in limit processes.
 To that purpose, we take $0 < \epsilon < \delta$ and look at the regularization $M_\epsilon T_\delta v$
 for $v\in L^1_{\operatorname{loc}}(\R^3_+)$. 
 If $v$ and $\partial_j v$ belong to $L^1_{\operatorname{loc}}(\R^3_+)$, then also $M_\epsilon T_\delta v$ 
 has a weak derivative in $\R^3_+$ and
  $\partial_j M_\epsilon T_\delta  v = M_\epsilon T_\delta  \partial_j v$ for all $j \in \{1,2,3\}$.
 
 We set $\tilde{\rho}(x) = \rho(-x)$ for all $x \in \R^3$ and denote 
the corresponding mollifier by $\tilde{M}_\epsilon$.  A straightforward computation shows that
 \begin{align}
    \label{DefinitionConvolutionOnHMinusOneDomain}
 &\langle M_\epsilon T_\delta  v, \psi \rangle_{H^{-1}(\R^3_+) \times H^1_0(\R^3_+)} 
 = \langle  v, T_{-\delta} \tilde{M}_\epsilon  \psi \rangle_{H^{-1}(\R^3_+) \times H^1_0(\R^3_+)}
\end{align}
for all $ v \in \Ltwoh$ and $\psi \in H^1_0(\R^3_+)$.
 As $T_{-\delta} \tilde{M}_\epsilon$ maps $H^1_0(\R^3_+)$ continuously into itself, the mapping
 $M_\epsilon T_\delta$ continuously extends to 
 an operator on $H^{-1}(\R^3_+)$ via formula~\eqref{DefinitionConvolutionOnHMinusOneDomain}.
  We deduce the identity
\begin{align*}
 \partial_j M_\epsilon T_\delta v = M_\epsilon \partial_j T_\delta v =  M_\epsilon T_\delta  \partial_j v
\end{align*}
by duality for all $j \in \{1,2,3\}$ and $v \in \Ltwoh$. Finally, for $A \in W^{1,\infty}(\R^3_+)$ and $v \in \Hh{-1}$ 
we obtain  $(T_\delta A) T_\delta v =  T_\delta (A v)$ in $\Hh{-1}$.

\smallskip

II) Let $0 < \epsilon < \delta$. We abbreviate the differential operators 
$\mathcal{L}(T_\delta \mathcal{A}_j, T_\delta \mathcal{D})$ by $\mathcal{L}_\delta$ and 
$\Div(T_\delta \mathcal{A}_1, T_\delta \mathcal{A}_2, T_\delta \mathcal{A}_3)$ by 
$\Div_\delta$. (Recall~\eqref{EquationDefinitionOfGeneralizedDiv}.) Let $\alpha \in \N_0^4$ with $|\alpha| = m$, 
$\alpha_0 = 0$, and $\alpha_3 = k$. We set $\alpha' = \alpha - e_3 \in \N_0^4$. The derivative
$\partial^{\alpha'} u$ belongs to $\G{0}$ by assumption. 
Because of the mollifier, the map $M_\epsilon T_\delta \partial^{\alpha'} u$ is contained in 
$C^1(\clJ, \Hh{2}) \hookrightarrow \G{1}$, $M_\epsilon T_\delta \partial^{\alpha'} u_0$ in $\Hh{1}$,  
$\mathcal{L}_\delta M_\epsilon T_\delta \partial^{\alpha'} u$  in $\G{0}$, 
and $\Div_\delta \mathcal{L}_\delta M_\epsilon T_\delta \partial^{\alpha'} u$ in $\Ltwoa$. 
To show convergence of $\partial_3 M_\epsilon T_\delta \partial^{\alpha'} u$ as $\epsilon \rightarrow 0$, we
want to apply the a priori estimate~\eqref{EquationFirstOrderFinal}.
Therefore, we have to study the convergence properties of the  functions $\mathcal{L}_\delta M_\epsilon T_\delta \partial^{\alpha'} u$ 
and $\Div_\delta \mathcal{L}_\delta M_\epsilon T_\delta \partial^{\alpha'} u$ as $\epsilon \rightarrow 0$. 
We focus on the latter as this is the more difficult one.

We use  the maps $\mu_{kl}$, $\hat \mu$, and $\tilde\mu$ from  \eqref{EquationDefinitionOfTildeMu}.
Exploiting step~I), we compute
 \begin{align}
  \label{EquationPreparationToComputeDivOfLMepsu}
  &(T_\delta \tilde{\mu})^T \nabla \mathcal{L}_\delta M_\epsilon T_\delta \partial^{\alpha'} u \\
  &= \sum_{j = 0}^2 (T_\delta \tilde{\mu})^T (T_\delta \nabla \mathcal{A}_j) \partial_j M_{\epsilon} T_\delta \partial^{\alpha'} u + (T_\delta \tilde{\mu})^T (T_\delta \nabla \mathcal{D}) M_\epsilon T_\delta \partial^{\alpha'} u   \nonumber\\
  &\quad \! + T_\delta (\tilde{\mu}^T \mathcal{A}_0) \nabla M_\epsilon T_\delta \partial_t \partial^{\alpha'} u + T_\delta (\tilde{\mu}^T \mathcal{D}) \nabla M_\epsilon T_\delta \partial^{\alpha'} u  + \! \sum_{j = 1}^3 T_\delta(\tilde{\mu}^T \mathcal{A}_j) \nabla \partial_j M_{\epsilon} T_\delta \partial^{\alpha'} u \nonumber \\
  &=: \Lambda^{\delta,\epsilon} + \sum_{j = 1}^3 T_\delta(\tilde{\mu}^T \mathcal{A}_j) \nabla \partial_j M_{\epsilon} T_\delta \partial^{\alpha'} u. \nonumber
 \end{align}
 The cancellation properties of $\mathcal{L}_\delta$ established in formulas~\eqref{EquationTraceSmaller3EqualsZero}, \eqref{EquationTraceLarger3EqualsZero}, 
 \eqref{EquationTraceSmaller9EqualsZero} and \eqref{EquationTraceLarger9EqualsZero} show that
 \begin{align*}
 	&\sum_{k = 1}^3 \sum_{j = 1}^3 (T_\delta(\tilde{\mu}^T \mathcal{A}_j) \nabla \partial_j M_{\epsilon} T_\delta \partial^{\alpha'} u)_{(k+3l)k} = 0
 \end{align*}
 for all $l \in \{0,1,2,3\}$.
 Equation~\eqref{EquationPreparationToComputeDivOfLMepsu} thus leads to
 \begin{equation}
 	\label{EquationEstimateForDivFirstResult}
 	\Div_\delta  \mathcal{L}_\delta M_\epsilon T_\delta \partial^{\alpha'} u = \sum_{k = 1}^3 \Big( \Lambda_{kk}^{\delta,\epsilon}, \Lambda_{(k+3)k}^{\delta,\epsilon}, \Lambda_{(k+6)k}^{\delta,\epsilon}, \Lambda_{(k+9)k}^{\delta,\epsilon} \Big).
 \end{equation}
 We rewrite $\Lambda^{\delta, \epsilon}$ in the form
 \begin{align*}
 	\Lambda^{\delta, \epsilon} &= \sum_{j = 0}^2 [T_\delta (\tilde{\mu}^T \nabla \mathcal{A}_j), M_\epsilon] \partial_j T_\delta \partial^{\alpha'} u  
 	  + [T_\delta(\tilde{\mu}^T \nabla \mathcal{D}), M_\epsilon] T_\delta \partial^{\alpha'} u \\
 	&\quad  + [T_\delta (\tilde{\mu}^T \mathcal{A}_0), M_\epsilon] \nabla  T_\delta \partial_t \partial^{\alpha'} u  
 	   + [T_\delta (\tilde{\mu}^T \mathcal{D}), M_\epsilon] \nabla T_\delta \partial^{\alpha'} u\nonumber \\
 	&\quad + M_\epsilon T_\delta \Big( \sum_{j=0}^2 \tilde{\mu}^T \nabla \mathcal{A}_j \partial_j \partial^{\alpha'} u 
 	  + \tilde{\mu}^T \nabla \mathcal{D} \partial^{\alpha'} u + \tilde{\mu}^T \mathcal{A}_0 \nabla \partial_t \partial^{\alpha'} u 
 	   + \tilde{\mu}^T \mathcal{D} \nabla \partial^{\alpha'} u \Big).
 \end{align*}
 In view  of the terms with $m$ space derivatives in the last line, we introduce the map
 \begin{align*}
  \tilde{f}_{\alpha'} &= \sum_{0 < \beta \leq \alpha'} \binom{\alpha'}{\beta} \partial^{\beta}  (\tilde{\mu}^T \mathcal{A}_0) \nabla \partial^{\alpha' - \beta} \partial_t u
			  + \sum_{0 < \beta \leq \alpha'} \binom{\alpha'}{\beta} \partial^{\beta}  (\tilde{\mu}^T \mathcal{D}) \nabla \partial^{\alpha' - \beta} u \\
		      &\quad + \sum_{j = 0}^2 \sum_{0 < \beta \leq \alpha'} \binom{\alpha'}{\beta} \partial^\beta (\tilde{\mu}^T \nabla \mathcal{A}_j) \partial^{\alpha' - \beta} \partial_j u  + \sum_{0 < \beta \leq \alpha'} \binom{\alpha'}{\beta} \partial^\beta (\tilde{\mu}^T \nabla \mathcal{D}) \partial^{\alpha' - \beta}  u.
 \end{align*}
 As $u$ and $\partial_t u$ are contained in $C(\clJ, \Hh{m-1})$, Lemma~\ref{LemmaRegularityForA0} implies that 
$\tilde{f}_{\alpha'}$ is an element of $\Ltwoa$.  It follows 
 	\begin{align*}
 	\Lambda^{\delta, \epsilon}
 	&= \sum_{j = 0}^2 [T_\delta (\tilde{\mu}^T \nabla \mathcal{A}_j), M_\epsilon] \partial_j T_\delta \partial^{\alpha'} u  + [T_\delta(\tilde{\mu}^T \nabla \mathcal{D}), M_\epsilon] T_\delta \partial^{\alpha'} u \\
 	&\quad + [T_\delta (\tilde{\mu}^T \mathcal{A}_0), M_\epsilon] \nabla  T_\delta \partial_t \partial^{\alpha'} u + [T_\delta (\tilde{\mu}^T \mathcal{D}), M_\epsilon] \nabla T_\delta \partial^{\alpha'} u + \partial^{\alpha'} M_\epsilon T_\delta (\tilde{\mu}^T \nabla f)  \nonumber \\
 	&\quad  - M_\epsilon T_\delta \tilde{f}_{\alpha'} -\sum_{j = 1}^3 \partial^{\alpha'} M_\epsilon T_\delta (\tilde{\mu}^T \mathcal{A}_j \nabla \partial_j  u) \\
 	&=: \tilde{\Lambda}^{\delta, \epsilon}  -\sum_{j = 1}^3 \partial^{\alpha'} M_\epsilon T_\delta (\tilde{\mu}^T \mathcal{A}_j \nabla \partial_j  u).
 \end{align*}
 Equations~\eqref{EquationTraceSmaller3EqualsZero}, \eqref{EquationTraceLarger3EqualsZero}, 
 \eqref{EquationTraceSmaller9EqualsZero} and \eqref{EquationTraceLarger9EqualsZero}
 also yield that
 \begin{align*}
 	 \sum_{k = 1}^3  \Big(\Lambda_{kk}^{\delta, \epsilon}, \Lambda_{(k+3)k}^{\delta, \epsilon}, \Lambda_{(k+6)k}^{\delta, \epsilon}, \Lambda_{(k+9)k}^{\delta, \epsilon} \Big) 
 	= \sum_{k = 1}^3 \Big(\tilde{\Lambda}_{kk}^{\delta, \epsilon}, \tilde{\Lambda}_{(k+3)k}^{\delta, \epsilon}, \tilde{\Lambda}_{(k+6)k}^{\delta, \epsilon}, \tilde{\Lambda}_{(k+9)k}^{\delta, \epsilon} \Big).
 \end{align*}
 By means of~\eqref{EquationEstimateForDivFirstResult}, we arrive at the core identity
 \begin{align}
  \label{EquationDivOfLMepsEdeltau}
  &\Div_\delta \mathcal{L}_\delta M_\epsilon T_\delta \partial^{\alpha'} u 
   = \sum_{k = 1}^3 \Big(\tilde{\Lambda}_{kk}^{\delta, \epsilon}, \tilde{\Lambda}_{(k+3)k}^{\delta, \epsilon}, 
      \tilde{\Lambda}_{(k+6)k}^{\delta, \epsilon}, \tilde{\Lambda}_{(k+9)k}^{\delta, \epsilon} \Big).
 \end{align}
Starting from its counterpart (4.7) in  \cite{SpitzMaxwellLinear}, the rest of the reasoning is now the same as in the proof
of Lemma~4.1 in this paper. One uses that $M_\epsilon T_\delta \partial^{\alpha'} u$
 solves the initial value problem~\eqref{IVP} with differential operator $\mathcal{L}_\delta$,
 inhomogeneity $\mathcal{L}_\delta M_\epsilon T_\delta \partial^{\alpha'} u$ and initial value $M_\epsilon T_\delta u_0$.
 In these data and in \eqref{EquationDivOfLMepsEdeltau}, one 
  can pass to the limit in $L^2$ as $\epsilon \rightarrow 0$ employing estimates for the commutators 
  of the mollifier and the coefficients. The estimate \eqref{EquationFirstOrderFinal} from 
  Proposition~\ref{PropositionCentralEstimateInNormalDirection} then allows to bound $\nabla T_\delta \partial^{\alpha'} u$ in $G_0(\Omega)$,
  uniformly in $\delta>0$, see (4.15) in  \cite{SpitzMaxwellLinear}. One can then let $\delta\to0$ obtaining the result. We omit the details.
\end{proof}

%%%%%%%%%%%%%%%%%%%%%%%%%%%%%%%%%%%%%%%%%%%%%%%%%%%%%%%%%%%%%%%%%%%%%%%%%%%%%%%%%%%%%%%%%%%%%
%%%%%%%%%%%%%%%%%%%%%%%%%%%%%%%%%%%%%%%%%%%%%%%%%%%%%%%%%%%%%%%%%%%%%%%%%%%%%%%%%%%%%%%%%%%%%
%%%%%%%%%%%%%%%%%%%%%%%%%%%%%%%%%%%%%%%%%%%%%%%%%%%%%%%%%%%%%%%%%%%%%%%%%%%%%%%%%%%%%%%%%%%%%

Replacing estimate~\eqref{EquationFirstOrderFinal} from Proposition~\ref{PropositionCentralEstimateInNormalDirection} 
 by inequality~\eqref{EquationFirstOrderL2} in the above proof, one derives the following variant of Lemma~\ref{LemmaRegularityInNormalDirection}, 
 cf.\ Corollary 4.2 in \cite{SpitzMaxwellLinear}.

\begin{cor}
 \label{CorollaryRegularityInNormalDirectionL2InTime}
Let $\eta > 0$, $m \in \N$, and $\tilde{m} =  \max\{m,3\}$. Take coefficients $\mathcal{A}_0 \in \Fupdwl{\tilde{m}}{cp}{\eta}$, 
$\mathcal{A}_1, \mathcal{A}_2 \in \Fcoeff{\tilde{m}}{\operatorname{cp}}$, $\mathcal{A}_3 = \tilde{\mathcal{A}}_3^{\operatorname{co}}$,
and $\mathcal{D} \in \Fuwl{\tilde{m}}{cp}$. Choose data $f \in \Ha{m}$ and $u_0 \in \Hh{m}$. 
Let $u$ be a solution of the initial value problem~\eqref{IVP} with these coefficients and data.
Assume that $u$ belongs to $\bigcap_{j = 1}^m C^j(\clJ, \Hh{m-j})$.

Take $k \in \{1, \ldots, m\}$ and a multi-index $\alpha \in \N_0^4$ with $|\alpha| = m$, $\alpha_0 = 0$,
and $\alpha_3 = k$. 
Suppose that $\partial^\beta u$ is contained in $\Ltwoa$ for all $\beta \in \N_0^4$ with $|\beta| = m$ and 
$\beta_3 \leq k-1$.
Then $\partial^\alpha u$ is an element of $\Ltwoa$.
\end{cor}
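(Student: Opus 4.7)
The plan is to adapt the proof of Lemma~\ref{LemmaRegularityInNormalDirection} line by line, substituting the weaker $L^2(\Omega)$ a priori estimate \eqref{EquationFirstOrderL2} from Proposition~\ref{PropositionCentralEstimateInNormalDirection} for the stronger $G_0(\Omega)$ estimate \eqref{EquationFirstOrderFinal} used there. The key observation is that all intermediate bounds and limits in Lemma~\ref{LemmaRegularityInNormalDirection} rely only on $L^2$-type convergence of the regularizations and their commutators with the mollifier; the choice of a priori estimate governs \emph{only} which norm of the conclusion we control.

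Concretely, I would reuse the translation operator $T_\delta$ and mollifier $M_\epsilon$ introduced in step~I of the proof of Lemma~\ref{LemmaRegularityInNormalDirection}, together with the compatibility relations between them, spatial derivatives, and multiplication by Lipschitz coefficients. Writing $\alpha' = \alpha - e_3$, the hypothesis $u \in \bigcap_{j = 1}^m C^j(\clJ, \Hh{m-j})$ together with $|\alpha'| = m-1$ and $\alpha'_0 = 0$ yields $\partial^{\alpha'} u \in C(\clJ, H^1(\R^3_+))$, which is enough to form the regularization $v^{\delta,\epsilon} := M_\epsilon T_\delta \partial^{\alpha'} u$ for $0 < \epsilon < \delta$. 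This $v^{\delta,\epsilon}$ solves the initial value problem \eqref{IVP} with the translated operator $\mathcal{L}_\delta = \mathcal{L}(T_\delta \mathcal{A}_j, T_\delta \mathcal{D})$, inhomogeneity $\mathcal{L}_\delta v^{\delta,\epsilon}$, and initial value $M_\epsilon T_\delta \partial^{\alpha'} u(0)$.

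Next, the cancellation computations \eqref{EquationTraceSmaller3EqualsZero}, \eqref{EquationTraceLarger3EqualsZero}, \eqref{EquationTraceSmaller9EqualsZero} and \eqref{EquationTraceLarger9EqualsZero}, which hinge only on the algebraic structure of $\tilde\mu$, $\mathcal{A}_j$, and $\tilde{\mathcal{A}}_3^{\operatorname{co}}$, still yield the core identity \eqref{EquationDivOfLMepsEdeltau}. All terms on its right-hand side are either standard mollifier commutators acting on quantities controlled by the hypothesis (that is, $\partial^\beta u \in \Ltwoa$ for $|\beta| = m$ with $\beta_3 \le k-1$) or lower-order terms involving only spatial derivatives of $u$ of order at most $m-1$, which lie in $C(\clJ, L^2(\R^3_+))$ by the assumption $u \in \bigcap_j C^j(\clJ, H^{m-j}(\R^3_+))$. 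Together with the corresponding bound on $\mathcal{L}_\delta v^{\delta,\epsilon}$ itself (again reducing to the same class of quantities), this makes $\|\Div_\delta \mathcal{L}_\delta v^{\delta,\epsilon}\|_{L^2(\Omega)}$ uniformly bounded in $\epsilon$ and $\delta$. Plugging these bounds into \eqref{EquationFirstOrderL2} — applied with coefficients $T_\delta \mathcal{A}_j$, $T_\delta \mathcal{D}$ whose constants are uniform in $\delta$ — gives a uniform $L^2(\Omega)$-bound on $\nabla v^{\delta,\epsilon}$, in particular on $\partial_3 M_\epsilon T_\delta \partial^{\alpha'} u = \partial^\alpha M_\epsilon T_\delta u$.

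The remainder of the argument, passing $\epsilon \to 0$ via standard mollifier commutator estimates and then $\delta \to 0$ using weak compactness in $L^2(\Omega)$ followed by identification of the limit, is identical to the final part of the proof of Lemma~\ref{LemmaRegularityInNormalDirection} with the uniform $G_0(\Omega)$-bound there replaced everywhere by a uniform $L^2(\Omega)$-bound. The only potential obstacle, namely producing uniform-in-$\epsilon$ estimates for the commutator terms appearing in \eqref{EquationDivOfLMepsEdeltau}, is strictly weaker than in Lemma~\ref{LemmaRegularityInNormalDirection} because $L^2(\Omega)$ is dominated by $G_0(\Omega)$; consequently the bounds already established there transfer without modification. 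The conclusion is $\partial^\alpha u \in \Ltwoa$, as claimed.
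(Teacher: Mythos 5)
Your proposal is correct and takes the same route as the paper, whose proof of the corollary consists exactly of substituting the $L^2$-estimate \eqref{EquationFirstOrderL2} for \eqref{EquationFirstOrderFinal} in the proof of Lemma~\ref{LemmaRegularityInNormalDirection}. One small imprecision: the hypotheses yield only $\partial^{\alpha'} u \in C^1(\clJ, L^2(\R^3_+)) \subset \G{0}$, not $C(\clJ,\Hh{1})$ (the latter would essentially already give the conclusion and is not available), but membership in $\G{0}$ is all that is needed to form the regularization $M_\epsilon T_\delta \partial^{\alpha'} u$, with the mollifier then supplying the smoothness required to apply Proposition~\ref{PropositionCentralEstimateInNormalDirection}.
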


%%%%%%%%%%%%%%%%%%%%%%%%%%%%%%%%%%%%%%%%%%%%%%%%%%%%%%%%%%%%%%%%%%%%%%%%%%%%%%%%%%%%%%%%%%%%%
%%%%%%%%%%%%%%%%%%%%%%%%%%%%%%%%%%%%%%%%%%%%%%%%%%%%%%%%%%%%%%%%%%%%%%%%%%%%%%%%%%%%%%%%%%%%%
%%%%%%%%%%%%%%%%%%%%%%%%%%%%%%%%%%%%%%%%%%%%%%%%%%%%%%%%%%%%%%%%%%%%%%%%%%%%%%%%%%%%%%%%%%%%%

Based on Lemma~\ref{LemmaRegularityInNormalDirection} and Corollary~\ref{CorollaryRegularityInNormalDirectionL2InTime},
the regularization arguments in tangential and time direction are analogous to the proofs of Lemma~4.4 and 4.5
in \cite{SpitzMaxwellLinear}. One first studies the solution $u$ mollified in $(x_1,x_2)$. The regularized solution $u_\epsilon$ satisfies
the Maxwell system with modified data (as in (4.20) of \cite{SpitzMaxwellLinear}). It then crucially enters into the bound 
of $u$ in a family of weighted tangential Sobolev norms, taken from Section~1.7 and Section~2.4 in~\cite{Hoermander}.
The a priori estimate from Lemma~\ref{LemmaEllerResult} allows us to control $u_\epsilon$ in $G_0$.
It is then possible to take the limit $\epsilon \to 0$. The results from \cite{Hoermander} require smooth coefficients
so that temporarily  we have to assume this extra regularity.

In the time direction one looks at the problem solved by the time derivative $v$ 
of $u$, cf.\ (4.32) in \cite{SpitzMaxwellLinear}. Integration with respect to time yields a function which 
coincides with $u$, implying the required time regularity.
Here the compatibility conditions are needed.
In these arguments the new features of the problem \eqref{IBVP} do not play a role and one can follow the lines of the
proofs of \cite{SpitzMaxwellLinear}. We thus only state the results.

\begin{lem}
 \label{LemmaRegularityInSpaceTangential}
 Let $\eta > 0$, $m \in \N$, and $\tilde{m} = \max\{m,3\}$.
 Take coefficients $\mathcal{A}_0 \in \Fupdwl{\tilde{m}}{cp}{\eta}$, $\mathcal{A}_1, \mathcal{A}_2 \in \Fcoeff{\tilde{m}}{cp}$, 
 $\mathcal{A}_3 = \tilde{\mathcal{A}}_3^{\operatorname{co}}$,
 $\mathcal{D} \in \Fuwl{\tilde{m}}{cp}$ and $B= \mathcal{B}^{\operatorname{co}}$. We further assume that these 
 coefficients belong  to $C^\infty(\overline{\Omega})$.  Let $u$ be the weak solution of~\eqref{IBVP} with data 
 $f \in \Hata{m}$,  $g \in \E{m}$,  and $u_0 \in \Hhta{m}$. 
 Suppose that $u$ belongs to $\bigcap_{j = 1}^m C^j(\clJ, \Hh{m-j})$. Pick a multi-index $\alpha \in \N_0^4$ with $|\alpha| = m$ 
 and $\alpha_0 = \alpha_3 = 0$. Then $\partial^\alpha u$ is an element of $C(\clJ, \Ltwoh)$.
\end{lem}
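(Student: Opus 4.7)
Following Lemma~4.4 of \cite{SpitzMaxwellLinear}, the plan is to regularize $u$ by a tangential mollifier $M_\epsilon$ acting in the variables $(x_1,x_2)$ only. Because $M_\epsilon$ commutes with the constant matrix $\mathcal{A}_3 = \tilde{\mathcal{A}}_3^{\operatorname{co}}$, with $\partial_t$, and with $\partial_3$, and because the boundary matrix $B = \mathcal{B}^{\operatorname{co}}$ is constant, the mollified function $u_\epsilon := M_\epsilon u$ satisfies the initial boundary value problem $\mathcal{L} u_\epsilon = f_\epsilon$, $B u_\epsilon = M_\epsilon g$, $u_\epsilon(0) = M_\epsilon u_0$, with modified inhomogeneity
\begin{equation*}
  f_\epsilon = M_\epsilon f + [\mathcal{A}_0, M_\epsilon]\partial_t u + \sum_{j = 1}^2 [\mathcal{A}_j, M_\epsilon]\partial_j u + [\mathcal{D}, M_\epsilon] u
\end{equation*}
carrying the commutators with $M_\epsilon$.

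The heart of the argument is to estimate $u_\epsilon$ uniformly in $\epsilon$ in the weighted tangential Sobolev norms from Sections~1.7 and 2.4 of \cite{Hoermander}. These norms are designed so that their finiteness encodes precisely the regularity $\partial^\alpha u \in \Ltwoa$ for all multi-indices $\alpha$ with $\alpha_3 = 0$ and $|\alpha| \leq m$, and so that the commutators $[\mathcal{A}_j, M_\epsilon]\partial_j$ obey Friedrichs-type bounds. Applying the $L^2$ a priori estimate of Lemma~\ref{LemmaEllerResult} to the problem for $u_\epsilon$ and exploiting the hypotheses $f \in \Hata{m}$, $g \in \E{m}$, and $u_0 \in \Hhta{m}$, one obtains a bound for $u_\epsilon$ in the weighted tangential Sobolev norm that is uniform in $\epsilon$. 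Passing to the limit $\epsilon \to 0$ yields $\partial^\alpha u \in L^\infty(J, \Ltwoh)$ for every $\alpha$ with $\alpha_0 = \alpha_3 = 0$ and $|\alpha| = m$; continuity in time is inherited from the approximants $\partial^\alpha u_\epsilon \in C(\clJ, \Ltwoh)$ via the same uniform bound, giving the claimed conclusion $\partial^\alpha u \in C(\clJ, \Ltwoh)$.

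The main obstacle is the derivation of the uniform commutator estimates and the resulting uniform weighted tangential Sobolev bound on $u_\epsilon$; this is the single point of the argument that forces the provisional $C^\infty$ hypothesis on the coefficients, which will be removed later via the approximation procedure of Lemma~\ref{LemmaApproximationOfCoefficients}. The compatibility conditions play no role in this step since differentiation is performed solely in tangential and time directions, so that the boundary relation $B u_\epsilon = M_\epsilon g$ transforms covariantly under these derivatives without generating any new initial-time obstructions.
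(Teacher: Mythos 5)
Your proposal matches the paper's own (very brief) proof of this lemma, which simply refers the reader to Lemma~4.4 of \cite{SpitzMaxwellLinear}: mollify in $(x_1,x_2)$ only, observe that $\mathcal{A}_3 = \tilde{\mathcal{A}}_3^{\operatorname{co}}$ and $B = \mathcal{B}^{\operatorname{co}}$ are constant so the commutators vanish for $j=3$ and the boundary condition is preserved, bound the commutator terms uniformly using the weighted tangential Sobolev norms of H\"ormander together with the $L^2$ a priori estimate of Lemma~\ref{LemmaEllerResult}, and pass to the limit $\epsilon \to 0$ under the provisional $C^\infty$ assumption on the coefficients. Your observation that compatibility conditions are irrelevant at this step (and only needed in the time-direction argument of Lemma~\ref{LemmaBasicRegularityInTime}) is also consistent with the paper.
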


\begin{lem}
 \label{LemmaBasicRegularityInTime}
 Let $\eta > 0$. Take coefficients $\mathcal{A}_0 \in \Fupdwl{3}{cp}{\eta}$, $\mathcal{A}_1, \mathcal{A}_2 \in \Fcoeff{3}{cp}$, 
 $\mathcal{A}_3 = \tilde{\mathcal{A}}_3^{\operatorname{co}}$,
 $\mathcal{D} \in \Fuwl{3}{cp}$, and $B = \mathcal{B}^{\operatorname{co}}$. 
 Choose data $u_{0} \in \Hh{1}$, $g \in  \E{1}$,
 and $f \in \Ha{1}$. Assume that the tuple $(0, \mathcal{A}_0,\ldots, \mathcal{A}_3, \mathcal{D}, B, f, g,  u_0)$ 
 fulfills the compatibility conditions~\eqref{EquationCompatibilityConditionPrecised} on $G=\R^3_+$ of order $1$. 
 Let $u \in C(\clJ, \Ltwoh)$ be the weak solution of~\eqref{IBVP} with data
 $f$, $g$, and $u_{0}$. Assume that $u \in C^1(\overline{J'}, \Ltwoh)$ implies 
 $u \in G_1(J' \times \R^3_+)$ for every open interval $J' \subseteq J$. 
 Then $u$ belongs to $\G{1}$.
\end{lem}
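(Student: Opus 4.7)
The strategy is to construct a candidate for $\partial_t u$ via the formally differentiated system, solve it at the $L^2$ level using Lemma~\ref{LemmaEllerResult}, and then recover $u$ by time integration together with an $L^2$-uniqueness argument; once $u \in C^1(\clJ, \Ltwoh)$ is established, the hypothesis of the lemma (with $J' = J$) yields $u \in \G{1}$. Since $\mathcal{A}_1, \mathcal{A}_2 \in \Fcoeff{3}{cp}$ are time-independent and $\mathcal{A}_3 = \tilde{\mathcal{A}}_3^{\operatorname{co}}$ is constant, formally differentiating~\eqref{IBVP} in time yields the linear problem
\begin{equation*}
 \mathcal{A}_0 \partial_t v + \sum_{j = 1}^3 \mathcal{A}_j \partial_j v + (\mathcal{D} + \partial_t \mathcal{A}_0) v = \partial_t f - (\partial_t \mathcal{D}) u, \quad Bv = \partial_t g, \quad v(0) = v_0,
\end{equation*}
with $v_0 := S_{\R^3_+, 1, 1}(0, \mathcal{A}_0, \mathcal{A}_1, \mathcal{A}_2, \mathcal{A}_3, \mathcal{D}, f, u_0) \in \Ltwoh$ by Lemma~\ref{LemmaEstimatesForHigherOrderInitialValues}. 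The source lies in $\Ltwoa$, the boundary datum $\partial_t g$ belongs to $L^2(J, H^{1/2}(\partial \R^3_+))$ as $g \in \E{1}$, and $\mathcal{D} + \partial_t \mathcal{A}_0 \in L^\infty(\Omega)$; hence Lemma~\ref{LemmaEllerResult} provides a unique $v \in C(\clJ, \Ltwoh)$ solving this problem.

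I next set $\tilde u(t) := u_0 + \int_0^t v(s)\, ds$, so that $\tilde u \in C^1(\clJ, \Ltwoh)$ with $\partial_t \tilde u = v$ and $\tilde u(0) = u_0$. On the boundary,
\[ B \tilde u(t) = B u_0 + \int_0^t \partial_t g(s)\, ds = B u_0 + g(t) - g(0) = g(t), \]
where the order-$1$ compatibility condition $B u_0 = g(0)$ is invoked (this is the sole use of compatibility). Setting $w := \tilde u - u$ and $G := \mathcal{L}(\mathcal{A}_0, \ldots, \mathcal{A}_3, \mathcal{D}) \tilde u - f$, the construction of $v_0$ via~\eqref{EquationDefinitionSmp} ensures $G(0) = 0$ in $\Ltwoh$, and a direct calculation that substitutes the $v$-equation gives $\partial_t G = (\partial_t \mathcal{D}) w$ in the distributional sense, whence $G(t) = \int_0^t (\partial_t \mathcal{D})(s) w(s)\, ds$ pointwise in $\Ltwoh$. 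Since $u$ is a weak solution of~\eqref{IBVP}, the function $w$ satisfies $\mathcal{L}(\mathcal{A}_0, \ldots, \mathcal{A}_3, \mathcal{D}) w = G$ weakly, $w(0) = 0$, and $Bw = 0$.

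The $L^2$ estimate~\eqref{EquationEllerEstimateInL2} applied to $w$ with vanishing initial and boundary data and source $G$ yields $\Ltwoan{w}^2 \leq (C/\gamma^2)\, \Ltwoan{G}^2$ for $\gamma \geq \gamma_0$. The pointwise bound $\Ltwohn{G(t)} \leq C \int_0^t \Ltwohn{w(s)}\, ds$ combined with the Cauchy--Schwarz inequality in time gives $\Ltwoan{G}^2 \leq (CT/\gamma)\, \Ltwoan{w}^2$. Taking $\gamma$ sufficiently large forces $w \equiv 0$, whence $u = \tilde u \in C^1(\clJ, \Ltwoh)$, and the hypothesis of the lemma applied with $J' = J$ concludes that $u \in \G{1}$. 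The main technical obstacle is the rigorous justification of $\partial_t G = (\partial_t \mathcal{D}) w$ and of the weak identity $\mathcal{L} w = G$ despite $u$ being only a weak solution of~\eqref{IBVP}; this is handled by testing against $\phi \in C_c^\infty(\Omega)$, integrating by parts in time, and exploiting that $\partial_t \tilde u = v$ holds in $C(\clJ, \Ltwoh)$ so all terms involving $\tilde u$ admit a classical interpretation.
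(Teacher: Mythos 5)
Your proposal is correct and follows exactly the strategy the paper attributes to the time-direction regularization step (the paper itself only sketches this, referring to Lemma~4.5 of \cite{SpitzMaxwellLinear}): solve the formally time-differentiated system for a candidate $v$ of $\partial_t u$ at the $L^2$-level via Lemma~\ref{LemmaEllerResult}, integrate in time to form $\tilde u$, and identify $\tilde u=u$ by the weighted $L^2$-uniqueness estimate. The use of the order-one compatibility condition $Bu_0=g(0)$ to verify $B\tilde u=g$, the observation that $G:=\mathcal{L}\tilde u-f$ satisfies $G(0)=0$ because $v_0=S_{1,1}$, the substitution of the $v$-equation to obtain $\partial_t G=(\partial_t\mathcal{D})w$, and the Gronwall-type absorption with $\gamma$ large are all the right ingredients, arranged as in the reference.
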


%%%%%%%%%%%%%%%%%%%%%%%%%%%%%%%%%%%%%%%%%%%%%%%%%%%%%%%%%%%%%%%%%%%%%%%%%%%%%%%%%%%%%%%%%%%%
%%%%%%%%%%%%%%%%%%%%%%%%%%%%%%%%%%%%%%%%%%%%%%%%%%%%%%%%%%%%%%%%%%%%%%%%%%%%%%%%%%%%%%%%%%%%
%%%%%%%%%%%%%%%%%%%%%%%%%%%%%%%%%%%%%%%%%%%%%%%%%%%%%%%%%%%%%%%%%%%%%%%%%%%%%%%%%%%%%%%%%%%%

To iterate the previous result, we need a relation between the operators $S_{m,p}$ of different order
stated in the next lemma. It follows from  a straightforward computation  based on definition
\eqref{EquationDefinitionSmp} of $S_{m,p}$ as in Lemma~4.8 of~\cite{SpitzDissertation}.
\begin{lem}
 \label{LemmaHigherOrderCompatibilityConditions}
 Let  $\eta > 0$, $m \in \N$ and $\tilde{m} = \max\{m,3\}$. Take coefficients $\mathcal{A}_0 \in \Fupdwl{\max\{m+1,3\}}{cp}{\eta}$ with 
 $\partial_t \mathcal{A}_0 \in \Fuwl{\tilde{m}}{cp}$, $\mathcal{A}_1$, $\mathcal{A}_2 \in \Fcoeff{\max\{m+1,3\}}{cp}$, 
 $\mathcal{A}_3 = \tilde{\mathcal{A}}_3^{\operatorname{co}}$, $\mathcal{D} \in \Fuwl{\max\{m+1,3\}}{cp}$,
 and $B = \mathcal{B}^{\operatorname{co}}$. Choose data $t_0 \in \clJ$, 
 $u_0 \in \Hh{m+1}$, $g \in \E{m+1}$, and $f \in \Ha{m+1}$. Assume that $u \in \G{m}$ solves~\eqref{IBVP}
 with initial time $t_0$.
 Set $u_1 = S_{m+1,1}(t_0, \mathcal{A}_0, \ldots,\mathcal{A}_3, \mathcal{D}, f, u_0)$ and $f_1 = \partial_t f - \partial_t \mathcal{D} u$.
 Let $p \in \{0, \ldots, m-1\}$.
 We then obtain
 \begin{align*}
  S_{m, p}(t_0,\mathcal{A}_0, \ldots, \mathcal{A}_3, \partial_t \mathcal{A}_0 + \mathcal{D}, f_1, u_1)
   = S_{m+1, p+1}(t_0,\mathcal{A}_0, \ldots, \mathcal{A}_3, \mathcal{D}, f,u_0).
 \end{align*}
\end{lem}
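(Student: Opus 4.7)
The plan is to prove the identity by induction on $p$, using the recursive definition \eqref{EquationDefinitionSmp} of $S_{m,p}$ together with Pascal's identity. The conceptual content behind the claim is that $v = \partial_t u$ formally solves the same type of system obtained from \eqref{IBVP} by differentiating in time: since the $\mathcal{A}_j$ for $j \geq 1$ are time-independent, applying $\partial_t$ to $\mathcal{A}_0 \partial_t u + \sum_j \mathcal{A}_j \partial_j u + \mathcal{D} u = f$ gives
\begin{equation*}
   \mathcal{A}_0 \partial_t v + \sum_{j=1}^3 \mathcal{A}_j \partial_j v + (\partial_t \mathcal{A}_0 + \mathcal{D}) v = \partial_t f - \partial_t \mathcal{D}\, u = f_1,
\end{equation*}
with initial value $v(t_0) = \partial_t u(t_0) = S_{m+1,1}(t_0, \mathcal{A}_0, \ldots, \mathcal{A}_3, \mathcal{D}, f, u_0) = u_1$. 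The lemma says that the "initial-value formulas" $S_{m,p}$ for this differentiated system reproduce the iterated time derivatives $\partial_t^{p+1} u(t_0)$ of the original one. Crucially, the computation is purely algebraic and does not use that $u$ actually solves \eqref{IBVP}; only the algebraic structure of the recursion \eqref{EquationDefinitionSmp} is needed.

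For the base case $p=0$, one has $S_{m,0}(t_0, \ldots, f_1, u_1) = u_1 = S_{m+1,1}(t_0, \ldots, f, u_0)$ directly from \eqref{EquationDefinitionSmp}. For the inductive step, assume the claim holds for all $q \leq p-1$. Writing out $S_{m,p}(t_0, \mathcal{A}_0, \ldots, \mathcal{A}_3, \partial_t \mathcal{A}_0 + \mathcal{D}, f_1, u_1)$ via \eqref{EquationDefinitionSmp}, one substitutes $S_{m,q}(\ldots, f_1, u_1) = S_{m+1,q+1}(\ldots, f, u_0)$ for $q \in \{0,\ldots,p-1\}$ by the induction hypothesis, expands $\partial_t^{p-1} f_1(t_0) = \partial_t^p f(t_0) - \partial_t^{p-1}(\partial_t \mathcal{D}\, u)(t_0)$ via Leibniz, uses $\partial_t^{p-1-k} u(t_0) = S_{m+1,p-1-k}(\ldots, f, u_0)$, and splits $\partial_t^l(\partial_t \mathcal{A}_0 + \mathcal{D}) = \partial_t^{l+1} \mathcal{A}_0 + \partial_t^l \mathcal{D}$.

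Then the terms group naturally into contributions involving $\partial_t^l \mathcal{A}_0(t_0)$ and $\partial_t^l \mathcal{D}(t_0)$ multiplied by $S_{m+1,\star}$. After a shift of summation index, each coefficient becomes a sum $\binom{p-1}{l} + \binom{p-1}{l-1}$, which by Pascal's identity equals $\binom{p}{l}$, with the endpoint cases $l=0$ and $l=p$ matching the unique binomials $\binom{p}{0}$ and $\binom{p}{p}$. This is precisely the expansion of $S_{m+1,p+1}(t_0, \mathcal{A}_0, \ldots, \mathcal{A}_3, \mathcal{D}, f, u_0)$ from \eqref{EquationDefinitionSmp}, completing the induction.

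The only real work is the index bookkeeping in the inductive step; no analytic obstacle arises. The regularity hypotheses on $\mathcal{A}_0, \mathcal{D}, f, u_0$ (and the fact that $u \in \mathcal{G}_m(\Omega)$, needed only so that all time traces $\partial_t^j u(t_0)$ and products such as $\partial_t \mathcal{D}\, u$ make sense in the appropriate Sobolev spaces) ensure that every term appearing in the recursion lies in the right space, so the computation can be carried out rigorously; Lemma~\ref{LemmaEstimatesForHigherOrderInitialValues} and Lemma~\ref{LemmaRegularityForA0} supply the requisite boundedness. The structural parallel with Lemma~4.8 of \cite{SpitzDissertation} means no new ideas are required beyond the combinatorial identity above.
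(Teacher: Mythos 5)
Your induction scheme is the intended one: the paper itself defers to Lemma~4.8 of \cite{SpitzDissertation} as a ``straightforward computation based on definition~\eqref{EquationDefinitionSmp},'' and the steps you describe --- base case $u_1=S_{m+1,1}$, Leibniz on $\partial_t^{p-1}f_1(t_0)$, the split $\partial_t^l(\partial_t\mathcal{A}_0+\mathcal{D})=\partial_t^{l+1}\mathcal{A}_0+\partial_t^l\mathcal{D}$, index shifts, and Pascal's identity $\binom{p-1}{l-1}+\binom{p-1}{l}=\binom{p}{l}$ --- are exactly that computation, and they close.

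One claim you make is internally inconsistent and should be dropped. You write that the computation ``does not use that $u$ actually solves \eqref{IBVP}; only the algebraic structure of the recursion \eqref{EquationDefinitionSmp} is needed,'' but three lines later you invoke $\partial_t^{p-1-k}u(t_0)=S_{m+1,p-1-k}(t_0,\mathcal{A}_j,\mathcal{D},f,u_0)$, which is precisely \eqref{EquationTimeDerivativesOfSolution} evaluated at $t_0$ --- a consequence of $u$ solving \eqref{IBVP}, not of the recursion. And this is not cosmetic: when you expand $\partial_t^{p-1}(\partial_t\mathcal{D}\,u)(t_0)$ by Leibniz, you must replace the factors $\partial_t^{p-1-k}u(t_0)$ by $S_{p-1-k}$ before the resulting binomials can merge with those coming from the $\partial_t^l\mathcal{D}(t_0)\,\tilde S_{p-1-l}$ terms in the recursion. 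Without that substitution the two families of terms do not combine and the identity with $S_{m+1,p+1}$ fails. So the hypothesis that $u\in\G{m}$ solves \eqref{IBVP} is used in an essential way exactly there (and $p\le m-1$ guarantees $p-1-k\le m-2$, so \eqref{EquationTimeDerivativesOfSolution} applies); present it that way rather than discounting it.
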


%%%%%%%%%%%%%%%%%%%%%%%%%%%%%%%%%%%%%%%%%%%%%%%%%%%%%%%%%%%%%%%%%%%%%%%%%%%%%%%%%%%%%%%%%%%%%
%%%%%%%%%%%%%%%%%%%%%%%%%%%%%%%%%%%%%%%%%%%%%%%%%%%%%%%%%%%%%%%%%%%%%%%%%%%%%%%%%%%%%%%%%%%%%
%%%%%%%%%%%%%%%%%%%%%%%%%%%%%%%%%%%%%%%%%%%%%%%%%%%%%%%%%%%%%%%%%%%%%%%%%%%%%%%%%%%%%%%%%%%%%

Combining the above results with an iteration argument, we derive the desired 
regularity of the solution $u$ provided the coefficients are smooth.
\begin{prop}
 \label{PropositionRegularityForApproximatingProblem}
 Let $\eta > 0$, $m \in \N$, and $\tilde{m} = \max\{m,3\}$. 
 Take coefficients 
 $\mathcal{A}_0 \in \Fupdwl{\tilde{m}}{cp}{\eta}$ with $\partial_t \mathcal{A}_0 \in \Fuwl{\max\{m-1,3\}}{cp}$ , $\mathcal{A}_1, \mathcal{A}_2 \in \Fcoeff{\tilde{m}}{cp}$, 
 $\mathcal{A}_3 = \tilde{\mathcal{A}}_3^{\operatorname{co}}$,
  $\mathcal{D} \in \Fuwl{\tilde{m}}{cp}$, and $B = \mathcal{B}^{\operatorname{co}}$. 
  Assume that these coefficients are  contained in $C^\infty(\overline{\Omega})$.
 Choose data $f \in \Ha{m}$, $g \in \E{m}$, and $u_0 \in \Hh{m}$ such that the tuple 
 $(0, \mathcal{A}_0, \ldots, \mathcal{A}_3, \mathcal{D}, B, f, g, u_0)$ 
 satisfies the compatibility conditions~\eqref{EquationCompatibilityConditionPrecised} on $G=\R^3_+$
 of order $m$.  Let $u$ be the weak solution of~\eqref{IBVP}  Then $u$ belongs to $\G{m}$.
\end{prop}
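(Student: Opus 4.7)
The plan is to proceed by induction on $m$, relying on three regularity tools already established: the tangential space regularity of Lemma~\ref{LemmaRegularityInSpaceTangential}, the normal regularity of Lemma~\ref{LemmaRegularityInNormalDirection}, and the time-regularity bootstrap of Lemma~\ref{LemmaBasicRegularityInTime}, combined with Lemma~\ref{LemmaHigherOrderCompatibilityConditions} to link the compatibility conditions at consecutive orders.

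For the base case $m=1$, the task reduces to verifying the hypothesis of Lemma~\ref{LemmaBasicRegularityInTime}: for every open subinterval $J'\subseteq J$ on which $u\in C^1(\overline{J'},L^2(\R^3_+))$, I must show $u\in G_1(J'\times\R^3_+)$. Restricting to such a $J'$, an application of Lemma~\ref{LemmaRegularityInSpaceTangential} at order $1$ (with $f\in\Ha{1}\subset\Hata{1}$ and $u_0\in\Hh{1}\subset\Hhta{1}$) gives $\partial_1 u,\partial_2 u\in C(\overline{J'},L^2)$, and Lemma~\ref{LemmaRegularityInNormalDirection} with $m=k=1$ and $\alpha=e_3$ then yields $\partial_3 u\in C(\overline{J'},L^2)$, its hypothesis on $\partial^\beta u$ with $|\beta|=1$, $\beta_3=0$ being met by the previous step together with the $C^1$-in-time assumption on $J'$. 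Lemma~\ref{LemmaBasicRegularityInTime} therefore delivers $u\in\G{1}$.

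For the inductive step $m\to m+1$, the induction hypothesis applied at order $m$ (whose assumptions are inherited from those at $m+1$) already gives $u\in\G{m}$. I then consider the initial boundary value problem formally satisfied by $v=\partial_t u$: its coefficients are $\mathcal{A}_0,\mathcal{A}_1,\mathcal{A}_2,\mathcal{A}_3,\partial_t\mathcal{A}_0+\mathcal{D}$, its inhomogeneity is $f_1=\partial_t f-\partial_t\mathcal{D}\,u$, its boundary datum is $\partial_t g$, and its initial value is $u_1=S_{m+1,1}(0,\mathcal{A}_0,\ldots,\mathcal{A}_3,\mathcal{D},f,u_0)$. The required regularity of $f_1$ and $u_1$ follows from Lemmas~\ref{LemmaRegularityForA0} and~\ref{LemmaEstimatesForHigherOrderInitialValues}, and Lemma~\ref{LemmaHigherOrderCompatibilityConditions} identifies the order-$m$ compatibility conditions for this derived problem with the order-$(m+1)$ conditions assumed for the original one. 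The induction hypothesis applied to $v$ then yields $\partial_t u\in\G{m}$, so that $\partial_t^j u\in C(\clJ,H^{m+1-j})$ for every $j=1,\ldots,m+1$. It remains to establish $u\in C(\clJ,H^{m+1})$. Lemma~\ref{LemmaRegularityInSpaceTangential} at order $m+1$, whose time-regularity hypothesis is now satisfied, produces $\partial^\alpha u\in C(\clJ,L^2)$ for all $\alpha$ with $|\alpha|=m+1$ and $\alpha_0=\alpha_3=0$. A finite induction over $k=1,\ldots,m+1$ using Lemma~\ref{LemmaRegularityInNormalDirection} then successively supplies $\partial^\alpha u\in C(\clJ,L^2)$ for $|\alpha|=m+1$, $\alpha_0=0$, $\alpha_3=k$, the hypothesis at each step being furnished by the previous iterations together with the already-established tangential and time derivatives. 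Assembling these pieces yields $u\in\G{m+1}$.

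The main obstacle is the careful verification at every step that all premises of Lemmas~\ref{LemmaRegularityInNormalDirection} and~\ref{LemmaRegularityInSpaceTangential} are in place, in particular the full time-regularity assumption $u\in\bigcap_{j=1}^{m+1}C^j(\clJ,H^{m+1-j})$ required to invoke them at the top order; this is exactly what the passage to the derivative problem $v=\partial_t u$ supplies, and the compatibility matching between original and derivative problems is precisely the content of Lemma~\ref{LemmaHigherOrderCompatibilityConditions}. The remaining technical bookkeeping for the transformed coefficients and data (such as $\partial_t\mathcal{A}_0+\mathcal{D}$, $f_1$, $u_1$) is routine, handled by the bilinear estimates of Lemma~\ref{LemmaRegularityForA0} and the bounds of Lemma~\ref{LemmaEstimatesForHigherOrderInitialValues}.
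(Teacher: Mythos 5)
Your proof follows the paper's argument exactly: the same $m=1$ base case built from Lemmas~\ref{LemmaBasicRegularityInTime}, \ref{LemmaRegularityInSpaceTangential}, and \ref{LemmaRegularityInNormalDirection}, and the same inductive step passing to the derived problem for $\partial_t u$ with coefficients $\mathcal{A}_0,\ldots,\mathcal{A}_3,\partial_t\mathcal{A}_0+\mathcal{D}$, inhomogeneity $f_1$, boundary datum $\partial_t g$, and initial value $u_1$, using Lemmas~\ref{LemmaRegularityForA0}, \ref{LemmaEstimatesForHigherOrderInitialValues}, and~\ref{LemmaHigherOrderCompatibilityConditions} to verify the regularity and compatibility required by the induction hypothesis, and finally upgrading to $\G{m+1}$ by a finite induction in the normal direction. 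You spell out the base case and the step-by-step application of Lemma~\ref{LemmaRegularityInNormalDirection} over $k$ in slightly more detail than the paper, but there is no difference in substance.
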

\begin{proof}
Lemma~\ref{LemmaBasicRegularityInTime}, Lemma~\ref{LemmaRegularityInSpaceTangential}, 
 and Lemma~\ref{LemmaRegularityInNormalDirection} show the assertion  for $m = 1$.
Let the claim be true for some $m \in \N$ and let the assumptions be fulfilled for $m+1$.  
The weak solution $u$ of~\eqref{IBVP} hence belongs to $\G{m}$, and $\partial_t u$ satisfies
 \begin{equation*}
 \left\{\begin{aligned}
    \mathcal{L}_{\partial_t} v   &= \partial_{t} f - \partial_t \mathcal{D} u=:f_1, \quad &&x \in \R^3_+, \quad &&t \in J; \\
    B v &= \partial_t g, \quad &&x \in \partial \R^3_+, &&t \in J; \\
    v(0) &= S_{m+1,1}(0, \mathcal{A}_0, \ldots, \mathcal{A}_3, \mathcal{D}, f, u_0)=:u_1,  &&x \in \R^3_+,
  \end{aligned} \right.
\end{equation*}
 where we write $\mathcal{L}_{\partial_t}$ for $\mathcal{L}(\mathcal{A}_0, \ldots, \mathcal{A}_3, \partial_t \mathcal{A}_0 + \mathcal{D})$.
 The initial field $u_1$ belongs to $\Hh{m}$ by Lemma~\ref{LemmaEstimatesForHigherOrderInitialValues},
 the inhomogeneity $f_1$ to $\Ha{m}$ by  Lemma~\ref{LemmaRegularityForA0}, and 
 $\partial_t g$ to $\E{m}$. The coefficients satisfy  the conditions of Lemma~\ref{LemmaHigherOrderCompatibilityConditions} 
 and $\partial_t \mathcal{A}_0 + \mathcal{D}$ is an element of $\Fuwl{\tilde{m}}{cp} \cap C^\infty(\overline{\Omega})$.
Lemma~\ref{LemmaHigherOrderCompatibilityConditions} thus shows 
  the compatibility conditions~\eqref{EquationCompatibilityConditionPrecised} of order $m$ for the tuple 
 $(0, \mathcal{A}_0, \ldots, \mathcal{A}_3, \partial_t \mathcal{A}_0 + \mathcal{D}, f_1, \partial_t g, u_1)$.
 By the induction hypothesis, the function  $\partial_t u$ is contained in $\G{m}$, so that $u$ belongs to
 $\bigcap_{j=1}^{m+1} C^j(\clJ, \Hh{m+1-j})$. 
 Lemma~\ref{LemmaRegularityInSpaceTangential} and Lemma~\ref{LemmaRegularityInNormalDirection} then imply that
 the solution $u$ is an element of $\G{m+1}$.
\end{proof}

It remains to remove the extra regularity assumptions. Lemma~\ref{LemmaApproximationOfCoefficients}
provides suitable approximations of the given coefficients. However, after this procedure the compatibility 
conditions can be violated. To overcome this difficulty, we modify the initial fields appropriately in 
Lemma~\ref{LemmaExistenceOfApproximatingSequence}. The proof of this result is based on the next  fact which 
again relies on the algebraic structure of the coefficient matrices.

\begin{lem}
 \label{LemmaConstructionForCC} 
 Let $\eta > 0$, $p\in\N_0$, and  $m,k \in \N$ with $m \geq 3$ and $k \leq m-1$. Take $\mathcal{A}_0 \in \Fpdk{m}{\eta}{12}$ and 
 $\mathcal{A}_3 = \tilde{\mathcal{A}}_3^{\operatorname{co}}$.
 Choose $r > 0$ such that $\Fvarnorm{m-1}{\mathcal{A}_0(0)}\leq r$.
 Take an approximating family $\{\mathcal{A}_{0,\epsilon}\}_{\epsilon> 0}$  provided 
 by Lemma~\ref{LemmaApproximationOfCoefficients}.
 Let $v_{0,\epsilon}$  be maps in $\Hh{k}^{12}$ for $\epsilon > 0$. Then there exists a number 
 $\epsilon_0 >0$, a constant $C = C(\eta,r)$, and a family of functions $\{v_{p,\epsilon}\}_{0 < \epsilon < \epsilon_0}$ in $\Hh{k}^{12}$ 
 such that
  \begin{align*}
  \mathcal{A}_3 (\mathcal{A}_{0,\epsilon}(0)^{-1} \mathcal{A}_{3})^p v_{p,\epsilon} = \mathcal{A}_3 v_{0,\epsilon}\qquad \text{and}\qquad
    \Hhn{k}{v_{p,\epsilon}} \leq C \Hhn{k}{v_{0,\epsilon}}
 \end{align*}
 for all $\epsilon \in (0, \epsilon_0)$.
\end{lem}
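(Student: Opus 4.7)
The plan is to exploit the algebraic structure of $\mathcal{A}_3 = \tilde{\mathcal{A}}_3^{\operatorname{co}}$. Put $K := \kernel \mathcal{A}_3 = \operatorname{span}\{e_3, e_6, e_9, e_{12}\}$ and $V := \image \mathcal{A}_3 = K^\perp$. Since $A_3^{\operatorname{co}}$ is skew-symmetric, $\mathcal{A}_3$ restricts to a bijection $V \to V$ with a constant (in $x$), bounded algebraic inverse $\mathcal{A}_3^\dagger \in \R^{12\times 12}$. Let $P$ denote the orthogonal projection onto $K$. The cornerstone observation is that the $4 \times 4$ compression $\Gamma_\epsilon(x) := P\, \mathcal{A}_{0,\epsilon}(0,x) P$, i.e.\ the submatrix of $\mathcal{A}_{0,\epsilon}(0,x)$ on the rows and columns $\{3,6,9,12\}$, is symmetric with $\Gamma_\epsilon \geq \eta$ on its invariant subspace $K$. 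This is precisely the block shown to be $\geq \eta$ in the proof of Proposition~\ref{PropositionCentralEstimateInNormalDirection}; positive definiteness of $\mathcal{A}_{0,\epsilon}$ is preserved by the mollification in Lemma~\ref{LemmaApproximationOfCoefficients}. Consequently $\Gamma_\epsilon|_K$ is pointwise invertible with $\|\Gamma_\epsilon^{-1}\|_{L^\infty(\R^3_+)} \leq C(\eta)$.

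With this setup I would build $v_{p,\epsilon}$ by iterating a one-step lifting. Set $u_0 := v_{0,\epsilon}$; given $u_l \in \Hh{k}^{12}$, define
\begin{align*}
  k_l &:= -\Gamma_\epsilon^{-1} P\, \mathcal{A}_{0,\epsilon}(0)\, u_l \ \in \ K, \qquad w_l := u_l + k_l, \\
  u_{l+1} &:= \mathcal{A}_3^\dagger\, \mathcal{A}_{0,\epsilon}(0)\, w_l \ \in \ V,
\end{align*}
and set $v_{p,\epsilon} := u_p$. The choice of $k_l$ is tuned so that $P \mathcal{A}_{0,\epsilon}(0) w_l = 0$, whence $\mathcal{A}_{0,\epsilon}(0) w_l \in V$ and $\mathcal{A}_3 u_{l+1} = \mathcal{A}_{0,\epsilon}(0) w_l$. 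Because $k_l \in \kernel \mathcal{A}_3$, we obtain the one-step identity
\[
  \mathcal{A}_3\, \mathcal{A}_{0,\epsilon}(0)^{-1} \mathcal{A}_3\, u_{l+1} = \mathcal{A}_3 w_l = \mathcal{A}_3 u_l,
\]
and a straightforward induction on $l$ yields $\mathcal{A}_3 (\mathcal{A}_{0,\epsilon}(0)^{-1}\mathcal{A}_3)^p v_{p,\epsilon} = \mathcal{A}_3 u_0 = \mathcal{A}_3 v_{0,\epsilon}$, covering in particular the trivial case $p=0$.

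For the $\Hh{k}$-bound I would estimate each step uniformly in $\epsilon$. Since $k \leq m-1$ and $m \geq 3 \geq 2$, part~(4) of Lemma~\ref{LemmaRegularityForA0} gives $\|\mathcal{A}_{0,\epsilon}(0) v\|_{\Hh{k}} \leq C \|\mathcal{A}_{0,\epsilon}(0)\|_{F^0_{m-1}(\R^3_+)} \|v\|_{\Hh{k}}$; by a standard Moser-type estimate for the inverse of a symmetric positive definite matrix whose entries lie in $F^0_{m-1}(\R^3_+)$ and are uniformly bounded below by $\eta$, the same product bound holds with $\Gamma_\epsilon^{-1}$ in place of $\mathcal{A}_{0,\epsilon}(0)$, and $P$ and $\mathcal{A}_3^\dagger$ act as constant matrices. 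Combining these estimates yields $\Hhn{k}{u_{l+1}} \leq C(\eta,r)\, \Hhn{k}{u_l}$; iterating the fixed number $p$ of steps delivers $\Hhn{k}{v_{p,\epsilon}} \leq C(\eta,r) \Hhn{k}{v_{0,\epsilon}}$. The parameter $\epsilon_0 > 0$ is fixed only to guarantee, through parts~(3)--(4) of Lemma~\ref{LemmaApproximationOfCoefficients}, the uniform bound $\|\mathcal{A}_{0,\epsilon}(0)\|_{F^0_{m-1}(\R^3_+)} \leq 2r$ for $\epsilon \in (0,\epsilon_0)$. The main obstacle is not the computation but the identification that the singular rows/columns of $\mathcal{A}_3$ are exactly $\{3,6,9,12\}$ and that the corresponding $4\times 4$ compression of $\mathcal{A}_0$ inherits the positivity $\geq \eta$; this is the very same structural cancellation that drives Proposition~\ref{PropositionCentralEstimateInNormalDirection}, and once recognised, the rest of the argument is routine.
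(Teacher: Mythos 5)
Your proof is correct and takes essentially the same approach as the paper: the kernel $\operatorname{span}\{e_3,e_6,e_9,e_{12}\}$ of $\mathcal{A}_3$, the positivity $\geq\eta$ of the corresponding $4\times 4$ compression of $\mathcal{A}_{0,\epsilon}(0)$, a kernel correction to land $\mathcal{A}_{0,\epsilon}(0)w_l$ in $(\kernel\mathcal{A}_3)^\perp$, and a right inverse of $\mathcal{A}_3$ on that complement are exactly the ingredients the paper uses, just written abstractly via $P$ and $\mathcal{A}_3^\dagger$ rather than through the explicit permutation-type matrix $\mathcal{Q}$ and the block $\Theta_\epsilon$. (Incidentally, your sign convention matches the lemma statement directly, whereas the paper's proof works with $(-\mathcal{A}_{0,\epsilon}(0)^{-1}\mathcal{A}_3)^p$, a harmless factor $(-1)^p$ discrepancy.)
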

\begin{proof}
 I) By Lemma~\ref{LemmaApproximationOfCoefficients} there is a number $\epsilon_0 > 0$ such that
  \begin{align}
  \label{EquationBoundsForApproximatingSequenceCoefficients}
   \Fvarnorm{m-1}{\mathcal{A}_{0,\epsilon}(0)} \leq 2 r
  \end{align}
  for all $\epsilon \in (0, \epsilon_0)$. Let $\epsilon \in (0,\epsilon_0)$. We introduce the invertible matrices
  \begin{equation*}
   Q = \begin{pmatrix}
          0 &0 &0 &0 &1 &0 \\
          0 &0 &0 &-1 &0 &0 \\
          0 &0 &1 &0 &0  &0 \\
          0 &-1 &0 &0 &0  &0 \\
          1 &0 &0 &0 &0  &0 \\
          0 &0 &0 &0 &0  &1
         \end{pmatrix}
         \qquad \text{and} \qquad
    \mathcal{Q} = \begin{pmatrix}
                   Q &0 \\
                   0 &-Q
                  \end{pmatrix}
    \end{equation*}
    and note that
    \begin{equation*}
    \mathcal{A}_3 \mathcal{Q} = \tilde{\mathcal{A}}_3^{\operatorname{co}}\mathcal{Q} = \begin{pmatrix}
                                 J_{\operatorname{bl}} &0 &0 &0 \\
                                 0 &J_{\operatorname{bl}} &0 &0 \\
                                 0 &0 &J_{\operatorname{bl}} &0 \\
                                 0 &0 &0 &J_{\operatorname{bl}}
                                \end{pmatrix}, \quad \text{where} \quad
    J_{\operatorname{bl}} = \begin{pmatrix}
			      1 &0 &0  \\
			      0 &1 &0  \\
			      0 &0 &0  \\
                            \end{pmatrix}.
  \end{equation*}
  Since $\mathcal{A}_{0,\epsilon}\ge \eta$, also the matrix
  \begin{align*}
   \Theta_\epsilon = \begin{pmatrix}
       \mathcal{A}_{0,\epsilon; 3,3}  &\mathcal{A}_{0,\epsilon;3,6} &\mathcal{A}_{0,\epsilon; 3,9} &\mathcal{A}_{0,\epsilon; 3,12} \\
       \mathcal{A}_{0,\epsilon;6,3}   &\mathcal{A}_{0,\epsilon;6,6} &\mathcal{A}_{0,\epsilon; 6,9} &\mathcal{A}_{0,\epsilon; 6,12} \\
       \mathcal{A}_{0,\epsilon; 9,3}  &\mathcal{A}_{0,\epsilon; 9,6} &\mathcal{A}_{0,\epsilon; 9,9} &\mathcal{A}_{0,\epsilon; 9,12} \\
       \mathcal{A}_{0,\epsilon; 12,3} &\mathcal{A}_{0,\epsilon; 12,6} &\mathcal{A}_{0,\epsilon; 12,9} &\mathcal{A}_{0,\epsilon; 12,12}
       \end{pmatrix},
  \end{align*}
 satisfies $\Theta_\epsilon \geq \eta$ on $\Omega$.
  In particular, $\Theta_\epsilon$ has an inverse with
  \begin{equation}
   \label{EquationBoundForThetaInverse}
   \Fvarnorm{m-1}{\Theta_\epsilon^{-1}(0)} \leq C(\eta, r)\qquad \text{for all \ } \epsilon \in (0, \epsilon_0).
  \end{equation}

  II) Let $w_0 \in \Hh{k}^{12}$. 
  We can define scalar functions $h_{1,\epsilon}, \ldots, h_{4,\epsilon}$ by
  \begin{equation*}
  	(h_{1,\epsilon}, \ldots, h_{4,\epsilon}) = -\Theta_\epsilon^{-1}(0) (\mathcal{A}_{0,\epsilon}(0) w_0)_{(3,6,9,12)},
  \end{equation*} 
  where we write $\zeta_{(3,6,9,12)}=(\zeta_3, \zeta_6,\zeta_9,\zeta_{12})$ for any vector $\zeta\in \R^{12}$.
  Lemma~\ref{LemmaRegularityForA0} and the inequalities  \eqref{EquationBoundsForApproximatingSequenceCoefficients}
  and~\eqref{EquationBoundForThetaInverse} imply that
  \begin{equation}
  	\label{EquationBoundForh1epsh2eps}
  	\Hhn{k}{(h_{1,\epsilon}, \ldots, h_{4,\epsilon})} \leq C(\eta,r) \Hhn{k}{w_0}.
  \end{equation}
We next set
  \begin{align}
  	\label{EquationDefinitionOfw1eps}
  	\hat{w}_{\epsilon} = \mathcal{Q} \tilde{w}_{\epsilon}, \qquad
  	\tilde{w}_{\epsilon} = -\mathcal{A}_{0,\epsilon}(0)  \Big( w_0 + h_{1,\epsilon} e_3 + h_{2,\epsilon} e_6 + h_{3,\epsilon} e_9 + h_{4,\epsilon} e_{12} \Big).
  \end{align}
  Lemma~\ref{LemmaRegularityForA0}, \eqref{EquationBoundsForApproximatingSequenceCoefficients}, 
   and~\eqref{EquationBoundForh1epsh2eps} again provide a constant $C(\eta,r)$ such that
  \begin{equation}\label{EquationEstimateForConstructedFunctionInSummary}
  	\Hhn{k}{\hat{w}_{\epsilon}} \leq C(\eta,r) \Hhn{k}{w_0}.
  \end{equation}
  Observe that
   \begin{equation*}
     	(\tilde{w}_{\epsilon})_{(3,6,9,12)} = (-\mathcal{A}_{0,\epsilon}(0)   w_0)_{(3,6,9,12)} 
   		- \Theta_\epsilon(0)(h_{1,\epsilon},\ldots,h_{4,\epsilon}) = 0,
   \end{equation*}
   and hence $\mathcal{A}_3 \mathcal{Q} \tilde{w}_{\epsilon} = \tilde{w}_{\epsilon}$. We thus compute
	\begin{align}\label{EquationConstructionOfFunctionSummary}
		\mathcal{A}_3(-\mathcal{A}_{0,\epsilon}(0)^{-1} \mathcal{A}_3) \hat{w}_{\epsilon} 
		  &=  \mathcal{A}_3 (-\mathcal{A}_{0,\epsilon}(0)^{-1}) \tilde{w}_{\epsilon} = \mathcal{A}_3 w_0
	\end{align}
	using \eqref{EquationDefinitionOfw1eps} and $\operatorname{ker}\mathcal{A}_3= \operatorname{span}\{e_3, e_6, e_9,e_{12}\}$.

	III) To show the assertion of the lemma, we proceed inductively. We claim that for all $p \in \N_0$, $\epsilon \in (0, \epsilon_0)$, 
	and $w \in \Hh{k}^{12}$ there is a function $w_{p,\epsilon}(w)$ in $\Hh{k}^{12}$ 
	and a constant $C_p = C_p(\eta,r)$
	such that
	\begin{align}
		\label{EquationInductionClaimForConstructionvdeltaepsilon}
		&\mathcal{A}_{3} (-\mathcal{A}_{0,\epsilon}(0)^{-1} \mathcal{A}_3)^{p} w_{p,\epsilon}(w) = \mathcal{A}_{3} w, \\
		&\Hhn{k}{w_{p,\epsilon}(w)} \leq C_p \Hhn{k}{w}.\label{EquationInductionClaimForEstimatevdeltaepsilon}
	\end{align}
	We can simply set $w_{0,\epsilon}(w) =w$. Let the claim be true for a number $p \in \N_0$. 
	Fix $\epsilon \in (0, \epsilon_0)$ and $w \in \Hh{k}^{12}$. Step II) applied with $w_0 = w$ yields a function 
	$\tilde{w}_{p,  \epsilon} \in \Hh{k}^{12}$ satisfying
	\begin{align}
		 \label{EquationConstructionFirstStep}
		 \mathcal{A}_{3} (-\mathcal{A}_{0,\epsilon}(0)^{-1} \mathcal{A}_3) \tilde{w}_{p, \epsilon} = \mathcal{A}_{3} w
		\hspace{0.5em} \text{ and } \hspace{0.5em} \Hhn{k}{ \tilde{w}_{p,  \epsilon}} \leq C(\eta,r) \Hhn{k}{w}.
	\end{align}
	We now define $w_{p+1, \epsilon}(w) = w_{p,\epsilon}(\tilde{w}_{p,  \epsilon})$ for each 
	$\epsilon \in (0, \epsilon_0)$. 
	The map $w_{p+1,  \epsilon}(w)$ then is contained in $\Hh{k}^{12}$, and we compute
	\begin{align*}
		\mathcal{A}_{3} (-\mathcal{A}_{0,\epsilon}(0)^{-1} \mathcal{A}_3)^{p+1} w_{p+1,  \epsilon}(w) &= \mathcal{A}_{3}(-\mathcal{A}_{0,\epsilon}(0)^{-1}) \mathcal{A}_3 (-\mathcal{A}_{0,\epsilon}(0)^{-1} \mathcal{A}_3)^{p} w_{p, \epsilon}(\tilde{w}_{p,  \epsilon}) \\
		&= \mathcal{A}_{3}(-\mathcal{A}_{0,\epsilon}(0)^{-1}) \mathcal{A}_3 \tilde{w}_{p,  \epsilon} 
		= \mathcal{A}_{3} w,
	\end{align*}
	where we employed the induction hypothesis~\eqref{EquationInductionClaimForConstructionvdeltaepsilon} and~\eqref{EquationConstructionFirstStep}.
	Combining~\eqref{EquationInductionClaimForEstimatevdeltaepsilon} with~\eqref{EquationEstimateForConstructedFunctionInSummary}, we further obtain
	\begin{align*}
		\Hhn{k}{w_{p+1,\epsilon}(w)} = \Hhn{k}{w_{p,\epsilon}(\tilde{w}_{p,  \epsilon})} 
		\leq C_p \Hhn{k}{\tilde{w}_{p,  \epsilon}} \leq C \Hhn{k}{w},
	\end{align*}
	where $C = C(\eta,r)$. The claim now follows by induction.
	
	We obtain the assertion of the lemma by setting $v_{p,\epsilon} = w_{p, \epsilon}(v_{0, \epsilon})$.
\end{proof}

\begin{lem}
 \label{LemmaExistenceOfApproximatingSequence}
 Let $\eta > 0$, $m \in \N$, and $\tilde{m} = \max\{m,3\}$. Take coefficients $\mathcal{A}_0 \in \Fupdwl{\tilde{m}}{cp}{\eta}$,
 $\mathcal{A}_1, \mathcal{A}_2 \in \Fcoeff{\tilde{m}}{cp}$, $\mathcal{A}_3 = \tilde{\mathcal{A}}_3^{\operatorname{co}}$, 
 $\mathcal{D} \in \Fuwl{\tilde{m}}{cp}$, and $B = \mathcal{B}^{\operatorname{co}}$. Choose  data $f \in \Ha{m}$, $g \in \E{m}$,
 and $u_0 \in \Hh{m}$ which fulfill the
 compatibility conditions~\eqref{EquationCompatibilityConditionPrecised} on $G=\R^3_+$ of order $m$ in $t_0 \in \clJ$. % i.e.,
 Let $\{\mathcal{A}_{i,\epsilon}\}_{\epsilon > 0}$ and $\{\mathcal{D}_{\epsilon}\}_{\epsilon > 0}$ be the families of functions 
 provided by Lemma~\ref{LemmaApproximationOfCoefficients} for $\mathcal{A}_i$ and $\mathcal{D}$ respectively for $i \in \{0,1,2\}$. 
 Then there exists a number $\epsilon_0 > 0$ and a family   $\{u_{0,\epsilon}\}_{0 < \epsilon <\epsilon_0}$ in $\Hh{m}$ such that the 
 compatibility conditions for the tuple $(t_0, \mathcal{A}_{0,\epsilon}, \mathcal{A}_{1,\epsilon}, \mathcal{A}_{2,\epsilon}, \mathcal{A}_3, \mathcal{D}_\epsilon, B, f,g, u_{0,\epsilon})$ of order $m$ are satisfied
  and $u_{0,\epsilon}$ tends to $u_0$ in $\Hh{m}$ as $\epsilon \rightarrow 0$.
\end{lem}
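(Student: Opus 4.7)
The plan is to modify $u_0$ by adding successive boundary-layer corrections that kill the compatibility defect at each order $p\in\{0,\ldots,m-1\}$, using Lemma~\ref{LemmaConstructionForCC} as the algebraic building block. First I would use Lemma~\ref{LemmaApproximationOfCoefficients} to fix $\epsilon_0>0$ so small that for all $\epsilon\in(0,\epsilon_0)$ the perturbed coefficients lie in $\Fupdwl{\tilde{m}}{cp}{\eta/2}$ with $\tilde{m}$-norms bounded by $2r$ (say) and with $\mathcal{A}_{0,\epsilon}(0)\to\mathcal{A}_0(0)$, $\partial_t^j\mathcal{A}_{0,\epsilon}(0)\to\partial_t^j\mathcal{A}_0(0)$, and analogously for $\mathcal{D}_\epsilon$, in the spaces demanded by Lemma~\ref{LemmaEstimatesForHigherOrderInitialValues} and Lemma~\ref{LemmaConstructionForCC}. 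This guarantees that all constants produced below are $\epsilon$-uniform.

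The construction proceeds by induction on $p$. Denote the defect by
\begin{equation*}
D_{p,\epsilon}(v):=B\,\tr_{\partial\R^3_+}\!S_{\R^3_+,m,p}(t_0,\mathcal{A}_{0,\epsilon},\mathcal{A}_{1,\epsilon},\mathcal{A}_{2,\epsilon},\mathcal{A}_3,\mathcal{D}_\epsilon,f,v)-\partial_t^pg(t_0),\quad v\in\Hh{m}.
\end{equation*}
I would produce iterates $u_{0,\epsilon}^{(p)}\in\Hh{m}$ satisfying $D_{q,\epsilon}(u_{0,\epsilon}^{(p)})=0$ for $q\le p$ and $u_{0,\epsilon}^{(p)}\to u_0$ in $\Hh{m}$ as $\epsilon\to0$. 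For $p=0$ the condition reads $B\tr u_{0,\epsilon}^{(0)}=g(t_0)$, which holds by the hypothesis on $u_0$, so $u_{0,\epsilon}^{(0)}:=u_0$ works.

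For the step from $p-1$ to $p$, I would expand $S_{\R^3_+,m,p}$ via \eqref{EquationDefinitionSmp} and the relation $B=\mathcal{M}^{\operatorname{co}}\mathcal{A}_3$. The only contributions involving the $p$-th normal derivative of the initial datum arise from iterating the factor $-\mathcal{A}_{0,\epsilon}(0)^{-1}\mathcal{A}_3\partial_3$ and combine into a boundary term structurally of the form
\begin{equation*}
\mathcal{M}^{\operatorname{co}}\,\mathcal{A}_3\bigl(-\mathcal{A}_{0,\epsilon}(0)^{-1}\mathcal{A}_3\bigr)^{p-1}\tr_{\partial\R^3_+}\bigl(-\mathcal{A}_{0,\epsilon}(0)^{-1}\mathcal{A}_3\,\partial_3^p u_{0,\epsilon}^{(p-1)}\bigr),
\end{equation*}
whereas the remaining contributions involve only normal derivatives of order $<p$. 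Comparing with the same expansion for the unperturbed tuple $(\mathcal{A}_i,\mathcal{D},u_0)$, for which the defect vanishes by hypothesis, and using Lemma~\ref{LemmaRegularityForA0}, Lemma~\ref{LemmaEstimatesForHigherOrderInitialValues}, Lemma~\ref{LemmaApproximationOfCoefficients}, and the inductive hypothesis $\|u_{0,\epsilon}^{(p-1)}-u_0\|_{\Hh{m}}\to0$, I would show that the defect $D_{p,\epsilon}(u_{0,\epsilon}^{(p-1)})$ tends to $0$ in $H^{m-p-1/2}(\partial\R^3_+)$ as $\epsilon\to 0$. Applying Lemma~\ref{LemmaConstructionForCC} to an $\Hh{m-p}$-lift of this boundary defect with parameter $p-1$ and $k=m-p$, I would obtain $v_{p,\epsilon}\in\Hh{m-p}$ whose trace absorbs the leading defect, with norm $O(\|D_{p,\epsilon}(u_{0,\epsilon}^{(p-1)})\|)$. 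A standard polynomial lifting in $x_3$ times a cutoff then yields $w_{p,\epsilon}\in\Hh{m}$ with vanishing normal traces of orders $0,\ldots,p-1$, $\partial_3^p w_{p,\epsilon}|_{\partial\R^3_+}=v_{p,\epsilon}|_{\partial\R^3_+}$, and $\|w_{p,\epsilon}\|_{\Hh{m}}\to0$. Setting $u_{0,\epsilon}^{(p)}:=u_{0,\epsilon}^{(p-1)}-w_{p,\epsilon}$, the vanishing lower-order normal traces guarantee that the conditions of orders $0,\ldots,p-1$ survive, while the structural identity from Lemma~\ref{LemmaConstructionForCC} kills $D_{p,\epsilon}$. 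After $m$ steps, $u_{0,\epsilon}:=u_{0,\epsilon}^{(m-1)}$ is the desired family.

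The main obstacle is the algebraic bookkeeping in the inductive step: one has to check that the $p$-th normal derivatives of $u_{0,\epsilon}^{(p-1)}$ enter the defect precisely in the combination $\mathcal{A}_3(\mathcal{A}_{0,\epsilon}(0)^{-1}\mathcal{A}_3)^{p-1}$ to which Lemma~\ref{LemmaConstructionForCC} applies, and that the polynomial lifting of $v_{p,\epsilon}$ does not reintroduce defects at lower orders. Both facts hinge on the structural identity $B=\mathcal{M}^{\operatorname{co}}\mathcal{A}_3$ established in \eqref{eq:B-trafo}–\eqref{EquationDecompositionOfA3co} and on the fact that the remaining lower-order terms in $S_{\R^3_+,m,p}$ depend continuously on the coefficients in the strong sense provided by Lemma~\ref{LemmaApproximationOfCoefficients}.
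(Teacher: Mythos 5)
Your proposal follows the same route as the paper's proof: it exploits $B=\mathcal{M}^{\operatorname{co}}\mathcal{A}_3$ from \eqref{EquationDecompositionOfA3co} to reduce the compatibility conditions to identities for $\mathcal{A}_3 S_{m,p}$ and then inductively corrects $u_0$ via Lemma~\ref{LemmaConstructionForCC}. The paper itself outsources the iteration to steps~I)--II) of Lemma~4.8 in \cite{SpitzMaxwellLinear}, and your sketch essentially reconstructs that argument, so it is consistent with the intended proof.
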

\begin{proof}
 Without loss of generality we assume $t_0 = 0$. 
 We set $u_{0,\epsilon} = u_0 + h_\epsilon$ and look for functions $h_\epsilon \in \Hh{m}$ with $h_\epsilon \rightarrow 0$ in 
 $\Hh{m}$  such that the compatibility conditions are fulfilled. Since 
 $B = \mathcal{M} \mathcal{A}_3$ for a constant matrix $\mathcal{M} = \mathcal{M}^{\operatorname{co}}$ by \eqref{EquationDecompositionOfA3co}, it  
 suffices  to find $h_\epsilon$ with
 \begin{align*}
  \mathcal{A}_3 S_{m,p}(0, \mathcal{A}_{0,\epsilon}, \mathcal{A}_{1,\epsilon}, \mathcal{A}_{2,\epsilon}, \mathcal{A}_3, \mathcal{D}_\epsilon, f, u_0 + h_\epsilon)  = \mathcal{A}_3 S_{m,p}(0, \mathcal{A}_0,\ldots, \mathcal{A}_3, \mathcal{D}, f, u_0)
 \end{align*}
 for all $0 \leq p \leq m-1$ on $\partial \R^3_+$. Using Lemma~\ref{LemmaConstructionForCC} one can now repeat 
 steps~I) and II) of the proof of Lemma~4.8 of \cite{SpitzMaxwellLinear} in which the structure arising from the 
 interface problem does not play a role. We thus omit the details.
 \end{proof}

 We can now deduce the differentiability theorem by 
applying Proposition~\ref{PropositionRegularityForApproximatingProblem} to the solutions of 
the approximating initial boundary value problems with coefficients and data from 
Lemma~\ref{LemmaExistenceOfApproximatingSequence}. Compared to  \cite{SpitzMaxwellLinear},
again the specific structure of our problem  does not enter the reasoning,
and thus we do not give a proof  and refer to Theorem~4.10 of \cite{SpitzMaxwellLinear} for the details.

\begin{theorem}
  \label{TheoremRegularityOfSolution}  \label{TheoremAPrioriEstimatesAndRegularityOfSolutionForGeneralCoefficients}
 Let $\eta > 0$, $m \in \N$, and  $\tilde{m} = \max\{m,3\}$. Take coefficients
 $\mathcal{A}_0 \in \Fupdwl{\tilde{m}}{cp}{\eta}$, $\mathcal{A}_1, \mathcal{A}_2 \in \Fcoeff{\tilde{m}}{cp}$, $\mathcal{A}_3 = \tilde{\mathcal{A}}_3^{\operatorname{co}}$,
  $\mathcal{D} \in \Fuwl{\tilde{m}}{cp}$, and $B = \mathcal{B}^{\operatorname{co}}$. Choose data
 $f \in \Ha{m}$, $g \in \E{m}$, and $u_0 \in \Hh{m}$ such that the tuple $(0, \mathcal{A}_0, \ldots, \mathcal{A}_3, \mathcal{D}, B, f,g, u_0)$ 
 satisfies the compatibility conditions~\eqref{EquationCompatibilityConditionPrecised} on $G=\R_+^3$ of order~$m$.
 Then the weak solution $u$ of~\eqref{IBVP} belongs to $\G{m}$.
\end{theorem}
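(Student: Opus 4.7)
The plan is to deduce the theorem from Proposition~\ref{PropositionRegularityForApproximatingProblem} via an approximation argument, exactly parallelling Theorem~4.10 in~\cite{SpitzMaxwellLinear}. First I would apply Lemma~\ref{LemmaApproximationOfCoefficients} to obtain families $\{\mathcal{A}_{0,\epsilon}\}$, $\{\mathcal{A}_{1,\epsilon}\}$, $\{\mathcal{A}_{2,\epsilon}\}$ in $\Fupdwl{\tilde{m}}{cp}{\eta}$ respectively $\Fcoeff{\tilde{m}}{cp}$ and $\{\mathcal{D}_\epsilon\}$ in $\Fuwl{\tilde{m}}{cp}$, each lying in $C^\infty(\overline{\Omega})$, uniformly bounded in their respective $\mathcal{F}$-norms, and converging to $\mathcal{A}_0$, $\mathcal{A}_1$, $\mathcal{A}_2$, $\mathcal{D}$ in $L^\infty(\Omega)$; the matrices $\mathcal{A}_3$ and $B$ are constant and remain fixed. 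Then Lemma~\ref{LemmaExistenceOfApproximatingSequence} supplies modified initial values $u_{0,\epsilon} \to u_0$ in $\Hh{m}$ such that the tuples $(0, \mathcal{A}_{0,\epsilon}, \mathcal{A}_{1,\epsilon}, \mathcal{A}_{2,\epsilon}, \mathcal{A}_3, \mathcal{D}_\epsilon, B, f, g, u_{0,\epsilon})$ fulfill the compatibility conditions~\eqref{EquationCompatibilityConditionPrecised} of order $m$ on $\R^3_+$.

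Next, Proposition~\ref{PropositionRegularityForApproximatingProblem} applies to each regularized problem and yields a $\G{m}$-solution $u_\epsilon$. Because the approximating coefficients satisfy uniform bounds in $\Fnorm{\tilde{m}}{\cdot}$ with initial-value bounds controlled by $r_0$ (again from Lemma~\ref{LemmaApproximationOfCoefficients}), Theorem~\ref{TheoremAPrioriEstimates} delivers a constant $\gamma_m$ and constants $C_{m,0}, C_m, C_1$ independent of $\epsilon$ such that
\begin{align*}
  \Gnorm{m}{u_\epsilon}^2 &\leq (C_{m,0} + T C_m) e^{m C_1 T}\Big( \sum_{j=0}^{m-1} \Hhn{m-1-j}{\partial_t^j f_\epsilon(0)}^2 + \Enorm{m}{g}^2 \\
  &\qquad + \Hhn{m}{u_{0,\epsilon}}^2\Big) + \frac{C_m}{\gamma_m}\Hangamma{m}{f}^2,
\end{align*}
where the $\partial_t^j f_\epsilon(0)$ arise from the compatibility-producing modification and are controlled via Lemma~\ref{LemmaEstimatesForHigherOrderInitialValues} by $f$ and $u_{0,\epsilon}$. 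Since $u_{0,\epsilon} \to u_0$ in $\Hh{m}$ and $\partial_t^j f_\epsilon(0) \to \partial_t^j f(0)$, the right-hand side stays uniformly bounded in $\epsilon$.

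With this uniform bound, I would extract a weak-$*$ limit $\tilde u$ in the appropriate product $L^\infty(J,H^{m-j})$ topologies for $j\in\{0,\dots,m\}$. The basic $L^2$ theory of Lemma~\ref{LemmaEllerResult}, applied to the differences $u_\epsilon - u_\delta$ satisfying perturbed problems with $L^\infty$-small coefficient errors, yields convergence $u_\epsilon \to u$ in $C(\clJ,\Ltwoh)$ where $u$ is the weak solution of~\eqref{IBVP}. Consequently $\tilde u = u$, so $u \in \tilde{\mathcal{G}}_m(\Omega)$ with the bound above. The residual continuity in time of the derivatives, i.e.\ upgrading from $L^\infty(J,H^{m-j})$ to $C(\clJ,H^{m-j})$, is obtained by reading it off the differential equation: time derivatives up to order $m-1$ are expressed through spatial derivatives via \eqref{EquationTimeDerivativesOfSolution}, while the top-order continuity follows from the equation itself together with the already-established regularity of the lower-order terms.

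The main obstacle is the identification of the weak-$*$ limit with $u$ and the transfer of regularity across the approximation, since the compatibility-restoring correction $u_{0,\epsilon} - u_0$ in Lemma~\ref{LemmaExistenceOfApproximatingSequence} perturbs not only the initial datum but also the values of $\partial_t^j f_\epsilon(0)$ used in the $S_{m,p}$-terms. Controlling these perturbations uniformly is exactly what the bound in Lemma~\ref{LemmaConstructionForCC} provides, so one must combine carefully that lemma with Lemma~\ref{LemmaEstimatesForHigherOrderInitialValues} to ensure no loss in the a priori estimate as $\epsilon \to 0$. Since neither the cancellation structure nor the interface reflection enters this final synthesis in any new way, the argument is identical to that in \cite[Theorem~4.10]{SpitzMaxwellLinear} and we omit further details.
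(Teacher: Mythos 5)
Your proposal follows essentially the same route the paper itself invokes (it refers to Theorem~4.10 of \cite{SpitzMaxwellLinear}): smooth the coefficients via Lemma~\ref{LemmaApproximationOfCoefficients}, repair the compatibility conditions by modifying only $u_0$ via Lemma~\ref{LemmaExistenceOfApproximatingSequence}, apply Proposition~\ref{PropositionRegularityForApproximatingProblem} to the regularized problems, use the uniform a priori estimate from Theorem~\ref{TheoremAPrioriEstimates}, and pass to the limit through $L^2$-convergence from Lemma~\ref{LemmaEllerResult}. This is correct.

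One small slip: $f$ and $g$ are not approximated in Lemma~\ref{LemmaExistenceOfApproximatingSequence} (only $u_0$ is replaced by $u_{0,\epsilon}$), so the terms you denote $\partial_t^j f_\epsilon(0)$ are in fact the unchanged $\partial_t^j f(0)$; the uniformity in $\epsilon$ of the right-hand side of the a priori estimate is therefore immediate and does not require Lemma~\ref{LemmaConstructionForCC} at that point (that lemma is used inside Lemma~\ref{LemmaExistenceOfApproximatingSequence} to bound $u_{0,\epsilon}-u_0$, not to control time derivatives of $f$). Also, the final upgrade from $\tilde{\mathcal{G}}_m$ to $\mathcal{G}_m$ is not quite just ``reading continuity off the equation''; one needs the standard combination of weak continuity in $H^{m-j}$ plus continuity of the norm (from the a priori estimate and the equation), which in a Hilbert space upgrades weak to strong continuity. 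These are presentational rather than substantive points; the overall argument is the intended one.
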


 \begin{rem}
  Recall that Theorem~\ref{TheoremExistenceAndUniquenessOnDomain} is valid for coefficients
  $A_0$ and $D$ which have merely a limit as $|(t,x)| \rightarrow \infty$.
  Also all intermediate results extend to such coefficients. In particular, Proposition~\ref{PropositionCentralEstimateInNormalDirection}, 
  Theorem~\ref{TheoremAPrioriEstimates}, and Theorem~\ref{TheoremRegularityOfSolution} are still
  true if $\mathcal{A}_0$ and $\mathcal{D}$ only have a limit as $|(t,x)| \rightarrow \infty$, cf.\ the proof of Theorem~4.13 in \cite{SpitzDissertation}.
 \end{rem}
 
 %%%%%%%%%%%%%%%%%%%%%%%%%%%%%%%%%%%%%%%%%%%%%%%%%%%%%%%%%%%%%%%%%%%%%%%%%%%%%%%%%%%%%%%%%%%%
 %%%%%%%%%%%%%%%%%%%%%%%%%%%%%%%%%%%%%%%%%%%%%%%%%%%%%%%%%%%%%%%%%%%%%%%%%%%%%%%%%%%%%%%%%%%%
 %%%%%%%%%%%%%%%%%%%%%%%%%%%%%%%%%%%%%%%%%%%%%%%%%%%%%%%%%%%%%%%%%%%%%%%%%%%%%%%%%%%%%%%%%%%%
 \section{Local existence and uniqueness of the nonlinear system}
 \label{SectionLocalExistence}
 
 In this section we prove existence and uniqueness of a solution of~\eqref{EquationNonlinearIBVP} by 
 a fixed point argument based on the a priori estimates and the regularity theory from 
 Sections~\ref{SectionAPrioriEstimates} and \ref{SectionRegularity} for the corresponding linear problem. 
 We define a solution of~\eqref{EquationNonlinearIBVP} to be a function $u$ belonging to 
 $\bigcap_{j = 0}^m C^j(I, \mathcal{H}^{m-j}(G))$ with $\overline{\image u_{\pm}} \subseteq \mathcal{U}_{\pm}$ 
 for all $t \in I$ and satisfying~\eqref{EquationNonlinearIBVP}. Here $I$ is an interval with $t_0 \in I$.
 We further allow more general functions $\sigma$ than arising from the model~\eqref{EquationOhmsLaw}.
 The specific structure of the interface conditions does not enter very much in the proofs from now on.
 For this reason we can be more brief in this part of the paper and often refer the reader to the article
 \cite{SpitzQuasilinearMaxwell}, where the initial boundary value problem was treated in detail.
 We first introduce the spaces 
\begin{align}\label{def:ml}
 &\ml{m,n}{G}{\mathcal{U}_\pm} \\
 &= \{\theta \colon (G_+ \times \mathcal{U}_+) \cup (G_{-} \times \mathcal{U}_{-}) \rightarrow \R^{n \times n} \text{ with } 
	 \theta_\pm \in C^m(G_\pm \times \mathcal{U}_\pm, \R^{n \times n}) \text{ and } \notag \\
 &\hspace{3em} \sup_{(x,y) \in G_\pm \times \mathcal{U}_{\pm,1}} |\partial^\alpha \theta(x,y)| < \infty 
 \text{ for all } \alpha \in \N_0^9 \text{ with }
 |\alpha| \leq m \text{ and }  \mathcal{U}_{\pm,1} \Subset \mathcal{U}_\pm \}, \notag \\
 &\mlpd{m,n}{G}{\mathcal{U}_\pm} = \{\theta \in \ml{m,n}{G}{\mathcal{U}_\pm} \colon \text{There exists } \eta > 0 \text{ with } 
 \theta  =  \theta^T  \geq  \eta \notag \\
 &\hspace{18em} \text{on }  G_\pm  \times  \mathcal{U}_\pm\}\notag 
\end{align}
for our nonlinearities.
Here $\theta_+$ and $\theta_{-}$ denote the restrictions of $\theta$ to $G_+ \times \mathcal{U}_+$ 
respectively $G_{-} \times \mathcal{U}_{-}$. Moreover, by writing $G_\pm \times \mathcal{U}_{\pm}$ we 
address the two sets $G_+ \times \mathcal{U}_+$ and $G_{-} \times \mathcal{U}_{-}$.
Actually, we only need the dimensions $n = 1$ or $n = 6$.

We often have to control compositions $\theta(v)$ in higher regularity in terms of $v$. In Lemma~2.1
and Corollary~2.2  of  
\cite{SpitzQuasilinearMaxwell} the necessary formulas and estimates have been provided 
for functions defined on a single domain. Our interface case can then be treated by applying these facts 
to the subsets $G_\pm$ separately. Since the proofs below are only sketched, we do not repeat the modified
versions of these rather lengthy  auxiliary  results. 

As in the linear case discussed in Section~\ref{SectionNotation}, regular solutions of \eqref{EquationNonlinearIBVP}
have to satisfy compatibility conditions. To express them, we first introduce  the operators that give 
the initial values of the time differentiated version of \eqref{EquationNonlinearIBVP}, cf.~\eqref{EquationDefinitionSmp}.

\begin{definition}
 \label{DefinitionNonlinearSmpOperators}
 Let $J \subseteq \R$ be an open interval, $m \in \N$, 
 $\chi \in \mlpdpm{m,6}{G}{\mathcal{U}_\pm}$, and $\sigma\in \ml{m,6}{G}{\mathcal{U}_\pm}$. 
 We inductively define the operators
\begin{align*}
 S_{\chi, \sigma, G, m,p} \colon \clJ \times \Hpmadom{\max\{m,3\}} \times \mathcal{H}^{\max\{m,2\}}(G,\mathcal{U})  \rightarrow \Hpmhdom{m-p}
\end{align*}
by $S_{\chi, \sigma,G, m, 0,\pm}(t_0, f_\pm,u_{0,\pm}) = u_{0,\pm}$ and
\begin{align}
\label{EquationDefinitionHigherOrderInitialValuesNonlinear}
 &S_{\chi, \sigma,G, m,p,\pm }(t_0, f_\pm,u_{0,\pm})\\
 & \hspace{0.75cm}= \chi_{\pm}(u_{0,\pm})^{-1} \Big(\partial_t^{p-1}f_\pm(t_0) - \sum_{j=1}^3 {A}_j^{\operatorname{co}} \partial_j S_{\chi, \sigma,G,m,p-1,\pm}(t_0, f_\pm,u_{0,\pm}) \nonumber\\
    &\hspace{8em} - \sum_{l=1}^{p-1} \binom{p-1}{l} M_{1,\pm}^l(t_0, f_{\pm}, u_{0,\pm}) S_{\chi, \sigma,G,m,p-l,\pm}(t_0,f_\pm,u_{0,\pm}) \notag \\
   &\hspace{8em} - \sum_{l=0}^{p-1} \binom{p-1}{l} M_{2,\pm}^l(t_0, f_\pm, u_{0,\pm}) S_{\chi, \sigma,G,m,p-1-l,\pm}(t_0, f_\pm,u_{0,\pm})\Big), \nonumber\\
 &M_{k,\pm}^p = \sum_{1 \leq j \leq p} \sum_{\substack{\gamma_1,\ldots,\gamma_j \in \N_0^4 \setminus\{0\} \\ \sum \gamma_i = (p,0,0,0)}} \sum_{l_1, \ldots, l_j = 1}^6
	  C((p,0,0,0), \gamma_1, \ldots, \gamma_j)  \nonumber\\
	  &\hspace{8em} \cdot  (\partial_{y_{l_j}} \cdots \partial_{y_{l_1}} \theta_{k,\pm})(u_{0,\pm})
	      \prod_{i=1}^j S_{\chi,\sigma,G,m,|\gamma_i|,\pm}(t_0,f_\pm,u_{0,\pm})_{l_i } \label{EquationDefinitionMkp}
\end{align}
for $1 \leq p \leq m$, $k \in \{1,2\}$, where $\theta_1 = \chi$, $\theta_2 = \sigma$,  $M_{2,\pm}^0 = \sigma_\pm(u_{0,\pm})$,
and $C$ is a combinatorical constant, cf.\ Lemma~2.1 and (2.8) of \cite{SpitzQuasilinearMaxwell}. 
By $\mathcal{H}^{\max\{m,2\}}(G,\mathcal{U})$ we mean 
those functions $u_0 \in \Hpmhdom{\max\{m,2\}}$ with $\overline{\image u_{0,\pm}} \subseteq \mathcal{U}_{\pm}$.
\end{definition}
Lemma~2.4 of \cite{SpitzQuasilinearMaxwell} shows that the operators $S_{\chi, \sigma, G, m, p}$ indeed map into $\Hpmhdom{m-p}$ 
and it provides corresponding  estimates. (One applies it to the subsets $G_\pm$ separately.) Using
Lemma~2.1 of \cite{SpitzQuasilinearMaxwell}, we can differentiate \eqref{EquationNonlinearIBVP} $p$-times and obtain
\begin{equation}
	\label{EquationTimeDerivativesOfSolutionEqualSChiSigmaG}
	\partial_t^p u(t_0) = S_{\chi, \sigma, G, m,p}(t_0, f, u_0) \quad \text{for all } p \in \{0, \ldots, m\}
\end{equation}
if $u \in \Gpmdom{m}$ is a solution of~\eqref{EquationNonlinearIBVP} with data $f \in \Hpmadom{m}$,
$u_0 \in \Hpmhdom{m}$, and $g \in \Edom{m}$. Proceeding similarly with the interface and boundary condition,
equation \eqref{EquationTimeDerivativesOfSolutionEqualSChiSigmaG} leads to the identities
\begin{align}
\label{EquationNonlinearCompatibilityConditions}
	B_\Sigma S_{\chi, \sigma, G, m,p}(t_0, f, u_0) &= \partial_t^p g(t_0) \quad \text{on } \Sigma, \\
	B_{\partial G} S_{\chi, \sigma, G, m,p}(t_0, f, u_0) &= 0 \quad \text{on } \partial G
	  \qquad \text{for all \ } p \in \{0, \ldots, m-1\},\nonumber
\end{align}
which are necessary for the existence of a $\Gpmdom{m}$-solution of~\eqref{EquationNonlinearIBVP}. 
We say that the data tuple $(\chi, \sigma, t_0, B_\Sigma, B_{\partial G}, f, g, u_0)$ fulfills the \emph{compatibility conditions}
of order $m$ if $\overline{\image u_{0,\pm}} \subseteq \mathcal{U}_\pm$ and the equations~\eqref{EquationNonlinearCompatibilityConditions} are true.
%%%%%%%%%%%%%%%%%%%%%%%%%%%%%%%%%%%%%%%%%%%%%%%%%%%%%%%%%%%%%%%%%%%%%%%%%%%%%%%%%%%%%%%%%%%%%
%%%%%%%%%%%%%%%%%%%%%%%%%%%%%%%%%%%%%%%%%%%%%%%%%%%%%%%%%%%%%%%%%%%%%%%%%%%%%%%%%%%%%%%%%%%%%

\begin{rem}
	\label{RemarkChiuInFm}
	Analogously to Remark~1.2 in~\cite{SpitzMaxwellLinear}, the linear theory allows for coefficients 
	in $\mathcal{W}^{1,\infty}(J \times G)$ 
	whose derivatives up to order $m$ on $G_\pm$ are contained in $L^\infty(J, \Lpmtwohdom) + L^\infty(J \times G_\pm)$. In view of 
	Lemma~2.1 in \cite{SpitzQuasilinearMaxwell}, we can thus apply the linear theory with coefficients $\chi(\hat{u})$ and 
	$\sigma(\hat{u})$ and $\hat{u} \in \Gpmdomvar{\tilde{m}}$.
	However, the part of the derivatives in $L^\infty(J \times G)$ is easier to treat so that we concentrated on coefficients 
	from $\Fpmdom{m}$ in Sections~\ref{SectionAPrioriEstimates} and~\ref{SectionRegularity}. The same is true for the nonlinear problem. 
	In the proofs we will thus assume without loss of generality that $\chi$ and $\sigma$ from $\mlpm{m,6}{G}{\mathcal{U}_\pm}$ have 
	decaying space derivatives as $|x| \rightarrow \infty$.  More precisely, for 
	all multiindices $\alpha \in \N_0^9$ with $\alpha_4 = \ldots = \alpha_9 = 0$ and  $1 \leq |\alpha| \leq m$, $R > 0$, $\mathcal{U}_{1,\pm} \Subset \mathcal{U}_\pm$,
	and $v \in L^\infty(J, \Ltwohdom)$ with $\image v_\pm \subseteq \mathcal{U}_{1,\pm}$ and 
	$\|v\|_{L^\infty(J, \Ltwohdom)} \leq R$ we require
	\begin{align}
	\label{EquationPropertyForL2}
		&(\partial^\alpha \chi_\pm)(v_\pm), (\partial^\alpha \sigma_\pm)(v_\pm) \in L^\infty(J, L^2(G_\pm)), \nonumber\\
		&\|(\partial^\alpha \chi_\pm)(v_\pm)\|_{L^\infty(J, L^2(G_\pm))} + \|(\partial^\alpha \sigma_\pm)(v_\pm)\|_{L^\infty(J, L^2(G_\pm))} \leq C,
	\end{align}
	where $C = C(\chi,\sigma,m,R,\mathcal{U}_{1,\pm})$.
	With this assumption we obtain from Lemma~2.1 in \cite{SpitzQuasilinearMaxwell} that $\chi(\hat{u})$ and $\sigma(\hat{u})$ 
	belong to $\Fpmdom{m}$.
	
	Finally, we note that for unbounded $G$ the above considerations are unnecessary since then $\Lpmtwohdom + L^\infty(G_\pm) = \Lpmtwohdom$.
\end{rem}
The next lemma relates the maps $S_{\chi,\sigma,G,m,p}$ to their linear counterparts in~\eqref{EquationDefinitionSmp}. 

\begin{lem}
 \label{LemmaCorrespondenceLinearNonlinearInZero}
 Let $J \subseteq \R$ be an open interval, $t_0 \in \clJ$, and $m \in \N$ with $m \geq 3$.
 Take  $\chi \in \mlpdpm{m,6}{G}{\mathcal{U}_\pm}$ and $\sigma \in \mlpm{m,6}{G}{\mathcal{U}_\pm}$.
 Choose data $f \in \Hpmadom{m}$ and $u_0 \in \Hpmhdom{m}$ with $\overline{\image u_{0,\pm}}\subseteq\mathcal{U}_\pm$. 
 Let $r  > 0$. Assume that $f$ and $u_0$ satisfy
 \begin{align*}
  \begin{aligned}
    &\Hpmhndom{m}{u_0} \leq r, \quad &&\max_{0 \leq j \leq m-1} \Hpmhndom{m-j-1}{\partial_t^j f(t_0)} \leq r, \\
    &\Gpmdomnormwg{m-1}{f} \leq r, &&\Hpmandom{m}{f} \leq r.
  \end{aligned}
 \end{align*}
 
(1)	Let $\hat{u} \in \Gpmdomvar{m}$ with $\partial_t^p \hat{u}(t_0) = S_{\chi,\sigma,G,m,p}(t_0,f,u_0)$ 
	for $0 \leq p \leq m - 1$. Then $\hat{u}$ fulfills the equations
	\begin{align}
	    \label{EquationConnectionLinearAndNonlinearCompatibilityConditions}
	    S_{G,m,p}(t_0,\chi(\hat{u}),A_1^{\operatorname{co}}, A_2^{\operatorname{co}},A_3^{\operatorname{co}}, \sigma(\hat{u}), f, u_0) = S_{\chi,\sigma,G, m, p}(t_0, f, u_0)
	\end{align}
	for all $p \in \{0,\ldots,m\}$.

	(2) There  is a constant $C(\chi,\sigma,m,r,\mathcal{U}_{1,\pm}) > 0$ 
	and a function $u$ in $\Gpmdom{m}$ realizing the initial conditions 
	\begin{align*}
	  \partial_t^p u(t_0) = S_{\chi, \sigma, G, m, p}(t_0,f,u_0)
	\end{align*}
	for all $p \in \{0,\ldots,m\}$ and it is bounded by
	\begin{align*}
	    \Gpmdomnormwg{m}{u} \leq C(\chi,\sigma,m,r,\mathcal{U}_{1,\pm}) \Big(\sum_{j = 0}^{m-1} \Hpmhndom{m-j-1}{\partial_t ^j f(t_0)} + \Hpmhndom{m}{u_0} \Big).
	\end{align*}   
	Here $\mathcal{U}_{1,\pm}$ denote compact subsets of $\mathcal{U}_\pm$ with $\image u_{0,\pm} \subseteq \mathcal{U}_{1,\pm}$.
\end{lem}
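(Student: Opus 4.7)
I would prove the two assertions separately: part~(1) by induction on $p$, and then part~(2) by combining part~(1) with a uniform bound on the nonlinear initial-data operators and a Taylor-type extension.

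\emph{Part (1).} The base case $p=0$ is immediate as both sides return $u_0$. For the inductive step, fix $p \geq 1$ and assume the identity for all $l<p$. Inserting $A_0 = \chi(\hat u)$, $A_j = A_j^{\operatorname{co}}$, $D = \sigma(\hat u)$ into the linear recursion~\eqref{EquationDefinitionSmp} gives
\begin{align*}
 S_{G,m,p,\pm} &= \chi_\pm(\hat u)(t_0)^{-1}\Big(\partial_t^{p-1} f_\pm(t_0) - \sum_{j=1}^3 A_j^{\operatorname{co}}\partial_j S_{G,m,p-1,\pm}  \\
 &\quad - \sum_{l=1}^{p-1}\tbinom{p-1}{l} \partial_t^l[\chi_\pm(\hat u)](t_0)\, S_{G,m,p-l,\pm} \\
 &\quad - \sum_{l=0}^{p-1}\tbinom{p-1}{l} \partial_t^l[\sigma_\pm(\hat u)](t_0)\, S_{G,m,p-1-l,\pm}\Big).
\end{align*}
By the Faà di~Bruno formula (Lemma~2.1 of~\cite{SpitzQuasilinearMaxwell}), each time derivative $\partial_t^l[\chi_\pm(\hat u)](t_0)$ is a sum of $j$-fold products of factors $\partial_t^{|\gamma_i|}\hat u(t_0)$ with $\sum \gamma_i = (l,0,0,0)$ and $|\gamma_i| \geq 1$, multiplied by partial derivatives of $\chi_\pm$ evaluated at $\hat u(t_0) = u_{0,\pm}$ and weighted by the combinatorial constants $C((l,0,0,0),\gamma_1,\ldots,\gamma_j)$. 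By the hypothesis on $\hat u$ and the induction hypothesis, each factor $\partial_t^{|\gamma_i|}\hat u(t_0)$ equals $S_{\chi,\sigma,G,m,|\gamma_i|,\pm}(t_0,f,u_0)$, so the resulting sum is precisely $M_{1,\pm}^l$ as defined in~\eqref{EquationDefinitionMkp}; the argument for $\sigma$ and $M_{2,\pm}^l$ is identical. Using also the induction hypothesis for $S_{G,m,p-1,\pm}$ in the $A_j^{\operatorname{co}}\partial_j$ term, the right-hand side collapses into the nonlinear recursion~\eqref{EquationDefinitionHigherOrderInitialValuesNonlinear} defining $S_{\chi,\sigma,G,m,p,\pm}$, which closes the induction.

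\emph{Part (2).} Applying Lemma~2.4 of~\cite{SpitzQuasilinearMaxwell} separately on $G_+$ and $G_-$ yields, for each $p\in\{0,\ldots,m\}$,
\[
 \Hpmhndom{m-p}{S_{\chi,\sigma,G,m,p}(t_0,f,u_0)} \leq C(\chi,\sigma,m,r,\mathcal{U}_{1,\pm})\Big(\sum_{j=0}^{p-1}\Hpmhndom{m-1-j}{\partial_t^j f(t_0)}+\Hpmhndom{m}{u_0}\Big).
\]
A Taylor-type extension, provided by Lemma~2.34 of~\cite{SpitzDissertation} applied on $G_+$ and $G_-$, then produces a function $u \in \Gpmdom{m}$ with $\partial_t^p u(t_0) = S_{\chi,\sigma,G,m,p}(t_0,f,u_0)$ for all $p\in\{0,\ldots,m\}$ and $\Gpmdomnormwg{m}{u} \leq C\sum_{p=0}^{m}\Hpmhndom{m-p}{\partial_t^p u(t_0)}$. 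Chaining the two estimates yields the stated bound.

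The main obstacle is the Faà di~Bruno bookkeeping in part~(1): one must verify that the combinatorial coefficients $C((l,0,0,0),\gamma_1,\ldots,\gamma_j)$ arising from the multivariate chain rule match precisely those used in the definition~\eqref{EquationDefinitionMkp} of $M_{k,\pm}^l$. Since this identity is already recorded in Lemma~2.1 of~\cite{SpitzQuasilinearMaxwell}, the remaining work is a careful re-indexing of sums rather than new analysis; no feature of the interface problem enters at this stage.
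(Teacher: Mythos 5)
Your proof is correct and follows essentially the same route as the paper: part~(1) by induction using the two recursive definitions together with the chain-rule identity of Lemma~2.1 in~\cite{SpitzQuasilinearMaxwell}, and part~(2) by combining the bound of Lemma~2.4 in~\cite{SpitzQuasilinearMaxwell} with the extension result of Lemma~2.34 in~\cite{SpitzDissertation} applied on $G_+$ and $G_-$ separately. The paper leaves the Faà di~Bruno bookkeeping implicit, while you spell it out; one small remark is that the factors $\partial_t^{|\gamma_i|}\hat u(t_0)$ inside $M^l_{k,\pm}$ match $S_{\chi,\sigma,G,m,|\gamma_i|,\pm}$ directly from the hypothesis on $\hat u$ (since $|\gamma_i|\le m-1$), with no appeal to the induction hypothesis needed there — that hypothesis is used only to replace the lower-order linear $S_{G,m,q}$ by $S_{\chi,\sigma,G,m,q}$.
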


\begin{proof}
Assertion~(1) can be shown  by induction using the definitions of the 
 operators $S_{G,m,p}$ in~\eqref{EquationDefinitionSmp} and  of 
 $S_{\chi,\sigma,G,m,p}$ in~\eqref{EquationDefinitionHigherOrderInitialValuesNonlinear},
 as well as Lemma~2.1 in \cite{SpitzQuasilinearMaxwell}.

 Since $S_{\chi, \sigma, G, m, p}(t_0,f,u_0)$ belongs to $\Hpmhdom{m-p}$ for all $p \in \{0, \ldots, m\}$, 
 an extension theorem (see e.g.\ Lemma~2.34 in~\cite{SpitzDissertation} applied on $G_+$ and $G_{-}$ separately)
 yields the existence of a function $u$ in $\Gpmdom{m}$ with   $\partial_t^p u(t_0) = S_{\chi, \sigma, G, m, p}(t_0,f,u_0)$ and
\begin{equation*}
 \Gpmdomnormwg{m}{u} \leq C \sum_{p = 0}^m \Hpmhndom{m-p}{S_{\chi, \sigma, G, m, p}(t_0,f,u_0)}
\end{equation*} 
for all $p \in \{0, \ldots, m\}$.  Lemma~2.4 of \cite{SpitzQuasilinearMaxwell} then implies assertion (2).
\end{proof}

 We introduce slightly strengthened assumptions on our material laws $\chi$ and $\sigma$ to guarantee that 
 $\chi(\hat{u})$ and $\sigma(\hat{u})$ converge at infinity, as required in 
 Theorem~\ref{TheoremExistenceAndUniquenessOnDomain}. 
 \begin{align*}
  &\mlwlpm{m,n}{G}{\mathcal{U}_\pm}{cv} = \{\theta \in \mlpm{m,n}{G}{\mathcal{U}_\pm} \colon \exists A \in \R^{n \times n} \text{ such that for all } \\
      &\hspace{8.5em} (x_k, y_k)_k \in (G \times \mathcal{U})^{\N} \text{ with } |x_k| \rightarrow \infty \text{ and } y_k \rightarrow 0: \\
      &\hspace{8.5em} \theta(x_k, y_k) \rightarrow A \text{ as } k \rightarrow \infty\}, \\
   &\mlpdwlpm{m,n}{G}{\mathcal{U}_\pm}{cv} = \mlpdpm{m,n}{G}{\mathcal{U}_\pm} \cap \mlwlpm{m,n}{G}{\mathcal{U}_\pm}{cv}.
 \end{align*}
  The space $\mlwlpm{m,n}{G}{\mathcal{U}_\pm}{cv}$ coincides with $\mlpm{m,n}{G}{\mathcal{U}_\pm}$ in \eqref{def:ml}
 if $G$ is bounded. 
 
 The next result provides the uniqueness of solutions of~\eqref{EquationNonlinearIBVP}. 
 Its proof is an obvious modification of Lemma~7.1 in \cite{SpitzQuasilinearMaxwell}
 and therefore omitted.
 \begin{lem}
 Let $t_0 \in \R, T > 0$, $J = (t_0, t_0 + T)$, and $m \in \N$ 
 with $m \geq 3$. Take material laws $\chi \in \mlpdwlpm{m,6}{G}{\mathcal{U}_\pm}{cv}$ 
 and $\sigma \in \mlwlpm{m,6}{G}{\mathcal{U}_\pm}{cv}$.
 Choose data $f \in \Hpmadom{m}$, $g \in \Edom{m}$, and $u_0 \in \Hpmhdom{m}$. 
 Let $u_1$ and $u_2$ be two solutions in $\Gpmdom{m}$ of~\eqref{EquationNonlinearIBVP} with 
 initial time $t_0$. Then $u_1 = u_2$.
\end{lem}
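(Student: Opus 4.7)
The plan is to follow a standard linearization-plus-$L^2$-energy argument, paralleling Lemma~7.1 in \cite{SpitzQuasilinearMaxwell}: obtain a linear symmetric hyperbolic interface problem with zero data for $w := u_1 - u_2$, and conclude $w \equiv 0$ via the $L^2$ energy estimate for interface problems of Maxwell type.

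First, writing the differences of the nonlinear terms via the fundamental theorem of calculus,
\begin{equation*}
  \chi_\pm(u_{1,\pm}) - \chi_\pm(u_{2,\pm}) = N_{\chi,\pm}\, w_\pm, \qquad
  \sigma_\pm(u_{1,\pm}) u_{1,\pm} - \sigma_\pm(u_{2,\pm}) u_{2,\pm} = \tilde{N}_\pm\, w_\pm,
\end{equation*}
with bounded matrix functions $N_{\chi,\pm}$ and $\tilde{N}_\pm$ depending on $u_1,u_2$, and subtracting the two copies of~\eqref{EquationNonlinearIBVP}, I obtain the linear system
\begin{equation*}
  \chi_\pm(u_{1,\pm})\partial_t w_\pm + \sum_{j=1}^3 A_j^{\operatorname{co}}\partial_j w_\pm + \tilde{D}_\pm w_\pm = 0 \quad \text{on } J\times G_\pm,
\end{equation*}
with $\tilde{D}_\pm := N_{\chi,\pm}\,\partial_t u_{2,\pm} + \tilde{N}_\pm$. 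Because $u_1$ and $u_2$ share the same data $(f,g,u_0)$, the interface, boundary, and initial conditions for $w$ reduce to the homogeneous ones
\begin{equation*}
  B_\Sigma(w_+,w_-) = 0 \text{ on }\Sigma,\quad B_{\partial G}w_+ = 0 \text{ on }\partial G,\quad w(t_0) = 0 \text{ on } G.
\end{equation*}

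Next, I would verify the coefficient regularity. Since $m\ge 3$, the Sobolev embedding $\mathcal{H}^m(G_\pm)\hookrightarrow C^1(\overline{G_\pm})$ combined with $u_1,u_2\in\Gpmdom{m}$ shows $u_1,u_2\in C^1(\clJ\times\overline{G_\pm})$, with images contained in compact subsets of $\mathcal{U}_\pm$. Consequently $\chi_\pm(u_{1,\pm})\in\mathcal{W}^{1,\infty}(J\times G)$ is symmetric with $\chi_\pm(u_{1,\pm})\ge\eta$, while $\tilde{D}_\pm\in L^\infty(J\times G)$. The maximal-nonnegativity of $B_\Sigma$ and $B_{\partial G}$ underpinning~\eqref{EquationDecompositionOfA3co} is untouched by the linearization.

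Finally, I would apply the basic $L^2$-uniqueness result of \cite{Eller} — the tool behind Lemma~\ref{LemmaEllerResult}, transferred from the half-space to the interface problem on $G$ via the localization procedure of Section~\ref{SectionLocalization} — to conclude that the only $C(\clJ,\mathcal{H}^0(G))$-solution of the zero-data problem for $w$ is $w\equiv 0$, hence $u_1=u_2$. The main technical obstacle is that Lemma~\ref{LemmaEllerResult} as stated requires the coefficients to be constant outside a compact set, an assumption not automatic for $\chi_\pm(u_{1,\pm})$ when $G$ is unbounded. This is resolved exactly as in Lemma~7.1 of \cite{SpitzQuasilinearMaxwell}: one uses the version of Eller's energy estimate that only needs Lipschitz coefficients and maximal nonnegative interface-boundary conditions, since only the inequality (not existence) is required for uniqueness; alternatively, a cut-off and finite-propagation-speed argument reduces uniqueness on $J$ to compactly supported perturbations. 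Gronwall's inequality then closes the energy estimate and yields $w=0$.
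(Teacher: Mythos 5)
Your proposal is correct and follows essentially the same route the paper indicates by reference to Lemma~7.1 of \cite{SpitzQuasilinearMaxwell}: form $w=u_1-u_2$, linearize around $u_1$ via the fundamental theorem of calculus to obtain a symmetric linear system $\chi(u_1)\partial_t w+\sum_j A_j^{\operatorname{co}}\partial_j w+\tilde D w=0$ with homogeneous initial, boundary, and interface data, and conclude $w\equiv 0$ from the $L^2$ energy estimate. The regularity check ($\chi(u_1)\in\mathcal{W}^{1,\infty}$, $\chi(u_1)=\chi(u_1)^T\geq\eta$, $\tilde D\in L^\infty$, both solutions valued in compact subsets of $\mathcal{U}_\pm$) is done correctly using $m\geq 3$ and the Sobolev embedding.

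One clarifying remark on the ``main technical obstacle'' you raise: the $\operatorname{cp}$ restriction in Lemma~\ref{LemmaEllerResult} is not actually an issue here, because for uniqueness you never need the existence half of Eller's result (nor the localization to $\R^3_+$). The required $L^2$ energy inequality on $G$ itself follows directly by the Friedrichs argument: multiply by $w$, integrate over $G_\pm$, integrate by parts in the constant $A_j^{\operatorname{co}}$-terms, and observe that the resulting boundary integrals on $\partial G$ and $\Sigma$ vanish because $\langle A^{\operatorname{co}}(\nu)w_+,w_+\rangle = 2(\nu\times E_+)\cdot H_+ = 0$ under the perfect-conductor condition, while on $\Sigma$ the jump conditions $[E\times\nu]=[H\times\nu]=0$ for $w$ force the two one-sided contributions to cancel ($\nu\times E_+=\nu\times E_-$ is tangential, $H_+-H_-$ is normal). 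Gronwall then gives $w=0$ with no cut-off or finite-propagation-speed argument needed, and this is the ``obvious modification'' of Lemma~7.1 the paper has in mind. Your first alternative (using only the energy inequality for Lipschitz coefficients with conservative boundary/interface conditions) is precisely this, so the proposal is sound as written.
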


 We now show the basic local existence theorem for \eqref{EquationNonlinearIBVP} by
 a contraction argument. To close the argument, one has to take great care
 of the constants.  In particular, the 
 structure of the a priori estimate in Theorem~\ref{TheoremExistenceAndUniquenessOnDomain} is crucial here.
 \begin{theorem}
 \label{TheoremLocalExistenceNonlinear}
 Let $t_0 \in \R$, $T > 0$,  $J = (t_0,t_0 + T)$, and $m \in \N$ with $m \geq 3$. 
 Take $\chi \in \mlpdwlpm{m,6}{G}{\mathcal{U}_\pm}{cv}$ and $\sigma \in \mlwlpm{m,6}{G}{\mathcal{U}_\pm}{cv}$.
 Let $B_\Sigma$ and $B_{\partial G}$ be given by~\eqref{EquationDefinitionB}. Choose data $f \in \Hpmadom{m}$,  $g \in \Edom{m}$, and 
 $u_0 \in \Hpmhdom{m}$ with $\overline{\image u_{0,\pm}} \subseteq \mathcal{U}_\pm$
 such that the tuple $(\chi, \sigma, t_0, B_\Sigma, B_{\partial G}, f, g, u_0)$ fulfills 
 the nonlinear compatibility conditions~\eqref{EquationNonlinearCompatibilityConditions} of order $m$.
 Pick a radius $r > 0$ satisfying
 \begin{align}
 \label{EquationDataSmallerRadiusInLocalExistenceTheorem}
  &\sum_{j = 0}^{m-1} \Hpmhndom{m-1-j}{\partial_t^j f(t_0)}^2 + \Enormwgdom{m}{g}^2 + \Hpmhndom{m}{u_0}^2 + \Hpmandom{m}{f}^2 \leq r^2.
 \end{align}
 Take a number $\kappa > 0$ with 
 \begin{equation*}
  \dist(\{u_{0,\pm}(x) \colon x \in G_\pm\}, \partial \mathcal{U}_\pm) > \kappa.
 \end{equation*}
 Then there exists a time
 $\tau = \tau(\chi,\sigma,m,T,r,\kappa) > 0$ such that the nonlinear initial boundary value 
 problem~\eqref{EquationNonlinearIBVP} with data $f$, $g$, and $u_0$ 
 has a unique 
 solution $u$ on $[t_0, t_0 + \tau]$ which belongs to $\mathcal{G}_m(J_\tau \times G)$, 
 where $J_\tau = (t_0, t_0 + \tau)$.
\end{theorem}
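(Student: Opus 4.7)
The plan is to realize $u$ as the fixed point of the map $\Phi$ that sends $\hat u$ to the solution of the \emph{linear} interface problem~\eqref{EquationIBVPIntroduction} with frozen coefficients $A_0=\chi(\hat u)$ and $D=\sigma(\hat u)$ and the given data $f,g,u_0$. To set this up, first fix a compact set $\mathcal{U}_{1,\pm}\Subset\mathcal{U}_\pm$ with $\dist(\{u_{0,\pm}(x)\},\partial\mathcal{U}_\pm)>\kappa$ and $\dist(\mathcal{U}_{1,\pm},\partial\mathcal{U}_\pm)>\kappa/2$, and use Lemma~\ref{LemmaCorrespondenceLinearNonlinearInZero}~(2) to produce a reference function $u^\ast\in\Gpmdom{m}$ realising $\partial_t^p u^\ast(t_0)=S_{\chi,\sigma,G,m,p}(t_0,f,u_0)$ with a bound $\Gpmdomnormwg{m}{u^\ast}\le C(\chi,\sigma,m,r,\mathcal{U}_{1,\pm})\,r=:R_0$. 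Then, for $\tau\in(0,T]$ and a radius $R\ge 2R_0$ to be fixed later, I introduce the ball
\begin{equation*}
 \mathcal{B}_\tau(R) = \{\hat u \in \mathcal{G}_{m,\operatorname{iv}}(J_\tau\times G) \colon \GpmdomnormwgP{m}{\hat u}\le R,\ \image \hat u_\pm(t,\cdot)\subseteq\mathcal{U}_{1,\pm}\ \forall t\in\overline{J_\tau}\},
\end{equation*}
where $\mathcal{G}_{m,\operatorname{iv}}$ fixes the prescribed time derivatives at $t_0$. For $\tau$ small, $u^\ast\in\mathcal{B}_\tau(R)$ by Sobolev embedding $\Gpmdom{m}\hookrightarrow C(\overline{J_\tau},C(\overline{G_\pm}))$ since $m\ge 3$, so $\mathcal{B}_\tau(R)$ is non-empty; it is also closed in the weaker norm $\GpmdomnormwgP{0}{\cdot}$ using weak-$\ast$ compactness of bounded sets in $\Gpmdom{m}$.

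For $\hat u\in\mathcal{B}_\tau(R)$, the coefficients $\chi(\hat u)$ and $\sigma(\hat u)$ belong to the classes $\Fpmdomupdwlk{\tilde m}{cv}{\eta/2}{6}$ and $\Fpmdomuwlk{\tilde m}{cv}{6}$ required by Theorem~\ref{TheoremExistenceAndUniquenessOnDomain}, with norm bounded by some $r_1=r_1(\chi,\sigma,m,R,\mathcal{U}_{1,\pm})$, while $\|\chi(\hat u)(t_0)\|$ and $\|\sigma(\hat u)(t_0)\|$ in the relevant $\mathcal{F}^0_{\tilde m-1}$ and $\mathcal{H}^{\tilde m-1-j}$ norms depend only on $u_0$ and the prescribed time-derivatives $S_{\chi,\sigma,G,m,p}(t_0,f,u_0)$, i.e.\ only on $r$ (through Lemma~\ref{LemmaEstimatesForHigherOrderInitialValues} applied to Definition~\ref{DefinitionNonlinearSmpOperators}); call this bound $r_0'$. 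The nonlinear compatibility conditions~\eqref{EquationNonlinearCompatibilityConditions} together with Lemma~\ref{LemmaCorrespondenceLinearNonlinearInZero}~(1) yield the linear compatibility~\eqref{EquationCompatibilityConditionPrecised} for the frozen tuple, so Theorem~\ref{TheoremExistenceAndUniquenessOnDomain} delivers a unique $\Phi(\hat u)\in\Gpmdom{m}$ with
\begin{align*}
\GpmdomnormwgP{m}{\Phi(\hat u)}^2
&\le (C_{m,0}(\eta/2,r_0')+\tau C_m(\eta/2,r_1,T))\,e^{mC_1(\eta/2,r_1,T)\tau}\,r^2 \\
&\quad+\frac{C_m}{\gamma_m}\,\HpmangammaPdom{m}{f}^2.
\end{align*}
Since $C_{m,0}$ depends only on $\eta$ and $r_0'$ (and thus only on $r$, not on $R$), choosing $R$ larger than $2C_{m,0}(\eta/2,r_0')^{1/2}r$ and $\gamma$ large makes the right-hand side $\le R^2$ for $\tau$ small. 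The condition $\image\Phi(\hat u)_\pm(t,\cdot)\subseteq\mathcal{U}_{1,\pm}$ follows by the fundamental theorem of calculus applied to $\Phi(\hat u)-u_0$ (which vanishes at $t_0$), using that $\partial_t\Phi(\hat u)$ is bounded in $C(\overline{J_\tau},\Hpmhdom{m-1})\hookrightarrow C(\overline{J_\tau\times G_\pm})$. Hence $\Phi$ maps $\mathcal{B}_\tau(R)$ into itself for $\tau$ small.

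To obtain a contraction, I take $\hat u_1,\hat u_2\in\mathcal{B}_\tau(R)$ and note that $w:=\Phi(\hat u_1)-\Phi(\hat u_2)$ satisfies~\eqref{EquationIBVPIntroduction} with coefficients $\chi(\hat u_1)$, $\sigma(\hat u_1)$, zero boundary data, zero initial data, and inhomogeneity
\begin{equation*}
 F = (\chi(\hat u_2)-\chi(\hat u_1))\partial_t\Phi(\hat u_2) + (\sigma(\hat u_2)-\sigma(\hat u_1))\Phi(\hat u_2).
\end{equation*}
A mean value estimate together with $\Gpmdom{m}\hookrightarrow W^{1,\infty}$ for $m\ge 3$ gives
$\|F\|_{L^2_\gamma(J_\tau\times G)}\le C(R)\,\|\hat u_1-\hat u_2\|_{L^2_\gamma(J_\tau\times G)}$. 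Applying the $L^2$ wellposedness from Lemma~\ref{LemmaEllerResult} (which carries over to the interface problem as exploited in Section~\ref{SectionLocalization}) to $w$ produces
\begin{equation*}
 \GpmdomnormwgP{0}{\Phi(\hat u_1)-\Phi(\hat u_2)}^2 \le \frac{C(R)}{\gamma}\,\GpmdomnormwgP{0}{\hat u_1-\hat u_2}^2,
\end{equation*}
so $\Phi$ is a strict contraction in the $\mathcal{G}_0$ norm for $\gamma$ large enough. Since $\mathcal{B}_\tau(R)$ is closed in this weaker norm, Banach's fixed point theorem yields a unique fixed point $u\in\mathcal{B}_\tau(R)$, which by construction solves~\eqref{EquationNonlinearIBVP}; uniqueness among $\Gpmdom{m}$-solutions is given by the lemma cited just before the theorem. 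The main technical obstacle is keeping the interplay between $R$ and $\tau$ consistent: the constants $C_m, C_1$ in the linear a priori estimate depend on $R$ through the $\mathcal{F}_m$-norm of $\chi(\hat u),\sigma(\hat u)$, so one must exploit the specific form of~\eqref{EquationAPrioriEstimatesOnADomain}, namely that the $R$-dependent piece is multiplied by $\tau$, together with Lemma~\ref{LemmaEstimatesForHigherOrderInitialValues} which controls $C_{m,0}$ by the data alone.
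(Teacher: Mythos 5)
Your proof follows the paper's strategy: freeze the coefficients via $\chi(\hat u), \sigma(\hat u)$, convert the nonlinear compatibility conditions to linear ones via Lemma~\ref{LemmaCorrespondenceLinearNonlinearInZero}, invoke the linear a priori estimate of Theorem~\ref{TheoremExistenceAndUniquenessOnDomain}, and run a fixed-point argument in a weaker norm. The essential constant bookkeeping — that $C_{m,0}$ depends only on $r$ through the prescribed time-derivatives at $t_0$, so $R$ can be fixed before $\tau$ and $\gamma$, and that the $R$-dependent part of the estimate is $O(\tau)$ — is correctly identified, and this is precisely what makes the self-map step close.

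Two points should be tightened. First, you place the ball $\mathcal{B}_\tau(R)$ inside $\mathcal{G}_{m,\operatorname{iv}}(J_\tau\times G)$, a subset of $\GpmdomP{m}$ (the $C^j$-in-time intersection). Weak-$\ast$ compactness of the $\GpmdomvarP{m}$-norm bound only puts the $\mathcal{G}_0$-limit of a sequence from $\mathcal{B}_\tau(R)$ into the $L^\infty$-in-time space $\GpmdomvarP{m}$, not into $\GpmdomP{m}$; so $\mathcal{B}_\tau(R)$ as you defined it is not a priori closed in your metric. The paper avoids this by setting up $B_R(J_\tau)$ directly inside $\GpmdomvarP{m}$, finding the fixed point there, and then upgrading it to $\GpmdomP{m}$ via the linear regularity of Theorem~\ref{TheoremExistenceAndUniquenessOnDomain}; your writeup should do the same. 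Second, the paper contracts in the $\mathcal{G}_{m-1}$ metric, you in $\mathcal{G}_0$. Contracting in $\mathcal{G}_0$ does work (it only needs the $m=0$ case of Theorem~\ref{TheoremExistenceAndUniquenessOnDomain}, i.e.\ the $L^2$-level interface estimate — Lemma~\ref{LemmaEllerResult} lives on $\R^3_+$, not $G$), but $\mathcal{G}_{m-1}$-convergence automatically transports the prescribed initial time-derivatives $\partial_t^j v(t_0)$ for $j\le m-1$ to the limit, which is needed to re-apply $\Phi$ to the fixed point; with $\mathcal{G}_0$ this must be recovered from the $\GpmdomvarP{m}$ bound by an Arzel\`a--Ascoli argument, which you do not supply. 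Also, the left-hand side of your a priori estimate display is the unweighted $\mathcal{G}_m$-norm, but Theorem~\ref{TheoremExistenceAndUniquenessOnDomain} bounds the $\gamma$-weighted one; passing to the unweighted norm costs a factor $e^{\gamma\tau}$, so the constraint $\gamma\tau\lesssim 1$ (present in the paper's definition of $\tau$) has to be built into the choice of $\tau$. None of these is a wrong idea, but all need to be filled in to make the argument watertight.
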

\begin{proof}
Without loss of generality we assume $t_0 = 0$ and that~\eqref{EquationPropertyForL2} holds true 
for $\chi$ and $\sigma$, cf. Remark~\ref{RemarkChiuInFm}. Let $\tau \in (0,T]$. We set $J_\tau = (0, \tau)$ and 
\begin{equation}
\label{EquationDefinitionUkappa}
 \mathcal{U}_{\kappa,\pm} = \{y \in \mathcal{U}_\pm \colon \dist(y, \partial \mathcal{U}_\pm) \geq \kappa \} \cap \overline{B}_{2C_{\operatorname{Sob}} r}(0),
\end{equation}
where $C_{\operatorname{Sob}}$ is the norm of the Sobolev embedding $\Hpmhdom{2}\hookrightarrow L^\infty(G)$. 
The sets $\mathcal{U}_{\kappa,\pm}$ are compact and contain $\overline{\image u_{0,\pm}}$.

 Let $R > 0$. As in step~I of the proof of Theorem~3.3 in~\cite{SpitzQuasilinearMaxwell} one 
 checks that
 \begin{align*}
  B_R(J_\tau) := \{v \in \GpmdomvarP{m} \colon &\GpmdomnormwgP{m}{v} \leq R, \, \|v - u_0\|_{L^\infty(J_\tau \times G)} \leq \kappa/2, \\
  &\partial_t^j v(0) = S_{\chi,\sigma,G,m,j}(0,f,u_0) \text{ for } 0 \leq j \leq m-1\}
 \end{align*}
 is a complete metric space when endowed with $d(v_1,v_2) = \GpmdomnormwgP{m-1}{v_1 - v_2}$. It is non-empty
 thanks to Lemma~\ref{LemmaCorrespondenceLinearNonlinearInZero} and the choice of $R$ and $\tau$ below.
 
Let $\hat{u} \in B_R(J_\tau)$. We have $\chi \geq \eta$ for some $\eta > 0$.
The map $\chi(\hat{u})$ is contained in $\mathcal{F}_{m,\eta}^{\operatorname{cv}}(J_\tau \times G)$ 
and $\sigma(\hat{u})$ in $\mathcal{F}_{m}^{\operatorname{cv}}(J_\tau \times G)$ 
by Lemma~2.1 in \cite{SpitzQuasilinearMaxwell}, Remark~\ref{RemarkChiuInFm},
and Sobolev's embedding. Lemma~\ref{LemmaCorrespondenceLinearNonlinearInZero}
and the assumptions imply that the tuple 
$(t_0,\chi(\hat{u}), A_1^{\operatorname{co}}, A_2^{\operatorname{co}}, A_3^{\operatorname{co}},\sigma(\hat{u}),B_\Sigma, B_{\partial G},f,g,u_0)$ 
fulfills the linear compatibility conditions~\eqref{EquationCompatibilityConditionPrecised}.
Theorem~\ref{TheoremExistenceAndUniquenessOnDomain} then yields a solution $u \in \GpmdomP{m}$ of 
the linear sytem~\eqref{EquationIBVPIntroduction} with differential operator $L(\chi(\hat{u}),A_1^{\operatorname{co}}, A_2^{\operatorname{co}}, A_3^{\operatorname{co}}, \sigma(\hat{u}))$
and data $f$, $g$, and $u_0$. In this way one defines a mapping $\Phi \colon \hat{u} \mapsto u$ from $B_R(J_\tau)$ to $\GpmdomP{m}$. 
 We are now  looking for a radius $R>0$ and a (small) time $\tau>0$ such that 
 $\Phi$ leaves invariant  $B_R(J_\tau)$.

For this purpose take numbers $\tau \in (0,T]$ and $R > C_{\ref{LemmaCorrespondenceLinearNonlinearInZero}}( \chi, \sigma, m, r,\mathcal{U}_{\kappa,\pm}) (m+1) r$ 
which will be fixed below. Let $\hat{u} \in B_R(J_\tau)$. Lemma~2.4 in \cite{SpitzQuasilinearMaxwell} and 
\eqref{EquationDataSmallerRadiusInLocalExistenceTheorem} imply that
\begin{align}
\label{EquationBoundForLinearHigherOrderInitialValue}
 \Hpmhndom{m-p}{S_{\chi,\sigma,G,m,p}(0,f,u_0)} \leq C_{2.4,\text{\cite{SpitzQuasilinearMaxwell}}}(\chi,\sigma,m,r,\mathcal{U}_{\kappa,\pm})
\end{align}
for all $p \in \{0,\ldots,m\}$ and a constant $C_{2.4,\text{\cite{SpitzQuasilinearMaxwell}}}$. From Lemma~2.1 of  \cite{SpitzQuasilinearMaxwell} we infer
\begin{align*}
 \Fpmvarnormdom{m-1}{\chi(\hat{u})(0)}, \Fpmvarnormdom{m-1}{\sigma(\hat{u})(0)} \leq C_{2.1,\text{\cite{SpitzQuasilinearMaxwell}}}(\chi,\sigma,m,r,\mathcal{U}_{\kappa,\pm}),
\end{align*}
using \eqref{EquationDataSmallerRadiusInLocalExistenceTheorem} and  $\chi(\hat{u})(0) = \chi(u_0)$, for instance. 
Note that $\overline{\image \hat{u}_\pm}$ is contained in the compact set
\begin{equation*}
 \tilde{\mathcal{U}}_{\kappa,\pm} = \mathcal{U}_{\kappa,\pm} + \overline{B}(0,\kappa/2) \subseteq \mathcal{U}_\pm
\end{equation*}
as $\hat{u} \in B_R(J_\tau)$.
Lemma~2.1 in \cite{SpitzQuasilinearMaxwell} and estimate~\eqref{EquationBoundForLinearHigherOrderInitialValue} lead to the bounds
\begin{align*}
 \Hpmhndom{m-l-1}{\partial_t^l \chi(\hat{u})(0)}&\leq C_{2.1,\text{\cite{SpitzQuasilinearMaxwell}}}(\chi, m, \mathcal{U}_{\kappa,\pm}) (1 + \max_{0 \leq k \leq l} \Hpmhndom{m-k-1}{\partial_t^k \hat{u}(0)})^{m-1} \\
 &\hspace*{-1.5cm}= C_{2.1,\text{\cite{SpitzQuasilinearMaxwell}}}(\chi, m, \mathcal{U}_{\kappa,\pm}) (1 + \max_{0 \leq k \leq l} \Hpmhndom{m-k-1}{S_{\chi,\sigma,G,m,k}(0,f,u_0)})^{m-1} \\
 &\hspace*{-1.5cm}\leq C_{2.1,\text{\cite{SpitzQuasilinearMaxwell}}}(\chi, m, \mathcal{U}_{\kappa,\pm}) (1 + C_{2.4,\text{\cite{SpitzQuasilinearMaxwell}}}(\chi,\sigma,m,r,\mathcal{U}_{\kappa,\pm}))^{m-1}, \\
 \Hpmhndom{m-l-1}{\partial_t^l \sigma(\hat{u})(0)} &\leq C_{2.1,\text{\cite{SpitzQuasilinearMaxwell}}}(\sigma, m,  \mathcal{U}_{\kappa,\pm}) (1 + C_{2.4,\text{\cite{SpitzQuasilinearMaxwell}}}(\chi,\sigma,m,r,\mathcal{U}_{\kappa,\pm}))^{m-1}
\end{align*}
for all $l \in \{1,\ldots,m-1\}$. We thus find a radius $r_0 = r_0(\chi,\sigma,m,r,\kappa)$ such that
\begin{align*}
 &\max\{\Fpmvarnormdom{m-1}{\chi(\hat{u})(0)}, \max_{1 \leq l \leq m-1} \Hpmhndom{m-l-1}{\partial_t^l \chi(\hat{u})(0)}\} \leq r_0, \nonumber \\
 &\max\{\Fpmvarnormdom{m-1}{\sigma(\hat{u})(0)}, \max_{1 \leq l \leq m-1} \Hpmhndom{m-l-1}{\partial_t^l \sigma(\hat{u})(0)}\} \leq r_0.
\end{align*}
Since $\hat{u}$ belongs to $B_R(J_\tau)$, Lemma~2.1 in \cite{SpitzQuasilinearMaxwell} yields the inequality
\begin{align*}
 \Fpmnormdom{m}{\chi(\hat{u})}, \Fpmnormdom{m}{\sigma(\hat{u})} &\leq C_{2.1,\text{\cite{SpitzQuasilinearMaxwell}}}(\chi,\sigma,m,\tilde{\mathcal{U}}_{\kappa,\pm})(1+R)^m.
\end{align*}
Hence, there is a radius $R_1 = R_1(\chi, \sigma, m, R, \kappa)$ with 
\begin{align*}
 \Fpmnormdom{m}{\chi(\hat{u})} \leq R_1 \qquad \text{and} \qquad \Fpmnormdom{m}{\sigma( \hat{u})} \leq R_1.
\end{align*}

We next define the constant $C_{m,0} = C_{m,0}(\chi,\sigma,r,\kappa)$ by
\begin{align*}
 C_{m,0}(\chi,\sigma,r,\kappa) &= C_{\ref{TheoremExistenceAndUniquenessOnDomain},m,0}(\eta(\chi),r_0(\chi,\sigma,m,r,\kappa)),
\end{align*}
where $C_{\ref{TheoremExistenceAndUniquenessOnDomain},m,0}$ denotes the constant $C_{m,0}$ from Theorem~\ref{TheoremExistenceAndUniquenessOnDomain}.
The radius $R = R(\chi,\sigma,m,r,\kappa)$ for $B_R(J_\tau)$ is now fixed as
\begin{align}
 \label{EquationRadiusForFixedPointSpace}
 R= \max\Big\{\sqrt{6 \, C_{m,0}(\chi,\sigma,r,\kappa)}\, r, \, C_{\ref{LemmaCorrespondenceLinearNonlinearInZero}}( \chi, \sigma, m, r,\mathcal{U}_{\kappa,\pm})(m+1) r + 1\Big\}.
\end{align}
We further introduce the constants
\begin{align*}
 \gamma_m = \gamma_m(\chi, \sigma, T,r,\kappa) &:= \gamma_{\ref{TheoremExistenceAndUniquenessOnDomain},m}(\eta(\chi),R_1(\chi,\sigma,m,R(\chi,\sigma,m,r,\kappa),\kappa),T), \\ 
 C_m = C_m(\chi,\sigma,T,r) &:= C_{\ref{TheoremExistenceAndUniquenessOnDomain},m}(\eta(\chi), R_1(\chi,\sigma,m,R(\chi,\sigma,m,r,\kappa),\kappa),T), 
\end{align*}
where $\gamma_{\ref{TheoremExistenceAndUniquenessOnDomain},m}$ and 
$C_{\ref{TheoremExistenceAndUniquenessOnDomain},m}$ are the corresponding constants from Theorem~\ref{TheoremExistenceAndUniquenessOnDomain}.
Let $ C_{2.2,\text{\cite{SpitzQuasilinearMaxwell}}}(\theta,m, R, \tilde{\mathcal{U}}_{\kappa,\pm})$ be the constant that arises when applying  
Corollary~2.2 of \cite{SpitzQuasilinearMaxwell} to the components of $\theta \in \mlpm{m,6}{G}{\mathcal{U}_\pm}$.
We now define the parameter $\gamma = \gamma(\chi, \sigma, m, T, r,\kappa)$ and the time step 
$\tau = \tau(\chi, \sigma, m, T, r,\kappa)$  by
\begin{align}
 \gamma &= \max\Big\{\gamma_m, \,C_{m,0}^{-1} C_m \Big\}, \notag \\ 
  \tau &= \min\Big\{T, \, (2 \gamma + m C_{\ref{TheoremExistenceAndUniquenessOnDomain},1})^{-1} \log{2}, \, C_m^{-1} C_{m,0}, (2 C_{\operatorname{Sob}} R)^{-1} \kappa,
      \notag \\ 
      &\hspace{4em} [32  R^2 C_{m,0} C_{P}^2(C_{2.2,\text{\cite{SpitzQuasilinearMaxwell}}}^2(\chi,m, R, \tilde{\mathcal{U}}_\kappa) 
	+ C_{2.2,\text{\cite{SpitzQuasilinearMaxwell}}}^2(\sigma,m,R,\tilde{\mathcal{U}}_\kappa))]^{-1}\Big\}, \label{EquationDefinitionTauForFixedPointArgument}
\end{align}
where $C_{P}$ denotes the constant from Lemma~\ref{LemmaRegularityForA0}.

>From now on the reasoning follows the lines of steps~III)--V) of the proof of Theorem~3.3 in \cite{SpitzQuasilinearMaxwell}.
The above choice of constants and the linear results of our paper imply that $\Phi$ is a strict contraction on $B_R(J_\tau)$
which yields the assertion.
\end{proof}

%%%%%%%%%%%%%%%%%%%%%%%%%%%%%%%%%%%%%%%%%%%%%%%%%%%%%%%%%%%%%%%%%%%%%%%%%%%%%%%%%%%%%%%%%%%%%
%%%%%%%%%%%%%%%%%%%%%%%%%%%%%%%%%%%%%%%%%%%%%%%%%%%%%%%%%%%%%%%%%%%%%%%%%%%%%%%%%%%%%%%%%%%%%

\begin{rem}
 \label{RemarkNegativeTimes}
 Using time reversion and adapting coefficients and data accordingly, we can transfer
  the result of Theorem~\ref{TheoremLocalExistenceNonlinear} to the negative time direction, 
  cf.\ Remark~7.12 in~\cite{SpitzDissertation}.
 \end{rem}

%%%%%%%%%%%%%%%%%%%%%%%%%%%%%%%%%%%%%%%%%%%%%%%%%%%%%%%%%%%%%%%%%%%%%%%%%%%%%%%%%%%%%%%%%%%%%
%%%%%%%%%%%%%%%%%%%%%%%%%%%%%%%%%%%%%%%%%%%%%%%%%%%%%%%%%%%%%%%%%%%%%%%%%%%%%%%%%%%%%%%%%%%%%
We assume that the conditions of Theorem~\ref{TheoremLocalExistenceNonlinear} are valid and that the functions
$f$ and $g$ belong to the spaces
$\mathcal{H}^m((-T,T) \times G)$ respectively $E_m((-T,T) \times \Sigma)$,
 for  all $T > 0$.  We now define the \emph{maximal 
existence times} by 
\begin{equation}\label{def:max-times}\begin{split}
  T_+(m, t_0, f,g, u_0) &= \sup \{\tau \geq t_0 \colon \exists \, \mathcal{G}_m  \text{-solution of } \eqref{EquationNonlinearIBVP} 
      \text{ on } [t_0,  \tau]\}, \\
  T_{-}(m, t_0, f,g, u_0) &= \inf \{\tau \leq t_0 \colon \exists \, \mathcal{G}_m  \text{-solution of } \eqref{EquationNonlinearIBVP} 
      \text{ on } [\tau, t_0]\}.
 \end{split}\end{equation}
 The interval $(T_{-}(m,t_0,f,g,u_0) , T_+(m,t_0,f,g,u_0)) =: I_{max}(m,t_0,f,g,u_0)$ is called 
 the \emph{maximal interval of existence}. These notions are modified in a straightforward way if the inhomogeneities are given on an 
 open interval $J\subseteq \R$ with $t_0\in \clJ$.  By standard methods we can extend the solution 
 given by Theorem~\ref{TheoremLocalExistenceNonlinear} and Remark~\ref{RemarkNegativeTimes} to a 
 \emph{maximal solution} $u \in \bigcap_{j=0}^m C^j(I_{max}, \Hpmhdom{m-j})$ of~\eqref{EquationNonlinearIBVP}
 on $I_{max}$ which cannot be extended beyond this interval. More precisely, we obtain the following 
 basic blow-up criterion, cf.\ Lemma~4.1 of \cite{SpitzQuasilinearMaxwell}.
 
 \begin{prop}
   \label{LemmaBlowUpCriterion}
    Let $t_0 \in \R$ and $m \in \N$ with $m \geq 3$. 
 Take $\chi \in \mlpdwlpm{m,6}{G}{\mathcal{U}_\pm}{cv}$ and $\sigma \in \mlwlpm{m,6}{G}{\mathcal{U}_\pm}{cv}$.
 Choose data $f \in \mathcal{H}^m((-T,T) \times G)$, $g \in E_m((-T,T) \times \Sigma)$, and $u_0 \in \Hpmhdom{m}$ 
 for  all $T > 0$ and define $B_\Sigma$ and $B_{\partial G}$ as in~\eqref{EquationDefinitionB}. 
 Assume that the tuple $(\chi,\sigma, t_0, B_\Sigma, B_{\partial G}, f,g, u_0)$ fulfills the compatibility 
 conditions~\eqref{EquationNonlinearCompatibilityConditions} of order $m$.   Let $u$ be the maximal solution of~\eqref{EquationNonlinearIBVP} on $I_{max}$ 
   introduced above. If 
   $T_+ = T_+(m, t_0, f,g, u_0) < \infty$, then one of the following blow-up properties
   \begin{enumerate}
    \item
    $\liminf_{t \nearrow T_+} \dist(\{u_+(t,x) \colon x \in G_+\}, \partial \mathcal{U}_+) = 0$ \ or correspondingly for $u_{-}$,
    \item 
    $\lim_{t \nearrow T_+} \Hpmhndom{m}{u(t)} = \infty$
   \end{enumerate}
   occurs.
   The analogous result is true for $T_{-}(m, t_0, f, g, u_0)$.
  \end{prop}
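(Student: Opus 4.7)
The approach is a standard contradiction argument based on the local existence result Theorem~\ref{TheoremLocalExistenceNonlinear}. I would assume that $T_+ < \infty$ and that neither (1) nor (2) holds, and then produce a $\mathcal{G}_m$-extension of $u$ past $T_+$. Failure of (1) yields constants $\kappa_0, \delta_0 > 0$ with $\dist(\{u_\pm(t,x) \colon x \in G_\pm\}, \partial \mathcal{U}_\pm) \geq \kappa_0$ for all $t \in (T_+ - \delta_0, T_+)$. Failure of (2) produces a sequence $t_n \nearrow T_+$ and a constant $R_0 > 0$ with $\Hpmhndom{m}{u(t_n)} \leq R_0$ for all $n$.

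Next I would verify that the data quantities entering \eqref{EquationDataSmallerRadiusInLocalExistenceTheorem} at restart time $t_1 := t_n$ are bounded uniformly in $n$. Since $f \in \mathcal{H}^m((-T,T) \times G)$ for every $T > 0$, standard time-trace results give $\partial_t^j f \in C([t_0, T_+ + 1], \Hpmhdom{m-1-j})$ for $j \in \{0, \ldots, m-1\}$, hence $\sum_{j=0}^{m-1}\Hpmhndom{m-1-j}{\partial_t^j f(t_n)}^2$ is uniformly bounded. The norms $\Epmnormwgdom{m}{g}$ and $\Hpmandom{m}{f}$ on $(t_n, t_n+1) \times G$ (respectively $\times \Sigma$) are likewise uniformly bounded, and $\Hpmhndom{m}{u(t_n)} \leq R_0$. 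Thus all data quantities are dominated by some radius $r$ independent of $n$. The nonlinear compatibility conditions \eqref{EquationNonlinearCompatibilityConditions} at $t_n$ for the tuple $(\chi, \sigma, t_n, B_\Sigma, B_{\partial G}, f, g, u(t_n))$ are automatically satisfied, because $u$ is a $\mathcal{G}_m$-solution on $[t_0, T_+)$: identity~\eqref{EquationTimeDerivativesOfSolutionEqualSChiSigmaG} applied at $t_n$ gives $\partial_t^p u(t_n) = S_{\chi, \sigma, G, m, p}(t_n, f, u(t_n))$, and differentiating the interface and boundary conditions $p$ times and evaluating at $t_n$ yields \eqref{EquationNonlinearCompatibilityConditions}.

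I would then apply Theorem~\ref{TheoremLocalExistenceNonlinear} with initial time $t_n$, radius $r$, and $\kappa := \kappa_0/2$ (taking $n$ large enough that $t_n > T_+ - \delta_0$, so that $\dist(\{u_\pm(t_n,x)\},\partial\mathcal{U}_\pm) > \kappa$). This produces a $\mathcal{G}_m$-solution of~\eqref{EquationNonlinearIBVP} on $[t_n, t_n + \tau]$ with $\tau = \tau(\chi, \sigma, m, T_+ + 1, r, \kappa_0) > 0$ independent of $n$. For $n$ so large that $t_n + \tau > T_+$, the preceding uniqueness lemma ensures this new solution agrees with $u$ on $[t_n, T_+)$; glueing along the common interval produces a $\mathcal{G}_m$-solution of~\eqref{EquationNonlinearIBVP} on $[t_0, t_n + \tau]$, contradicting the definition of $T_+$ in~\eqref{def:max-times}. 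The case $T_-$ is treated identically via Remark~\ref{RemarkNegativeTimes}.

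The step I expect to be the main obstacle is ensuring that the existence time $\tau$ coming from formula~\eqref{EquationDefinitionTauForFixedPointArgument} is genuinely independent of $n$. Every factor there must be traced back to the fixed data $(\chi, \sigma, m)$, the uniform radius $r$, and the fixed distance $\kappa_0/2$; in particular, the constants $C_{m,0}, C_m, \gamma_m, R_1$, and the auxiliary Sobolev and composition constants $C_{\operatorname{Sob}}, C_{2.2,\text{\cite{SpitzQuasilinearMaxwell}}}$ must not depend on $t_n$. This is precisely the reason the dependence of constants was carefully tracked in Theorem~\ref{TheoremExistenceAndUniquenessOnDomain} and Theorem~\ref{TheoremLocalExistenceNonlinear}, and this careful bookkeeping is what drives the blow-up criterion.
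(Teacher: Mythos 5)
Your proposal is correct and takes essentially the same approach as the paper, which here simply cites Lemma~4.1 of \cite{SpitzQuasilinearMaxwell} and describes the extension as ``standard methods.'' You correctly identify the key points: uniform bounds on the data quantities in \eqref{EquationDataSmallerRadiusInLocalExistenceTheorem} at restart times $t_n$, automatic validity of the compatibility conditions at $t_n$ via \eqref{EquationTimeDerivativesOfSolutionEqualSChiSigmaG}, a restart time step $\tau$ from Theorem~\ref{TheoremLocalExistenceNonlinear} that is independent of $n$ because its dependence is only on $(\chi,\sigma,m,T,r,\kappa)$, and glueing via the uniqueness lemma. (There is a small notational slip where you announce $\kappa:=\kappa_0/2$ but then write $\tau(\chi,\sigma,m,T_++1,r,\kappa_0)$; this does not affect the argument.)
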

 
  \section{Local well-posedness}
 \label{SectionMain}
   The blow-up criterion in Proposition~\ref{LemmaBlowUpCriterion} can be improved.
  By Theorem~\ref{TheoremLocalWellposednessNonlinear}, if $T_+ < \infty$ (and 
  the solution does not come arbitrarily close to $\partial \mathcal{U}_+$ or $\partial \mathcal{U}_{-}$), then the spatial Lipschitz norm 
  of the solution has to blow up as $t\to T_+$, see Theorem~\ref{TheoremLocalWellposednessNonlinear} below. 
Similar blow-up criteria have been established for several quasilinear hyperbolic systems both 
on the full space and on domains, see e.g.~\cite{BahouriCheminDanchin, BenzoniGavage, Klainerman,  Majda}.
For this improvement over the $\Hpmhdom{m}$-norm, one has to exploit that 
  a solution $u$ of the nonlinear problem~\eqref{EquationNonlinearIBVP} solves the linear problem~\eqref{EquationIBVPIntroduction} 
  with coefficients $\chi(u)$ and $\sigma(u)$, and then use  
   Moser-type estimates. Lemma~4.2 from~\cite{SpitzQuasilinearMaxwell} provides a version of these estimates suited to our 
   setting in which we admit space dependent nonlinearities. We can apply this lemma to the subdomains $G_\pm$ separately.

  The next proposition is the main step towards  the improved blow-up condtion.   In its proof one 
  differentiates~\eqref{EquationNonlinearIBVP} and applies the basic $L^2$-estimate \eqref{EquationEllerEstimateInL2} 
  to the derivative of $u$. For the tangential and time derivatives,  the Moser-type estimates allow us to treat the arising 
  inhomogeneities in such a way that the Gronwall lemma yields the desired estimate. 
  In order to bound the normal derivatives of $u$, we have to combine the above approach with 
  Proposition~\ref{PropositionCentralEstimateInNormalDirection}.  Once more the reasoning is parallel to that in 
  \cite{SpitzQuasilinearMaxwell}, making use of the linear results of the present paper. For details we thus refer to 
 the proof of Proposition~4.4 in \cite{SpitzQuasilinearMaxwell}.
  
  \begin{prop}
	\label{PropositionEstimateForNonlinearSolutionOnDomain}
	Let $m \in \N$ with $m \geq 3$ and $t_0 \in \R$. Take nonlinearities
	$\chi \in \mlpdwlpm{m,6}{G}{\mathcal{U}_\pm}{cv}$ and $\sigma \in \mlwlpm{m,6}{G}{\mathcal{U}_\pm}{cv}$. Let
   $B_{\Sigma}$ and $B_{\partial G}$ be defined as in~\eqref{EquationDefinitionB}.
    Choose data $u_0 \in \Hpmhdom{m}$, $g \in E_m((-T,T) \times \Sigma)$, 
   and $f \in \mathcal{H}^m((-T,T) \times G)$ 
   for all $T > 0$ such that the tuple $(\chi,\sigma,t_0, B_\Sigma, B_{\partial G}, f,g,u_0)$ fulfills the compatibility 
   conditions~\eqref{EquationNonlinearCompatibilityConditions}
   of order $m$.
   Let $u$ denote the maximal solution of~\eqref{EquationNonlinearIBVP} on $(T_{-},T_+)$. We introduce the quantity 
    \begin{align*}
     \omega(T) = \sup_{t \in (t_0,T)} \| u(t) \|_{\mathcal{W}^{1,\infty}(G)}
    \end{align*}
    for every $T \in (t_0,T_+)$. We further take $r > 0$ with 
    \begin{align*}
     &\sum_{j=0}^{m-1} \Hpmhndom{m-j-1}{\partial_t^j f(t_0)} \! + \!  \|g\|_{E_m((t_0, T_+) \times \Sigma)} \! + \! \Hpmhndom{m}{u_0} \! + \! \|f\|_{\mathcal{H}^m((t_0, T_+) \times G)} \leq r.
    \end{align*}
    We set $T^* = T_+$ if $T_+ < \infty$ and take any $T^* > t_0$ if $T_+ = \infty$. Let $\omega_0 > 0$ and let 
    $\mathcal{U}_{1,\pm}$ be compact subsets of $\mathcal{U}_\pm$.

    Then there exists a constant $C = C(\chi,\sigma,m,r,\omega_0,\mathcal{U}_{1,\pm},T^* - t_0)$ such that
    \begin{align*}
     \|u\|_{\mathcal{G}_m((t_0,T)\times G)}^2 &\leq C \Big( \sum_{j=0}^{m-1} \Hpmhndom{m-1-j}{\partial_t^j f(t_0)}^2  + \Hpmhndom{m}{u_0}^2 + \|g\|_{E_m((t_0, T) \times \Sigma)}^2 \\
     &\hspace{4em} + \|f\|_{\mathcal{H}^m((t_0, T) \times G)}^2 \Big)
    \end{align*}
    for all times $T \in (t_0,T^*)$ which have the property that  $\omega(T) \leq \omega_0$ and $\image u_\pm(t) \subseteq \mathcal{U}_{1,\pm}$ for all $t \in [t_0,T]$. 
    The analogous result is true on $(T_{-},t_0)$.
\end{prop}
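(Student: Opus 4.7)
The plan is to exploit that the maximal solution $u$ of the quasilinear system \eqref{EquationNonlinearIBVP} also satisfies the \emph{linear} interface problem~\eqref{EquationIBVPIntroduction} with the $u$--dependent coefficients $A_0 = \chi(u)$ and $D = \sigma(u)$, and then to run the linear a priori machinery of Sections~\ref{SectionAPrioriEstimates}--\ref{SectionLocalization} while handling the nonlinear terms by Moser-type commutator estimates. The point of the statement is that the resulting constant depends only on the Lipschitz norm $\omega_0$ (and on the data size and the sets $\mathcal{U}_{1,\pm}$), not on the full $\mathcal{H}^m$-norm of $u$ itself, so a naive direct application of Theorem~\ref{TheoremExistenceAndUniquenessOnDomain} is not enough: one has to re-derive the differentiated $L^2$-estimates by hand and control the right-hand sides by $\omega_0 \|u\|_{\mathcal{H}^m}$ plus lower order.

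First, I would localize the problem as in Section~\ref{SectionLocalization}, obtaining finitely-type-many half-space problems \eqref{IBVP} for $\tilde{u}^i = (\mathcal{G}_r^i)^{-1} \mathcal{R}\Phi_i(\theta_i u)$ with coefficient $\tilde{\mathcal{A}}_0^i$ built from $\chi(u)$ (and likewise for $\tilde{\mathcal{D}}^i$ from $\sigma(u)$), so that $\tilde{\mathcal{A}}_3^i = \tilde{\mathcal{A}}_3^{\mathrm{co}}$ and the boundary matrix is the constant $\mathcal{B}^{\mathrm{co}}$. The hypothesis $\omega(T)\le\omega_0$ together with $\image u_\pm(t)\subseteq\mathcal{U}_{1,\pm}$ yields the uniform bound
\begin{equation*}
  \|\chi(u)\|_{W^{1,\infty}}+\|\sigma(u)\|_{W^{1,\infty}}\le C(\chi,\sigma,\omega_0,\mathcal{U}_{1,\pm}),
\end{equation*}
and an analogous bound for the localized coefficients. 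In particular the ellipticity constant $\eta$ and the Lipschitz size $r$ entering Lemma~\ref{LemmaEllerResult} depend \emph{only} on $(\chi,\sigma,\omega_0,\mathcal{U}_{1,\pm})$.

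Next, for tangential/time derivatives of order $\le m$, i.e.\ $\alpha\in\mathbb{N}_0^4$ with $\alpha_3=0$, I would differentiate the localized linear problem. The function $\partial^\alpha\tilde u^i$ then solves the same boundary value problem with a new inhomogeneity $\tilde f^i_\alpha$ consisting of $\partial^\alpha\tilde f^i$ plus commutators $[\partial^\alpha,\tilde{\mathcal{A}}_0^i]\partial_t\tilde u^i$, $[\partial^\alpha,\tilde{\mathcal{A}}_j^i]\partial_j\tilde u^i$, $[\partial^\alpha,\tilde{\mathcal{D}}^i]\tilde u^i$, and the new boundary datum $\partial^\alpha\tilde g^i$. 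Applying the $L^2$ estimate~\eqref{EquationEllerEstimateInL2} from Lemma~\ref{LemmaEllerResult} and summing over $\alpha$ and $i$, the commutator terms are bounded by Moser-type inequalities (the interface analogue of Lemma~4.2 in~\cite{SpitzQuasilinearMaxwell}, applied piecewise on $G_\pm$) of the form
\begin{equation*}
 \big\|[\partial^\alpha,\chi(u)]\partial_t u\big\|_{L^2_\gamma}\;\le\;C(\chi,\omega_0,\mathcal{U}_{1,\pm})\,\big(\|u\|_{\mathcal{H}^m_\gamma}+\|\partial_t u\|_{\mathcal{H}^{m-1}_\gamma}\big),
\end{equation*}
and analogously for $\sigma(u)$ and the spatial coefficients. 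Here the crucial point is that the prefactor is controlled by $\omega_0$ alone (this is what Moser estimates buy us, since one top-order derivative is taken out in $L^\infty$). Choosing $\gamma$ large depending on $(\chi,\sigma,\omega_0)$ and $T^*-t_0$, one absorbs the top-order terms of $u$ on the left and reduces everything to the data plus $\|u\|_{\mathcal{H}^m_{\mathrm{ta},\gamma}}$, which is closed by Gronwall.

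Finally, for the normal regularity one uses Proposition~\ref{PropositionCentralEstimateInNormalDirection} and its iteration via Lemma~\ref{LemmaRegularityInNormalDirection}: for each $k=1,\dots,m$ one controls $\partial^\alpha u$ with $\alpha_3=k$ in terms of derivatives with $\alpha_3<k$ (already estimated in the previous step) and of the generalized divergence $\operatorname{Div}(\mathcal A_1,\mathcal A_2,\mathcal A_3)f$, where $f$ now equals $\chi(u)\partial_t u+\sum_j\mathcal A_j^{\mathrm{co}}\partial_j u+\sigma(u)u$ on the localized half-space. The divergence is of one order lower than the corresponding $\mathcal{H}^m$-norm, so Moser estimates again yield a bound of the right form with constants only depending on $\omega_0$. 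Undoing the localization through the usual finite-overlap argument (as in \eqref{EquationRecomposeOtherData}) and Gronwall's inequality yields the claim on $(t_0,T)$; the corresponding assertion on $(T_-,t_0)$ follows by time reversal as in Remark~\ref{RemarkNegativeTimes}. The main obstacle is bookkeeping: all constants must be tracked to ensure they depend only on $(\chi,\sigma,m,r,\omega_0,\mathcal{U}_{1,\pm},T^*-t_0)$ and not on $\|u\|_{\mathcal{H}^m}$, so that the absorption argument is legitimate uniformly in $T<T^*$; the cancellation structure exploited in Proposition~\ref{PropositionCentralEstimateInNormalDirection} is what makes this possible at the characteristic interface and boundary.
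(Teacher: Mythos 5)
Your proposal follows essentially the same route as the paper's own (outsourced) proof, which refers to Proposition~4.4 in \cite{SpitzQuasilinearMaxwell} combined with the linear results here: localize to the half-space problem, differentiate and apply the $L^2$-estimate~\eqref{EquationEllerEstimateInL2}, control the commutators by Moser-type estimates (Lemma~4.2 of \cite{SpitzQuasilinearMaxwell} applied on $G_\pm$ separately) with constants depending only on $\omega_0$ and $\mathcal{U}_{1,\pm}$, absorb and apply Gronwall, and close the normal derivatives by iterating Proposition~\ref{PropositionCentralEstimateInNormalDirection}. You correctly single out the key point---that one top-order derivative is extracted in $L^\infty$ so the constants depend on $\omega_0$ and not on $\|u\|_{\mathcal{H}^m}$---and this is exactly what makes the uniform absorption legitimate.
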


The main missing part of the final local wellposedness theorem is the continuous dependence on initial data.
Here a loss of derivatives occurs since the difference of 
 two solutions satisfies an equation with a less regular right-hand side. 
The next lemma shows the core fact in this context. It improves the convergence of solutions $u_n$ by one level of regularity,
assuming uniform bounds of $u_n$ and  convergence of the data in the higher norm.
In the proof one uses that derivatives of the solutions satisfy a system with modified forcing terms.
These problems are then splitted in one  with fixed inhomogeneities (arising from the limit data) 
and  one with right-hand sides tending to 0 (up to to an error term treated in a Gronwall argument).
Such techniques were developed for the full space (see e.g.~\cite{BahouriCheminDanchin}). We combine this approach with our linear 
results to prevent a loss of normal regularity at the characteristic boundary. Here again the structure of Maxwell's equations 
is crucially used. The proof is  a combination of that of Lemma~5.2 in \cite{SpitzQuasilinearMaxwell} 
with the theorems of the previous sections. It is thus omitted.

  \begin{lem}
  \label{LemmaConvergenceOfNonlinearSolutionInGmProvidedConvergenceInGmminus1}
   Let $J' \subseteq \R$ be an open and bounded interval, $t_0 \in \overline{J'}$, and $m \in \N$ with $m \geq 3$.
   Take functions $\chi \in \mlpdwlpm{m,6}{G}{\mathcal{U}_\pm}{cv}$ and $\sigma \in \mlwl{m,6}{G}{\mathcal{U}_\pm}{cv}$. 
Let    $B_{\Sigma}$ and $B_{\partial G}$ be defined by~\eqref{EquationDefinitionB}.
   Choose data $f_n, f \in \mathcal{H}^m(J' \times G)$, $g_n, g \in E_m(J' \times \Sigma)$, 
   and $u_{0,n}, u_0 \in \Hpmhdom{m}$ 
   for all $n \in \N$ with 
   \begin{align*}
    \Hpmhndom{m}{u_{0,n} - u_0} \longrightarrow 0, \quad \|g_n - g\|_{E_m(J' \times \Sigma)} \longrightarrow 0, \quad \|f_n - f\|_{\mathcal{H}^m(J' \times G)} \longrightarrow 0,
   \end{align*}
   as $n \rightarrow \infty$.    We further assume that the system~\eqref{EquationNonlinearIBVP} 
   with data $(t_0, f_n, g_n, u_{0,n})$ and $(t_0, f, g, u_0)$ has $\mathcal{G}_m(J' \times G)$-solutions
   $u_n$ and $u$ for all $n \in \N$, that there are compact subsets $\tilde{\mathcal{U}}_{1,\pm}$ of $\mathcal{U}_\pm$ 
   with $\image u_\pm(t) \subseteq \tilde{\mathcal{U}}_{1,\pm}$ for all $t \in J'$, that $(u_n)_n$ is bounded in $\mathcal{G}_m(J' \times G)$, 
   and that $(u_n)_n$ converges to $u$ in $\mathcal{G}_{m-1}(J' \times G)$.
   Then the functions $u_n$ tend to $u$ in $\mathcal{G}_m(J' \times G)$.
  \end{lem}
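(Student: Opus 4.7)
The plan is to follow the strategy of Lemma~5.2 in \cite{SpitzQuasilinearMaxwell} and combine it with the linear regularity and a priori results from Sections~\ref{SectionAPrioriEstimates} and \ref{SectionRegularity}, exploiting the cancellation structure of the Maxwell system once more. The naive approach of applying Theorem~\ref{TheoremExistenceAndUniquenessOnDomain} to the difference $w_n := u_n - u$ fails: $w_n$ solves a linear interface problem whose right-hand side contains terms of the form $(\chi(u_n)-\chi(u))\partial_t u$ and $(\sigma(u_n)-\sigma(u))u$, and these differences are only controlled in $\mathcal{G}_{m-1}$ by hypothesis. A direct application would yield only $\mathcal{G}_{m-1}$-convergence, which we already have. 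Hence we must work at the level of the top-order derivatives directly.

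First I would, for each multi-index $\alpha\in\N_0^4$ with $|\alpha|\leq m$ and $\alpha_3=0$, differentiate the nonlinear equation for $u_n$ and for $u$. Setting $z_n^\alpha := \partial^\alpha(u_n-u)$, one obtains a linear interface initial boundary value problem
\begin{equation*}
  \chi(u_n)\partial_t z_n^\alpha + \sum_{j=1}^3 A_j^{\operatorname{co}}\partial_j z_n^\alpha + \sigma(u_n) z_n^\alpha = F_n^\alpha,
\end{equation*}
with interface datum $\partial^\alpha(g_n-g)$ on $\Sigma$, homogeneous condition on $\partial G$, and initial value derived from $S_{\chi,\sigma,G,m,\alpha_0}$ evaluated at the two data sets. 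The forcing $F_n^\alpha$ consists of: differences $\partial^\beta(f_n-f)$, commutator-type expressions of the form $[\partial^\alpha,\chi(u_n)]\partial_t u_n$ etc., and differences $(\chi(u_n)-\chi(u))\partial^\alpha\partial_t u$ and $(\sigma(u_n)-\sigma(u))\partial^\alpha u$. Invoking the Moser-type bounds of Lemma~4.2 from \cite{SpitzQuasilinearMaxwell}, applied separately on $G_{\pm}$, together with the uniform $\mathcal{G}_m$-bound on $(u_n)$, the $\mathcal{G}_{m-1}$-convergence $u_n\to u$, and the convergence of the data, one checks that $F_n^\alpha\to 0$ in $L^2(J'\times G)$ and that the interface and initial data tend to $0$ in $L^2_\gamma(J'\times \Sigma)$ with suitable half-derivative and in $L^2(G)$. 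Applying the localized $L^2$-wellposedness of Lemma~\ref{LemmaEllerResult} (after the localization-transformation of Section~\ref{SectionLocalization}, whose coefficients are uniformly controlled in $n$) and summing over all such $\alpha$ gives that all tangential and time derivatives of $u_n-u$ of order up to $m$ tend to $0$ in $L^\infty(J',L^2(G))$.

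Next I would recover convergence of the normal derivatives. This is the hard part, because $A_3^{\operatorname{co}}$ is singular so that normal derivatives are not controlled by the $L^2$-estimate. Here the core observation of Proposition~\ref{PropositionCentralEstimateInNormalDirection} intervenes: the generalized divergence $\operatorname{Div}(\mathcal{A}_1,\mathcal{A}_2,\mathcal{A}_3)$ of the Maxwell operator contains only first-order spatial derivatives, thanks to the algebraic cancellations in formulas~\eqref{EquationTraceSmaller3EqualsZero}--\eqref{EquationTraceLarger9EqualsZero}. Applying this structural fact to $w_n$ (in the localized coordinates), using the already established convergence of tangential and time derivatives, and iterating the normal-regularity step exactly as in Lemma~\ref{LemmaRegularityInNormalDirection} and Corollary~\ref{CorollaryRegularityInNormalDirectionL2InTime}, one controls $\partial_3^k(u_n-u)$ of all orders up to $m$ in terms of quantities that tend to $0$. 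Together with the tangential/time bounds this yields $u_n\to u$ in $\mathcal{G}_m(J'\times G)$.

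The main obstacle is bookkeeping: one must verify that the modified inhomogeneities $F_n^\alpha$ and the commutator terms really do tend to zero in $L^2$, not merely remain bounded. This hinges on splitting each commutator into a piece of the form \emph{(derivative of $u_n-u$)$\cdot$(coefficient independent of $n$)} (handled by $\mathcal{G}_{m-1}$-convergence) and a piece of the form \emph{(derivative of $u$ of order $\leq m$)$\cdot$($\mathcal{G}_{m-1}$-small coefficient difference)} (handled by the Moser estimates and the hypothesis $u_n\to u$ in $\mathcal{G}_{m-1}$). Once the $L^2$-convergence of these forcing terms is established, the linear $L^2$ theory and the Maxwell cancellation propagate this convergence to the top-order norm without incurring a further loss of derivatives at the characteristic interface~$\Sigma$ or the perfectly conducting boundary~$\partial G$.
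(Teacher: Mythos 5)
The forcing $F_n^\alpha$ you write down for the system satisfied by $z_n^\alpha = \partial^\alpha(u_n - u)$ explicitly contains the term $(\chi(u_n)-\chi(u))\,\partial^\alpha\partial_t u$. For a top-order multi-index with $|\alpha|=m$, the factor $\partial^\alpha\partial_t u$ involves $m+1$ derivatives of $u$, which is not controlled by the hypothesis $u\in\Gpmdom{m}$; indeed $\partial^\alpha\partial_t u$ only lies in $C(\overline{J'},\Hpmhdom{-1})$. Consequently $F_n^\alpha\notin L^2(J'\times G)$, and your claim that ``$F_n^\alpha\to 0$ in $L^2(J'\times G)$'' is not even meaningful. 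The ``splitting of each commutator'' you invoke at the end does not reach this term: it is the $\beta=0$ summand of the Leibniz expansion of $\partial^\alpha\big[(\chi(u_n)-\chi(u))\partial_t u\big]$, not a commutator, and the derivative of $u$ it carries has order $m+1$, exceeding the ``order $\le m$'' required for your second category. The same obstruction defeats your normal-direction step, since Proposition~\ref{PropositionCentralEstimateInNormalDirection} and Lemma~\ref{LemmaRegularityInNormalDirection} also require the forcing (and its generalized divergence) to lie in $L^2$. As written, your direct approach only recovers $\mathcal{G}_{m-1}$-convergence, which is already assumed.

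The argument the paper refers to (Lemma~5.2 of \cite{SpitzQuasilinearMaxwell}, transferred here to the interface problem) is designed precisely so that derivatives of $u$ of order $m+1$ never appear. One does not differentiate the difference $u_n-u$ at top order. Instead one regards $\partial^\alpha u_n$ and $\partial^\alpha u$ as solutions of two \emph{separate} linear interface problems, each with a Moser-commutator forcing of total derivative order at most $m$ (hence genuinely in $L^2$); one then splits the problem for $\partial^\alpha u_n$ into a part solved with the \emph{fixed} limit inhomogeneities and interface/initial data but the $n$-dependent coefficients $\chi(u_n),\sigma(u_n)$, and a remainder whose right-hand sides tend to zero. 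The remainder is killed by the $L^2$ a priori estimate of Lemma~\ref{LemmaEllerResult} (after localization), while the fixed-data part is compared with $\partial^\alpha u$ via a Gronwall argument in which the problematic $(m+1)$st derivative is never formed. Only within this decomposition can the tangential/time $L^2$-estimates and, subsequently, the Maxwell normal cancellation of Proposition~\ref{PropositionCentralEstimateInNormalDirection} be applied. You need to restructure your proof around that decomposition before the rest of your outline becomes viable.
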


  Finally, we can prove the full local wellposedness theorem. 
  In the following we will write $B_{M}(x,r)$ for the ball of radius $r$ around a point $x$ from a
  metric space~$M$. For times $t_0 < T$ we further define the data space
  \begin{align*}
     &M_{\chi,\sigma,m}(t_0,T) = \{(\tilde{f},\tilde{g}, \tilde{u}_0) \in \mathcal{H}^m((t_0,T) \times G) \times E_m((t_0,T) \times \Sigma) \times \Hpmhdom{m} \colon \\
	&\hspace{12.9em} (\chi, \sigma, t_0,B_\Sigma, B_{\partial G}, \tilde{f}, \tilde{g}, \tilde{u}_0) \text{ is compatible of order } m\}, 
	\end{align*}
and endow it with the metric
\begin{align*}
d((\tilde{f}_1,\, &\tilde{g}_1, \tilde{u}_{0,1}), (\tilde{f}_2, \tilde{g}_2, \tilde{u}_{0,2})) \\
      & = \max \{\|\tilde{f}_1 - \tilde{f}_2 \|_{\mathcal{H}^m((t_0,T) \times G)}, \|\tilde{g}_1 - \tilde{g}_2\|_{E_m((t_0, T) \times \Sigma)}, \Hpmhndom{m}{\tilde{u}_{0,1} - \tilde{u}_{0,2}}\}.
    \end{align*}

  \begin{theorem}
   \label{TheoremLocalWellposednessNonlinear}
   Let $m \in \N$ with $m \geq 3$ and fix $t_0 \in \R$. Take functions $\chi \in \mlpdwlpm{m,6}{G}{\mathcal{U}_\pm}{cv}$ and 
   $\sigma \in \mlwlpm{m,6}{G}{\mathcal{U}_\pm}{cv}$. Let
   $B_{\Sigma}$ and $B_{\partial G}$ be defined by~\eqref{EquationDefinitionB}.
   Choose data $u_0 \in \Hpmhdom{m}$, $g \in E_m((-T,T) \times \Sigma)$, 
   and $f \in \mathcal{H}^m((-T,T) \times G)$ 
   for all $T > 0$ such that $\overline{\image u_{0,\pm}} \subseteq \mathcal{U}_\pm$ and 
   the tuple $(\chi,\sigma,t_0,B_\Sigma, B_{\partial G},f,g,u_0)$ fulfills the compatibility 
   conditions~\eqref{EquationNonlinearCompatibilityConditions}   of order $m$.
   
   Then the maximal existence times $T_\pm = T_\pm(m, t_0, f,g, u_0)$ from \eqref{def:max-times} 
    do not depend on $k \in \{3, \ldots, m\}$. Moreover, the following assertions are true.
   \begin{enumerate}[leftmargin = 2em]
    \item  There exists a unique maximal solution $u$ of~\eqref{EquationNonlinearIBVP}
     which belongs to the function space $\bigcap_{j = 0}^m C^{j}((T_{-}, T_+), \Hpmhdom{m-j})$.
    \item  If $T_+ < \infty$, then
	\begin{enumerate}
	 \item the restriction $u_+$ leaves every compact subset of $\mathcal{U}_+$ or $u_{-}$ leaves 
	 every compact subset of $\mathcal{U}_{-}$, or
	 \item  $\limsup_{t \nearrow T_+} \max\{\|\nabla u_+(t)\|_{L^{\infty}(G_+)}, \|\nabla u_{-}(t)\|_{L^{\infty}(G_{-})}\} = \infty$.
	\end{enumerate}
	The analogous result holds for $T_{-}$.
    \item 
    Fix $T \in (t_0, T_+)$ and take $T'\in (T,T_+)$. Then there is a number 
    $\delta > 0$ such that for all data $(\tilde{f}, \tilde{g},  \tilde{u}_0) \in B_{M_{\chi,\sigma,m}(t_0, T')}((f,g,u_0), \delta)$ 
    the maximal existence time satisfies $T_+(m, t_0, \tilde{f},\tilde{g}, \tilde{u}_0) > T$. 
    We denote by $u(\cdot; \tilde{f}, \tilde{g}, \tilde{u}_0)$ the corresponding maximal 
    solution of~\eqref{EquationNonlinearIBVP}. The flow map
    \begin{align*}
     \Psi\colon B_{M_{\chi,\sigma,m}(t_0, T')}((f,g,u_0), \delta)  &\rightarrow \mathcal{G}_m((t_0,T) \times G) , \quad
     (\tilde{f},\tilde{g}, \tilde{u}_0) \longmapsto u(\cdot; \tilde{f}, \tilde{g}, \tilde{u}_0), 
    \end{align*}
    is continuous, and  there is a constant $C = C(\chi, \sigma,m,r,T_+ - t_0, \kappa_0)$ such that
    \begin{align*}
     &\|\Psi(\tilde{f}_1, \tilde{g}_1, \tilde{u}_{0,1}) - \Psi(\tilde{f}_2, \tilde{g}_2, \tilde{u}_{0,2})\|_{\mathcal{G}_{m-1}((t_0,T) \times G)} \nonumber \\
     &\leq C \sum_{j = 0}^{m-1} \Hpmhndom{m-j-1}{\partial_t^j \tilde{f}_1(t_0) - \partial_t^j \tilde{f}_2(t_0)} + C \|\tilde{g}_1 - \tilde{g}_2\|_{E_{m-1}((t_0,T) \times \Sigma)} \nonumber\\
     &\qquad + C \Hpmhndom{m}{\tilde{u}_{0,1} - \tilde{u}_{0,2}} +C \| \tilde{f}_1 - \tilde{f}_2 \|_{\mathcal{H}^{m-1}((t_0,T) \times G)}
    \end{align*}
    for all $(\tilde{f}_1, \tilde{g}_1, \tilde{u}_{0,1}), (\tilde{f}_2, \tilde{g}_2, \tilde{u}_{0,2}) \in B_{M_{\chi,\sigma,m}(t_0, T')}((f,g,u_0), \delta)$, 
    where $\kappa_0 = \dist(\image u_{0,\pm}, \partial \mathcal{U}_\pm)$.
    The analogous result is true for $T_{-}$.
   \end{enumerate}
  \end{theorem}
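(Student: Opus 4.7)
The plan is to assemble the four assertions from the results developed in Sections~\ref{SectionLocalExistence} and from Proposition~\ref{PropositionEstimateForNonlinearSolutionOnDomain} and Lemma~\ref{LemmaConvergenceOfNonlinearSolutionInGmProvidedConvergenceInGmminus1}. For (1), a standard extension argument based on Theorem~\ref{TheoremLocalExistenceNonlinear} and Remark~\ref{RemarkNegativeTimes} produces a maximal $\mathcal{G}_m$-solution $u$ on $(T_-,T_+)$ with the claimed regularity and satisfying the basic blow-up alternative from Proposition~\ref{LemmaBlowUpCriterion}. To see that $T_\pm$ does not depend on $k\in\{3,\ldots,m\}$, note that the compatibility conditions of order $m$ imply those of order $k$, so the $\mathcal{G}_k$-maximal times $T_\pm^{(k)}$ satisfy $T_-^{(k)}\le T_-^{(m)}$ and $T_+^{(k)}\ge T_+^{(m)}$. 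Conversely, on $(T_-^{(m)},T_+^{(m)})$ the $\mathcal{G}_m$-solution is in particular a $\mathcal{G}_k$-solution whose image stays in a compact subset of $\mathcal{U}_\pm$ on every compact subinterval and whose $\mathcal{H}^k$-norm stays bounded by the $\mathcal{H}^m$-norm; hence neither blow-up option in Proposition~\ref{LemmaBlowUpCriterion} occurs at $T_\pm^{(m)}$ for the $\mathcal{G}_k$-solution, forcing equality.

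For (2), suppose $T_+<\infty$ while $\image u_\pm(t)$ stays in a compact $\mathcal{U}_{1,\pm}\Subset\mathcal{U}_\pm$ and the Lipschitz norm is bounded by some $\omega_0$ on $[t_0,T_+)$. Then Proposition~\ref{PropositionEstimateForNonlinearSolutionOnDomain} applied with $T^*=T_+$ yields a uniform bound on $\|u\|_{\mathcal{G}_m((t_0,T)\times G)}$ for $T\nearrow T_+$, which together with the compactness of $\image u_\pm$ contradicts both options in Proposition~\ref{LemmaBlowUpCriterion}. Thus one of the two improved blow-up options must occur, giving the refined criterion.

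For assertion~(3), fix $T\in(t_0,T_+)$ and $T'\in(T,T_+)$ and set $r = \|(f,g,u_0)\|$ in the relevant data norm plus one. Choose $\kappa<\kappa_0/2$. Using the uniform bounds for $u$ on $[t_0,T']$ together with continuity of the data-to-initial-derivative maps $\partial_t^j f(t_0)$ and of $S_{\chi,\sigma,G,m,p}$ in the data, one checks that there is $\delta>0$ so that any $(\tilde f,\tilde g,\tilde u_0)\in B_{M_{\chi,\sigma,m}(t_0,T')}((f,g,u_0),\delta)$ satisfies the radius-$r$ hypothesis of Theorem~\ref{TheoremLocalExistenceNonlinear} and $\dist(\image\tilde u_{0,\pm},\partial\mathcal{U}_\pm)>\kappa$ and is compatible of order $m$. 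The local existence time from Theorem~\ref{TheoremLocalExistenceNonlinear} depends only on $(\chi,\sigma,m,T',r,\kappa)$, so iterating it a bounded number of times extends the perturbed solution past $T$; hence $T_+(m,t_0,\tilde f,\tilde g,\tilde u_0)>T$ and the corresponding solutions $u(\cdot;\tilde f,\tilde g,\tilde u_0)$ are uniformly bounded in $\mathcal{G}_m((t_0,T)\times G)$ with images in a common compact subset $\tilde{\mathcal U}_{1,\pm}\Subset \mathcal{U}_\pm$, after shrinking $\delta$ if necessary.

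To obtain the quantitative $\mathcal{G}_{m-1}$-Lipschitz estimate, take two such solutions $u_1,u_2$ and observe that the difference $w=u_1-u_2$ solves a linear problem of type~\eqref{EquationIBVPIntroduction} with coefficients $\chi(u_1)$ and an inhomogeneity that is a linear combination of $\tilde f_1-\tilde f_2$ and of terms of the form $(\chi(u_1)-\chi(u_2))\partial_t u_2$ and $(\sigma(u_1)u_1-\sigma(u_2)u_2)$, with boundary datum $\tilde g_1-\tilde g_2$ and initial value $\tilde u_{0,1}-\tilde u_{0,2}$. The Moser-type estimates used in Proposition~\ref{PropositionEstimateForNonlinearSolutionOnDomain} bound all the nonlinear source contributions in $\mathcal{H}^{m-1}$ by $C\|w\|_{\mathcal{G}_{m-1}}$ plus data differences; applying the linear a priori estimate of Theorem~\ref{TheoremExistenceAndUniquenessOnDomain} at order $m-1$ and absorbing via Gronwall yields the stated inequality. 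The main obstacle is the last step, full continuity in $\mathcal{G}_m$: the $\mathcal{G}_{m-1}$-Lipschitz estimate loses one derivative, so one cannot close a direct contraction in $\mathcal{G}_m$. Given a sequence $(\tilde f_n,\tilde g_n,\tilde u_{0,n})\to(\tilde f,\tilde g,\tilde u_0)$ in $M_{\chi,\sigma,m}$, the above gives convergence of the corresponding solutions in $\mathcal{G}_{m-1}((t_0,T)\times G)$, while the uniform bound in $\mathcal{G}_m$ and uniform enclosure of the images place us exactly in the hypotheses of Lemma~\ref{LemmaConvergenceOfNonlinearSolutionInGmProvidedConvergenceInGmminus1}. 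That lemma then upgrades the convergence to $\mathcal{G}_m$, so $\Psi$ is continuous. The negative-time statement follows by time reversion as in Remark~\ref{RemarkNegativeTimes}.
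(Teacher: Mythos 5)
Your overall strategy matches the paper's: part~(1) from Theorem~\ref{TheoremLocalExistenceNonlinear}, Remark~\ref{RemarkNegativeTimes}, and Proposition~\ref{LemmaBlowUpCriterion}; part~(2) from Proposition~\ref{PropositionEstimateForNonlinearSolutionOnDomain}; and part~(3) by a uniform-bound argument, a $\mathcal{G}_{m-1}$-Lipschitz estimate via the linear theory, and an upgrade to $\mathcal{G}_m$ via Lemma~\ref{LemmaConvergenceOfNonlinearSolutionInGmProvidedConvergenceInGmminus1}, iterated in time. Parts~(2) and~(3) are argued soundly.

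However, the $k$-independence argument has a genuine gap. You correctly get the easy inequality $T_+^{(k)} \ge T_+^{(m)}$ from the inclusion of a $\mathcal{G}_m$-solution into a $\mathcal{G}_k$-solution, but the ``Conversely'' step does not establish $T_+^{(k)} \le T_+^{(m)}$. You argue that on $(T_-^{(m)},T_+^{(m)})$ the $\mathcal{H}^k$-norm ``stays bounded by the $\mathcal{H}^m$-norm''; that comparison is vacuous precisely when the $\mathcal{H}^m$-norm blows up as $t\nearrow T_+^{(m)}$, which is the scenario you need to exclude. Concluding from this that no blow-up option in Proposition~\ref{LemmaBlowUpCriterion} occurs for the $\mathcal{G}_k$-solution at $T_+^{(m)}$ is therefore unsupported, and in any case that conclusion would only re-prove the easy inequality. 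The missing step is exactly the regularity-propagation estimate you invoke elsewhere: assume by contradiction $T_+^{(k)} > T_+^{(m)}$; then the $\mathcal{G}_k$-solution exists on the closed interval $[t_0,T_+^{(m)}]$, so its image lies in a fixed compact subset of $\mathcal{U}_\pm$ and $\sup_{[t_0,T_+^{(m)}]}\|u(t)\|_{\mathcal{W}^{1,\infty}(G)}$ is finite (using $\mathcal{H}^3(G)\hookrightarrow \mathcal{W}^{1,\infty}(G)$, $k\ge 3$). Proposition~\ref{PropositionEstimateForNonlinearSolutionOnDomain} at level $m$ then bounds $\|u\|_{\mathcal{G}_m((t_0,T)\times G)}$ uniformly for $T\nearrow T_+^{(m)}$, which contradicts the blow-up alternative of Proposition~\ref{LemmaBlowUpCriterion} at $T_+^{(m)}$. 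Equivalently, this is an immediate corollary of part~(2), so you could simply establish~(2) first and then derive the $k$-independence from it.
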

  \begin{proof}[Sketch of the proof]
  We note that in part (3) one may extend $\tilde f$ and $\tilde g$ to the time interval $\R$ to be in the framework of the previous parts
  of the theorem. Except for part (3), the assertions easily follow from  Propositions~\ref{LemmaBlowUpCriterion}  and 
  \ref{PropositionEstimateForNonlinearSolutionOnDomain}. In the context of part (3) we set  $\tilde u= u(\cdot; \tilde{f}, \tilde{g}, \tilde{u}_0)$.
 If this solution exists on an interval $[t_0,t']$ with $\mathcal{G}_m$--norm less than $R'$,
 Theorem~\ref{TheoremExistenceAndUniquenessOnDomain} and the results of Section 2 in 
 \cite{SpitzQuasilinearMaxwell} allow us to bound $u-\tilde u$ in $\mathcal{G}_{m-1,\gamma}((t_0,t') \times G)$ by analogous norms of the differences
 of the data, if $\gamma(R')$ is large enough. We next use a time step $\tau$ as in \eqref{EquationDefinitionTauForFixedPointArgument} and a radius
 $R$ as in \eqref{EquationRadiusForFixedPointSpace} in the proof of Theorem~\ref{TheoremLocalExistenceNonlinear}, where we have fixed a sufficiently large
 radius $r>0$ for the data. If $\delta>0$ is small enough, this theorem then yields a solution $\tilde u$ of~\eqref{EquationNonlinearIBVP}
 in $\mathcal{G}_m((t_0,t+\tau)\times G)$ with norm less or equal $R$, for  data $(\tilde{f}, \tilde{g}, \tilde{u}_0)$.
 Using  the bound in $\mathcal{G}_{m-1,\gamma}((t_0,t') \times G)$ just mentioned and 
 Lemma~\ref{LemmaConvergenceOfNonlinearSolutionInGmProvidedConvergenceInGmminus1}, we obtain the continuity of the flow map on 
 $\mathcal{G}_m((t_0,t+\tau)\times G)$. Decreasing $\delta>0$ if necessary, one can then deduce assertion (3) iteratively.
 The details are analogous to the proof of Theorem~5.3 in  \cite{SpitzQuasilinearMaxwell} which only uses different linear results.
  \end{proof}

 \section{Appendix}
 In this appendix we show that the interface conditions for  $\bs{D}$ and $\bs{B}$ are preserved.
 
 \begin{lem}
  \label{LemmaConservationOfInterfaceConditions}
  Let $t_0, T \in \R$ with $t_0 < T$ and set $J = (t_0, T)$. 
Let  $(\bs{E}, \bs{H}, \bs{D}, \bs{B})$ in $C(\clJ, \mathcal{H}^1(G)) \cap C^1(\clJ, L^2(G))$ be a solution of the 
  Maxwell system~\eqref{EquationMaxwellSystem} with $\bs{J} \in L^2(J, \mathcal{H}(\div,G))$ and $\bs{J}_\Sigma \in L^2(J, H(\div,\Sigma))$ 
  satisfying $[\bs{E} \times \nu] = 0$ and 
  $[\bs{H} \times \nu] = \bs{J}_{\Sigma}$ on $J \times \Sigma$. Set $\rho_\Sigma(t) = \rho_{\Sigma,0} - 
  \int_{t_0}^t (\div_{\Sigma} \bs{J}_{\Sigma} - [\bs{J} \cdot \nu])(s) ds$ for all $t \in \clJ$.
  \begin{enumerate}
   \item \label{ItemConservationInterfaceConditionB} If $[\bs{B} \cdot \nu](t_0) = 0$ on $\Sigma$, then $[\bs{B} \cdot \nu] = 0$ on $J \times \Sigma$.
   \item \label{ItemConservationInterfaceConditionD} If $[\bs{D} \cdot \nu](t_0) = -\rho_{\Sigma,0}$, then $[\bs{D} \cdot \nu] = -\rho_\Sigma$ on $J \times \Sigma$.
  \end{enumerate}
 \end{lem}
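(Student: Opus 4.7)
The strategy is to differentiate each interface condition in time by means of the Maxwell evolution equations and to convert the arising normal traces of curls into surface divergences of tangential traces. The main tool is the classical surface identity
\begin{equation*}
(\curl v)\cdot \nu\big|_\Sigma = \div_\Sigma (v \times \nu),
\end{equation*}
which holds for smooth $v$ by Stokes' theorem and extends to $v \in H(\curl,G_\pm)$ as an equality in $H^{-1/2}(\Sigma)$ via density and the duality pairing $\langle \curl v\cdot \nu,\phi\rangle_\Sigma = \langle v\times\nu,\nabla_\Sigma \phi\rangle_\Sigma$ for $\phi\in H^{1/2}(\Sigma)$. Since $\bs{E},\bs{H}\in C(\clJ,\mathcal{H}^1(G))$, the tangential traces $(\bs{E}\times\nu)_\pm$ and $(\bs{H}\times\nu)_\pm$ lie in $C(\clJ,H^{1/2}(\Sigma))$, so all jumps and surface divergences below are pointwise-in-$t$ well-defined elements of $H^{-1/2}(\Sigma)$.

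For part (1) I would take the normal component on $\Sigma$ of $\partial_t \bs{B}_\pm = -\curl \bs{E}_\pm$. Since $\bs{B}\in C(\clJ,\mathcal{H}^1(G))\cap C^1(\clJ,L^2(G))$ and $\curl\bs{E}_\pm\in C(\clJ,L^2(G_\pm))$, the normal trace $(\bs{B}\cdot\nu)_\pm$ is of class $C^1$ from $\clJ$ into $H^{-1/2}(\Sigma)$ with derivative equal to the normal trace of $\partial_t\bs{B}_\pm$. Taking the jump across $\Sigma$, applying the surface identity to $\bs{E}_\pm$, and invoking $[\bs{E}\times\nu]=0$ then gives
\begin{equation*}
\partial_t [\bs{B}\cdot\nu] = -\bigl[(\curl\bs{E})\cdot\nu\bigr] = -\div_\Sigma [\bs{E}\times\nu] = 0
\end{equation*}
in $C(\clJ,H^{-1/2}(\Sigma))$; integrating from $t_0$ to $t$ and using $[\bs{B}\cdot\nu](t_0)=0$ finishes the argument.

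For part (2), the analogous manipulation applied to $\partial_t \bs{D}_\pm = \curl \bs{H}_\pm - \bs{J}_\pm$ yields
\begin{equation*}
\partial_t [\bs{D}\cdot\nu] = \div_\Sigma [\bs{H}\times\nu] - [\bs{J}\cdot\nu] = \div_\Sigma \bs{J}_\Sigma - [\bs{J}\cdot\nu] = -\partial_t \rho_\Sigma,
\end{equation*}
where I use $[\bs{H}\times\nu]=\bs{J}_\Sigma$, the fact that $\bs{J}\in L^2(J,\mathcal{H}(\div,G))$ makes $[\bs{J}\cdot\nu]$ a well-defined element of $L^2(J,H^{-1/2}(\Sigma))$, and the very definition of $\rho_\Sigma$. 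Integrating in time and using $[\bs{D}\cdot\nu](t_0)=-\rho_{\Sigma,0}$ closes the argument. The only genuine technical point is justifying the surface identity and the commutation of $\partial_t$ with the normal trace at the given low regularity; both reduce to standard density and duality arguments for the trace operators on $H(\curl,G_\pm)$ and $H(\div,G_\pm)$, and I do not expect any serious obstacle.
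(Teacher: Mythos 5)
Your proposal is correct and follows essentially the same route as the paper: both reduce $\partial_t[\bs{B}\cdot\nu]$ and $\partial_t[\bs{D}\cdot\nu]$ to surface divergences of the tangential traces via Maxwell's evolution equations, and then use $[\bs{E}\times\nu]=0$, $[\bs{H}\times\nu]=\bs{J}_\Sigma$, and integration in time. The only cosmetic difference is that you invoke the identity $(\curl v)\cdot\nu|_\Sigma=\div_\Sigma(v\times\nu)$ as a named tool, whereas the paper reproduces its derivation inline by testing the normal trace against $\phi\in C_c^\infty(G)$, integrating by parts over $G_\pm$, and then (for part~(2)) using the tangentiality of $\bs{J}_\Sigma$ to pass from $\langle\bs{J}_\Sigma,\nabla\phi\rangle$ to $-\langle\div_\Sigma\bs{J}_\Sigma,\phi\rangle$.
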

 \begin{proof}
  \ref{ItemConservationInterfaceConditionB} Since $\partial_t \bs{B}_{\pm}$ belongs to $H(\div, G_{\pm})$, 
  these fields have a normal trace in $H^{-1/2}(\Sigma)$ for each $t \in \clJ$. Employing that also 
  $\curl \bs{E}_{\pm}\in H(\div, G_{\pm})$, we compute
  \begin{align*}
\langle \partial_t [&\bs{B} \cdot \nu](t), \phi \rangle_{H^{-1/2}(\Sigma) \times H^{1/2}(\Sigma)} 
   =  \langle  [\partial_t \bs{B} \cdot \nu](t), \phi \rangle_{H^{-1/2}(\Sigma) \times H^{1/2}(\Sigma)}  \\
   &= \langle  [- \curl \bs{E} \cdot \nu](t), \phi \rangle_{H^{-1/2}(\Sigma) \times H^{1/2}(\Sigma)} \\
   &= \langle  - \curl \bs{E}_+(t) \cdot \nu, \phi \rangle_{H^{-1/2}(\Sigma) \times H^{1/2}(\Sigma)} 
     + \langle  \curl \bs{E}_{-} (t) \cdot \nu, \phi \rangle_{H^{-1/2}(\Sigma) \times H^{1/2}(\Sigma)} \\
   &= - \int_{G_+} \div \curl \bs{E}_+(t) \phi \, dx - \int_{G_+} \curl \bs{E}_+(t) \cdot \nabla \phi \, dx - \int_{G_{-}} \div \curl \bs{E}_{-}(t) \phi \, dx \\
   &\quad - \int_{G_{-}} \curl \bs{E}_{-}(t) \cdot \nabla \phi \, dx\\
   &= - \int_{G_+} \bs{E}_+(t) \cdot \curl \nabla \phi \, dx + \langle  \bs{E}_+(t) \times \nu, \nabla \phi \rangle_{H^{-1/2}(\Sigma) \times H^{1/2}(\Sigma)} \\
   &\quad - \int_{G_{-}} \bs{E}_{-}(t) \cdot \curl \nabla \phi \, dx + \langle  \bs{E}_{-}(t) \times (-\nu), \nabla \phi \rangle_{H^{-1/2}(\Sigma) \times H^{1/2}(\Sigma)} \\
   &= \langle  [\bs{E} \times \nu](t), \nabla \phi \rangle_{H^{-1/2}(\Sigma) \times H^{1/2}(\Sigma)} = 0
  \end{align*}
  for all $t \in J$ and $\phi \in C_c^\infty(G)$. Since 
  $\tr_{\Sigma} H^1_0(G) = H^{1/2}(\Sigma)$, we infer that $\partial_t [\bs{B} \cdot \nu] = 0$ on 
  $\clJ \times G$. As $[\bs{B} \cdot \nu](t_0) = 0$ on $\Sigma$, we arrive at $[\bs{B} \cdot \nu] = 0$ 
  on $J \times \Sigma$.
  
  \ref{ItemConservationInterfaceConditionD}  We proceed as in part (1).
  Using the assumptions on $\bs{J}$, we  compute
  \begin{align*}
   \langle \partial_t [&\bs{D} \cdot \nu](t), \phi \rangle_{H^{-1/2}(\Sigma) \times H^{1/2}(\Sigma)} 
   =  \langle  [\partial_t \bs{D} \cdot \nu](t), \phi \rangle_{H^{-1/2}(\Sigma) \times H^{1/2}(\Sigma)}  \\
   &= \langle  [(\curl \bs{H} - \bs{J}) \cdot \nu](t), \phi \rangle_{H^{-1/2}(\Sigma) \times H^{1/2}(\Sigma)} \\
   &= - \langle  [\bs{J} \cdot \nu](t), \phi \rangle_{H^{-1/2}(\Sigma) \times H^{1/2}(\Sigma)} 
      - \langle  [\bs{H} \times \nu](t), \nabla \phi \rangle_{H^{-1/2}(\Sigma) \times H^{1/2}(\Sigma)} \\
   &= - \langle  [\bs{J} \cdot \nu](t), \phi \rangle_{H^{-1/2}(\Sigma) \times H^{1/2}(\Sigma)} 
      - \langle  \bs{J}_\Sigma(t), \nabla \phi \rangle_{H^{-1/2}(\Sigma) \times H^{1/2}(\Sigma)} 
  \end{align*}
  for all $\phi \in C_c^\infty(G)$ and almost all $t \in J$. Since $J_{\Sigma} = [\bs{H} \times \nu]$, 
  the boundary current density $\bs{J}_\Sigma$ is tangent to $\Sigma$, i.e., $\bs{J}_\Sigma = \pi_\Sigma \bs{J}_\Sigma$, 
  where $\pi_\Sigma = \pi_{\Sigma,x}$ denotes the orthogonal projection on the tangent space at $x \in \Sigma$. We infer that
  \begin{align*}
   &\langle  \bs{J}_\Sigma(t), \nabla \phi \rangle_{H^{-1/2}(\Sigma) \times H^{1/2}(\Sigma)} 
   = \langle  \pi_\Sigma \bs{J}_\Sigma(t), \pi_\Sigma \nabla \phi \rangle_{H^{-1/2}(\Sigma) \times H^{1/2}(\Sigma)} \\
   &= \langle  \bs{J}_\Sigma(t), \nabla_\Sigma \phi \rangle_{H^{-1/2}(\Sigma) \times H^{1/2}(\Sigma)} = - \langle \div_{\Sigma} \bs{J}_\Sigma(t), \phi \rangle_{H^{-1/2}(\Sigma) \times H^{1/2}(\Sigma)},
  \end{align*}
  where we refer to  Definition~2.2  of  $\nabla_\Sigma$  and $\div_\Sigma$ in~\cite{Cessenat}.
  We conclude that
  \begin{align*}
   \langle \partial_t [\bs{D} \cdot \nu](t), \phi \rangle_{H^{-1/2}(\Sigma) \times H^{1/2}(\Sigma)} 
   = \langle \div_\Sigma \bs{J}_\Sigma(t) -  [\bs{J} \cdot \nu](t), \phi \rangle_{H^{-1/2}(\Sigma) \times H^{1/2}(\Sigma)}
  \end{align*}
 for all $\phi \in C_c^\infty(G)$ and almost all $t \in J$. Arguing as in~\ref{ItemConservationInterfaceConditionB}, 
 we derive claim (2).
 \end{proof}

 \bibliographystyle{plain}
\bibliography{maxwell-inter}

\end{document}